\pdfoutput=1
\documentclass[11pt]{article}

\usepackage[a4paper,margin=1.15in]{geometry}
\usepackage{amsmath,amsthm,amssymb,mathtools}
\usepackage{mathrsfs}
\usepackage{tikz}
\usetikzlibrary{calc,positioning}
\usepackage{enumitem}
\usepackage{microtype}
\usepackage[colorlinks=true,linkcolor=purple,citecolor=magenta,urlcolor=cyan]{hyperref}
\usepackage{aliascnt}
\usepackage[nameinlink,capitalize]{cleveref}

\newtheorem{theorem}{Theorem}[section]

\newaliascnt{proposition}{theorem}
\newtheorem{proposition}[proposition]{Proposition}
\aliascntresetthe{proposition}
\crefname{proposition}{Proposition}{Propositions}

\newaliascnt{lemma}{theorem}
\newtheorem{lemma}[lemma]{Lemma}
\aliascntresetthe{lemma}
\crefname{lemma}{Lemma}{Lemmas}

\newaliascnt{corollary}{theorem}
\newtheorem{corollary}[corollary]{Corollary}
\aliascntresetthe{corollary}
\crefname{corollary}{Corollary}{Corollaries}

\theoremstyle{definition}
\newaliascnt{definition}{theorem}
\newtheorem{definition}[definition]{Definition}
\aliascntresetthe{definition}
\crefname{definition}{Definition}{Definitions}

\newaliascnt{example}{theorem}
\newtheorem{example}[example]{Example}
\aliascntresetthe{example}
\crefname{example}{Example}{Examples}

\newaliascnt{question}{theorem}
\newtheorem{question}[question]{Question}
\aliascntresetthe{question}
\crefname{question}{Question}{Questions}

\theoremstyle{remark}
\newaliascnt{remark}{theorem}
\newtheorem{remark}[remark]{Remark}
\aliascntresetthe{remark}
\crefname{remark}{Remark}{Remarks}

\newcommand{\Raiz}{\Phi}
\newcommand{\RaizSimp}{\Delta}
\newcommand{\RaizReti}{Q}
\newcommand{\Pesos}{\Lambda}
\newcommand{\Weyl}{W}
\newcommand{\QO}{\mathscr{Q}}
\newcommand{\chr}{\operatorname{ch}}
\newcommand{\supp}{\operatorname{supp}}
\newcommand{\Mu}{\mathsf{M}}
\newcommand{\typeA}{\ensuremath{\mathrm{A}}}
\newcommand{\typeB}{\ensuremath{\mathrm{B}}}
\newcommand{\typeC}{\ensuremath{\mathrm{C}}}
\newcommand{\typeD}{\ensuremath{\mathrm{D}}}
\newcommand{\typeE}{\ensuremath{\mathrm{E}}}
\newcommand{\typeF}{\ensuremath{\mathrm{F}}}
\newcommand{\typeG}{\ensuremath{\mathrm{G}}}
\newcommand{\Z}{\mathbb Z}

\newcommand{\C}{\mathbb C}

\font\ssfnt=cmss10
\def\LiE{{\ssfnt L\kern-.25em\raise0.59ex\hbox{\i}\kern-0.03em E}}

\title{Atomic Decompositions of Lie Characters and the Dominant Weight Poset}
\author{Felipe de Mattos Chafik Hindi \and Waldeck Sch\"utzer}
\date{September 7, 2026}

\begin{document}
\maketitle

\begin{abstract}
Let $\mathfrak g$ be a complex semisimple Lie algebra and, for a dominant
integral weight $\lambda$, let $\Theta_\lambda$ be the multiplicity-free sum
of the weights of the irreducible module $V(\lambda)$.  The $\Theta_\mu$ form
a $\Z$-basis of the $\Weyl$-invariants, so there are unique integers
$a(\mu,\lambda)$, the \emph{atomic numbers}, with
\[
    \chr V(\lambda)=\sum_{\mu\in\Pesos^+}a(\mu,\lambda)\Theta_\mu .
\]
We ask when they are nonnegative.  They form the M\"obius transform of the
weight-multiplicity function on the dominant-weight poset, which the crosscut
theorem turns into an alternating sum of at most
$2^{\operatorname{rank}\Raiz}$ multiplicities, indexed by subsets of the
weights covering $\mu$ in $[\mu,\lambda]$.

We prove that $a(\mu,\lambda)\ge0$ whenever every connected component of the
Dynkin diagram is a path, and we identify the coefficient with the dimension
of an explicit weight space: it is cut out of $V(\lambda)$ by alternately
taking kernels of raising operators and cokernels of lowering operators, one
for each of these covers, processed in order along the path.  Type
$\typeD_4$ shows the hypothesis is necessary.  Put
$\beta=\alpha_1+2\alpha_2+2\alpha_3+2\alpha_4$; for every dominant $\mu$ with
$\langle\mu,\alpha_2^\vee\rangle=1$ and $\mu+\beta$ dominant,
$a(\mu,\mu+\beta)$ equals $-2$ if $\langle\mu,\alpha_1^\vee\rangle=0$ and
$-1$ otherwise.  Restriction to the support of $\lambda-\mu$ carries this
family into every irreducible type with a trivalent node.  An irreducible
finite root system therefore has all atomic numbers nonnegative precisely
when its Dynkin diagram is a path, namely in types $\typeA_n$, $\typeB_n$,
$\typeC_n$, $\typeF_4$, and $\typeG_2$.

Deep in the dominant chamber, in an explicit range, $a(\mu,\lambda)$ is the
Kostant partition number of $\lambda-\mu$ for the nonsimple positive roots;
negative atomic numbers are therefore confined to boundary slabs.
\end{abstract}

\noindent\textbf{2020 Mathematics Subject Classification.}
Primary 17B10; Secondary 17B20, 05E10, 06A07.

\section{Introduction}

Computing weight multiplicities in irreducible highest-weight modules is a
classical problem in the representation theory of semisimple Lie algebras.
The Weyl character formula determines the character, and Kostant's formula
extracts individual multiplicities from it, but both formulas hide substantial
cancellation.  A complementary problem is to refine the character into simpler
multiplicity-free pieces.  Such refinements appear in polytope expansions,
girdle expansions, and in the theory of atomic decompositions and atomic
polynomials; see, for example, \cite{Walton2,Schutzer,Shimozono,MR4178925,Patimo}.

Fix a complex semisimple Lie algebra $\mathfrak g$, with root system $\Raiz$,
positive roots $\Raiz^+$, simple roots
$\RaizSimp=\{\alpha_i:i\in I\}$, simple coroots $\alpha_i^\vee$,
fundamental weights $\omega_i$ determined by
$\langle\omega_i,\alpha_j^\vee\rangle=\delta_{ij}$, weight lattice $\Pesos$,
dominant cone $\Pesos^+$, Weyl group $\Weyl$, and
$\rho=\frac12\sum_{\alpha\in\Raiz^+}\alpha$.  For a weight $\nu$, its
$i$-th \emph{Dynkin label} is $\nu_i=\langle\nu,\alpha_i^\vee\rangle$.
Thus $\nu=\sum_i\nu_i\omega_i$.  In numerical examples we abbreviate a
weight by its ordered Dynkin labels.  Nonnegative single-digit labels are
concatenated, as in $(0100)$; if any label is negative, commas separate all
labels, as in $(1,-1)$.  Such weight expressions are parenthesized, including
in subscripts, but their own parentheses may be omitted when they are
arguments of a function, as in $a(0100,0022)$.  For symbolic or multidigit
labels we use explicit sums of fundamental weights to avoid ambiguity.
Tuples of simple-root coefficients retain their commas and are explicitly
identified as such.
Fix Chevalley generators
\[
 e_i\in\mathfrak g_{\alpha_i},\qquad
 f_i\in\mathfrak g_{-\alpha_i},\qquad
 [e_i,f_i]=\alpha_i^\vee
 \quad(i\in I).
\]
For $\lambda\in\Pesos^+$, let $V(\lambda)$ denote the irreducible module of
highest weight $\lambda$.  For every $\nu\in\Pesos$ set
\[
 m(\nu,\lambda)=\dim V(\lambda)_\nu,
\]
with $m(\nu,\lambda)=0$ when $\nu$ is not a weight of $V(\lambda)$.
More generally, write $m_V(\nu)=\dim V_\nu$ for a finite-dimensional
$\mathfrak g$-module $V$.  Two weights are \emph{Weyl-conjugate} if
$\eta=w\nu$ for some $w\in\Weyl$, using the ordinary, unshifted action.
Their multiplicities in every such $V$ agree.  Indeed, for each simple root
$\alpha_i$, the sum of the weight spaces in $\nu+\Z\alpha_i$ is a
finite-dimensional $\mathfrak{sl}_2(\alpha_i)$-module.  Symmetry of its
weight strings gives $m_V(s_i\nu)=m_V(\nu)$, and the simple reflections
generate $\Weyl$.  If
$\Pesos(\lambda)$ denotes the set of weights of $V(\lambda)$, define
\[
    \Theta_\lambda=\sum_{\nu\in\Pesos(\lambda)} e^\nu.
\]
Thus $\Theta_\lambda$ remembers which weights occur in $V(\lambda)$ but forgets
their multiplicities.

The integers $a(\mu,\lambda)$ are defined by the triangular expansion
\begin{equation}\label{eq:intro-atomic-expansion}
    \chr V(\lambda)
      =\sum_{\mu\in\Pesos^+} a(\mu,\lambda)\Theta_\mu.
\end{equation}
We call $a(\mu,\lambda)$ the \emph{atomic number} associated with
$(\mu,\lambda)$.  The problem addressed in this paper is not whether the
integral expansion \eqref{eq:intro-atomic-expansion} exists (it always does!)
but whether its coefficients are nonnegative.

Our main organizing observation is that the two-variable functions
\[
   m(\mu,\lambda)=\dim V(\lambda)_\mu,
   \qquad a(\mu,\lambda),
\]
are elements of the incidence algebra of the dominant-weight poset, and that
\begin{equation}\label{eq:intro-mobius}
    a(\mu,\lambda)
      =\sum_{\eta\in[\mu,\lambda]}
        \Mu(\mu,\eta)m(\eta,\lambda),
\end{equation}
where $\Mu$ is the M\"obius function of the poset.  Thus atomic positivity is a
positivity problem for a M\"obius transform of the multiplicity function.

Although M\"obius inversion on the dominance order appears in earlier work,
notably \cite{MR4178925}, we use the incidence algebra as the organizing
framework.  The objects of study are the full two-variable tables $m$ and $a$,
not their columns separately.  The M\"obius function is then a
representation-independent inversion kernel, computed from the dominant-weight
poset and used uniformly for all highest weights.

The M\"obius formula is the first of three complementary descriptions of the
same two-variable function $a$.  They are best viewed not as separate results,
but as three forms of one inversion principle:
\begin{equation}\label{eq:intro-three-descriptions}
   m=\zeta*a,
   \qquad a=\Mu*m;
   \qquad
   a*\kappa=\delta;
   \qquad
   a(\mu,\mu+\eta)=p_{\rm ns}(\eta)
   \quad\text{in the stable chamber}.
\end{equation}
Here $\kappa$ is the transition kernel obtained by expanding the girdles
$\Theta_\lambda$ in irreducible characters, and $p_{\rm ns}$ is the Kostant
partition function for the nonsimple positive roots.  The first description is
poset-theoretic: for a fixed highest weight, M\"obius inversion on the
dominant-weight poset extracts the atomic numbers from the weight
multiplicities.  The second is a highest-weight recursion: the character--girdle
formula of \cite{Schutzer} evaluates $\kappa$ explicitly, and
$a*\kappa=\delta$ makes the atomic table the incidence-algebra inverse of this
kernel.  Equivalently it gives a recursion in the highest-weight variable that
involves no multiplicities (\cref{thm:atomic-recursion}); the multiplicity
recursion of \cite[Corollary~3.1]{Schutzer} is its order-filter sum.

The third description is the stable form of the second.  Deep in the dominant
chamber the shifted Weyl corrections disappear, so the highest-weight
recursion becomes the translation-invariant finite-difference operator
\[
   \prod_{\alpha\in\Raiz^{\rm ns}}(1-T_\alpha).
\]
Its fundamental solution has generating function
$\prod_{\alpha\in\Raiz^{\rm ns}}(1-e^{-\alpha})^{-1}$, hence coefficients
$p_{\rm ns}$.  Thus the stable partition function is not an unrelated
asymptotic formula: it is the translation-invariant model forced by the same
inverse relation $a*\kappa=\delta$.  \Cref{thm:atomic-stabilization} makes this
exact on an explicit finite stable range, sharpening a limit observed by
Lecouvey and Lenart \cite{MR4178925}.  In particular every negative atomic
number is a near-the-wall phenomenon, which is consistent with, and quantifies,
the $\typeD_4$ analysis of \cref{sec:D4}.

A sustained exhaustive exploration in \LiE\ \cite{LiE}, together with
purpose-built code based on Stembridge's dominant-weight poset
\cite{Stembridge}, first revealed the path--fork dichotomy and the local
$\typeD_4$ patterns below.

Crosscut inversion makes the sign mechanism explicit.  If
$\gamma_i=\mu+\beta_i$ are the atoms above $\mu$ and
$x_S=\mu+\sum_{i\in S}\beta_i$, then the crown formula is an alternating sum
of multiplicities at the dominant joins $\widehat{x_S}$.  Thus two issues
control the sign: the raw Boolean finite difference at the $x_S$, and the
effect of dominant stabilization $x_S\mapsto\widehat{x_S}$.  Support reduction
makes this analysis local in $\supp(\lambda-\mu)$.

Rank two gives the first local models.  The crowns are singletons, chains, or
diamonds.  In types $\typeA_1\times\typeA_1$, $\typeA_2$, $\typeB_2$, and
$\typeC_2$ all atomic numbers are $0$ or $1$.  In type $\typeG_2$ the diamond
coefficient is a bounded partition count by the four nonsimple positive roots
and can exceed $1$; compare the stronger $q$-atomicity result of
Muniz--Plaza--Rojas-And\'ias \cite{Muniz-Plaza-Rojas-G2}.

On a path, both parts of the sign problem are favorable.  Distinct atom
differences have disjoint connected supports and generate a regular path
forest.  Every raw sum reaches its least dominant majorant through reflections
at Dynkin label $-1$, so stabilization preserves multiplicities.  Elementary
$\mathfrak{sl}_2$-string theory then realizes the raw Boolean difference as the
dimension of a successive subquotient obtained by alternating primitive
kernels and lowering quotients along the path.  Hence all types
$\typeA_n,\typeB_n,\typeC_n,\typeF_4$, and $\typeG_2$ are atomically positive,
and every atomic number in a path type has a representation-theoretic model
(\cref{cor:path-primitive-model}).  In type $\typeA$, Plaza and Sagurie
\cite{Plaza-Sagurie} independently obtain a related positivity theorem and an
algorithm for Lascoux's atomic decomposition.

The first failure occurs in type $\typeD_4$, and it occurs in the second
mechanism: the raw Boolean difference remains nonnegative, but when the
trivalent Dynkin label is $1$, dominant stabilization contributes an additional
crown term.  This yields an infinite family.  With $J=\{1,3,4\}$ and
$\{i,j,k\}=J$, put
\[
 \beta_i=\alpha_i+2\alpha_2+2\alpha_j+2\alpha_k.
\]
Whenever $\mu$ and $\lambda=\mu+\beta_i$ are dominant with $\mu_2=1$,
\cref{prop:D4-minimal-family} gives
\[
 a(\mu,\lambda)=
 \begin{cases}
 -2,&\mu_i=0,\\
 -1,&\mu_i>0.
 \end{cases}
\]
Support reduction propagates these examples to every irreducible branched
type.  Together with path positivity, this shows that negativity requires a
support component of type $\typeD$ or $\typeE$, equivalently a trivalent
simple root.  Whether every negative coefficient in a larger $\typeD/\typeE$
diagram is already detected on the radius-one $\typeD_4$ neighborhood remains
open.

The paper is organized as follows.  \Cref{sec:atomic} develops the incidence
algebra of the dominant-weight poset, including the crown, crosscut inversion,
and the Boolean envelope.  \Cref{sec:reductions} gives the second inverse
description $a*\kappa=\delta$ and the structural locality reductions.
Rank-two combinatorics is worked out in \cref{sec:rank2}, and positivity for
path diagrams (\cref{thm:path-positive}) is proved in \cref{sec:path-positive}.  Forks are analyzed in \cref{sec:D4},
beginning with $\typeD_4$ and ending with the $\typeD/\typeE$ obstruction,
propagation, and
locality question.  Finally, \cref{sec:stabilization} identifies the stable
value with $p_{\rm ns}$ and confines negativity to boundary slabs.

\section{The incidence algebra of the dominant-weight poset}\label{sec:atomic}\label{sec:dominant-poset}

We use the usual root order on the weight lattice:
\[
   \mu\le \lambda
   \quad\Longleftrightarrow\quad
   \lambda-\mu\in \Z_{\ge0}\RaizSimp.
\]
Its restriction to $\Pesos^+$ will be called the \emph{dominant-weight order}.
Every interval in $\Pesos^+$ is finite.

For $\lambda\in\Pesos^+$ we use throughout
\[
    m(\nu,\lambda)=\dim V(\lambda)_\nu
    \qquad(\nu\in\Pesos),
\]
with the value $0$ off the weight set; thus
\[
    \chr V(\lambda)
      =\sum_{\nu\in\Pesos}m(\nu,\lambda)e^\nu.
\]
If $\mu\in\Pesos^+$, then the saturated-set property for highest-weight
modules implies that
\[
   \mu\in\Pesos(\lambda)
   \quad\Longleftrightarrow\quad
   \mu\le\lambda.
\]
Accordingly, the coefficient of $e^\mu$ in $\Theta_\eta$ is $1$ exactly when
$\mu\le\eta$.

\begin{proposition}\label{prop:integral-expansion}
The girdles $\{\Theta_\mu:\mu\in\Pesos^+\}$ form a $\Z$-basis of the
$\Weyl$-invariants $\Z[\Pesos]^\Weyl$.  In particular, for each
$\lambda\in\Pesos^+$ there are unique integers $a(\mu,\lambda)$,
$\mu\in\Pesos^+$, such that
\[
    \chr V(\lambda)
      =\sum_{\mu\in\Pesos^+}a(\mu,\lambda)\Theta_\mu.
\]
Moreover, $a(\mu,\lambda)=0$ unless $\mu\le\lambda$; as $\Pesos(\lambda)$ is
finite, the sum is therefore finite.
\end{proposition}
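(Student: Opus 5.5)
The plan is to compare the girdles with the orbit sums, which form the standard basis of the invariants, and to show that the transition matrix between them is unitriangular for the dominant-weight order. For $\mu\in\Pesos^+$ let $o_\mu=\sum_{\nu\in\Weyl\mu}e^\nu$ be the orbit sum. The first step is the standard fact that $\{o_\mu:\mu\in\Pesos^+\}$ is a $\Z$-basis of $\Z[\Pesos]^\Weyl$. A $\Weyl$-invariant element has coefficients that are constant on $\Weyl$-orbits, and every orbit meets $\Pesos^+$ in exactly one point. Hence each invariant $f$ equals $\sum_{\mu\in\Pesos^+}c_\mu o_\mu$, where $c_\mu$ is the coefficient of $e^\mu$ in $f$. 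This sum is finite, and it is unique.

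Next I would expand each girdle in orbit sums. The weight set $\Pesos(\eta)$ is $\Weyl$-stable, since multiplicities are Weyl-invariant as recalled in the introduction. Therefore $\Theta_\eta$ is $\Weyl$-invariant, and $\Theta_\eta=\sum_{\mu}c_{\mu\eta}o_\mu$, where $c_{\mu\eta}$ is the coefficient of $e^\mu$ in $\Theta_\eta$. By the saturation remark just before the proposition, $c_{\mu\eta}=1$ if $\mu\le\eta$ and $c_{\mu\eta}=0$ otherwise, for dominant $\mu$. So $\Theta_\eta=o_\eta+\sum_{\mu<\eta,\ \mu\in\Pesos^+}o_\mu$, and this sum is finite because intervals in $\Pesos^+$ are finite.

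The main point is to invert this unitriangular relation over $\Z$. Because each lower set $\{\mu\in\Pesos^+:\mu\le\eta\}$ is finite, I would argue by induction on the finite number of dominant weights below $\eta$. The induction shows $o_\eta=\Theta_\eta-\sum_{\mu<\eta}o_\mu$, and then applies the hypothesis to each $o_\mu$ to express $o_\eta$ as a $\Z$-combination of the $\Theta_\mu$ with $\mu\le\eta$. Hence the $\Theta$'s span. For linear independence, take a finite nontrivial relation $\sum d_\eta\Theta_\eta=0$ and choose $\eta$ maximal among the weights with $d_\eta\neq0$. The coefficient of $o_\eta$ in the relation is then $d_\eta$, because no other $\Theta_{\eta'}$ with $d_{\eta'}\neq0$ has $\eta'\ge\eta$. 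This contradicts $d_\eta\ne 0$. So the girdles form a $\Z$-basis.

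Finally, $\chr V(\lambda)$ is $\Weyl$-invariant, so it has unique integer coordinates $a(\mu,\lambda)$ in this basis. Its orbit-sum expansion involves only $o_\mu$ with $\mu\in\Pesos(\lambda)\cap\Pesos^+$, that is, with $\mu\le\lambda$. The inversion above rewrites each such $o_\mu$ using only $\Theta_{\mu'}$ with $\mu'\le\mu\le\lambda$. Uniqueness then gives $a(\mu,\lambda)=0$ unless $\mu\le\lambda$, and the finiteness of $\Pesos(\lambda)$ makes the sum finite. The only delicate point is the finiteness that makes the triangular inversion legitimate, namely that lower sets in the dominant cone are finite. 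This follows from $\mu\le\eta$ forcing $\mu\in\Pesos(\eta)$, which is a finite set.
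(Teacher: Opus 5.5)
Your proposal is correct and follows the paper's own argument. The orbit sums form a $\Z$-basis of $\Z[\Pesos]^\Weyl$, saturation gives the unitriangular expansion $\Theta_\eta=\sum_{\nu\in\Pesos^+,\,\nu\le\eta}\mathcal O_\nu$ over finite lower sets, and this transition is inverted over $\Z$. The one small difference is in the vanishing of $a(\mu,\lambda)$ for $\mu\nleq\lambda$: you deduce it from the triangular shape of the inverse and uniqueness, while the paper compares coefficients at a maximal $\mu$ with $a(\mu,\lambda)\ne0$; both work.
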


The orbit sums $\mathcal O_\nu=\sum_{\gamma\in\Weyl\nu}e^\gamma$,
$\nu\in\Pesos^+$, form a $\Z$-basis of $\Z[\Pesos]^\Weyl$, and
$\Theta_\eta=\sum_{\nu\in\Pesos^+,\ \nu\le\eta}\mathcal O_\nu$; each such index set is
finite, so this transition is unitriangular with finite rows and invertible
over $\Z$, which gives the first assertion.  That $a(\mu,\lambda)$ vanishes
for $\mu\nleq\lambda$ follows by comparing coefficients at a weight maximal
among those with $a(\mu,\lambda)\ne0$.  We omit the routine verifications.
The basis property is also a consequence of the character formula of
\cite[Theorem~2.1]{Schutzer}, which expresses $\Theta_\lambda$ as
$\chr V(\lambda)$ plus an integral combination of characters $\chr V(\eta)$
with $\eta<\lambda$.

\begin{definition}
The integers in \cref{prop:integral-expansion} are the \emph{atomic numbers}.
We say that $\chr V(\lambda)$ admits an \emph{atomic decomposition} if
\[
     a(\mu,\lambda)\ge0
     \qquad\text{for all }\mu\in\Pesos^+.
\]
A root system (or its semisimple Lie algebra) is called \emph{atomic}, or
\emph{universally atomically positive}, if every irreducible character admits
an atomic decomposition.
\end{definition}

\begin{remark}\label{rem:schutzer-thm22}
The character formula \cite[Theorem~2.1]{Schutzer} used above is independent
of \cite[Theorem~2.2]{Schutzer}, which asserts that the coefficients
$a(\mu,\lambda)$ are always nonnegative.  That assertion is not correct: the
first counterexamples occur in type $\typeD_4$, as observed in
\cite[Example~2.6]{MR4178925} and as \cref{prop:D4-minimal-family} makes
explicit.  Determining exactly when nonnegativity does hold is the subject of
the present paper.
\end{remark}

Comparing the coefficient of $e^\mu$ on both sides of the atomic expansion
gives the basic relation
\begin{equation}\label{eq:m-from-a}
    m(\mu,\lambda)
       =\sum_{\substack{\eta\in\Pesos^+\\ \mu\le\eta\le\lambda}}
          a(\eta,\lambda).
\end{equation}

\subsection{The atomic table in the incidence algebra}
\label{subsec:incidence-framework}

The use of M\"obius inversion in connection with atomic decompositions is not
new in itself.  Lecouvey and Lenart observe that, for a fixed highest weight,
the triangular relation defining atomic polynomials can be inverted on the
dominance order by M\"obius inversion \cite{MR4178925}.  Our point of view is
different in two related respects.  First, we keep both weight variables and
regard the entire multiplicity table and the entire atomic table simultaneously
as elements of one incidence algebra.  Second, we use structural information
about the M\"obius function of the dominant-weight poset to turn the formal
inversion into a sparse computational procedure.

Put $P=\Pesos^+$.  Its incidence algebra $I(P;\Z)$ \cite{Stanley} consists of the
integer-valued functions on comparable pairs, with convolution
\[
   (f*g)(\mu,\lambda)
      =\sum_{\mu\le\eta\le\lambda}f(\mu,\eta)g(\eta,\lambda).
\]
Let $\zeta(\mu,\lambda)=1$ for $\mu\le\lambda$, and let
$\Mu=\zeta^{-1}$ be the M\"obius function.  Extend $m$ and $a$ by zero to
non-comparable pairs.  Then \eqref{eq:m-from-a} is exactly
\begin{equation}\label{eq:global-incidence-identity}
       m=\zeta*a,
       \qquad\text{hence}\qquad
       a=\Mu*m.
\end{equation}
Equivalently, for every dominant pair $\mu\le\lambda$,
\begin{equation}\label{eq:mobius-atomic}
   a(\mu,\lambda)
      =\sum_{\eta\in[\mu,\lambda]}
          \Mu(\mu,\eta)m(\eta,\lambda).
\end{equation}
Thus the character of each individual highest-weight module gives one column
of a global identity in $I(P;\Z)$; the inversion kernel $\Mu$ depends only on
the dominant-weight poset and not on the representation.

\begin{definition}\label{def:crown}
For a dominant weight $\mu$, define the \emph{M\"obius crown above $\mu$} by
\[
   \QO(\mu)
      =\{\eta\in\Pesos^+:\eta\ge\mu,\ \Mu(\mu,\eta)\ne0\}.
\]
For $\mu\le\lambda$, its truncation at $\lambda$ is
\[
   \QO(\mu,\lambda)=\QO(\mu)\cap[\mu,\lambda].
\]
\end{definition}

By the definition of the crown, \eqref{eq:mobius-atomic} is equivalently the
crown-supported formula
\begin{equation}\label{eq:crown-formula}
   a(\mu,\lambda)
      =\sum_{\eta\in\QO(\mu,\lambda)}
          \Mu(\mu,\eta)m(\eta,\lambda).
\end{equation}

The terminology ``crown'' emphasizes its computational role: it discards every
element of the interval that cannot
contribute to the atomic number.  Thus, for fixed $\mu\le\lambda$, the
arithmetic part of the inversion uses $|\QO(\mu,\lambda)|$ multiplicities rather
than $|[\mu,\lambda]|$ multiplicities.  Once the support of the row
$\Mu(\mu,-)$ and its M\"obius values are known, the representation-theoretic
task of obtaining those multiplicities is separated from the poset-theoretic
task of computing the sparse inversion kernel.

As will follow from \cref{prop:crown-envelope}, the crown is contained in the
join-image of a Boolean lattice on at most $\operatorname{rank}\Raiz$ atoms;
hence $|\QO(\mu,\lambda)|\le2^{\operatorname{rank}\Raiz}$, although the
crown itself need not be Boolean because distinct atom subsets can have the
same join.

The sparsity is especially strong for the dominant-weight poset.  Stembridge
proved that, for an irreducible root system, its M\"obius function takes only
the values
\[
       0,\ \pm1,\ \pm2,
\]
and his analysis of the covering relation and of the M\"obius function gives
considerably more information about when the nonzero values may occur
\cite[Sections~2 and~4]{Stembridge}.  Thus \eqref{eq:mobius-atomic} is a short
signed linear combination of multiplicities with very small integral
coefficients.

\subsection{Lattice structure, crowns, and crosscut inversion}
The weight lattice decomposes into cosets modulo the root lattice $\RaizReti$.
Fix one such coset and a base point $\kappa$ in it.  Writing
\[
 \nu=\kappa+\sum_i x_i\alpha_i,
 \qquad
 \eta=\kappa+\sum_i y_i\alpha_i,
\]
identifies the root order with the product order on $\Z^{\operatorname{rank}\Raiz}$.
In this full weight-lattice order we therefore have
\begin{equation}\label{eq:full-root-join}
 \nu\vee\eta
   =\kappa+\sum_i\max\{x_i,y_i\}\alpha_i.
\end{equation}
We use the same symbol $\vee$ for the join in a dominant component.  Thus, in
formulas containing both kinds of joins, a join of dominant weights is taken in
$\Pesos^+$, whereas a join of arbitrary weights is the coordinatewise join
\eqref{eq:full-root-join}.  Intersecting a root-lattice coset
with $\Pesos^+$ gives a connected component of the dominant-weight poset.
Stembridge proved that each such component is a lattice and that $\Pesos^+$
is a sub-meet-semilattice of $\Pesos$ \cite{Stembridge}.  Each dominant
component has a minimum element, either $0$ or a minuscule weight.

\begin{lemma}\label{lem:dynkin-poset}
Let $\Gamma$ be the automorphism group of the Dynkin diagram.  Every
$g\in\Gamma$ induces an automorphism of the posets $\Pesos$ and $\Pesos^+$.
Consequently,
\[
    \Mu(g\mu,g\eta)=\Mu(\mu,\eta).
\]
\end{lemma}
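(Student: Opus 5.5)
The plan is to realize each diagram automorphism $g\in\Gamma$, viewed as a permutation of the index set $I$ preserving the Cartan matrix ($\langle\alpha_{g i},\alpha_{g j}^\vee\rangle=\langle\alpha_i,\alpha_j^\vee\rangle$), as a linear automorphism of the real span of the weights. Define it on fundamental weights by $g\omega_i=\omega_{g i}$ and extend $\Z$-linearly to $\Pesos$.

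The first step is to check that this map also permutes the simple roots, $g\alpha_i=\alpha_{g i}$. Write $\alpha_i=\sum_j\langle\alpha_i,\alpha_j^\vee\rangle\omega_j$. Applying $g$ gives
\[
g\alpha_i=\sum_j\langle\alpha_i,\alpha_j^\vee\rangle\omega_{gj}=\sum_{j}\langle\alpha_{gi},\alpha_{gj}^\vee\rangle\omega_{gj}=\alpha_{gi}.
\]
The middle equality is exactly the invariance of the Cartan matrix under $g$. As a consequence, $g$ maps $\RaizReti$ onto itself and maps $\Z_{\ge0}\RaizSimp$ bijectively onto itself. The map $g$ is an additive bijection of $\Pesos$ with inverse $g^{-1}$, so $\lambda-\mu\in\Z_{\ge0}\RaizSimp$ holds if and only if $g\lambda-g\mu\in\Z_{\ge0}\RaizSimp$. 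Hence $g$ is an order automorphism of $(\Pesos,\le)$.

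The second step is to show that $g$ preserves dominance. Since $g$ permutes Dynkin labels, $(g\nu)_{gi}=\nu_i$, and so $g$ maps $\Pesos^+$ bijectively onto itself. It therefore restricts to an automorphism of the induced poset $\Pesos^+$.

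The final step is purely poset-theoretic. A poset automorphism $g$ maps each interval $[\mu,\eta]$ isomorphically onto $[g\mu,g\eta]$. The Möbius function is determined by the recursion $\Mu(\mu,\mu)=1$ and $\sum_{\mu\le\xi\le\eta}\Mu(\mu,\xi)=0$ for $\mu<\eta$, which involves only the interval. Induction on the (finite) length of the interval then gives $\Mu(g\mu,g\eta)=\Mu(\mu,\eta)$.

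No step is a real obstacle. The only point needing care is the first step: one must verify that the linear extension defined on fundamental weights sends each simple root to a simple root. This rests solely on the invariance of the Cartan matrix, which holds even in non-simply-laced types, because a diagram automorphism preserves arrow directions and multiplicities.
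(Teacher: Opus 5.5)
Your proposal is correct and follows the paper's argument: $g$ permutes the simple roots, so it preserves $\Z_{\ge0}\RaizSimp$ and hence the root order, and M\"obius functions are invariant under poset isomorphisms. Your version also spells out two points the paper leaves implicit: that the extension defined by $g\omega_i=\omega_{gi}$ sends $\alpha_i$ to $\alpha_{gi}$ because the Cartan matrix is invariant, and that permuting Dynkin labels maps $\Pesos^+$ onto itself.
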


\begin{proof}
A diagram automorphism permutes the simple roots and hence preserves
$\Z_{\ge0}\RaizSimp$.  Thus
$\mu\le\eta$ if and only if $g\mu\le g\eta$.  M\"obius functions are invariant
under poset isomorphisms.
\end{proof}

For every weight $\nu$, denote by $\widehat\nu$ the least dominant weight
above $\nu$ in its root-lattice coset.  We verify that it exists.  Since
$2\rho=\sum_{\alpha\in\Raiz^+}\alpha\in\RaizReti$ is strictly dominant,
$\eta_0=\nu+N(2\rho)$ is a dominant majorant for $N\gg0$.  Let
\[
 D_0=\{\eta\in\Pesos^+:\nu\le\eta\le\eta_0\}.
\]
This set is finite and nonempty.  Stembridge's sub-meet-semilattice property
implies that the iterated meet
\[
 \widehat\nu:=\bigwedge_{\eta\in D_0}\eta
\]
is dominant; because every $\eta\in D_0$ majorizes $\nu$, the coordinatewise
meet also majorizes $\nu$.  If $\xi$ is any dominant majorant of $\nu$, then
$\xi\wedge\eta_0$ is dominant, belongs to $D_0$, and satisfies
\[
 \widehat\nu\le \xi\wedge\eta_0\le\xi.
\]
Thus $\widehat\nu$ is the unique least dominant majorant \cite{Stembridge}.
We call the operation $\nu\mapsto\widehat\nu$ \emph{dominant stabilization}.
It is defined by the root order; it need not send $\nu$ to the dominant
representative of its Weyl orbit.  Thus an equality
$m_V(\widehat\nu)=m_V(\nu)$ requires justification; in the path argument
we prove it by establishing Weyl conjugacy.

\begin{lemma}\label{lem:dominant-covering-join}
If $\nu$ and $\eta$ lie in the same root-lattice coset, then
\[
    \widehat\nu\vee\widehat\eta
       =\widehat{\nu\vee\eta}.
\]
Here the join on the left is taken in the dominant component, while the join
inside the hat on the right is the full weight-lattice join
\eqref{eq:full-root-join}.
\end{lemma}

\begin{proof}
Since $\nu\le\widehat\nu$ and $\eta\le\widehat\eta$,
\[
   \nu\vee\eta\le \widehat\nu\vee\widehat\eta,
\]
so minimality of the dominant covering gives
$\widehat{\nu\vee\eta}\le\widehat\nu\vee\widehat\eta$.
Conversely, $\nu,\eta\le\nu\vee\eta\le\widehat{\nu\vee\eta}$, hence
$\widehat\nu,\widehat\eta\le\widehat{\nu\vee\eta}$ and therefore
$\widehat\nu\vee\widehat\eta\le\widehat{\nu\vee\eta}$.
\end{proof}

Write $\mu\lessdot\gamma$ when $\gamma$ covers $\mu$ in the dominant-weight
poset.  For $\mu\le\lambda$, let
\[
   \mathcal A_\mu(\lambda)
      =\{\gamma\in[\mu,\lambda]:\mu\lessdot\gamma\}
\]
be the set of atoms of the interval $[\mu,\lambda]$.  We use the convention
$\bigvee\varnothing=\mu$.

\begin{proposition}
\label{prop:crosscut-lower}
For every $\mu\le\eta$,
\begin{equation}\label{eq:crosscut-mobius}
   \Mu(\mu,\eta)
      =\sum_{\substack{S\subseteq\mathcal A_\mu(\eta)\\
                       \bigvee S=\eta}}
          (-1)^{|S|}.
\end{equation}
Consequently, if $\Mu(\mu,\eta)\ne0$, then $\eta$ is the join of a subset of
the atoms above $\mu$.
\end{proposition}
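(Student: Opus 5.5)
This is Rota's crosscut theorem applied to the finite lattice $[\mu,\eta]$. By Stembridge's result recalled above, each connected component of $\Pesos^+$ is a lattice. Since $\mu\le\eta$ puts both in the same component, the interval $[\mu,\eta]$ is a finite lattice with minimum $\mu$ and maximum $\eta$. Its atoms are exactly the elements of $\mathcal A_\mu(\eta)$, namely the dominant weights in $[\mu,\eta]$ covering $\mu$. Joins inside the interval agree with joins in the component: the join in the component of elements bounded by $\eta$ is again $\le\eta$, and is $\ge\mu$. The case $\mu=\eta$ is trivial, because only $S=\varnothing$ contributes and $\bigvee\varnothing=\mu$ gives $1=\Mu(\mu,\mu)$. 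So assume $\mu<\eta$.

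I would give a self-contained proof of the crosscut identity rather than only cite it. For $x\in[\mu,\eta]$ define
\[
 g(x)=\sum_{\substack{S\subseteq\mathcal A_\mu(\eta)\\ \bigvee S=x}}(-1)^{|S|},
 \qquad
 f(x)=\sum_{y\in[\mu,x]}g(y)=\sum_{S\subseteq\{\gamma\in\mathcal A_\mu(\eta):\gamma\le x\}}(-1)^{|S|}.
\]
The second expression for $f$ holds because $\bigvee S\le x$ if and only if every element of $S$ lies below $x$. This sum over all subsets of a finite set equals $1$ if no atom lies below $x$, and $0$ otherwise. In a finite lattice, every $x>\mu$ lies above some atom: take a maximal chain from $\mu$ to $x$; its second element is an atom. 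Hence $f(x)=\delta_{\mu,x}$ for all $x\in[\mu,\eta]$. By Möbius inversion on the interval, $g(x)=\sum_{y\in[\mu,x]}\Mu(y,x)\,f(y)=\Mu(\mu,x)$. Taking $x=\eta$ gives \eqref{eq:crosscut-mobius}.

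The consequence follows at once. If $\Mu(\mu,\eta)\ne0$, the sum in \eqref{eq:crosscut-mobius} is nonempty, so some $S\subseteq\mathcal A_\mu(\eta)\subseteq\mathcal A_\mu$ satisfies $\bigvee S=\eta$.

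The only delicate point is the lattice structure. We need the joins $\bigvee S$ to exist and to be computed consistently, which is why Stembridge's theorem (or \cref{lem:dominant-covering-join}, via $\widehat{\cdot}$ of coordinatewise joins) is invoked. We also need finiteness of intervals, which guarantees that atoms exist below every element $x>\mu$.
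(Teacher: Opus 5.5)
Your proposal is correct and follows the same route as the paper. Both apply the crosscut theorem to the atoms of the finite lattice $[\mu,\eta]$, using Stembridge's result that dominant components are lattices, and both deduce the consequence from the fact that the indexing set must be nonempty. The only difference is that the paper cites the crosscut theorem from Stanley's \emph{Enumerative Combinatorics}, while you prove it directly by showing $f(x)=\sum_{y\le x}g(y)=\delta_{\mu,x}$ on $[\mu,\eta]$ and then applying M\"obius inversion.
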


\begin{proof}
The interval $[\mu,\eta]$ is a finite lattice.  Formula
\eqref{eq:crosscut-mobius} is the crosscut theorem applied to the crosscut
formed by the atoms of this interval; see, for example,
\cite[Section~3.9]{StanleyEC1}.  If $\Mu(\mu,\eta)\ne0$, then the sum on the
right-hand side of \eqref{eq:crosscut-mobius} is nonzero.  Hence its indexing
set is nonempty, so there exists
$S\subseteq\mathcal A_\mu(\eta)$ with $\bigvee S=\eta$.
\end{proof}

\begin{corollary}
\label{cor:atomic-finite-difference}
For every $\mu\le\lambda$,
\begin{equation}\label{eq:atomic-finite-difference}
   a(\mu,\lambda)
      =\sum_{S\subseteq\mathcal A_\mu(\lambda)}
          (-1)^{|S|}
          m\!\left(\bigvee S,\lambda\right).
\end{equation}
\end{corollary}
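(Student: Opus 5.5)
We substitute the crosscut expression for the Möbius function (\cref{prop:crosscut-lower}) into the crown-supported form of Möbius inversion, \eqref{eq:mobius-atomic}, and then exchange the order of summation. The argument rests on one observation about atoms. For $\mu\le\eta\le\lambda$, an atom of $[\mu,\eta]$ is a dominant $\gamma$ with $\mu\lessdot\gamma\le\eta$. Since $\eta\le\lambda$, such a $\gamma$ also lies in $[\mu,\lambda]$, so
\[
 \mathcal A_\mu(\eta)=\{\gamma\in\mathcal A_\mu(\lambda):\gamma\le\eta\}.
\]
Covering is a relation in the whole dominant poset, so it does not depend on which interval we look at.

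First, write
\[
 a(\mu,\lambda)=\sum_{\eta\in[\mu,\lambda]}\Mu(\mu,\eta)\,m(\eta,\lambda)
 =\sum_{\eta\in[\mu,\lambda]}\ \sum_{\substack{S\subseteq\mathcal A_\mu(\eta)\\ \bigvee S=\eta}}(-1)^{|S|}\,m(\eta,\lambda).
\]
Next, reindex the double sum by $S$ alone. Consider pairs $(\eta,S)$ with $S\subseteq\mathcal A_\mu(\eta)$ and $\bigvee S=\eta$. We claim these correspond bijectively to subsets $S\subseteq\mathcal A_\mu(\lambda)$, via $S\mapsto(\bigvee S,S)$.

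In one direction, the pair $(\eta,S)$ gives $S\subseteq\mathcal A_\mu(\eta)\subseteq\mathcal A_\mu(\lambda)$.

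In the other direction, take $S\subseteq\mathcal A_\mu(\lambda)$. Its join $\bigvee S$ is formed in the dominant component, which is a lattice by Stembridge. We adopt the convention $\bigvee\varnothing=\mu$. Every element of $S$ is at most $\lambda$, so $\bigvee S\le\lambda$. Every element of $S$ is at least $\mu$, so $\bigvee S\ge\mu$. Hence $\eta:=\bigvee S$ lies in $[\mu,\lambda]$. Each $\gamma\in S$ satisfies $\gamma\le\eta$, so by the observation above, $S\subseteq\mathcal A_\mu(\eta)$.

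The two maps are mutually inverse, since in every such pair $\eta$ is determined as $\bigvee S$. Summing over $S$ therefore gives \eqref{eq:atomic-finite-difference}.

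The only point needing care is the join. We must check that the join of atoms computed in $[\mu,\eta]$, as used in the crosscut theorem, agrees with the join in the component of $\Pesos^+$ (equivalently, in $[\mu,\lambda]$). An upper bound of $S$ in the component that lies below $\eta$ is an upper bound in $[\mu,\eta]$. Conversely, the component join is the least upper bound overall; it lies below $\eta$ whenever $\eta$ is an upper bound, so it belongs to $[\mu,\eta]$ and is the join there too. The same remark covers the empty set, whose join is $\mu$ in both settings. With this settled, the proof is a formal exchange of finite sums.
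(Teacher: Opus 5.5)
Your proposal is correct and is the same argument as the paper's: insert the crosscut formula \eqref{eq:crosscut-mobius} into \eqref{eq:mobius-atomic} and regroup the terms by the join $\bigvee S$. You make explicit two points the paper leaves implicit: that $\mathcal A_\mu(\eta)=\{\gamma\in\mathcal A_\mu(\lambda):\gamma\le\eta\}$, and that joins computed in $[\mu,\eta]$ agree with joins in the dominant component.
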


\begin{proof}
Insert \eqref{eq:crosscut-mobius} into
\eqref{eq:mobius-atomic}.  Every subset $S\subseteq\mathcal A_\mu(\lambda)$
has a join in $[\mu,\lambda]$, and grouping the terms by this join gives
\eqref{eq:atomic-finite-difference}.
\end{proof}

Thus the crown determines a finite signed difference operator, while the
representation enters only through the multiplicities evaluated at the joins
of its atoms.  This avoids any need to identify intersections of distinct
weight spaces with joins in the dominant-weight lattice.

\begin{lemma}\label{lem:atoms-simple-roots}
Let $\gamma$ cover $\mu$ in the dominant-weight poset.  Then there is a simple
root $\alpha_i$ such that
\begin{equation}\label{eq:atom-dominant-cover}
     \gamma=\widehat{\mu+\alpha_i}.
\end{equation}
In particular, there are at most $\operatorname{rank}\Raiz$ atoms above any
dominant weight.
\end{lemma}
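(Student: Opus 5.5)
We prove \cref{lem:atoms-simple-roots} by squeezing $\gamma$ between $\mu$ and the dominant stabilization of a single simple-root step. Since $\mu<\gamma$ and both lie in the same root-lattice coset, the difference $\gamma-\mu$ is a nonzero element of $\Z_{\ge0}\RaizSimp$. We choose any index $i$ with a positive coefficient of $\alpha_i$ in $\gamma-\mu$, and the whole argument then follows from minimality of the stabilization and the covering relation.

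The steps are as follows.

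\begin{enumerate}[label=(\arabic*)]
\item With this choice of $i$ we have $\mu+\alpha_i\le\gamma$ in the full weight-lattice order.
\item Since $\gamma$ is a dominant majorant of $\mu+\alpha_i$, minimality of the dominant stabilization gives $\widehat{\mu+\alpha_i}\le\gamma$.
\item On the other hand $\mu<\mu+\alpha_i\le\widehat{\mu+\alpha_i}$. So $\widehat{\mu+\alpha_i}$ is a dominant weight strictly above $\mu$.
\item We conclude $\mu<\widehat{\mu+\alpha_i}\le\gamma$. Because $\gamma$ covers $\mu$, no dominant weight lies strictly between them, and hence $\gamma=\widehat{\mu+\alpha_i}$.
\end{enumerate}

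The existence and minimality of $\nu\mapsto\widehat\nu$ were established before \cref{lem:dominant-covering-join} via Stembridge's sub-meet-semilattice property, so step (2) is immediate.

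For the counting statement, the map $\gamma\mapsto i$ need not be well defined, since several indices $i$ may work. Instead we use the map $i\mapsto\widehat{\mu+\alpha_i}$ from $I$ to $\Pesos^+$. By the first part, every atom above $\mu$ lies in its image. Therefore the number of atoms is at most $|I|=\operatorname{rank}\Raiz$.

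There is no real obstacle; the only point needing care is that the stabilization is taken within the correct root-lattice coset. This holds because $\mu+\alpha_i$ lies in the coset of $\mu$.
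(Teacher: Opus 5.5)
Your proof is correct and follows essentially the same route as the paper: choose $i$ with a positive coefficient of $\alpha_i$ in $\gamma-\mu$, squeeze $\mu<\widehat{\mu+\alpha_i}\le\gamma$ using minimality of the least dominant majorant, and let the covering relation force equality. For the counting statement, your surjection $i\mapsto\widehat{\mu+\alpha_i}$ onto the atoms is just a slightly more explicit version of the paper's remark that there are only $\operatorname{rank}\Raiz$ simple roots.
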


\begin{proof}
Write
$\gamma-\mu=\sum_i c_i\alpha_i$ with $c_i\in\Z_{\ge0}$ and choose $i$ with
$c_i>0$.  Then
\[
      \mu<\mu+\alpha_i\le\gamma.
\]
Since $\gamma$ is dominant, minimality of the dominant covering gives
\[
      \mu<\widehat{\mu+\alpha_i}\le\gamma.
\]
The covering relation $\mu\lessdot\gamma$ therefore forces
\eqref{eq:atom-dominant-cover}.  There are only
$\operatorname{rank}\Raiz$ possible simple roots.
\end{proof}

We use the Boolean configuration
\begin{equation}\label{eq:boolean-envelope}
   B(\mu)
      =\left\{\mu+\sum_{\alpha\in S}\alpha:
                    S\subseteq\RaizSimp\right\}
\end{equation}
and its dominant covering
\[
   \widehat B(\mu)=\{\widehat\nu:\nu\in B(\mu)\}.
\]

\begin{proposition}\label{prop:crown-envelope}
For every dominant weight $\mu$,
\begin{equation}\label{eq:crown-envelope}
       \QO(\mu)\subseteq\widehat B(\mu).
\end{equation}
In particular, $|\QO(\mu)|\le2^{\operatorname{rank}\Raiz}$ and the crown has
at most $\operatorname{rank}\Raiz$ atoms.
\end{proposition}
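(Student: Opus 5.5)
The plan is to combine the crosscut description of the M\"obius function (\cref{prop:crosscut-lower}) with the description of atoms in \cref{lem:atoms-simple-roots}, and then pass joins through dominant stabilization using \cref{lem:dominant-covering-join}.

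Fix a dominant $\mu$ and let $\eta\in\QO(\mu)$, so $\Mu(\mu,\eta)\ne0$. By \cref{prop:crosscut-lower}, $\eta=\bigvee S$ for some set $S$ of atoms of $[\mu,\eta]$; each such atom covers $\mu$. If $S=\varnothing$, then $\eta=\mu=\widehat\mu$, since $\mu$ is dominant, and $\mu\in B(\mu)$ corresponds to the empty subset of $\RaizSimp$. Otherwise, by \cref{lem:atoms-simple-roots}, each $\gamma\in S$ has the form $\widehat{\mu+\alpha_{i(\gamma)}}$ for some simple root $\alpha_{i(\gamma)}$. Let $T=\{\alpha_{i(\gamma)}:\gamma\in S\}\subseteq\RaizSimp$; repetitions are harmless, because equal simple roots give equal atoms.

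Next I apply \cref{lem:dominant-covering-join} inductively over the elements of $S$, all of which lie in the coset of $\mu$. This gives
\[
\eta=\bigvee_{\gamma\in S}\widehat{\mu+\alpha_{i(\gamma)}}=\widehat{\textstyle\bigvee_{\alpha\in T}(\mu+\alpha)} .
\]
For the induction I use that the dominant join of two hats equals the hat of the full join, and that the hat of a full join of hats equals the hat of the full join, since $\nu\vee\widehat\xi\le\widehat{\nu\vee\xi}$ and minimality applies. By the coordinatewise formula \eqref{eq:full-root-join}, the full join $\bigvee_{\alpha\in T}(\mu+\alpha)$ equals $\mu+\sum_{\alpha\in T}\alpha$, because distinct simple roots occupy distinct coordinates. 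That weight lies in $B(\mu)$, so $\eta\in\widehat B(\mu)$.

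For the cardinality bound, $|\widehat B(\mu)|\le|B(\mu)|=2^{\operatorname{rank}\Raiz}$. The atoms of the crown are among the atoms covering $\mu$, which number at most $\operatorname{rank}\Raiz$ by \cref{lem:atoms-simple-roots}.

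The only delicate step is the iterated use of \cref{lem:dominant-covering-join}. That lemma is stated for two weights, and the induction needs the identity $\widehat{\widehat\nu\vee\xi}=\widehat{\nu\vee\xi}$ for the full join. This follows from the same two-sided minimality argument used in the proof of the lemma, so I expect no real obstacle.
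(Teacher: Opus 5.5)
Your proposal is correct and follows the paper's proof essentially step for step. You use the crosscut formula to write $\eta$ as a join of atoms, \cref{lem:atoms-simple-roots} to write each atom as $\widehat{\mu+\alpha_i}$, and iterated \cref{lem:dominant-covering-join} with the coordinatewise join to land in $\widehat B(\mu)$. The step you flag as delicate is immediate: \cref{lem:dominant-covering-join} holds for arbitrary weights of the coset, so applying it to $\nu_1\vee\nu_2$ and $\nu_3$ gives $\widehat{\nu_1\vee\nu_2}\vee\widehat{\nu_3}=\widehat{\nu_1\vee\nu_2\vee\nu_3}$ without the auxiliary identity $\widehat{\widehat\nu\vee\xi}=\widehat{\nu\vee\xi}$, which is nonetheless true.
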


\begin{proof}
Let $\eta\in\QO(\mu)$.  By \cref{prop:crosscut-lower},
$\eta=\bigvee_{\gamma\in S}\gamma$ for some set $S$ of atoms above $\mu$.  By
\cref{lem:atoms-simple-roots} every $\gamma\in S$ has the form
$\widehat{\mu+\alpha_i}$ for some $i\in I$; choose one such index for each
$\gamma\in S$ and let $T$ be the set of chosen indices.  Distinct atoms
receive distinct indices, since $i$ determines $\widehat{\mu+\alpha_i}$, so
the $\alpha_i$ with $i\in T$ are distinct simple roots and the coordinatewise
join \eqref{eq:full-root-join} of the weights $\mu+\alpha_i$, $i\in T$, is
$\mu+\sum_{i\in T}\alpha_i$.  Repeated use of
\cref{lem:dominant-covering-join} now gives
\[
   \eta
      =\bigvee_{i\in T}\widehat{\mu+\alpha_i}
      =\widehat{\mu+\sum_{i\in T}\alpha_i}
      \in\widehat B(\mu).
\]
The cardinality and atom bounds follow.
\end{proof}

It is often convenient to index the finite difference
\eqref{eq:atomic-finite-difference} by subsets of the simple roots rather than
by subsets of the atoms.  The two indexings agree, with no hypothesis on
$\mu$ and $\lambda$.

\begin{corollary}
\label{cor:atomic-finite-difference-simple}
Let $\mu\le\lambda$ be dominant and, for $S\subseteq I$, put
$\alpha_S=\sum_{i\in S}\alpha_i$, with the convention
$\widehat{\mu+\alpha_\varnothing}=\mu$.  Then
\begin{equation}\label{eq:atomic-finite-difference-simple}
   a(\mu,\lambda)
      =\sum_{S\subseteq I}(-1)^{|S|}
          m\bigl(\widehat{\mu+\alpha_S},\lambda\bigr).
\end{equation}
\end{corollary}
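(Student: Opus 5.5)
Starting from \cref{cor:atomic-finite-difference}, I would re-index the atom-subset sum \eqref{eq:atomic-finite-difference} by subsets of $I$ through the map $\pi\colon 2^I\to 2^{\mathcal A}$, where $\mathcal A=\mathcal A_\mu(\lambda)$. The idea is that $\pi$ will be a sign-compatible fibration, and that terms of \eqref{eq:atomic-finite-difference-simple} lying outside $[\mu,\lambda]$ vanish.

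First, the vanishing. If $\widehat{\mu+\alpha_S}\not\le\lambda$, then $m(\widehat{\mu+\alpha_S},\lambda)=0$, because a dominant weight not below $\lambda$ is not a weight of $V(\lambda)$. So \eqref{eq:atomic-finite-difference-simple} runs effectively over $S$ with $\widehat{\mu+\alpha_S}\le\lambda$. Since $\lambda$ is a dominant majorant, this condition is equivalent to $\mu+\alpha_S\le\lambda$. That in turn is equivalent to $\mu+\alpha_i\le\lambda$ for every $i\in S$.

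Next, split $I$ into three classes:
\begin{itemize}
\item $I_0=\{i:\mu+\alpha_i\not\le\lambda\}$;
\item $I_1=\{i\notin I_0:\widehat{\mu+\alpha_i}\text{ is an atom of }[\mu,\lambda]\}$;
\item $I_2$, the remaining indices.
\end{itemize}
By the previous step, only $S\subseteq I_1\cup I_2$ contribute.

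For $i\in I_2$, the weight $\gamma=\widehat{\mu+\alpha_i}$ lies in $(\mu,\lambda]$ but is not an atom. So some atom $\gamma'$ satisfies $\mu\lessdot\gamma'<\gamma$. By \cref{lem:atoms-simple-roots}, $\gamma'=\widehat{\mu+\alpha_j}$ with $j\in I_1$. The key claim is that $\gamma'\ge\mu+\alpha_i$ would force $\gamma'\ge\gamma$ by minimality, which is impossible. Hence the $\alpha_i$-coordinate of $\gamma'-\mu$ is zero.

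Rather than chase this, I would handle $I_2$ with a cleaner involution. Write $S=T\sqcup U$ with $T\subseteq I_1$ and $U\subseteq I_2$. By \cref{lem:dominant-covering-join},
\[
\widehat{\mu+\alpha_S}=\bigvee_{i\in S}\widehat{\mu+\alpha_i}.
\]
Fix $T$. The alternating sum over $U$ should collapse to the term $U=\varnothing$. To get this, I would show that for each $i\in I_2$ there is some $j\in I_1$ with $\widehat{\mu+\alpha_j}\le\widehat{\mu+\alpha_i}$.

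The cancellation then works as follows. Order $I_2$ and toggle the least $i\in I_2$ whose chosen atom index $j$ lies in $T$. Since that atom is below $\widehat{\mu+\alpha_i}$, toggling $i$ does not change the join... but this is wrong: the join adds $\widehat{\mu+\alpha_i}$, which may exceed the atom.

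The honest route is the reverse comparison. It suffices to show that for every subset $S\subseteq I_1\cup I_2$, the joins equal those of the atom subsets, with total signs matching the crosscut Möbius values \eqref{eq:crosscut-mobius}. Grouping both \eqref{eq:atomic-finite-difference-simple} and \eqref{eq:atomic-finite-difference} by the value $\eta$ of the join, this reduces to the identity
\[
\sum_{\substack{S\subseteq I\\ \widehat{\mu+\alpha_S}=\eta}}(-1)^{|S|}=\Mu(\mu,\eta)\qquad\text{for every }\eta\in[\mu,\lambda].
\]
This is the crosscut theorem again, now applied with the family $\{\widehat{\mu+\alpha_i}\}$ in place of the atoms.

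The cleanest justification uses the dual form of the crosscut theorem, for any crosscut-like generating set. Consider the lattice $[\mu,\lambda]$ and the closure map $\nu\mapsto\widehat{\nu}$ from the Boolean interval $[\mu,\mu+\alpha_I]$, in the full lattice, to $[\mu,\eta]$; this map is a join-homomorphism by \cref{lem:dominant-covering-join}. For the full Boolean lattice, $\sum_{S:\widehat{\mu+\alpha_S}\le\eta}(-1)^{|S|}$ is an alternating sum over the subsets of $\{i:\mu+\alpha_i\le\eta\}$. This equals $1$ if that set is empty, i.e.\ if $\eta=\mu$, and $0$ otherwise.

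Set $g(\eta)$ equal to the left side of the displayed identity. The previous paragraph gives $\sum_{\mu\le\eta'\le\eta}g(\eta')=\delta(\mu,\eta)$. Möbius uniqueness then forces $g(\eta)=\Mu(\mu,\eta)$. Inserting this into \eqref{eq:mobius-atomic} yields the corollary. The main point needing care is the equivalence $\widehat{\mu+\alpha_S}\le\eta\iff\mu+\alpha_i\le\eta$ for all $i\in S$, for dominant $\eta$. It follows from the minimality of $\widehat{\ }$ and the coordinatewise join, and I expect no real obstacle there.
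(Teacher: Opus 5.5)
Your final argument (the one you call the honest route) is correct and complete, but it is not the paper's proof. The key step is sound: for dominant $\eta\ge\mu$, $\widehat{\mu+\alpha_S}\le\eta$ holds iff $S\subseteq\supp(\eta-\mu)$. This gives $\sum_{\mu\le\eta'\le\eta}g(\eta')=\delta_{\mu\eta}$, hence $g=\Mu(\mu,\cdot)$. Substituting into \eqref{eq:mobius-atomic}, and using your opening observation that $m$ vanishes at dominant weights not below $\lambda$, finishes the proof.

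The paper argues differently. It starts from the atom-indexed formula of \cref{cor:atomic-finite-difference} and restricts to $K=\supp(\lambda-\mu)$. It then deletes redundant indices by a sign-reversing involution. Writing $c_r=\widehat{\mu+\alpha_r}$: whenever $c_i\le c_j$ with $i\ne j$, the terms with $j\in S$ cancel under $S\mapsto S\bigtriangleup\{i\}$, since adjoining $c_i$ to a join that already contains $c_j$ changes nothing. What remains is a set of indices in bijection with the atoms. This is exactly your abandoned involution with the roles reversed. You toggled the redundant index when the atom index was present, which does change the join. One must instead toggle the atom index when the redundant one is present.

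Your route is Rota's Galois-connection form of the crosscut theorem, applied to the pair $S\mapsto\widehat{\mu+\alpha_S}$, $\eta\mapsto\supp(\eta-\mu)$. It needs only the existence and minimality of the least dominant majorant. It does not use the crosscut theorem on atoms, \cref{lem:atoms-simple-roots}, or \cref{lem:dominant-covering-join}; your remark that the map is a join-homomorphism is never actually used. It also proves more: the identity $\Mu(\mu,\eta)=\sum_{S:\,\widehat{\mu+\alpha_S}=\eta}(-1)^{|S|}$ itself, from which \cref{prop:crown-envelope} follows immediately. The paper's route instead derives the statement from the atom-indexed formula, which it needs anyway for the path theorem.

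In a write-up, drop the $I_0/I_1/I_2$ split and the failed involution, which play no role. Also fix the phrase saying the closure map lands in $[\mu,\eta]$: in general $\widehat{\mu+\alpha_S}$ need not lie below $\eta$.
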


\begin{proof}
Write $c_i=\widehat{\mu+\alpha_i}$ and $K=\supp(\lambda-\mu)$.  As in the
proof of \cref{prop:crown-envelope}, repeated use of
\cref{lem:dominant-covering-join} gives
$\widehat{\mu+\alpha_S}=\bigvee_{i\in S}c_i$ for $S\ne\varnothing$.  If
$i\notin K$, then $\mu+\alpha_i\nleq\lambda$, so
$\widehat{\mu+\alpha_S}\ge\mu+\alpha_i$ fails to lie below $\lambda$ and the
corresponding term vanishes, for every $S\ni i$.  The sum may therefore be
restricted to $S\subseteq K$.  If $i\in K$, then $\lambda$ is a dominant
weight above $\mu+\alpha_i$, so $\mu<c_i\le\lambda$ by minimality of the least
dominant majorant.  Two facts about these weights are used below.  First,
every atom $\gamma$ of $[\mu,\lambda]$ equals $c_i$ for some $i\in K$: by
\cref{lem:atoms-simple-roots} one has $\gamma=\widehat{\mu+\alpha_i}=c_i$ for
some $i$, and then $\mu+\alpha_i\le\gamma\le\lambda$ forces $i\in K$.  Second,
every $c_j$ with $j\in K$ lies above an atom of $[\mu,\lambda]$, because
$c_j>\mu$ and the interval $[\mu,c_j]$ is finite.

Suppose $c_i\le c_j$ for some $i\ne j$ in $K$.  If $j\in S$, then
$\bigvee_{r\in S}c_r=\bigvee_{r\in S\cup\{i\}}c_r$, so
$S\mapsto S\bigtriangleup\{i\}$ is a sign-reversing involution of
$\{S\subseteq K:j\in S\}$ that preserves $\widehat{\mu+\alpha_S}$; hence the
terms with $j\in S$ cancel in pairs and the index $j$ may be deleted from $K$
without changing the sum.  Now choose one index $i$ for each atom of
$[\mu,\lambda]$ and let $K'$ be the set of chosen indices.  By the second
fact above, every $j\in K\setminus K'$ admits some $i\in K'$ with
$c_i\le c_j$, and $i\ne j$ because $j\notin K'$; this remains true after any
deletion, since no element of $K'$ is ever deleted.
Deleting the elements of $K\setminus K'$ one at a time leaves
\[
   \sum_{S\subseteq K'}(-1)^{|S|}
     m\Bigl(\bigvee_{i\in S}c_i,\lambda\Bigr).
\]
By the first fact above, $i\mapsto c_i$ is a bijection from $K'$ onto
$\mathcal A_\mu(\lambda)$, so this is \eqref{eq:atomic-finite-difference}.
\end{proof}

\begin{corollary}
\label{cor:all-simple-root-atoms}
Let $r=\operatorname{rank}\Raiz$.  The truncated crown
$\QO(\mu,\lambda)$ has $r$ atoms if and only if
\begin{equation}\label{eq:all-atoms-criterion}
      \lambda-\mu\ge\sum_{i=1}^{r}\alpha_i
      \qquad\text{and}\qquad
      \mu+\alpha_i\in\Pesos^+
      \quad(1\le i\le r).
\end{equation}
When these conditions hold, the atoms are precisely
$\mu+\alpha_1,\ldots,\mu+\alpha_r$.
\end{corollary}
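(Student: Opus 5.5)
We prove both directions using the facts established in the proof of \cref{cor:atomic-finite-difference-simple}. Write $c_i=\widehat{\mu+\alpha_i}$ and $K=\supp(\lambda-\mu)$. From that proof we take two facts. First, every atom of $[\mu,\lambda]$ equals $c_i$ for some $i\in K$. Second, $c_i\in[\mu,\lambda]$ exactly when $i\in K$. Since distinct atoms receive distinct indices, the number of atoms is at most $|K|\le r$.

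Suppose first that \eqref{eq:all-atoms-criterion} holds. The inequality $\lambda-\mu\ge\sum_i\alpha_i$ gives $K=I$. Dominance of $\mu+\alpha_i$ gives $c_i=\mu+\alpha_i$. We claim each $\mu+\alpha_i$ covers $\mu$. Any dominant $\gamma$ with $\mu<\gamma\le\mu+\alpha_i$ has $\gamma-\mu$ a nonzero element of $\Z_{\ge0}\RaizSimp$ bounded by $\alpha_i$, so $\gamma-\mu=\alpha_i$. These $r$ covers are pairwise distinct and all lie in $[\mu,\lambda]$, so they are $r$ atoms. By the bound above there are no others, which also gives the final sentence of the statement. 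Each atom $\gamma$ satisfies $\Mu(\mu,\gamma)=-1\ne0$, so it lies in the crown. Hence the atoms of $\QO(\mu,\lambda)$, namely its elements covering $\mu$, are exactly these $r$ weights. At this point I should check that ``atoms of the truncated crown'' means its minimal elements above $\mu$. Any element of the crown covering $\mu$ inside the crown is an atom of $[\mu,\lambda]$, because the atoms of the interval lie in the crown and every element above $\mu$ lies above some atom. So the two notions coincide.

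For the converse, suppose the truncated crown has $r$ atoms. By the remark just made, these are $r$ atoms of $[\mu,\lambda]$, so $[\mu,\lambda]$ has at least $r$ atoms. The injection into $K$ then forces $K=I$ and makes the assignment bijective: every $c_i$, $i\in I$, is an atom, and the $c_i$ are pairwise distinct. It remains to show that $\mu+\alpha_i$ is dominant and that $\lambda-\mu\ge\sum_i\alpha_i$.

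The main obstacle is showing that $c_i=\mu+\alpha_i$ for every $i$. Suppose instead that $c_i\ne\mu+\alpha_i$ for some $i$. Then $\mu+\alpha_i$ fails dominance at some $j\ne i$ with $a_{ji}<0$ and $\mu_j<-a_{ji}$. In that case $c_i\ge\mu+\alpha_i+\alpha_j$, since stabilization must raise the $j$-coordinate. I would like to conclude that $c_i\ge c_j$, which contradicts $c_i,c_j$ being distinct atoms. This is immediate if $\mu+\alpha_j\le c_i$, because minimality of the dominant majorant then gives $c_j\le c_i$. And $\mu+\alpha_j\le\mu+\alpha_i+\alpha_j\le c_i$ does hold, so $c_j\le c_i$ and the contradiction follows. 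Hence every $\mu+\alpha_i$ is dominant.

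Finally, with $c_i=\mu+\alpha_i\le\lambda$ for all $i$, the coordinatewise join gives $\mu+\sum_i\alpha_i\le\lambda$, which is the inequality in \eqref{eq:all-atoms-criterion}.
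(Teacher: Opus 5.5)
Your proposal is correct and follows essentially the same route as the paper. Forward, each $\mu+\alpha_i$ is a dominant cover of $\mu$ lying below $\lambda$. Conversely, $r$ atoms force the $\widehat{\mu+\alpha_i}$ to be $r$ distinct atoms, and a non-dominant $\mu+\alpha_i$ would give $\widehat{\mu+\alpha_j}\le\widehat{\mu+\alpha_i}$ for some $j\ne i$, a contradiction.

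The differences are minor. You locate $\alpha_j$ as the vertex carrying a negative label, whereas the paper argues that $\widehat{\mu+\alpha_i}-\mu$ cannot be a multiple of $\alpha_i$ alone. You also check explicitly that atoms of $\QO(\mu,\lambda)$ coincide with atoms of $[\mu,\lambda]$, which the paper leaves implicit.
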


\begin{proof}
If the conditions in \eqref{eq:all-atoms-criterion} hold, each
$\mu+\alpha_i$ is dominant, lies below $\lambda$, and covers $\mu$; hence all
$r$ occur as atoms.  Conversely, suppose there are $r$ atoms.  By
\cref{lem:atoms-simple-roots}, every atom is one of the $r$ dominant coverings
$\widehat{\mu+\alpha_i}$; hence all $r$ of these coverings must be distinct
atoms.  If $\mu+\alpha_i$ were not dominant for some $i$, then
$\widehat{\mu+\alpha_i}-\mu$ could not be a positive multiple of
$\alpha_i$ alone: indeed, dominance of $\mu+k\alpha_i$ for some $k\ge1$
would imply dominance of $\mu+\alpha_i$, since the off-diagonal Cartan
entries are nonpositive.  Thus
$\widehat{\mu+\alpha_i}-\mu$ involves a second simple root $\alpha_j$.
It follows that
$\mu+\alpha_j\le\widehat{\mu+\alpha_i}$ and hence
$\widehat{\mu+\alpha_j}\le\widehat{\mu+\alpha_i}$.  Since both are
atoms, they must coincide, contradicting distinctness.  Hence every
$\mu+\alpha_i$ is dominant.  Since each of these atoms lies below $\lambda$,
every simple-root coefficient of $\lambda-\mu$ is at least $1$, giving the
first condition.
\end{proof}

\begin{corollary}\label{cor:rank2-crowns}
If the root system has rank two, every crown $\QO(\mu,\lambda)$ is one of the
following three posets (\cref{fig:rank2-crowns}): a singleton, a two-element
chain, or a four-element diamond.  In the diamond case the M\"obius values from the bottom are
$1,-1,-1,1$.
\end{corollary}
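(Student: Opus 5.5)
The plan is to combine the crosscut formula (\cref{prop:crosscut-lower}) with the bound on atoms from \cref{lem:atoms-simple-roots}. In rank two there are at most two atoms above $\mu$ in $[\mu,\lambda]$. By \cref{prop:crosscut-lower}, every element of $\QO(\mu,\lambda)$ is a join of a subset of the atoms. So the crown is contained in $\{\mu\}\cup\mathcal A_\mu(\lambda)\cup\{\bigvee\mathcal A_\mu(\lambda)\}$, and I will compute $\Mu$ on this set directly from \eqref{eq:crosscut-mobius}.

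The case analysis runs on the number of atoms.
\begin{itemize}
\item If $\mathcal A_\mu(\lambda)=\varnothing$, then $\lambda=\mu$ and the crown is the singleton $\{\mu\}$.
\item If there is a single atom $\gamma$, then the only subsets are $\varnothing$ and $\{\gamma\}$. Formula \eqref{eq:crosscut-mobius} gives $\Mu(\mu,\mu)=1$ and $\Mu(\mu,\gamma)=-1$, and every other $\eta\in[\mu,\lambda]$ has $\Mu(\mu,\eta)=0$. Here one notes that the atoms of $[\mu,\eta]$ are exactly the atoms of $[\mu,\lambda]$ lying below $\eta$, so the indexing sets agree. The crown is therefore the two-element chain $\mu<\gamma$.
\item If there are two distinct atoms $\gamma_1,\gamma_2$, they are incomparable, since both cover $\mu$. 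Put $\eta_0=\gamma_1\vee\gamma_2$, which is strictly above both. Then \eqref{eq:crosscut-mobius} gives $\Mu(\mu,\gamma_i)=-1$ and $\Mu(\mu,\eta_0)=(-1)^2=1$. The value at $\eta_0$ is correct because $\{\gamma_1,\gamma_2\}$ is the only subset with join $\eta_0$: each singleton has join $\gamma_i\neq\eta_0$. All other $\eta$ get $\Mu=0$. The crown is the four-element diamond $\{\mu,\gamma_1,\gamma_2,\eta_0\}$ with Möbius values $1,-1,-1,1$.
\end{itemize}

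One subtlety needs care: $\Mu(\mu,\eta)$ is computed in the interval $[\mu,\eta]$, whose atoms are $\mathcal A_\mu(\lambda)\cap[\mu,\eta]$. For $\eta=\gamma_i$ the only atom below $\gamma_i$ is $\gamma_i$ itself, so the computation above is valid. For $\eta\ge\eta_0$ with $\eta\neq\eta_0$, no subset has join $\eta$, so these $\eta$ contribute nothing.

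The main obstacle is bookkeeping rather than mathematics. One must make sure the crown is described as a poset, including that $\eta_0\neq\gamma_i$ and that the joins are taken in the dominant component, which is a lattice by Stembridge. One must also confirm that the truncation at $\lambda$ does not remove elements: this holds because all joins of atoms of $[\mu,\lambda]$ lie in $[\mu,\lambda]$.
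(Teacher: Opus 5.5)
Your proposal is correct and follows the paper's proof essentially verbatim. It bounds the number of atoms by \cref{lem:atoms-simple-roots} and then applies the crosscut formula \eqref{eq:crosscut-mobius} case by case for zero, one, or two atoms, and your extra observation that the atoms of $[\mu,\eta]$ are exactly the atoms of $[\mu,\lambda]$ lying below $\eta$ is a correct detail the paper leaves implicit.
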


\begin{proof}
By \cref{lem:atoms-simple-roots}, the interval has at most two atoms.  With no
atoms the interval is the singleton $\{\mu\}$.  With one atom,
\cref{prop:crosscut-lower} shows that only $\mu$ and that atom can belong to
the crown.  With two
distinct atoms $\gamma_1,\gamma_2$, the only possible joins are
$\mu,\gamma_1,\gamma_2$, and $\gamma_1\vee\gamma_2$, and
\eqref{eq:crosscut-mobius} gives the displayed M\"obius values.
\end{proof}

\begin{figure}[ht]
\centering
\pgfmathsetmacro{\dvr}{.15}
\begin{tabular}{c@{\qquad\qquad}c@{\qquad\qquad}c}
\begin{tikzpicture}[scale=0.55]
  \draw[fill=white] (0,0) circle (\dvr cm)
       node[right=4pt]{$\scriptstyle\mu$};
\end{tikzpicture}
&
\begin{tikzpicture}[scale=0.55]
  \draw (0,0)--(0,2);
  \draw[fill=white] (0,0) circle (\dvr cm)
       node[right=4pt]{$\scriptstyle\mu$};
  \draw[fill=white] (0,2) circle (\dvr cm);
\end{tikzpicture}
&
\begin{tikzpicture}[scale=0.55]
  \draw (-1.42,0)--(0,1.42)--(1.42,0)--(0,-1.42)--cycle;
  \draw[fill=white] (-1.42,0) circle (\dvr cm);
  \draw[fill=white] (1.42,0) circle (\dvr cm);
  \draw[fill=white] (0,1.42) circle (\dvr cm);
  \draw[fill=white] (0,-1.42) circle (\dvr cm)
       node[right=4pt]{$\scriptstyle\mu$};
\end{tikzpicture}
\end{tabular}
\caption{The three possible M\"obius crowns in rank two, as proved in
\cref{cor:rank2-crowns}: a singleton, a two-element chain, and a diamond.}
\label{fig:rank2-crowns}
\end{figure}

\section{Highest-weight inversion and structural locality}\label{sec:reductions}

\subsection{A recursion for the atomic numbers}
\label{subsec:atomic-recursion}

The M\"obius formula \eqref{eq:mobius-atomic} computes $a(\mu,\lambda)$ from
the weight multiplicities of the single module $V(\lambda)$.  There is a
second, complementary way to compute the atomic numbers: a recursion in
$\lambda$ that involves no multiplicities at all, only atomic numbers at
strictly smaller dominant weights.  It follows from the character--girdle
formula of \cite{Schutzer}, in the same way that the multiplicity recursion
of that paper does, and it refines that recursion.

We first fix notation.  Let
\[
   \Raiz^{\rm ns}=\Raiz^+\setminus\RaizSimp
\]
be the set of \emph{nonsimple} positive roots, and for $S\subseteq\Raiz^{\rm ns}$
write $\Sigma S=\sum_{\alpha\in S}\alpha$.  For $\xi\in\RaizReti$ put
\[
   c_\xi=\#\{S\subseteq\Raiz^{\rm ns}:\Sigma S=\xi,\ |S|\text{ even}\}
        -\#\{S\subseteq\Raiz^{\rm ns}:\Sigma S=\xi,\ |S|\text{ odd}\},
\]
and let
\[
   \mathcal F=\{\xi\in\RaizReti:\xi\ne0,\ c_\xi\ne0\},
   \qquad
   \mathcal F_0=\mathcal F\cup\{0\}.
\]
Only $S=\varnothing$ has $\Sigma S=0$, so $c_0=1$, and in the group algebra
\begin{equation}\label{eq:ns-generating}
   \prod_{\alpha\in\Raiz^{\rm ns}}\bigl(1-e^{-\alpha}\bigr)
      =\sum_{\xi\in\mathcal F_0}c_\xi e^{-\xi}.
\end{equation}
Both $\mathcal F$ and the integers $c_\xi$ depend only on the root system;
they are computed once and for all.  The cardinality of $\mathcal F$, that is,
the number of terms actually occurring in \eqref{eq:ns-generating}, is
tabulated in \cite[Table~1]{Schutzer}; because of cancellation it may be
strictly smaller than the number of distinct sums $\Sigma S$.

Next we recall the signed characters of \cite{Schutzer}.  A weight $\eta$
is \emph{regular} if $\langle\eta,\alpha^\vee\rangle\ne0$ for every
root $\alpha$, and \emph{singular} otherwise.  For $\nu\in\Pesos$
such that $\nu+\rho$ is regular, let $w_\nu\in\Weyl$ be the unique element with
$w_\nu(\nu+\rho)$ dominant, and set
\[
   \nu^\star=w_\nu(\nu+\rho)-\rho\in\Pesos^+,
   \qquad
   \widetilde\chi_\nu=(-1)^{\ell(w_\nu)}\chr V(\nu^\star);
\]
if $\nu+\rho$ is singular, set $\widetilde\chi_\nu=0$.  Thus
$\widetilde\chi_\nu=\chr V(\nu)$ when $\nu$ is dominant.  The operation
$\nu\mapsto\nu^\star$ selects the dominant representative for the
$\rho$-shifted (dot) action $w\mathbin{\cdot}\nu=w(\nu+\rho)-\rho$.
It differs from the least dominant majorant $\widehat\nu$ of
\cref{sec:dominant-poset}.  Since $w\nu-\nu\in\RaizReti$ for every $w\in\Weyl$,
the weight $\nu^\star$ lies in the same $\RaizReti$-coset as $\nu$.

The following is \cite[Theorem~2.1]{Schutzer}.

\begin{theorem}
\label{thm:schutzer-girdle}
For every $\lambda\in\Pesos^+$,
\begin{equation}\label{eq:schutzer-girdle}
   \Theta_\lambda=\sum_{\xi\in\mathcal F_0}c_\xi\,\widetilde\chi_{\lambda-\xi}.
\end{equation}
Moreover, if $\widetilde\chi_{\lambda-\xi}\ne0$ for some $\xi\in\mathcal F$,
then $(\lambda-\xi)^\star<\lambda$.
\end{theorem}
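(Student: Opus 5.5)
Both assertions follow from the Weyl character formula, rewritten through the alternating-sum form of the Weyl denominator. Write $A_\nu=\sum_{w\in\Weyl}(-1)^{\ell(w)}e^{w(\nu+\rho)}$. Then Weyl's formula says $\chr V(\nu)=A_\nu/A_0$ for dominant $\nu$. The definition of $\widetilde\chi_\nu$ makes $\widetilde\chi_\nu=A_\nu/A_0$ hold for \emph{every} $\nu\in\Pesos$: $A_\nu$ vanishes when $\nu+\rho$ is singular, and $A_\nu=(-1)^{\ell(w_\nu)}A_{\nu^\star}$ otherwise.

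With this, \eqref{eq:schutzer-girdle} is equivalent to the identity
\[
A_0\,\Theta_\lambda=\sum_{\xi\in\mathcal F_0}c_\xi A_{\lambda-\xi}
\]
in $\Z[\Pesos]$. Expanding the right side by \eqref{eq:ns-generating} gives $J\bigl(e^{\lambda+\rho}\prod_{\alpha\in\Raiz^{\rm ns}}(1-e^{-\alpha})\bigr)$, where $J$ is the antisymmetrizer $J(e^\nu)=\sum_w(-1)^{\ell(w)}e^{w\nu}$. Since $A_0=e^{\rho}\prod_{\alpha\in\Raiz^+}(1-e^{-\alpha})$, it suffices to show
\[
J\Bigl(e^{\lambda+\rho}\prod_{\alpha\in\Raiz^{\rm ns}}(1-e^{-\alpha})\Bigr)=e^{\rho}\prod_{\alpha\in\Raiz^+}(1-e^{-\alpha})\,\Theta_\lambda .
\]
Equivalently, $\Theta_\lambda$ should equal $J(\cdots)/A_0$.

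The route I would take is to prove this by analysing $\Theta_\lambda$ directly. The weight set $\Pesos(\lambda)$ is the set of lattice points in the convex hull of $\Weyl\lambda$ lying in $\lambda+\RaizReti$. I would use a Brion/Lawrence-type decomposition of the indicator function of this saturated polytope into vertex cones. The vertex cone at $w\lambda$ is generated by $-w\alpha_i$, and the cone at $\lambda$ is the lattice cone spanned by $-\Raiz^+$ intersected with the coset. Its generating function is $e^\lambda\prod_{\alpha\in\Raiz^+}(1-e^{-\alpha})^{-1}\cdot(\text{correction})$, and the correction accounts for the fact that only the simple directions are edges.

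The delicate point is the choice of cone. The naive cone $e^\lambda\prod_{i}(1-e^{-\alpha_i})^{-1}$ is the tangent cone at $\lambda$ only when $\lambda$ is regular. It has to be replaced by the sum over $\Weyl$ of $(-1)^{\ell(w)}$ times shifted terms, as in the Kostant-type proof of Weyl's formula. The factor $\prod_{\rm ns}(1-e^{-\alpha})$ cancels the nonsimple denominators, leaving $e^{\lambda+\rho}\prod_{\rm simple}\ldots$ after antisymmetrization. Concretely, I expect to check that $\Theta_\lambda\cdot A_0$, being anti-invariant, is determined by its coefficients at strictly dominant exponents $\nu+\rho$. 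I would then compute those coefficients as a signed count over subsets of $\Raiz^+$ and compare with the right side term by term.

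The main obstacle is this coefficient comparison. The left side's coefficient at $e^{\nu+\rho}$ is an alternating sum over $w\in\Weyl$ and $S\subseteq\Raiz^+$ of indicators $[\,\nu+\rho-w\rho+\ldots\in\Pesos(\lambda)]$. Showing that it collapses to $\sum_{S\subseteq\Raiz^{\rm ns}}(-1)^{|S|}[\nu=\lambda-\Sigma S]$ (after straightening) needs a sign-reversing involution. I would build that involution from the simple roots, pairing $S$ with $S\triangle\{\alpha_i\}$ along $\alpha_i$-strings, which saturation keeps inside $\Pesos(\lambda)$.

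The ``moreover'' part is easier. If $\xi\in\mathcal F$, then $\xi\ne0$ is a nonnegative combination of roots, so $\lambda-\xi<\lambda$. For any $w$, the weight $(\lambda-\xi)^\star=w(\lambda-\xi+\rho)-\rho$ is dominant, and it is bounded by the dominant representative: $w(\lambda-\xi+\rho)\le\overline{\lambda-\xi+\rho}$. The standard fact that the dominant element of a $\Weyl$-orbit of $\lambda+\rho-\xi$ lies below $\lambda+\rho$ (dominant $\lambda+\rho$, $\xi\ge0$) then gives $(\lambda-\xi)^\star\le\lambda$. Equality would force $w(\lambda+\rho-\xi)=\lambda+\rho$. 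Comparing norms, this gives $\|\lambda+\rho-\xi\|=\|\lambda+\rho\|$, which is impossible because $\langle\lambda+\rho,\xi\rangle>0$ forces $\|\lambda+\rho-\xi\|<\|\lambda+\rho\|$ unless $\|\xi\|$ compensates. I would settle that last case by the strict orbit inequality for strictly dominant $\lambda+\rho$.
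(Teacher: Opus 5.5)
The paper does not prove this statement; it quotes it from \cite[Theorem~2.1]{Schutzer}. Your proposal therefore has to stand on its own, and as written it has a genuine gap at its core.

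Your reduction is correct. Because $\widetilde\chi_\nu=A_\nu/A_0$ for every $\nu$ and $wA_0=(-1)^{\ell(w)}A_0$, the identity is equivalent to
\[
  \Theta_\lambda=\sum_{w\in\Weyl}w\Bigl(e^{\lambda}\prod_{i\in I}(1-e^{-\alpha_i})^{-1}\Bigr)
\]
as rational functions. But this identity is the entire theorem, and you never prove it. You correctly observe that $e^\lambda\prod_i(1-e^{-\alpha_i})^{-1}$ is the vertex-cone series only for regular $\lambda$. You then leave the singular case to an unspecified ``sum over $\Weyl$ of $(-1)^{\ell(w)}$ times shifted terms.'' The paper genuinely needs that case, for example in the wall computations of \cref{prop:B2-direct}. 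Your fallback coefficient comparison is not an argument either: the sign-reversing involution is never defined, and nothing you say explains why the contributions of $w\ne1$ cancel.

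One uniform way to close the gap is Brion's theorem applied to the rational simple polytope $P_\epsilon=\operatorname{conv}\bigl(\Weyl(\lambda+\epsilon)\bigr)$, with $\epsilon=\rho/N$ and $N$ large, so that $0<\langle\epsilon,\omega_i^\vee\rangle<1$ for all $i$.
\begin{itemize}
\item For $x\in\lambda+\RaizReti$ the numbers $\langle\lambda-wx,\omega_i^\vee\rangle$ are integers. Hence $P_\epsilon\cap(\lambda+\RaizReti)=\operatorname{conv}(\Weyl\lambda)\cap(\lambda+\RaizReti)=\Pesos(\lambda)$.
\item Since $\RaizSimp$ is a $\Z$-basis of $\RaizReti$, the tangent cone $w(\lambda+\epsilon-\mathbb R_{\ge0}\RaizSimp)$ meets $\lambda+\RaizReti$ exactly in $w(\lambda-\Z_{\ge0}\RaizSimp)$, whether or not $w$ fixes $\lambda$.
\end{itemize}
This yields the displayed identity for every dominant $\lambda$.

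The ``moreover'' part also has two holes.

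First, the ``standard fact'' you invoke is false for general $\xi\ge0$. In type $\typeA_1$ with $\lambda=0$ and $\xi=3\alpha_1$, the dominant conjugate of $\lambda+\rho-\xi$ is $5\omega_1\not\le\omega_1$. You must use that $\xi\in\mathcal F$ equals $\Sigma S$ for some nonempty $S\subseteq\Raiz^{\rm ns}$. Since $\chr V(\rho)=\sum_{S'\subseteq\Raiz^+}e^{\rho-\Sigma S'}$, the weight $\rho-\xi$ lies in $\Pesos(\rho)$. So $\lambda+\rho-\xi$ is a weight of $V(\lambda)\otimes V(\rho)$, and its dominant conjugate is at most $\lambda+\rho$.

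Second, strictness is not settled. Neither the norm comparison nor the strict orbit inequality excludes $\xi=(\lambda+\rho)-u(\lambda+\rho)$ with $u\ne1$, which is precisely the equality case. It is again the nonsimple hypothesis that rules it out:
\begin{itemize}
\item Suppose $w(\lambda+\rho-\xi)=\lambda+\rho$. Then $w\lambda\le\lambda$ and $w(\rho-\xi)\le\rho$ force $\rho-\xi=w^{-1}\rho$.
\item Since $w^{-1}\rho$ has multiplicity one in $V(\rho)$, the only $S'\subseteq\Raiz^+$ with $\rho-\Sigma S'=w^{-1}\rho$ is $\{\alpha\in\Raiz^+:w\alpha<0\}$.
\item For $w\ne1$ this set contains a simple root, contradicting $S\subseteq\Raiz^{\rm ns}$; and $w=1$ gives $\xi=0$.
\end{itemize}
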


We transfer \eqref{eq:schutzer-girdle} to the atomic numbers by taking
coordinates in the girdle basis of \cref{prop:integral-expansion}.  For
$\mu\in\Pesos^+$ let $A_\mu:\Z[\Pesos]^\Weyl\to\Z$ be the coordinate
functional attached to $\Theta_\mu$, so that
$A_\mu(\chr V(\lambda))=a(\mu,\lambda)$ and
$A_\mu(\Theta_\lambda)=\delta_{\mu\lambda}$.  Extend the atomic numbers to a
signed function of an arbitrary second argument by setting, for
$\mu\in\Pesos^+$ and $\nu\in\Pesos$,
\begin{equation}\label{eq:signed-atomic}
   \widetilde a(\mu,\nu)=A_\mu\bigl(\widetilde\chi_\nu\bigr)
      =\begin{cases}
        (-1)^{\ell(w_\nu)}\,a(\mu,\nu^\star),&\nu+\rho\text{ regular},\\[1mm]
        0,&\nu+\rho\text{ singular},
      \end{cases}
\end{equation}
and define $\widetilde m(\mu,\nu)$ in the same way from
$m(\mu,\cdot)$.  For dominant $\nu$ one has
$\widetilde a(\mu,\nu)=a(\mu,\nu)$ and $\widetilde m(\mu,\nu)=m(\mu,\nu)$.

\begin{theorem}\label{thm:atomic-recursion}
For all $\mu,\lambda\in\Pesos^+$,
\begin{equation}\label{eq:atomic-recursion}
   \sum_{\xi\in\mathcal F_0}c_\xi\,\widetilde a(\mu,\lambda-\xi)
      =\delta_{\mu\lambda};
\end{equation}
equivalently,
\begin{equation}\label{eq:atomic-recursion-solved}
   a(\mu,\lambda)
     =\delta_{\mu\lambda}
       -\sum_{\xi\in\mathcal F}c_\xi\,\widetilde a(\mu,\lambda-\xi).
\end{equation}
\end{theorem}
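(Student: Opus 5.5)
The plan is to apply the coordinate functional $A_\mu$ of \cref{prop:integral-expansion} to both sides of the character--girdle identity \eqref{eq:schutzer-girdle} of \cref{thm:schutzer-girdle}. Since $\{\Theta_\eta:\eta\in\Pesos^+\}$ is a $\Z$-basis of $\Z[\Pesos]^\Weyl$, the functional $A_\mu$ is well defined and $\Z$-linear. The left side gives $A_\mu(\Theta_\lambda)=\delta_{\mu\lambda}$.

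On the right side, note that each $\widetilde\chi_{\lambda-\xi}$ lies in $\Z[\Pesos]^\Weyl$: it is either $0$ or $\pm\chr V((\lambda-\xi)^\star)$. The sum over $\mathcal F_0$ is finite, so linearity gives
\[
\sum_{\xi\in\mathcal F_0}c_\xi A_\mu\bigl(\widetilde\chi_{\lambda-\xi}\bigr).
\]
The one point to check is that $A_\mu(\widetilde\chi_\nu)$ agrees with the case formula \eqref{eq:signed-atomic}:
\begin{itemize}
\item if $\nu+\rho$ is singular, then $\widetilde\chi_\nu=0$ and $A_\mu(0)=0$;
\item otherwise, $A_\mu\bigl((-1)^{\ell(w_\nu)}\chr V(\nu^\star)\bigr)=(-1)^{\ell(w_\nu)}a(\mu,\nu^\star)$ by linearity and the defining property $A_\mu(\chr V(\eta))=a(\mu,\eta)$ for dominant $\eta$.
\end{itemize}
This yields \eqref{eq:atomic-recursion}.

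For the solved form \eqref{eq:atomic-recursion-solved}, isolate the term $\xi=0$. We have $c_0=1$, and since $\lambda$ is dominant, $\lambda+\rho$ is regular dominant with $w_\lambda=1$ and $\lambda^\star=\lambda$. So this term equals $\widetilde a(\mu,\lambda)=a(\mu,\lambda)$. Moving the remaining terms over $\mathcal F=\mathcal F_0\setminus\{0\}$ to the other side gives the stated equation.

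Finally, we remark on why this is a genuine recursion. By the second assertion of \cref{thm:schutzer-girdle}, every nonzero term with $\xi\in\mathcal F$ involves $a(\mu,(\lambda-\xi)^\star)$ with $(\lambda-\xi)^\star<\lambda$. The recursion therefore terminates on the finite set of dominant weights below $\lambda$. No step is a real obstacle. The only care needed is to justify that $A_\mu$ applied to signed characters is given by \eqref{eq:signed-atomic}, which is immediate from $\Z$-linearity.
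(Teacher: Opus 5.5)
Your proposal is correct and follows the paper's proof exactly: apply the $\Z$-linear coordinate functional $A_\mu$ to Sch\"utzer's identity \eqref{eq:schutzer-girdle}, using $A_\mu(\Theta_\lambda)=\delta_{\mu\lambda}$ and \eqref{eq:signed-atomic} term by term. Your isolation of the $\xi=0$ term, via $c_0=1$, $w_\lambda=1$ and $\lambda^\star=\lambda$, correctly supplies the equivalence with \eqref{eq:atomic-recursion-solved}, which the paper leaves implicit.
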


\begin{proof}
All the terms of \eqref{eq:schutzer-girdle} lie in $\Z[\Pesos]^\Weyl$, and
$\mathcal F_0$ is finite.  Apply the $\Z$-linear functional $A_\mu$ to
\eqref{eq:schutzer-girdle}.  The left-hand side
gives $A_\mu(\Theta_\lambda)=\delta_{\mu\lambda}$ and the right-hand side
gives $\sum_\xi c_\xi\widetilde a(\mu,\lambda-\xi)$ by
\eqref{eq:signed-atomic}.
\end{proof}

\begin{corollary}\label{cor:atomic-recursion-effective}
Every nonzero summand of the sum on the right of
\eqref{eq:atomic-recursion-solved} is of the form $\pm c_\xi\,a(\mu,\eta)$
with $\eta\in\Pesos^+$, $\eta<\lambda$, and $\eta$ in the $\RaizReti$-coset of
$\lambda$.  Consequently
\eqref{eq:atomic-recursion-solved} determines the entire atomic table by
induction along the dominant-weight order, the base case being
\[
   a(\mu,\lambda)=\delta_{\mu\lambda}
   \qquad\text{for }\lambda\text{ minimal in its dominant component.}
\]
\end{corollary}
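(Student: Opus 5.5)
The proof reads \eqref{eq:atomic-recursion-solved} term by term and then inducts on the dominant-weight order. Fix $\mu\in\Pesos^+$ and $\xi\in\mathcal F$, and suppose $\widetilde a(\mu,\lambda-\xi)\ne0$. By \eqref{eq:signed-atomic}, $(\lambda-\xi)+\rho$ must then be regular. In that case
\[
   \widetilde a(\mu,\lambda-\xi)=(-1)^{\ell(w_{\lambda-\xi})}\,a\bigl(\mu,(\lambda-\xi)^\star\bigr).
\]
Put $\eta=(\lambda-\xi)^\star$. Then $\eta\in\Pesos^+$ by construction. Since $\widetilde\chi_{\lambda-\xi}\ne0$ and $\xi\in\mathcal F$, the second assertion of \cref{thm:schutzer-girdle} gives $\eta<\lambda$. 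For the coset condition, note that $\lambda-\xi$ lies in the coset of $\lambda$ because $\xi\in\RaizReti$, and the dot action preserves $\RaizReti$-cosets. So every nonzero summand has the form $\pm c_\xi\,a(\mu,\eta)$ with $\eta$ dominant, $\eta<\lambda$, and $\eta$ in the coset of $\lambda$.

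For the determination claim, fix $\mu$ and induct on $\lambda$ within one dominant component. This is legitimate because every interval $[\nu,\lambda]$ is finite and each component has a minimum, so the set of dominant weights below $\lambda$ in its coset is finite. Suppose $\lambda$ is minimal in its dominant component. Then no dominant $\eta<\lambda$ exists in the coset, and by the first part every summand in \eqref{eq:atomic-recursion-solved} vanishes. Hence $a(\mu,\lambda)=\delta_{\mu\lambda}$.

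For the inductive step, assume $a(\mu,\eta)$ is known for all dominant $\eta<\lambda$ in the coset. The right side of \eqref{eq:atomic-recursion-solved} uses only these values, the known constants $c_\xi$, and the signs $(-1)^{\ell(w_{\lambda-\xi})}$. Therefore it determines $a(\mu,\lambda)$.

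If $\mu$ lies in a different coset from $\lambda$, the recursion still holds, but the induction gives $0$ throughout, consistent with \cref{prop:integral-expansion}. Well-foundedness follows from the finiteness of lower intervals within a component. No step is a real obstacle, since the substantive input, the strict inequality $(\lambda-\xi)^\star<\lambda$, is already supplied by \cref{thm:schutzer-girdle}.
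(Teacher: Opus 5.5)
Your proof is correct and follows the same route as the paper. The first assertion comes from the triangularity part of \cref{thm:schutzer-girdle} together with the fact that the dot action preserves $\RaizReti$-cosets. The base case holds because no dominant weight lies strictly below a minimal $\lambda$. You also make explicit the well-foundedness of the induction via finiteness of lower intervals, which the paper leaves implicit.
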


\begin{proof}
The first assertion is the triangularity statement of
\cref{thm:schutzer-girdle} together with the remark that $\nu^\star$ and $\nu$
lie in the same $\RaizReti$-coset.  If $\lambda$ is minimal in its component
there is no dominant weight strictly below it, so every $\widetilde\chi_{\lambda-\xi}$
with $\xi\in\mathcal F$ vanishes; equivalently, $\chr V(\lambda)=\Theta_\lambda$,
which is the familiar statement that $V(\lambda)$ is trivial or minuscule.
\end{proof}

The next corollary shows that \eqref{eq:atomic-recursion} refines the
multiplicity recursion of \cite[Corollary~3.1]{Schutzer}: summing the atomic
recursion over a principal dominant order filter returns it exactly.

\begin{corollary}
\label{cor:multiplicity-recursion}
For all $\nu,\lambda\in\Pesos^+$,
\[
   \sum_{\xi\in\mathcal F_0}c_\xi\,\widetilde m(\nu,\lambda-\xi)
      =\begin{cases}1,&\nu\le\lambda,\\0,&\text{otherwise.}\end{cases}
\]
\end{corollary}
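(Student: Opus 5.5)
We plan to obtain the identity by summing the atomic recursion of \cref{thm:atomic-recursion} over the principal dominant order filter above $\nu$. By \eqref{eq:m-from-a}, for every dominant $\eta$,
\[
   m(\nu,\eta)=\sum_{\mu\in\Pesos^+,\ \mu\ge\nu}a(\mu,\eta).
\]
We first extend this to the signed functions. If $\eta+\rho$ is singular, both sides vanish. Otherwise, multiply the identity for $\eta^\star$ by $(-1)^{\ell(w_\eta)}$ and compare with the definitions of $\widetilde m$ and $\widetilde a$ in \eqref{eq:signed-atomic}. This gives
\[
   \widetilde m(\nu,\eta)=\sum_{\mu\ge\nu}\widetilde a(\mu,\eta)
\]
for every $\eta\in\Pesos$. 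The sum is finite because $a(\mu,\eta^\star)=0$ unless $\mu\le\eta^\star$ (\cref{prop:integral-expansion}), and intervals are finite.

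Next we take $\eta=\lambda-\xi$ and form $\sum_{\xi\in\mathcal F_0}c_\xi\widetilde m(\nu,\lambda-\xi)$. Expanding each term as above gives a double sum over $\xi\in\mathcal F_0$ and over dominant $\mu\ge\nu$. Both index sets are finite after restricting to nonzero terms, since every $\mu$ that occurs satisfies $\mu\le(\lambda-\xi)^\star\le\lambda$. We may therefore interchange the sums:
\[
   \sum_{\mu\in\Pesos^+,\ \mu\ge\nu}\ \sum_{\xi\in\mathcal F_0}c_\xi\,\widetilde a(\mu,\lambda-\xi).
\]
By \eqref{eq:atomic-recursion}, the inner sum equals $\delta_{\mu\lambda}$. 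The outer sum is therefore $\sum_{\mu\ge\nu}\delta_{\mu\lambda}$, which is $1$ when $\nu\le\lambda$ and $0$ otherwise.

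The only point needing care is the first step, namely that the order-filter summation is compatible with the sign-and-dot-action twist in the definition of $\widetilde m$ and $\widetilde a$. Both functions are defined by the same rule from the same $\nu^\star$ and $w_\nu$, so the compatibility is just linearity. Finiteness, which justifies the interchange of summations, comes from \cref{thm:schutzer-girdle} and the triangularity in \cref{prop:integral-expansion}.

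Alternatively, and as a check, we can apply to \eqref{eq:schutzer-girdle} the functional ``coefficient of $e^\nu$''. This sends $\widetilde\chi_\eta$ to $\widetilde m(\nu,\eta)$ and $\Theta_\lambda$ to $[\nu\le\lambda]$ by the saturation property, which gives the claim directly.
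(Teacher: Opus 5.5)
Your proof is correct and follows the paper's argument. You sum \eqref{eq:atomic-recursion} over the dominant $\mu\ge\nu$, apply \eqref{eq:m-from-a} after extending it to the signed functions $\widetilde m,\widetilde a$ (a step the paper leaves implicit), and interchange the finite sums. Your alternative check, which takes the coefficient of $e^\nu$ in \eqref{eq:schutzer-girdle} and uses saturation for $\Theta_\lambda$, is also valid and bypasses the atomic recursion entirely.
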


\begin{proof}
Fix $\nu$ and sum \eqref{eq:atomic-recursion} over all $\mu\in\Pesos^+$ with
$\mu\ge\nu$.  For each $\xi$ only finitely many terms are nonzero, and by
\eqref{eq:m-from-a},
\[
   \sum_{\mu\ge\nu}a(\mu,\eta)=m(\nu,\eta)
   \qquad(\eta\in\Pesos^+),
\]
since $a(\mu,\eta)=0$ unless $\mu\le\eta$.  Hence the left-hand side becomes
$\sum_\xi c_\xi\widetilde m(\nu,\lambda-\xi)$, while
$\sum_{\mu\ge\nu}\delta_{\mu\lambda}$ is $1$ if $\nu\le\lambda$ and $0$
otherwise.
\end{proof}

\begin{remark}
\label{rem:atomic-vs-multiplicity-recursion}
The two recursions are driven by the same finite-difference operator; they
differ only in the inhomogeneous term.  For multiplicities that term is the
indicator function of $\Pesos(\lambda)$, that is, the girdle itself, which is
already a nontrivial object.  For atomic numbers it is a Kronecker delta.  In
this sense the atomic table, and not the multiplicity table, is the natural
fundamental solution of the operator
$\prod_{\alpha\in\Raiz^{\rm ns}}(1-T_\alpha)$, where $T_\alpha$ denotes the
shift $\nu\mapsto\nu-\alpha$ in the second variable.
\end{remark}

\begin{remark}\label{rem:kappa-inverse}
The recursion has a compact form in the incidence algebra $I(P;\Z)$ of
\cref{subsec:incidence-framework}.  Since $\{\Theta_\mu\}$ and
$\{\chr V(\mu)\}$ are two $\Z$-bases of $\Z[\Pesos]^\Weyl$, there are unique
integers $\kappa(\eta,\lambda)$, vanishing unless $\eta\le\lambda$, with
\[
   \Theta_\lambda=\sum_{\eta\le\lambda}\kappa(\eta,\lambda)\chr V(\eta),
   \qquad \kappa(\lambda,\lambda)=1 ,
\]
so $\kappa\in I(P;\Z)$; and \cref{thm:schutzer-girdle} evaluates it:
\begin{equation}\label{eq:kappa-explicit}
   \kappa(\eta,\lambda)
     =\sum_{\substack{\xi\in\mathcal F_0\\
                      \lambda-\xi+\rho\ \text{regular}\\
                      (\lambda-\xi)^\star=\eta}}
        (-1)^{\ell(w_{\lambda-\xi})}c_\xi .
\end{equation}
Substituting one expansion into the other gives $a*\kappa=\delta$, which is
exactly \eqref{eq:atomic-recursion}.  Since $\kappa(\lambda,\lambda)=1$ for
every $\lambda$, the element $\kappa$ is a unit of $I(P;\Z)$; hence
$a*\kappa=\delta$ forces $a=\kappa^{-1}$, and therefore also
$\kappa*a=\delta$.  Combining with \eqref{eq:global-incidence-identity}
yields $m*\kappa=\zeta*a*\kappa=\zeta$, which is
\cref{cor:multiplicity-recursion}.  Thus the three identities
\[
   a=\Mu*m,
   \qquad
   a*\kappa=\delta,
   \qquad
   m*\kappa=\zeta
\]
are one statement: \emph{the atomic function $a$ is a unit of $I(P;\Z)$, and
\cite[Theorem~2.1]{Schutzer} is an explicit formula for $a^{-1}$}, namely
\eqref{eq:kappa-explicit}.  Equivalently $\kappa=m^{-1}*\zeta$.  In the stable
chamber the recursion represented by this inverse relation becomes
translation-invariant; its inverse kernel is the nonsimple-root partition
function $p_{\rm ns}$, as made precise in \cref{sec:stabilization}.
\end{remark}

\begin{example}\label{ex:A2-recursion}
Here $\Raiz^{\rm ns}=\{\alpha_1+\alpha_2\}=\{\rho\}$, so
$\mathcal F=\{\rho\}$ and $c_\rho=-1$.  Since
$(\lambda-\rho)+\rho=\lambda$, the shifted term is nonzero exactly when
$\lambda$ is regular, and then $(\lambda-\rho)^\star=\lambda-\rho$ with sign
$+1$.  Writing $\lambda=m_1\omega_1+m_2\omega_2$,
\eqref{eq:atomic-recursion-solved} becomes
\[
   a(\mu,\lambda)=
   \begin{cases}
     \delta_{\mu\lambda}+a(\mu,\lambda-\alpha_1-\alpha_2),&m_1,m_2>0,\\
     \delta_{\mu\lambda},&\min\{m_1,m_2\}=0,
   \end{cases}
\]
which is the atomic form of \eqref{eq:A2-character-recursion} and yields
\eqref{eq:A2-atomic-formula} at once.
\end{example}

\begin{example}\label{ex:B2-recursion}
With $\alpha_3=\alpha_1+\alpha_2$ and $\alpha_4=\alpha_1+2\alpha_2$ one has
$\Raiz^{\rm ns}=\{\alpha_3,\alpha_4\}$,
$\mathcal F=\{\alpha_3,\alpha_4,\alpha_3+\alpha_4\}$ and
$c_{\alpha_3}=c_{\alpha_4}=-1$, $c_{\alpha_3+\alpha_4}=1$.  Hence
\[
   a(\mu,\lambda)=\delta_{\mu\lambda}
     +\widetilde a(\mu,\lambda-\alpha_3)
     +\widetilde a(\mu,\lambda-\alpha_4)
     -\widetilde a(\mu,\lambda-\alpha_3-\alpha_4),
\]
an inclusion--exclusion in the two directions $\alpha_3,\alpha_4$; iterating
it reproves \cref{prop:B2-direct} and \cref{cor:B2-atomic}.  In type
$\typeG_2$ the same recursion has $11$ terms, with coefficients $\pm1$
\cite[Table~2]{Schutzer}.
\end{example}

\begin{remark}\label{rem:recursion-cost}
The two computations of $a(\mu,\lambda)$ are complementary rather than
competing.  Formula \eqref{eq:crown-formula} evaluates at most
$2^{\operatorname{rank}\Raiz}$ weight multiplicities of one module, but each
such evaluation is itself expensive.  The recursion
\eqref{eq:atomic-recursion-solved} uses no multiplicities, but its length is
governed by $|\mathcal F|$, which grows quickly with the rank: already
$|\mathcal F|=121$ in $\typeD_4$ and $|\mathcal F|=4781$ in $\typeF_4$
\cite[Table~1]{Schutzer}.  In practice many terms are singular near the walls;
for instance in $\typeD_4$ only $31$ of the $121$ terms are nonzero at
$\lambda=(1111)$, whereas $107$ are nonzero at $\lambda=(3333)$ and all
$121$ once $\lambda$ is deep enough.
\end{remark}

\begin{remark}
\label{rem:recursion-no-positivity}
The coefficients $c_\xi$ have mixed signs, so
\eqref{eq:atomic-recursion-solved} gives no direct induction for
$a(\mu,\lambda)\ge0$, except in the degenerate case
$|\Raiz^{\rm ns}|=1$, i.e.\ irreducible type $\typeA_2$
(\cref{ex:A2-recursion}).  Its use here is structural and computational, and,
through \cref{thm:atomic-stabilization} below, asymptotic.
\end{remark}

The following reductions make the incidence-algebra problem local in the Dynkin diagram.

\subsection{Dynkin-diagram symmetry}

A diagram automorphism $g$ permutes the simple roots and leaves the Cartan
matrix invariant, so the induced permutation of indices preserves the Serre
presentation of $\mathfrak g$ and therefore lifts to an automorphism of
$\mathfrak g$ that stabilizes $\mathfrak h$ and sends $\mathfrak g_{\alpha_i}$
to $\mathfrak g_{\alpha_{g(i)}}$.  Transporting $V(\lambda)$ along this
automorphism gives $V(g\lambda)$ and identifies $V(\lambda)_\nu$ with
$V(g\lambda)_{g\nu}$, whence $m(g\nu,g\lambda)=m(\nu,\lambda)$.  The M\"obius
function is likewise invariant by \cref{lem:dynkin-poset}.

\begin{proposition}\label{prop:atomic-dynkin}
For every diagram automorphism $g$ and every pair $\mu\le\lambda$ of dominant
weights,
\[
     a(g\mu,g\lambda)=a(\mu,\lambda).
\]
\end{proposition}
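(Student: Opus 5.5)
The plan is to use the Möbius formula \eqref{eq:mobius-atomic} and check that each of its ingredients is invariant under $g$. Fix a diagram automorphism $g$ and dominant weights $\mu\le\lambda$.

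First, $g$ acts on $\Pesos$ by permuting the fundamental weights, sending $\omega_i$ to $\omega_{g(i)}$. Equivalently, it permutes Dynkin labels. Hence $g$ preserves $\Pesos^+$, and by \cref{lem:dynkin-poset} it preserves the root order. So $g\mu\le g\lambda$ are again dominant, and $\eta\mapsto g\eta$ is a bijection from $[\mu,\lambda]$ onto $[g\mu,g\lambda]$.

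Second, I will apply \eqref{eq:mobius-atomic} to the pair $(g\mu,g\lambda)$ and reindex the sum by $\eta'=g\eta$:
\[
 a(g\mu,g\lambda)=\sum_{\eta\in[\mu,\lambda]}\Mu(g\mu,g\eta)\,m(g\eta,g\lambda).
\]
By \cref{lem:dynkin-poset}, $\Mu(g\mu,g\eta)=\Mu(\mu,\eta)$. By the transport of structure recorded just before the proposition, $m(g\eta,g\lambda)=m(\eta,\lambda)$. The sum therefore equals the right-hand side of \eqref{eq:mobius-atomic} for $(\mu,\lambda)$, which is $a(\mu,\lambda)$.

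An alternative route avoids Möbius inversion altogether. The linear map on $\Z[\Pesos]$ sending $e^\nu$ to $e^{g\nu}$ carries $\Theta_\mu$ to $\Theta_{g\mu}$, because it sends the weight set of $V(\mu)$ to that of $V(g\mu)$. It also carries $\chr V(\lambda)$ to $\chr V(g\lambda)$. The result then follows from uniqueness of the expansion in \cref{prop:integral-expansion}.

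The only step needing care is the identity $m(g\nu,g\lambda)=m(\nu,\lambda)$. Specifically, one must check that the lift of $g$ to $\mathfrak g$ acts on $\mathfrak h$ compatibly with the action of $g$ on weights, so that twisting $V(\lambda)$ by it has highest weight $g\lambda$ and sends weight spaces as claimed. This holds because the lift sends $\alpha_i^\vee$ to $\alpha_{g(i)}^\vee$. The paper states this fact just before the proposition, so I will cite it rather than reprove it.
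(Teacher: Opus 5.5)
Your proposal is correct, and its main argument is exactly the paper's proof: reindex \eqref{eq:mobius-atomic} along the order-preserving bijection $\eta\mapsto g\eta$ from $[\mu,\lambda]$ onto $[g\mu,g\lambda]$, then use $\Mu(g\mu,g\eta)=\Mu(\mu,\eta)$ from \cref{lem:dynkin-poset} and $m(g\eta,g\lambda)=m(\eta,\lambda)$ from the transport of structure. Your alternative route through uniqueness of the girdle expansion in \cref{prop:integral-expansion} is also valid, but the paper does not use it.
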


\begin{proof}
Using \eqref{eq:mobius-atomic},
\begin{align*}
 a(g\mu,g\lambda)
  &=\sum_{\xi\in[g\mu,g\lambda]}
       \Mu(g\mu,\xi)m(\xi,g\lambda)\\
  &=\sum_{\eta\in[\mu,\lambda]}
       \Mu(g\mu,g\eta)m(g\eta,g\lambda)\\
  &=\sum_{\eta\in[\mu,\lambda]}
       \Mu(\mu,\eta)m(\eta,\lambda).
\end{align*}
\end{proof}

\subsection{Reduction to the support of $\lambda-\mu$}

Let
\[
     I=\supp(\lambda-\mu)
       =\{i:\text{the coefficient of }\alpha_i
                    \text{ in }\lambda-\mu\text{ is nonzero}\}.
\]
Let $\Raiz_I$ be the root subsystem generated by $\{\alpha_i:i\in I\}$.
For an ambient weight $\nu$, define its projected weight $\nu_I$ in the
weight lattice of $\Raiz_I$ by its Dynkin labels
\begin{equation}\label{eq:support-projection}
 \langle\nu_I,\alpha_i^\vee\rangle
   =\langle\nu,\alpha_i^\vee\rangle
 \qquad(i\in I).
\end{equation}
Thus $\mu_I$ and $\lambda_I$ below are weights of the subsystem
$\Raiz_I$, and $a(\mu_I,\lambda_I)$ denotes the atomic number computed in
that subsystem.

\begin{theorem}\label{thm:support-reduction}
Let $\mu\le\lambda$ be dominant weights and put
$I=\supp(\lambda-\mu)$.
\begin{enumerate}[label=\textnormal{(\roman*)}]
\item \label{item:support-one}
      $a(\mu,\lambda)=a(\mu_I,\lambda_I)$.
\item \label{item:support-product}
      If $I=I_1\sqcup\cdots\sqcup I_t$ is the decomposition into connected
      components, then
      \[
          a(\mu,\lambda)
            =\prod_{r=1}^t a(\mu_{I_r},\lambda_{I_r}).
      \]
\end{enumerate}
\end{theorem}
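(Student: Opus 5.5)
We will prove \ref{item:support-one} by showing that every ingredient of the M\"obius formula \eqref{eq:mobius-atomic} transfers unchanged from $[\mu,\lambda]$ to $[\mu_I,\lambda_I]$. We will then prove \ref{item:support-product} by factoring the subsystem computation over the connected components of $I$.

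\emph{Step 1: the intervals are isomorphic.} Every $\eta\in[\mu,\lambda]$ has the form $\eta=\mu+\sum_{i\in I}x_i\alpha_i$ with $0\le x_i\le$ the $\alpha_i$-coefficient of $\lambda-\mu$. For $j\notin I$ the Dynkin label is $\eta_j=\mu_j+\sum_{i\in I}x_i\langle\alpha_i,\alpha_j^\vee\rangle\ge\mu_j\ge0$, because the off-diagonal Cartan entries are nonpositive. Hence, for weights of the form $\mu+\sum_{i\in I}x_i\alpha_i$, ambient dominance is equivalent to nonnegativity of the labels indexed by $I$, that is, to dominance of $\eta_I$ for $\Raiz_I$. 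The map $\eta\mapsto\eta_I$ is injective on $\mu+\Z\{\alpha_i:i\in I\}$. The labels of $\alpha_i$ with $i\in I$, restricted to $I$, are the Cartan data of $\Raiz_I$, so the map sends $\mu+\sum x_i\alpha_i$ to $\mu_I+\sum x_i\alpha_i$. It therefore identifies the order in both directions and gives a poset isomorphism $[\mu,\lambda]\cong[\mu_I,\lambda_I]$. Since the M\"obius function of an interval depends only on the interval, $\Mu(\mu,\eta)=\Mu(\mu_I,\eta_I)$ for all $\eta\in[\mu,\lambda]$.

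\emph{Step 2: the multiplicities agree.} We must show $m(\eta,\lambda)=m(\eta_I,\lambda_I)$ whenever $\lambda-\eta\in\Z_{\ge0}\{\alpha_i:i\in I\}$. Let $\mathfrak l_I$ be the Levi subalgebra generated by $\mathfrak h$ and the $e_i,f_i$ with $i\in I$. Put
\[
U=\bigoplus_{\nu\in\lambda-\Z_{\ge0}\{\alpha_i:i\in I\}}V(\lambda)_\nu .
\]
Then $U=U(\mathfrak l_I)v_\lambda$, since $V(\lambda)=U(\mathfrak n^-)v_\lambda$ and a PBW monomial landing in this weight range uses only root vectors $f_\alpha$ with $\alpha\in\Raiz_I^+$. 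Moreover $U$ is an irreducible $\mathfrak l_I$-module: any vector in $U$ killed by all $e_i$ with $i\in I$ is also killed by all $e_j$ with $j\notin I$, for weight reasons, so it is a highest-weight vector of $V(\lambda)$ and hence lies in $\C v_\lambda$. Restricting to $[\mathfrak l_I,\mathfrak l_I]=\mathfrak g_I$, on which the centre of $\mathfrak l_I$ acts by scalars, $U$ is the irreducible $\mathfrak g_I$-module $V(\lambda_I)$, and the weight space $U_\eta$ corresponds to the weight $\eta_I$. This is the step we expect to need the most care, in particular the claim that the centre acts by scalars and that weights are identified through their $I$-labels. Substituting Steps 1 and 2 into \eqref{eq:mobius-atomic} yields \ref{item:support-one}.

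\emph{Step 3: the product formula.} Suppose $I=I_1\sqcup\cdots\sqcup I_t$. Then $\Raiz_I=\prod_r\Raiz_{I_r}$, and the simple roots in different components are orthogonal. Dominance and the root order on $\mu_I+\Z_{\ge0}\{\alpha_i:i\in I\}$ split coordinatewise, so $[\mu_I,\lambda_I]\cong\prod_r[\mu_{I_r},\lambda_{I_r}]$. The M\"obius function of a product of posets is the product of the M\"obius functions. Likewise $\mathfrak g_I=\bigoplus_r\mathfrak g_{I_r}$ and $V(\lambda_I)\cong\bigotimes_rV(\lambda_{I_r})$, so the multiplicities are multiplicative: $m(\eta_I,\lambda_I)=\prod_r m(\eta_{I_r},\lambda_{I_r})$. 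The sum in \eqref{eq:mobius-atomic} over the product interval therefore factors as $\prod_r a(\mu_{I_r},\lambda_{I_r})$, and combined with \ref{item:support-one} this gives \ref{item:support-product}.
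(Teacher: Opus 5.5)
Your route is the paper's route. You identify $[\mu,\lambda]$ with $[\mu_I,\lambda_I]$ via $\eta\mapsto\eta_I$, transfer the multiplicities, apply \eqref{eq:mobius-atomic}, and factor over connected components. The only difference is Step~2, where the paper simply cites \cite[Proposition~2.4(1)]{BZ1} and you prove the restriction directly with the Levi argument. That argument is sound: the $\mathfrak l_I$-singular vectors of $U$ are forced into $\C v_\lambda$, and the centre acts by the scalar $\lambda$ because every weight of $U$ differs from $\lambda$ by an element of $\Z\{\alpha_i:i\in I\}$. Since $\mathfrak h=\mathfrak h_I\oplus\mathfrak z(\mathfrak l_I)$, a weight of $U$ is determined by its $I$-labels. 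So this part makes the proof self-contained.

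Step~1, however, contains a sign error that must be fixed. For $j\notin I$ and $i\in I$ one has $\langle\alpha_i,\alpha_j^\vee\rangle\le0$. Hence $\eta_j=\mu_j+\sum_{i\in I}x_i\langle\alpha_i,\alpha_j^\vee\rangle\le\mu_j$, not $\ge\mu_j$: adding simple roots from $I$ can only lower the labels outside $I$. For example, in type $\typeA_2$ take $\mu=\omega_2$ and $\lambda=\mu+\alpha_1=2\omega_1$. Then $I=\{1\}$ and $\lambda_2=0<\mu_2=1$.

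For the same reason, your claim that ambient dominance is equivalent to $I$-dominance for weights $\mu+\sum_{i\in I}x_i\alpha_i$ is false without the upper bound $x_i\le n_i$. The weight $\mu+2\alpha_1=4\omega_1-\omega_2$ in the same example has nonnegative $I$-label but is not dominant. You stated the bound $x_i\le n_i$ but never used it.

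The repair is to expand from the top, as the paper does. Write $\eta=\lambda-\sum_{i\in I}y_i\alpha_i$ with $y_i\ge0$. Then $\eta_j=\lambda_j-\sum_{i\in I}y_i\langle\alpha_i,\alpha_j^\vee\rangle\ge\lambda_j\ge0$ for every $j\notin I$. So it is $\eta\le\lambda$ together with dominance of $\lambda$, not of $\mu$, that controls the labels off $I$. This corrected inequality is what both halves of your bijection need:
\begin{enumerate}[label=\textnormal{(\arabic*)}]
\item Elements of $[\mu,\lambda]$ map to $\Raiz_I$-dominant weights.
\item Every element of $[\mu_I,\lambda_I]$ has the form $\mu_I+\sum_{i\in I}x_i\alpha_i$ with $0\le x_i\le n_i$. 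Its lift $\mu+\sum_{i\in I}x_i\alpha_i$ therefore lies below $\lambda$ and is ambient dominant.
\end{enumerate}
With this correction, Steps~1--3 give a complete proof.
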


\begin{proof}
For \cref{item:support-one}, let $\eta\in[\mu,\lambda]$.  Then
$\lambda-\eta$ lies in $\Z_{\ge0}\{\alpha_i:i\in I\}$, so
$\supp(\lambda-\eta)\subseteq I$ and, for $j\notin I$,
\[
 \langle\eta,\alpha_j^\vee\rangle
   =\langle\lambda,\alpha_j^\vee\rangle
      -\langle\lambda-\eta,\alpha_j^\vee\rangle
   \ge\langle\lambda,\alpha_j^\vee\rangle\ge0,
\]
because $\langle\alpha_i,\alpha_j^\vee\rangle\le0$ for $i\ne j$.  Conversely,
the Cartan matrix of $\Raiz_I$ is invertible, so an element of
$\Z_{\ge0}\{\alpha_i:i\in I\}$ is determined by its Dynkin labels at the
$\alpha_i^\vee$, $i\in I$; given $\eta'\in[\mu_I,\lambda_I]$, let $\xi$ be the
element of $\Z_{\ge0}\{\alpha_i:i\in I\}$ with $\xi_I=\lambda_I-\eta'$.  The
displayed computation makes $\lambda-\xi$ dominant, and
$(\lambda-\xi)-\mu$ has the same labels as $\eta'-\mu_I$ on $I$ and no
coefficient outside $I$, hence lies in $\Z_{\ge0}\{\alpha_i:i\in I\}$.  The
map $\eta\mapsto\eta_I$ is therefore a bijection
$[\mu,\lambda]\to[\mu_I,\lambda_I]$, and it preserves the order in both
directions, since for $\eta,\eta''$ in the interval the difference
$\eta''-\eta$ lies in $\Z\{\alpha_i:i\in I\}$ and is a nonnegative
combination of the $\alpha_i$ exactly when $\eta''_I-\eta_I$ is; compare
\cite[Lemma~3.1]{Stembridge}.  Since $\supp(\lambda-\eta)\subseteq I$ for
every $\eta$ in the interval, the multiplicity is unchanged after restriction
to $\Raiz_I$, that is, $m(\eta,\lambda)=m_I(\eta_I,\lambda_I)$; see, for
example, \cite[Proposition~2.4(1)]{BZ1}.  The M\"obius formula
\eqref{eq:mobius-atomic} therefore gives the equality of atomic numbers.

For \cref{item:support-product}, the interval for the disconnected subsystem
is the direct product of the intervals for its connected components.  Both the
M\"obius function and the weight multiplicity factor over direct products.
Applying \eqref{eq:mobius-atomic} gives the stated product formula.  For the
standard product formula for M\"obius functions see, e.g.,
\cite[Section~3.8]{StanleyEC1}; for
the multiplicity factorization compare \cite[Proposition~2.4(2)]{BZ1}.
\end{proof}

\begin{corollary}\label{cor:minimal-obstruction-connected}
If $a(\mu,\lambda)<0$, then at least one connected component $I_r$ of
$\supp(\lambda-\mu)$ has
\[
    a(\mu_{I_r},\lambda_{I_r})<0.
\]
Thus every minimal obstruction to atomic positivity may be sought in an
irreducible root system.
\end{corollary}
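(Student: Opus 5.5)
This follows directly from the product formula in \cref{thm:support-reduction}\,\cref{item:support-product}.

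First, let $I=\supp(\lambda-\mu)$ with connected components $I_1\sqcup\cdots\sqcup I_t$. Then
\[
a(\mu,\lambda)=\prod_{r=1}^t a(\mu_{I_r},\lambda_{I_r}).
\]
Each factor is an integer. If $a(\mu,\lambda)<0$, the product is nonzero, so no factor vanishes. A product of nonzero integers is negative exactly when an odd number of its factors are negative. In particular, at least one factor $a(\mu_{I_r},\lambda_{I_r})$ is negative, which proves the first assertion.

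For the second assertion, note that each $I_r$ is connected by construction. Hence $\Raiz_{I_r}$ is an irreducible root system. The pair $\mu_{I_r}\le\lambda_{I_r}$ consists of dominant weights of $\Raiz_{I_r}$: their labels are the original labels on $I_r$, which are nonnegative. Their difference is the $I_r$-part of $\lambda-\mu$, which is a nonnegative combination of the $\alpha_i$ with $i\in I_r$.

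So every negative atomic number produces a negative atomic number in an irreducible subsystem. This is what "minimal obstructions may be sought in an irreducible root system" means. Moreover, the support of $\lambda_{I_r}-\mu_{I_r}$ is all of $I_r$, so the obstruction found this way is already support-reduced.

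No step here is a genuine obstacle. The only point needing care is the observation that a negative product of integers forces a negative factor, with no zero factors possible.
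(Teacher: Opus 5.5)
Your proof is correct and is the argument the paper intends. The paper gives no separate proof and treats the corollary as immediate from the product formula in \cref{thm:support-reduction}\ref{item:support-product}, exactly as in its proof of \cref{cor:rank2-support}. Your added checks are also right: $\mu_{I_r}\le\lambda_{I_r}$ are dominant for the irreducible subsystem $\Raiz_{I_r}$, and the support of their difference is all of $I_r$, because distinct components of $\supp(\lambda-\mu)$ are nonadjacent.
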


\section{Rank-two atomic combinatorics}\label{sec:rank2}

In type $\typeA_1$, every irreducible module has one-dimensional weight
spaces, so
\[
 \chr V(\lambda)=\Theta_\lambda
\]
and its only nonzero atomic number is $a(\lambda,\lambda)=1$.  We refer
to this as the rank-one case below.

\begin{theorem}\label{thm:rank2-all}
Every irreducible character for a semisimple Lie algebra of rank at most two
admits an atomic decomposition.  In types
$\typeA_1\times\typeA_1$, $\typeA_2$, $\typeB_2$, and $\typeC_2$ all atomic
coefficients are $0$ or $1$; in type $\typeG_2$ they are nonnegative integers
and are given by \eqref{eq:G2-atomic-count} when the crown is a diamond.
\end{theorem}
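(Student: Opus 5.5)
The plan is to reduce, via \cref{thm:support-reduction}, to an irreducible connected support and then handle each rank-two type with \cref{cor:rank2-crowns}. If $\supp(\lambda-\mu)$ is disconnected or has rank one, the product formula of \cref{thm:support-reduction}\ref{item:support-product} combined with the rank-one case gives $a(\mu,\lambda)\in\{0,1\}$. So we may assume $\supp(\lambda-\mu)=\{1,2\}$ with connected Dynkin diagram, i.e.\ type $\typeA_2$, $\typeB_2=\typeC_2$, or $\typeG_2$. Type $\typeA_1\times\typeA_1$ is also covered by the product formula.

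Next we use \cref{cor:rank2-crowns}: the crown above $\mu$ in $[\mu,\lambda]$ is a singleton, a chain, or a diamond. In the singleton case, $a(\mu,\lambda)=m(\mu,\lambda)$, and this forces $\mu=\lambda$, since otherwise $[\mu,\lambda]$ would contain an atom; hence $a=1$. In the chain case, with a single atom $\gamma=\widehat{\mu+\alpha_i}$, we get $a=m(\mu,\lambda)-m(\gamma,\lambda)$. In the diamond case, \cref{cor:all-simple-root-atoms} says both $\mu+\alpha_1$ and $\mu+\alpha_2$ are dominant, and
\[
a(\mu,\lambda)=m(\mu,\lambda)-m(\mu+\alpha_1,\lambda)-m(\mu+\alpha_2,\lambda)+m(\mu+\alpha_1+\alpha_2,\lambda),
\]
which is a second mixed finite difference of the multiplicity function.

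For the chain case I would prove $m(\mu,\lambda)\ge m(\gamma,\lambda)$ as follows. When $\gamma=\mu+\alpha_i$, the $\mathfrak{sl}_2(\alpha_i)$-string through $\mu$ has $\mu_i\ge0$, so $f_i\colon V_{\mu+\alpha_i}\to V_\mu$ is injective and the inequality holds. When $\mu+\alpha_i$ is not dominant, I would check directly in rank two that $\widehat{\mu+\alpha_i}$ is Weyl-conjugate to $\mu+\alpha_i$, or at least has multiplicity at most $m(\mu,\lambda)$. One way is to observe that $\mu+\alpha_i$ has a label $-1$ or $-2$, and reflecting along the offending string yields $\widehat{\mu+\alpha_i}$. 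The value in $\{0,1\}$ would then come from the explicit multiplicity formulas: in $\typeA_2$, multiplicities grow by exactly one per layer until the hexagon becomes a triangle; in $\typeB_2$ the corresponding statement follows from the recursions in \cref{ex:A2-recursion} and \cref{ex:B2-recursion}.

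A cleaner route for the diamond case, and for the bounds in $\typeA_2,\typeB_2,\typeC_2$, is the recursion of \cref{thm:atomic-recursion}. In $\typeA_2$, iterating \cref{ex:A2-recursion} gives $a(\mu,\lambda)=1$ exactly when $\lambda-\mu\in\Z_{\ge0}\rho$ and the intermediate weights stay regular, and $a=0$ otherwise. In $\typeB_2$, the four-term recursion of \cref{ex:B2-recursion} is an inclusion--exclusion in the directions $\alpha_3,\alpha_4$, whose fundamental solution is the indicator of the lattice points $k\alpha_3+l\alpha_4$. I would verify by induction on $\lambda$ that the dot-action boundary corrections only truncate this indicator, so values stay in $\{0,1\}$. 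For $\typeG_2$ I would instead evaluate the diamond difference, using Freudenthal's formula or a Kostant-type count, as the number of ways to write $\lambda-\mu$ by the four nonsimple roots subject to explicit bounds, which is the formula \eqref{eq:G2-atomic-count}; nonnegativity is then evident.

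The main obstacle is the $\typeG_2$ diamond. The 11-term recursion has mixed signs, and I expect the bookkeeping of singular and reflected terms near the walls to be the hard part. My first attempt would be to show the second mixed difference equals $\dim$ of $\ker e_1\cap\ker e_2$ modulo images, in the spirit of the path model, and count that via string decompositions.
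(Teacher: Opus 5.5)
Your reduction to connected support and the singleton/chain/diamond trichotomy are fine, and you are right that a singleton crown forces $\mu=\lambda$. The chain case, however, rests on a false mechanism.

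\textbf{The chain case.} It is not true that $\mu+\alpha_i$ has only labels $-1$ or $-2$: in $\typeG_2$ with $\mu=0$, the weight $\alpha_2$ has label $-3$ at $\alpha_1$. Nor is it true that reflecting along the offending string yields $\widehat{\mu+\alpha_i}$.

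For a concrete failure, take $\typeB_2$ with $\mu=0$ and $\lambda=\omega_1=\alpha_1+\alpha_2$. The unique atom is $\gamma=\omega_1=\widehat{\alpha_1}=\widehat{\alpha_2}$. With $i=1$, the weight $\alpha_1$ has label $-2$ at $\alpha_2$, and $s_2\alpha_1=\alpha_1+2\alpha_2=2\omega_2\neq\omega_1$. The long root $\alpha_1$ and the short root $\omega_1$ are not Weyl-conjugate. Indeed $m(\alpha_1,\omega_1)=0\ne1=m(\omega_1,\omega_1)$, so the identification $m(\gamma,\lambda)=m(\mu+\alpha_i,\lambda)$ fails for an admissible $i$. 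You would have to prove that a good index always exists, and the proposal does not.

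The paper sidesteps this. By Stembridge's covering theorem \cite[Theorem~2.8]{Stembridge}, $\beta=\gamma-\mu$ is a positive root (here $\alpha_1+\alpha_2$). Then \cref{lem:root-string-monotonicity}, an $\mathfrak{sl}_2(\beta)$-string argument valid for an arbitrary positive root, gives $m(\mu,\lambda)\ge m(\gamma,\lambda)$ with no Weyl-conjugacy claim.

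\textbf{The $\typeG_2$ formula.} This is the more serious gap. The statement includes the explicit count \eqref{eq:G2-atomic-count}, and the proposal never derives it. ``A Kostant-type count'' is named without a mechanism. Your fallback space ``$\ker e_1\cap\ker e_2$ modulo images'' is the wrong object, since $\ker e_1\cap\ker e_2$ vanishes at every weight $\mu\ne\lambda$ of $V(\lambda)$. The correct subquotient $(\ker e_1)_\mu/f_2(\ker e_1)_{\mu+\alpha_2}$ of \cref{lem:path-regular-boolean} gives nonnegativity but not the count. The missing steps are these:
\begin{enumerate}[label=\textnormal{(\roman*)}]
\item Kostant's formula together with \cref{lem:G2-partition-difference} turns the diamond difference into $\sum_{w}(-1)^{\ell(w)}p_*(w(\lambda+\rho)-(\mu+\rho))$.
\item The support of $p_*$ lies in the cone $v\le u\le 3v$, that is, nonnegative pairing with $(\alpha_1+\alpha_2)^\vee$ and $(3\alpha_1+\alpha_2)^\vee$. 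Since $\mu+\rho$ is strictly dominant, this kills every $w$ of length at least two.
\item The $s_1$- and $s_2$-terms are matched, via $\alpha_1=\beta_2-\beta_1$ and $\alpha_2=\beta_4-\beta_3$, with the partitions having $n_2\ge\lambda_1+1$, respectively $n_4\ge\lambda_2+1$.
\item These two sets of partitions are disjoint.
\end{enumerate}

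\textbf{The $\typeB_2$ bound.} Your claim that the boundary corrections ``only truncate'' the indicator of $\{k\alpha_3+l\alpha_4\}$ is precisely the content of \cref{prop:B2-direct}, which you defer. It is not a box truncation. For $m_2$ even, among the points $k\alpha_3+\tfrac{m_2}{2}\alpha_4$ only those with $k$ even occur. At the wall $m_1\omega_1$, the identity appears only after a nonsingular $\alpha_4$-term cancels the $\alpha_3$-term.
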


We first obtain explicit expansions in
$\typeA_1\times\typeA_1$, $\typeA_2$, $\typeB_2$, and $\typeC_2$ from
the rank-two character recursions of \cite{Schutzer}.  Type $\typeG_2$ is
then treated from \eqref{eq:crown-formula} and Kostant's multiplicity formula.

\subsection{Type $\typeA_1\times\typeA_1$}

The reducible rank-two case follows from the tensor-product structure and is
useful to record explicitly.
Write $\lambda=\lambda_1+\lambda_2$ according to the two $\typeA_1$ factors.
Every weight of the irreducible module
$V(\lambda)=V(\lambda_1)\boxtimes V(\lambda_2)$ occurs with multiplicity
one.  Hence its ordinary character is already multiplicity-free.

\begin{proposition}
\label{prop:A1xA1-direct}
For dominant weights $\mu\le\lambda$ in type
$\typeA_1\times\typeA_1$,
\begin{equation}\label{eq:A1xA1-atomic-formula}
 a(\mu,\lambda)=
 \begin{cases}
  1,&\mu=\lambda,\\
  0,&\mu\ne\lambda.
 \end{cases}
\end{equation}
\end{proposition}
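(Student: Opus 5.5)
Since $V(\lambda)=V(\lambda_1)\boxtimes V(\lambda_2)$ is multiplicity-free, we have $\chr V(\lambda)=\Theta_\lambda$. By the uniqueness in \cref{prop:integral-expansion} this gives $a(\mu,\lambda)=\delta_{\mu\lambda}$. The plan is therefore to verify multiplicity-freeness and then compare coefficients.

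First, write weights as pairs $(\nu_1,\nu_2)$ with respect to the two $\typeA_1$ factors. The Cartan subalgebra of $\mathfrak{sl}_2\times\mathfrak{sl}_2$ is the direct sum of the two Cartan subalgebras. A weight space of the outer tensor product therefore factors as
\[
V(\lambda)_{(\nu_1,\nu_2)}=V(\lambda_1)_{\nu_1}\otimes V(\lambda_2)_{\nu_2}.
\]
Each factor has dimension at most one by the rank-one case, so $m(\nu,\lambda)\in\{0,1\}$ for every $\nu$. Hence $\chr V(\lambda)=\sum_{\nu\in\Pesos(\lambda)}e^\nu=\Theta_\lambda$, directly from the definition of the girdle. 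By the uniqueness of the integral expansion in \cref{prop:integral-expansion}, comparing this with $\chr V(\lambda)=\sum_\mu a(\mu,\lambda)\Theta_\mu$ yields the claimed formula.

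As an alternative check, the same result follows from \cref{thm:support-reduction}\cref{item:support-product}. Here $\supp(\lambda-\mu)$ splits into components lying in single $\typeA_1$ factors, and each factor contributes $a=\delta$ by the rank-one statement. If $\mu=\lambda$ the support is empty, the product is empty, and the value is $1$. Otherwise some component has $\mu_{I_r}\ne\lambda_{I_r}$, and that factor contributes $0$.

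There is no real obstacle. The only point needing care is the identification of weight spaces of an outer tensor product, which is standard.
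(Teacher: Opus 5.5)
Your proof is correct and is essentially the paper's own argument: one-dimensional weight spaces of the outer tensor product give $\chr V(\lambda)=\Theta_\lambda$, and uniqueness of the triangular expansion in \cref{prop:integral-expansion} yields the formula. Your alternative check via \cref{thm:support-reduction} is also valid, since every component of $\supp(\lambda-\mu)$ is a singleton carrying a nonzero coefficient, so each rank-one factor vanishes unless $\mu=\lambda$.
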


\begin{proof}
Each irreducible $\typeA_1$-module has one-dimensional weight spaces.  Therefore
the external tensor product $V(\lambda_1)\boxtimes V(\lambda_2)$ also has
one-dimensional weight spaces, indexed by pairs of weights from the two
factors.  Thus $\chr V(\lambda)=\Theta_\lambda$, and uniqueness of the
triangular expansion in the $\Theta$-basis gives
\eqref{eq:A1xA1-atomic-formula}.
\end{proof}

\subsection{Type $\typeA_2$}

Let $\alpha_3=\alpha_1+\alpha_2$.  If
$\lambda=m_1\omega_1+m_2\omega_2$, the type-$\typeA_2$ recursion of
\cite{Schutzer} reduces, for dominant weights, to
\begin{equation}\label{eq:A2-character-recursion}
 \chr V(\lambda)=
 \begin{cases}
   \Theta_\lambda+\chr V(\lambda-\alpha_3),&m_1,m_2>0,\\
   \Theta_\lambda,&\min\{m_1,m_2\}=0.
 \end{cases}
\end{equation}
Indeed, in the first case
$\lambda-\alpha_3=(m_1-1)\omega_1+(m_2-1)\omega_2$ is dominant, whereas on a
wall the shifted term in the general recursion is singular and vanishes.

\begin{proposition}\label{prop:A2-direct}
Let $\lambda=m_1\omega_1+m_2\omega_2$ and put
$r=\min\{m_1,m_2\}$.  Then
\begin{equation}\label{eq:A2-direct-expansion}
  \chr V(\lambda)=\sum_{i=0}^{r}\Theta_{\lambda-i\alpha_3}.
\end{equation}
Consequently, if $\mu\le\lambda$ and
$\lambda-\mu=k_1\alpha_1+k_2\alpha_2$, then
\begin{equation}\label{eq:A2-atomic-formula}
 a(\mu,\lambda)=
 \begin{cases}
  1,&k_1=k_2,\\
  0,&k_1\ne k_2.
 \end{cases}
\end{equation}
\end{proposition}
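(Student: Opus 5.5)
The plan is to iterate the recursion \eqref{eq:A2-character-recursion}, and then read off the atomic numbers by uniqueness of the expansion in \cref{prop:integral-expansion}. Set $\lambda^{(i)}=\lambda-i\alpha_3$. Since $\langle\alpha_3,\alpha_1^\vee\rangle=\langle\alpha_3,\alpha_2^\vee\rangle=1$ in type $\typeA_2$, we have
\[
\lambda^{(i)}=(m_1-i)\omega_1+(m_2-i)\omega_2.
\]
So $\lambda^{(i)}$ is dominant for $0\le i\le r$, it lies on a wall exactly when $i=r$, and it has both labels positive for $i<r$.

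I will prove \eqref{eq:A2-direct-expansion} by induction on $r$.
\begin{itemize}
\item If $r=0$, then $\lambda$ lies on a wall, and the second case of \eqref{eq:A2-character-recursion} gives $\chr V(\lambda)=\Theta_\lambda$.
\item If $r>0$, the first case gives $\chr V(\lambda)=\Theta_\lambda+\chr V(\lambda-\alpha_3)$. Here $\lambda-\alpha_3$ has $\min$ of its labels equal to $r-1$, so the induction hypothesis expands $\chr V(\lambda-\alpha_3)$ as $\sum_{i=1}^{r}\Theta_{\lambda-i\alpha_3}$.
\end{itemize}
I take \eqref{eq:A2-character-recursion} as given from \cite{Schutzer}. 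If it needs justification, it follows from \cref{thm:schutzer-girdle} as in \cref{ex:A2-recursion}: there $\mathcal F=\{\rho\}$ with $c_\rho=-1$. Also $(\lambda-\rho)+\rho=\lambda$, which is regular iff $m_1,m_2>0$, and in that case $w=1$.

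For \eqref{eq:A2-atomic-formula}, the expansion \eqref{eq:A2-direct-expansion} involves distinct dominant weights $\lambda-i\alpha_3$, each with coefficient $1$. By uniqueness in \cref{prop:integral-expansion}, $a(\mu,\lambda)=1$ exactly when $\mu=\lambda-i\alpha_3$ for some $0\le i\le r$, and $a(\mu,\lambda)=0$ otherwise. This condition must be matched with $k_1=k_2$. If $\mu=\lambda-i\alpha_3$, then $\lambda-\mu=i\alpha_1+i\alpha_2$, so $k_1=k_2$. Conversely, suppose $k_1=k_2=i$. Then $\mu=\lambda-i\alpha_3$, and dominance of $\mu$ forces $m_1-i\ge0$ and $m_2-i\ge0$, so $i\le r$.

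The only point requiring care, and the main obstacle, is this last converse: the case split is phrased in terms of $k_1=k_2$ alone, and the hypothesis that $\mu$ is dominant is what supplies the bound $i\le r$.
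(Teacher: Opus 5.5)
Your proposal is correct and follows the paper's proof: iterate the $\typeA_2$ recursion to obtain the expansion, then read off the atomic numbers by uniqueness of the girdle expansion. Your explicit use of the dominance of $\mu$ to get $i\le r$ in the converse direction spells out a step the paper leaves implicit.
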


\begin{proof}
Iterating \eqref{eq:A2-character-recursion} gives
\eqref{eq:A2-direct-expansion}.  The indices in that expansion are precisely
$\lambda-i(\alpha_1+\alpha_2)$, and each occurs once.  Thus a dominant
$\mu\le\lambda$ occurs as an atomic index exactly when
$\lambda-\mu=i\alpha_1+i\alpha_2$, proving
\eqref{eq:A2-atomic-formula}.
\end{proof}

As a byproduct, comparison of the coefficient of $e^\mu$ on both sides of
\eqref{eq:A2-direct-expansion} recovers the familiar formula
\[
 m(\mu,\lambda)=1+\min\{k_1,k_2,m_1,m_2\}.
\]
Thus the direct atomic expansion and the multiplicity formula are two forms of
the same triangular identity.

\subsection{Types $\typeB_2$ and $\typeC_2$}

We use the following convention for $\typeB_2$:
\[
 \alpha_3=\alpha_1+\alpha_2,\qquad
 \alpha_4=\alpha_1+2\alpha_2,
\]
so that, in the fundamental-weight basis,
\[
 \alpha_1=2\omega_1-2\omega_2,\qquad
 \alpha_2=-\omega_1+2\omega_2,\qquad
 \alpha_3=\omega_1,\qquad
 \alpha_4=2\omega_2.
\]
The type-$\typeB_2$ recursion of \cite{Schutzer} may be iterated in the
$\alpha_3$- and $\alpha_4$-directions.  This yields the following direct
expansion.

\begin{proposition}\label{prop:B2-direct}
Let $\lambda=m_1\omega_1+m_2\omega_2$, let
$t\equiv m_1\pmod 2$, $s\equiv m_2\pmod 2$, with $s,t\in\{0,1\}$.  Then
\begin{equation}\label{eq:B2-direct-expansion}
 \chr V(\lambda)=\Xi_\lambda+\Psi_\lambda,
\end{equation}
where
\begin{equation}\label{eq:B2-Xi}
 \Xi_\lambda=
 \sum_{i=0}^{m_1}\sum_{j=1}^{(m_2-s)/2}
       \Theta_{i\omega_1+(s+2j)\omega_2},
\end{equation}
and
\begin{equation}\label{eq:B2-Psi}
 \Psi_\lambda=
 \begin{cases}
  \Theta_{m_1\omega_1}+\Theta_{(m_1-2)\omega_1}+\cdots+
       \Theta_{t\omega_1},&s=0,\\[1mm]
  \Theta_{m_1\omega_1+\omega_2}+\Theta_{(m_1-1)\omega_1+\omega_2}
       +\cdots+\Theta_{\omega_2},&s=1.
 \end{cases}
\end{equation}
Every index occurring on the right-hand side is distinct.
\end{proposition}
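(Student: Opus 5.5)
We prove \eqref{eq:B2-direct-expansion} by induction along the dominant-weight order, starting from the highest-weight recursion of \cref{ex:B2-recursion} written at the level of characters:
\[
 \chr V(\lambda)=\Theta_\lambda+\widetilde\chi_{\lambda-\alpha_3}+\widetilde\chi_{\lambda-\alpha_4}-\widetilde\chi_{\lambda-\alpha_3-\alpha_4},
\]
which is \eqref{eq:schutzer-girdle} solved for $\chr V(\lambda)$. Since $\alpha_3=\omega_1$ and $\alpha_4=2\omega_2$, the shifts act on the Dynkin labels as $(m_1,m_2)\mapsto(m_1-1,m_2)$, $(m_1,m_2-2)$ and $(m_1-1,m_2-2)$. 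The first step is to evaluate the three signed characters. When all three shifted weights are dominant, the recursion is a pure two-dimensional inclusion--exclusion on the lattice of labels. The only non-dominant cases are $m_1=0$ and $m_2\in\{0,1\}$. In those cases we add $\rho=\omega_1+\omega_2$ and inspect the shifted weights $(0,\ast)$ and $(\ast,0)$ or $(\ast,-1)$. A first label of $-1$ gives a weight $\nu$ with $\nu+\rho$ singular, so $\widetilde\chi_\nu=0$. A second label of $-2$ or $-1$ with $\nu+\rho$ regular is reflected by $s_2$, giving a sign $-1$ and a dominant $\nu^\star$; for instance, $m_2=1$ gives $(m_1,-1)\mapsto$ singular, and $m_2=0$ gives $(m_1,-2)$, reflected with a sign.

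The second step is to verify that the proposed right-hand side $\Xi_\lambda+\Psi_\lambda$ satisfies the same recursion with the same boundary values. Since the recursion determines the characters uniquely by \cref{cor:atomic-recursion-effective}, this proves \eqref{eq:B2-direct-expansion}. In the interior, telescoping in $i$ and $j$ shows that
\[
\Xi_\lambda-\Xi_{\lambda-\alpha_3}-\Xi_{\lambda-\alpha_4}+\Xi_{\lambda-\alpha_3-\alpha_4}
\]
picks out exactly the corner term $\Theta_{m_1\omega_1+m_2\omega_2}$ (the term $i=m_1$, $j=(m_2-s)/2$), with $\Psi$ absorbing the correction in the rows $m_2\in\{s\}$. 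Here we must check that the parities $s,t$ are preserved by the shifts (they are, since the shifts change $m_2$ by $2$), and that the $\Psi$ pieces telescope: the $s=0$ chain steps $m_1$ by $2$, while the $s=1$ chain steps by $1$.

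The boundary cases are $m_2\le1$ and $m_1=0$, where $\Xi_\lambda=0$ or has a single column. These are handled by direct comparison with the evaluated signed characters. For example, for $s=0$ and $m_2=0$ the reflected term $-\chr V((m_1-1)\omega_1)$ combined with $\chr V((m_1-1)\omega_1)$ must produce the step-$2$ chain $\Theta_{m_1\omega_1}+\Theta_{(m_1-2)\omega_1}+\cdots$. This verification is the main obstacle: the sign bookkeeping for the $s_2$-reflections at $m_2\in\{0,1\}$ must reproduce exactly the step-$2$ versus step-$1$ pattern in \eqref{eq:B2-Psi}. The safest route is first to handle the line $m_2=0$ alone as a one-variable recursion in $m_1$, then $m_2=1$, and only then run the two-dimensional induction.

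Finally, distinctness of the indices is read off from the labels. The terms of $\Xi_\lambda$ have second label $s+2j\ge s+2$ and pairwise distinct $(i,j)$, while the terms of $\Psi_\lambda$ have second label $s\le1$ and distinct first labels. Hence no index occurs twice.
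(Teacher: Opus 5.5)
Your proposal is correct and uses the same ingredients as the paper's proof: the $\typeB_2$ recursion, the three one-variable wall recursions (on the line $m_2=0$ the terms $\pm\chr V((m_1-1)\omega_1)$ cancel, and the $s_2$-reflected $\alpha_3+\alpha_4$ term supplies $+\chr V((m_1-2)\omega_1)$), and the same distinctness check; the only difference is that you verify the closed form against the recursion and conclude by uniqueness via induction, whereas the paper telescopes the recursion in the $\alpha_3$- and $\alpha_4$-directions to derive it. One harmless slip: the $\alpha_3$-shift does change the parity $t$, but this is irrelevant in the interior, because $\Psi$ depends only on $(m_1,s)$ and cancels between $\lambda$ and $\lambda-\alpha_4$ and between $\lambda-\alpha_3$ and $\lambda-\alpha_3-\alpha_4$.
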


\begin{proof}
Write $\eta=a\omega_1+b\omega_2$, where $a$ and $b$ are its Dynkin
labels, so that $\alpha_3=(10)$ and $\alpha_4=(02)$.  Since
$\alpha_3^\vee=2\alpha_1^\vee+\alpha_2^\vee$ and
$\alpha_4^\vee=\alpha_1^\vee+\alpha_2^\vee$, the weight $\eta$ is regular
exactly when
\begin{equation}\label{eq:B2-regularity}
 a\ne0,\qquad b\ne0,\qquad 2a+b\ne0,\qquad a+b\ne0,
\end{equation}
and $s_2(a\omega_1+b\omega_2)=(a+b)\omega_1-b\omega_2$.  By \cref{ex:B2-recursion},
\eqref{eq:schutzer-girdle} reads
\begin{equation}\label{eq:B2-recursion}
 \Theta_\nu=\widetilde\chi_\nu-\widetilde\chi_{\nu-\alpha_3}
   -\widetilde\chi_{\nu-\alpha_4}+\widetilde\chi_{\nu-\alpha_3-\alpha_4}
 \qquad(\nu\in\Pesos^+).
\end{equation}
Throughout, $\chr V(\eta)$ is read as $0$ when $\eta$ is not dominant.

Put $F(\nu)=\widetilde\chi_\nu-\widetilde\chi_{\nu-\alpha_3}$.  Then
\eqref{eq:B2-recursion} is $F(\nu)=\Theta_\nu+F(\nu-\alpha_4)$, and iterating
it along a chain of dominant weights gives
\begin{equation}\label{eq:B2-alpha4-iteration}
 F(\nu)=\sum_{j=0}^{k}\Theta_{\nu-j\alpha_4}+F(\nu-(k+1)\alpha_4)
\end{equation}
whenever $\nu,\nu-\alpha_4,\ldots,\nu-k\alpha_4$ are dominant.  Assume
$m_1\ge1$ and $m_2\ge2$, and put
\[
 k_3=m_1-1,\qquad k_4=\frac{m_2-2-s}{2}.
\]
For $0\le i\le k_3$ and $0\le j\le k_4+1$ the weight
$\lambda-i\alpha_3-j\alpha_4=(m_1-i)\omega_1+(m_2-2j)\omega_2$ is dominant, so
\eqref{eq:B2-alpha4-iteration} applies with $\nu=\lambda-i\alpha_3$ and the
same $k=k_4$ for every $i$.  Summing the resulting identities over
$0\le i\le k_3$, both the left-hand sides and the terms
$F(\lambda-i\alpha_3-(k_4+1)\alpha_4)$ telescope in the
$\alpha_3$-direction, and we obtain
\[
 \widetilde\chi_\lambda-\widetilde\chi_{\lambda-(k_3+1)\alpha_3}
  =\sum_{i=0}^{k_3}\sum_{j=0}^{k_4}\Theta_{\lambda-i\alpha_3-j\alpha_4}
   +\widetilde\chi_{\lambda-(k_4+1)\alpha_4}
   -\widetilde\chi_{\lambda-(k_3+1)\alpha_3-(k_4+1)\alpha_4}.
\]
Here $\lambda-(k_3+1)\alpha_3=m_2\omega_2$,
$\lambda-(k_4+1)\alpha_4=m_1\omega_1+s\omega_2$ and
$\lambda-(k_3+1)\alpha_3-(k_4+1)\alpha_4=s\omega_2$ are dominant, so every
signed character occurring is an ordinary character and
\begin{align}\label{eq:B2-telescope}
 \chr V(\lambda)={}&
 \sum_{i=0}^{m_1-1}\sum_{j=0}^{(m_2-2-s)/2}
       \Theta_{\lambda-i\alpha_3-j\alpha_4}\\
 &+\chr V(m_2\omega_2)
  +\chr V(m_1\omega_1+s\omega_2)-\chr V(s\omega_2).\nonumber
\end{align}

The three boundary characters are evaluated from \eqref{eq:B2-recursion}
directly; each of the three walls behaves differently, and only the first is a
pure cancellation of nonvanishing terms.

Let $\nu=m_1\omega_1$.  The weight
$\nu-\alpha_3=(m_1-1)\omega_1$ is dominant for $m_1\ge1$, while for $m_1=0$ the
shift $\nu-\alpha_3+\rho=(01)$ is singular; in both cases
$\widetilde\chi_{\nu-\alpha_3}=\chr V((m_1-1)\omega_1)$.  Next,
$\nu-\alpha_4=m_1\omega_1-2\omega_2$ is not dominant, but its shift
$\nu-\alpha_4+\rho=(m_1+1)\omega_1-\omega_2$ is regular by \eqref{eq:B2-regularity} exactly
when $m_1\ge1$, and then $s_2\bigl((m_1+1)\omega_1-\omega_2\bigr)=m_1\omega_1+\omega_2$ is strictly dominant, so
$(\nu-\alpha_4)^\star=(m_1-1)\omega_1$ and
$\widetilde\chi_{\nu-\alpha_4}=-\chr V((m_1-1)\omega_1)$; for $m_1=0$ both
sides are $0$.  In the same way $\nu-\alpha_3-\alpha_4+\rho=m_1\omega_1-\omega_2$ is
regular exactly when $m_1\ge2$, with $s_2\bigl(m_1\omega_1-\omega_2\bigr)=(m_1-1)\omega_1+\omega_2$, so
$\widetilde\chi_{\nu-\alpha_3-\alpha_4}=-\chr V((m_1-2)\omega_1)$ for every
$m_1\ge0$.  The two terms $\pm\chr V((m_1-1)\omega_1)$ cancel and
\begin{equation}\label{eq:B2-wall-1}
 \Theta_{m_1\omega_1}=\chr V(m_1\omega_1)-\chr V((m_1-2)\omega_1)
 \qquad(m_1\ge0).
\end{equation}
Note that the $\alpha_4$-term here is not singular: the identity emerges only
after it cancels the $\alpha_3$-term.

Let $\nu=m_1\omega_1+\omega_2$.  Now
$\nu-\alpha_4+\rho=(m_1+1)\omega_1$ and $\nu-\alpha_3-\alpha_4+\rho=m_1\omega_1$ are
singular by \eqref{eq:B2-regularity}, so both terms vanish, while
$\widetilde\chi_{\nu-\alpha_3}=\chr V((m_1-1)\omega_1+\omega_2)$, this being
$0$ for $m_1=0$ because $\nu-\alpha_3+\rho=(02)$ is then singular.  Hence
\begin{equation}\label{eq:B2-wall-2}
 \Theta_{m_1\omega_1+\omega_2}
  =\chr V(m_1\omega_1+\omega_2)-\chr V((m_1-1)\omega_1+\omega_2)
 \qquad(m_1\ge0).
\end{equation}

Let $\nu=m_2\omega_2$.  Here
$\nu-\alpha_3+\rho=(m_2+1)\omega_2$ and $\nu-\alpha_3-\alpha_4+\rho=(m_2-1)\omega_2$ are
singular, while $\nu-\alpha_4=(m_2-2)\omega_2$ is dominant for $m_2\ge2$ and, for
$m_2\le1$, has singular shift $(10)$ or $(1,-1)$.  Hence
\begin{equation}\label{eq:B2-wall-3}
 \Theta_{m_2\omega_2}=\chr V(m_2\omega_2)-\chr V((m_2-2)\omega_2)
 \qquad(m_2\ge0).
\end{equation}

Iterating \eqref{eq:B2-wall-1}, \eqref{eq:B2-wall-2} and
\eqref{eq:B2-wall-3} down to a nondominant index, which contributes $0$, gives
\begin{align*}
 \chr V(m_1\omega_1)
   &=\Theta_{m_1\omega_1}+\Theta_{(m_1-2)\omega_1}+\cdots+
      \Theta_{t\omega_1},\\
 \chr V(m_1\omega_1+\omega_2)
   &=\Theta_{m_1\omega_1+\omega_2}+\Theta_{(m_1-1)\omega_1+\omega_2}
       +\cdots+\Theta_{\omega_2},\\
 \chr V(m_2\omega_2)
   &=\Theta_{m_2\omega_2}+\Theta_{(m_2-2)\omega_2}+\cdots+
      \Theta_{s\omega_2},
\end{align*}
for all $m_1,m_2\ge0$.  In the boundary cases, where
\eqref{eq:B2-telescope} is unavailable, these three expansions already give
\eqref{eq:B2-direct-expansion}: if $m_2\le1$ then $\Xi_\lambda$ is empty and
$\Psi_\lambda$ is the first or the second expansion according as $s=0$ or
$s=1$; if $m_1=0$ then $\Xi_\lambda$ together with $\Psi_\lambda$ is the third
expansion.  In the remaining case $m_1\ge1$, $m_2\ge2$, substituting the three
expansions into \eqref{eq:B2-telescope}, the final
$\Theta_{s\omega_2}$ cancels the last term.  Since
$\alpha_3=\omega_1$ and $\alpha_4=2\omega_2$, the remaining double sum and the
$\omega_2$-boundary terms combine to give \eqref{eq:B2-Xi}, while the other
boundary terms give \eqref{eq:B2-Psi}.  The second Dynkin label is
at least $2+s$ in \eqref{eq:B2-Xi} and is $0$ or $1$ in
\eqref{eq:B2-Psi}, so the two sets of indices are disjoint.  Within
\eqref{eq:B2-Xi}, equality
\[
 i\omega_1+(s+2j)\omega_2=i'\omega_1+(s+2j')\omega_2
\]
forces $i=i'$ and $j=j'$.  Within \eqref{eq:B2-Psi}, the displayed first
Dynkin labels are pairwise distinct.  Thus every index in
\eqref{eq:B2-direct-expansion} occurs exactly once.
\end{proof}

Define $P_{\typeB_2}(\lambda)$ to be the set of dominant weights indexing the
right-hand side of \eqref{eq:B2-direct-expansion}.  Explicitly,
\begin{align*}
P_{\typeB_2}(\lambda)={}&
 \{i\omega_1+(s+2j)\omega_2:0\le i\le m_1,
            1\le j\le(m_2-s)/2\}\\
&\cup
\begin{cases}
 \{(m_1-2j)\omega_1:0\le j\le(m_1-t)/2\},&s=0,\\
 \{i\omega_1+\omega_2:0\le i\le m_1\},&s=1.
\end{cases}
\end{align*}

\begin{corollary}\label{cor:B2-atomic}
For dominant $\mu\le\lambda$ in type $\typeB_2$,
\[
 a(\mu,\lambda)=
 \begin{cases}
   1,&\mu\in P_{\typeB_2}(\lambda),\\
   0,&\mu\notin P_{\typeB_2}(\lambda).
 \end{cases}
\]
In particular, every irreducible character of type $\typeB_2$ has an atomic
decomposition.
\end{corollary}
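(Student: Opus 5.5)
This corollary follows from \cref{prop:B2-direct} together with the uniqueness part of \cref{prop:integral-expansion}; no multiplicity computation is needed.

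The plan has three steps. First, \eqref{eq:B2-direct-expansion} writes $\chr V(\lambda)$ as a sum of girdles $\Theta_\eta$ with $\eta$ dominant. I will confirm that each displayed index is dominant: $i\omega_1+(s+2j)\omega_2$, $(m_1-2j)\omega_1$ and $i\omega_1+\omega_2$ all have nonnegative labels in the stated ranges. This makes the expansion an integral combination of elements of the basis $\{\Theta_\mu:\mu\in\Pesos^+\}$.

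Second, the final assertion of \cref{prop:B2-direct} says every index occurs exactly once. Hence the coefficient of $\Theta_\mu$ on the right-hand side is $1$ if $\mu\in P_{\typeB_2}(\lambda)$ and $0$ otherwise. By uniqueness of the expansion in \cref{prop:integral-expansion}, these coefficients are exactly $a(\mu,\lambda)$.

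Third, the hypothesis $\mu\le\lambda$ is harmless. For $\mu\not\le\lambda$ the atomic number vanishes anyway, so the two cases cover all dominant $\mu$. For completeness I will note that $P_{\typeB_2}(\lambda)\subseteq[\,\cdot\,,\lambda]$, which is automatic: any index with a nonzero atomic number lies below $\lambda$ by \cref{prop:integral-expansion}. Nonnegativity, and hence the existence of an atomic decomposition, is then immediate.

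The only point needing care is that the indexing set in the corollary is literally the set of indices in \eqref{eq:B2-Xi} and \eqref{eq:B2-Psi}. For \eqref{eq:B2-Psi} with $s=0$, the parametrization $(m_1-2j)\omega_1$ with $0\le j\le (m_1-t)/2$ matches $m_1\omega_1,(m_1-2)\omega_1,\dots,t\omega_1$. The remaining cases match verbatim, so there is no real obstacle.
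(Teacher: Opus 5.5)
Your proposal is correct and follows the paper's own proof: \cref{prop:B2-direct} writes $\chr V(\lambda)$ as a sum of $\Theta_\eta$ over the pairwise distinct dominant indices $\eta\in P_{\typeB_2}(\lambda)$, and uniqueness of the expansion in \cref{prop:integral-expansion} then identifies each coefficient as $a(\mu,\lambda)$. Your extra checks, that each index is dominant and that every index lies below $\lambda$, are harmless additions that the paper leaves implicit.
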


\begin{proof}
By \cref{prop:B2-direct},
\[
   \chr V(\lambda)=\sum_{\eta\in P_{\typeB_2}(\lambda)}\Theta_\eta,
\]
and the indices $\eta$ are pairwise distinct.  Uniqueness of the triangular
expansion in \cref{prop:integral-expansion} therefore gives coefficient $1$
for $\eta\in P_{\typeB_2}(\lambda)$ and coefficient $0$ for every other
dominant $\eta\le\lambda$.
\end{proof}

For type $\typeC_2$, let $\iota$ be the root-datum isomorphism obtained by
interchanging the two nodes:
\[
 \iota(\alpha_1^{\typeB})=\alpha_2^{\typeC},\qquad
 \iota(\alpha_2^{\typeB})=\alpha_1^{\typeC},\qquad
 \iota(\omega_1^{\typeB})=\omega_2^{\typeC},\qquad
 \iota(\omega_2^{\typeB})=\omega_1^{\typeC}.
\]
It carries dominant weights to dominant weights, preserves the dominance
order, and satisfies
\[
 \iota\!\left(\chr V_B(\lambda)\right)=\chr V_C(\iota\lambda),
 \qquad
 \iota(\Theta_\eta)=\Theta_{\iota\eta}.
\]
Applying $\iota$ to \eqref{eq:B2-direct-expansion} therefore gives the
$\typeC_2$ expansion.  We denote by $P_{\typeC_2}(\lambda)$ the image of the
corresponding $P_{\typeB_2}(\iota^{-1}\lambda)$ under $\iota$.

\begin{corollary}\label{cor:C2-atomic}
For dominant $\mu\le\lambda$ in type $\typeC_2$,
\[
 a(\mu,\lambda)=
 \begin{cases}
   1,&\mu\in P_{\typeC_2}(\lambda),\\
   0,&\mu\notin P_{\typeC_2}(\lambda).
 \end{cases}
\]
In particular, every irreducible character of type $\typeC_2$ has an atomic
decomposition.
\end{corollary}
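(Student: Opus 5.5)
The plan is to transport \cref{cor:B2-atomic} along $\iota$.

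First, $\iota$ extends to a ring isomorphism $\Z[\Pesos_B]\to\Z[\Pesos_C]$, $e^\nu\mapsto e^{\iota\nu}$. It intertwines the two Weyl-group actions, because it carries simple roots to simple roots and simple coroots to simple coroots, hence $s_i^{\typeB}$ to $s_{\iota(i)}^{\typeC}$. So it restricts to an isomorphism $\Z[\Pesos_B]^{\Weyl}\to\Z[\Pesos_C]^{\Weyl}$.

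Second, I use the two identities displayed before the statement:
\[
\iota(\chr V_B(\lambda))=\chr V_C(\iota\lambda),\qquad \iota(\Theta_\eta)=\Theta_{\iota\eta}.
\]
Both follow from the root-datum isomorphism. Weight sets and multiplicities are determined by the root datum, for instance through Freudenthal's formula with the form transported. I take these identities as given. Also, $\iota$ is a bijection $\Pesos^+_B\to\Pesos^+_C$ that preserves the root order, since it maps $\Z_{\ge0}\RaizSimp$ onto $\Z_{\ge0}\RaizSimp$.

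Now fix dominant $\mu\le\lambda$ in type $\typeC_2$. Apply $\iota$ to the $\typeB_2$ expansion
\[
\chr V_B(\iota^{-1}\lambda)=\sum_{\eta\in P_{\typeB_2}(\iota^{-1}\lambda)}\Theta_\eta
\]
from \cref{prop:B2-direct}. This gives
\[
\chr V_C(\lambda)=\sum_{\eta'\in P_{\typeC_2}(\lambda)}\Theta_{\eta'}.
\]
The indices on the right are pairwise distinct, because $\iota$ is injective and the $\typeB_2$ indices are distinct. By uniqueness in \cref{prop:integral-expansion}, $a(\mu,\lambda)$ is $1$ exactly when $\mu\in P_{\typeC_2}(\lambda)$ and $0$ otherwise, and positivity follows.

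The only real obstacle is justifying the two displayed identities. I would do this by noting that the Weyl character formula is expressed entirely in terms of $\Weyl$, $\rho$ and weights, and $\iota$ preserves all three, with $\iota\rho_B=\rho_C$. Alternatively, Kostant's multiplicity formula with transported positive roots gives the same conclusion.
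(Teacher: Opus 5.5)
Your proposal is correct and follows the paper's route. You apply $\iota$ to the $\typeB_2$ expansion of \cref{prop:B2-direct}, note that injectivity of $\iota$ keeps the indices distinct, and conclude by uniqueness in \cref{prop:integral-expansion}, exactly as in \cref{cor:B2-atomic}; your added justification of the two transport identities (via $\iota\rho_{\typeB}=\rho_{\typeC}$ and the Weyl character formula) supplements what the paper simply asserts from $\iota$ being a root-datum isomorphism.
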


Taking coefficients of $e^\nu$ in \eqref{eq:B2-direct-expansion} also gives,
for \emph{dominant} $\nu$, the useful counting formula
\begin{equation}\label{eq:B2-multiplicity-count}
 m(\nu,\lambda)
   =\sum_{\eta\in P_{\typeB_2}(\lambda)}\zeta(\nu,\eta)
   =\#\{\eta\in P_{\typeB_2}(\lambda):\nu\le\eta\}.
\end{equation}
Thus the atomic expansion also gives a direct lattice-point interpretation of the multiplicity.

\subsection{Type $\typeG_2$}\label{subsec:G2}

We now prove positivity in the remaining irreducible rank-two type.  Number the
simple roots so that $\alpha_1$ is short and $\alpha_2$ is long.  Thus
\begin{equation}\label{eq:G2-simple-fundamental}
 \alpha_1=2\omega_1-\omega_2,
 \qquad
 \alpha_2=-3\omega_1+2\omega_2,
\end{equation}
and the four non-simple positive roots are
\begin{equation}\label{eq:G2-nonsimple-roots}
 \beta_1=\alpha_1+\alpha_2,\quad
 \beta_2=2\alpha_1+\alpha_2,\quad
 \beta_3=3\alpha_1+\alpha_2,\quad
 \beta_4=3\alpha_1+2\alpha_2.
\end{equation}
Let $p$ denote Kostant's partition function for all six positive roots and let
$p_*$ denote the partition function using only the four roots in
\eqref{eq:G2-nonsimple-roots}.  Both functions are extended by zero outside the
nonnegative root cone.

\begin{lemma}\label{lem:G2-partition-difference}
For every element $\xi$ of the root lattice,
\begin{equation}\label{eq:G2-partition-difference}
 p_*(\xi)
 =p(\xi)-p(\xi-\alpha_1)-p(\xi-\alpha_2)
      +p(\xi-\alpha_1-\alpha_2).
\end{equation}
\end{lemma}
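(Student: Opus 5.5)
The identity is a statement about generating functions, and I will prove it at that level. Work in the ring of formal series in $e^{-\alpha_1},e^{-\alpha_2}$, that is, formal power series supported on the cone $-\Z_{\ge0}\RaizSimp$. Kostant's partition function for the six positive roots has generating function
\[
 \sum_{\xi}p(\xi)e^{-\xi}=\prod_{\alpha\in\Raiz^+}\bigl(1-e^{-\alpha}\bigr)^{-1}.
\]
The restricted function $p_*$ has generating function $\prod_{k=1}^{4}(1-e^{-\beta_k})^{-1}$, since it counts the ways of writing $\xi$ as a nonnegative integral combination of $\beta_1,\dots,\beta_4$. Because $\Raiz^+=\{\alpha_1,\alpha_2,\beta_1,\beta_2,\beta_3,\beta_4\}$, these two series are related by
\[
 \sum_{\xi}p_*(\xi)e^{-\xi}
  =\bigl(1-e^{-\alpha_1}\bigr)\bigl(1-e^{-\alpha_2}\bigr)\sum_{\xi}p(\xi)e^{-\xi}.
\]

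The next step is to expand $(1-e^{-\alpha_1})(1-e^{-\alpha_2})=1-e^{-\alpha_1}-e^{-\alpha_2}+e^{-\alpha_1-\alpha_2}$. Comparing the coefficients of $e^{-\xi}$ on both sides then gives \eqref{eq:G2-partition-difference} directly. Each coefficient extraction is a finite sum, because all series involved are supported in a translate of the cone $-\Z_{\ge0}\RaizSimp$.

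The only point needing care is the convention that $p$ and $p_*$ vanish off the nonnegative root cone. Both functions are defined as coefficients of series supported on $-\Z_{\ge0}\RaizSimp$, so the zero extension is automatic. This shows the identity holds for every $\xi\in\RaizReti$, including those $\xi$ for which some shifted argument $\xi-\alpha_1$, $\xi-\alpha_2$, or $\xi-\alpha_1-\alpha_2$ leaves the cone.

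A bijective argument is also available: $p(\xi)$ counts decompositions of $\xi$ into positive roots, and one can sort them by the multiplicities of $\alpha_1$ and $\alpha_2$. This gives $p(\xi)=\sum_{a,b\ge0}p_*(\xi-a\alpha_1-b\alpha_2)$, and a two-dimensional telescoping in $a$ and $b$ recovers the same formula. I would present the generating-function proof, since it is shorter. I expect no real obstacle in either version.
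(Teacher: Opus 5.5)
Your proposal is correct and follows the paper's proof: multiply the Kostant generating function by the two simple-root factors, which cancels them and leaves the generating function of $p_*$, then compare coefficients. The only difference is that you index by $e^{-\xi}$ where the paper uses $e^{\xi}$, which changes nothing. Your remark about the zero extension off the cone is a reasonable extra check.
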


\begin{proof}
In the completed group algebra,
\[
 \sum_\xi p(\xi)e^\xi
   =\prod_{\alpha\in\Raiz^+}(1-e^\alpha)^{-1}.
\]
Multiplication by $(1-e^{\alpha_1})(1-e^{\alpha_2})$ cancels precisely the
two factors belonging to the simple roots, leaving
$\prod_{j=1}^4(1-e^{\beta_j})^{-1}$.  Comparing the coefficient of $e^\xi$
gives \eqref{eq:G2-partition-difference}.
\end{proof}

We shall also use the following elementary monotonicity fact.  It is included
to make the chain case below independent of any general atomicity statement.

\begin{lemma}\label{lem:root-string-monotonicity}
Let $\lambda$ and $\nu$ be dominant weights and let $\beta$ be a positive root.
Then
\[
       m(\nu,\lambda)\ge m(\nu+\beta,\lambda).
\]
\end{lemma}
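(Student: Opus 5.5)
The plan is to prove the inequality via $\mathfrak{sl}_2$-string theory for the root $\beta$, using the Weyl group to move $\nu+\beta$ toward $\nu$. Let $\mathfrak s_\beta=\langle e_\beta,f_\beta,\beta^\vee\rangle$ be the $\mathfrak{sl}_2$-subalgebra attached to $\beta$. The sum $\bigoplus_{k\in\Z}V(\lambda)_{\nu+k\beta}$ is a finite-dimensional $\mathfrak s_\beta$-module, whose $\beta^\vee$-eigenvalue on $V(\lambda)_{\nu+k\beta}$ is $\langle\nu,\beta^\vee\rangle+2k$. The classical fact I will use is unimodality: along such a string, the weight-space dimensions are symmetric about eigenvalue $0$ and weakly decreasing as $|\text{eigenvalue}|$ grows. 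This holds because the string decomposes into irreducible $\mathfrak{sl}_2$-modules, each of which has one-dimensional weight spaces forming an unbroken symmetric interval of eigenvalues.

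Set $c=\langle\nu,\beta^\vee\rangle$. Since $\nu$ is dominant and $\beta^\vee$ is a nonnegative combination of simple coroots, $c\ge0$. The weight $\nu$ then sits at eigenvalue $c\ge0$, and $\nu+\beta$ sits at eigenvalue $c+2>c\ge0$. Both lie on the same side of $0$, and $\nu+\beta$ is strictly farther out. Unimodality therefore gives $\dim V(\lambda)_{\nu+\beta}\le\dim V(\lambda)_\nu$, which is the claim.

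One detail needs care: the string is an $\mathfrak s_\beta$-module only because $\mathfrak g_{\pm\beta}$ shifts weights by $\pm\beta$. To keep the argument at the level of elementary facts the paper already uses, I can avoid choosing $e_\beta$ explicitly. Since $\beta$ is $W$-conjugate to a simple root, $\beta=w\alpha_i$, I will apply the simple-root string argument already quoted in the introduction (for $\mathfrak{sl}_2(\alpha_i)$) to $w^{-1}\nu$. Weight multiplicities are $W$-invariant, so this transports the statement. The condition $c\ge0$ becomes $\langle w^{-1}\nu,\alpha_i^\vee\rangle\ge0$, and the unimodality of the $\alpha_i$-string through $w^{-1}\nu$ finishes the argument.

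The main obstacle is the unimodality itself, specifically the monotone decrease away from the center rather than just the symmetry stated in the introduction. I will prove it directly. For eigenvalue $t\ge0$, the raising operator from eigenvalue $t$ to $t+2$ is surjective, equivalently the lowering map from $t+2$ to $t$ is injective, by the irreducible decomposition of finite-dimensional $\mathfrak{sl}_2$-modules. Applying this with $t=c$ yields the inequality, and dominance of $\lambda$ is not needed beyond finite-dimensionality.
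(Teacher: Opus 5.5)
Your proposal is correct and is essentially the paper's proof: view the $\beta$-string $\bigoplus_k V(\lambda)_{\nu+k\beta}$ as a finite-dimensional $\mathfrak{sl}_2(\beta)$-module, note that $\langle\nu,\beta^\vee\rangle\ge0$ by dominance of $\nu$, and use that in each irreducible summand the weight $h+2$ with $h\ge0$ forces the weight $h$ (equivalently, $f_\beta$ is injective from eigenvalue $c+2$ to eigenvalue $c$). Your optional detour through $\beta=w\alpha_i$ and Weyl invariance is also valid, since $\langle w^{-1}\nu,\alpha_i^\vee\rangle=\langle\nu,\beta^\vee\rangle$, but it is unnecessary: the paper simply uses the $\mathfrak{sl}_2$-subalgebra attached to $\beta$ directly.
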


\begin{proof}
Consider the $\mathfrak{sl}_2$-subalgebra corresponding to $\beta$.  The sum
\[
   \bigoplus_{k\in\mathbb Z}V(\lambda)_{\nu+k\beta}
\]
is a finite-dimensional module for this $\mathfrak{sl}_2$.  Its Cartan
eigenvalue on $V(\lambda)_{\nu+k\beta}$ is
$\langle\nu,\beta^\vee\rangle+2k$.  Since $\nu$ is dominant,
$\langle\nu,\beta^\vee\rangle\ge0$.  In every finite-dimensional irreducible
$\mathfrak{sl}_2$-module, occurrence of the weight $h+2$ with $h\ge0$ forces
occurrence of the weight $h$, with the same one-dimensional contribution.
After decomposing the displayed module into irreducible
$\mathfrak{sl}_2$-modules and summing these contributions, the claimed
inequality follows.
\end{proof}

The diamond is the only rank-two crown for which monotonicity alone does not
settle the sign.  Write
\[
   \lambda_i=\langle\lambda,\alpha_i^\vee\rangle,
   \qquad
   \mu_i=\langle\mu,\alpha_i^\vee\rangle.
\]

\begin{proposition}\label{prop:G2-diamond}
Assume that $\QO(\mu,\lambda)$ is a diamond and put
$\delta=\lambda-\mu$.  For $j=1,\ldots,4$, let $n_j$ be the multiplicity of
$\beta_j$ in a partition of $\delta$ by the four roots
\eqref{eq:G2-nonsimple-roots}.  Then
\begin{equation}\label{eq:G2-atomic-count}
 a(\mu,\lambda)
 =\#\left\{
 (n_1,n_2,n_3,n_4)\in\mathbb Z_{\ge0}^4:
 \delta=\sum_{j=1}^4n_j\beta_j,
 \ n_2\le\lambda_1,\ n_4\le\lambda_2
 \right\}.
\end{equation}
In particular, $a(\mu,\lambda)\ge0$.
\end{proposition}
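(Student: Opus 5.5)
When the crown is a diamond, \cref{cor:rank2-crowns} and the crosscut formula \eqref{eq:atomic-finite-difference} reduce the problem to a Kostant computation. The two atoms are $\gamma_1,\gamma_2$, with join $\gamma_1\vee\gamma_2$, and
\[
 a(\mu,\lambda)=m(\mu,\lambda)-m(\gamma_1,\lambda)-m(\gamma_2,\lambda)+m(\gamma_1\vee\gamma_2,\lambda).
\]

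\textbf{Step 1: identify the atoms.} By \cref{cor:all-simple-root-atoms}, a diamond has exactly two atoms. This forces $\mu+\alpha_1$ and $\mu+\alpha_2$ to be dominant and $\delta\ge\alpha_1+\alpha_2$. Hence $\gamma_i=\mu+\alpha_i$ and the join is $\mu+\alpha_1+\alpha_2$. It is dominant because it equals the coordinatewise join, it lies below $\lambda$, and the dominant join can only be larger yet is bounded by $\widehat{\mu+\alpha_1+\alpha_2}$, which \cref{lem:dominant-covering-join} identifies. So $a(\mu,\lambda)$ is the Boolean second difference of $\nu\mapsto m(\nu,\lambda)$ at $\mu$ in the directions $\alpha_1,\alpha_2$.

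\textbf{Step 2: apply Kostant's formula.} Insert
\[
 m(\nu,\lambda)=\sum_{w\in\Weyl}(-1)^{\ell(w)}p\bigl(w(\lambda+\rho)-\rho-\nu\bigr)
\]
into this difference. Exchanging sums gives, for each $w$, the second difference
\[
 p(\xi_w)-p(\xi_w-\alpha_1)-p(\xi_w-\alpha_2)+p(\xi_w-\alpha_1-\alpha_2)
\]
evaluated at $\xi_w=w(\lambda+\rho)-\rho-\mu$. By \cref{lem:G2-partition-difference} this equals $p_*(\xi_w)$. Therefore
\[
 a(\mu,\lambda)=\sum_{w\in\Weyl}(-1)^{\ell(w)}p_*\bigl(w(\lambda+\rho)-\rho-\mu\bigr),
\]
a Kostant-type alternating sum over the twelve elements of the $\typeG_2$ Weyl group, using only the nonsimple roots.

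\textbf{Step 3: reduce to three Weyl elements.} I expect only $w=1$, $s_1$ and $s_2$ to contribute. For $w=s_i$ one has $s_i(\lambda+\rho)-\rho=\lambda-(\lambda_i+1)\alpha_i$. Any $\xi$ in the cone spanned by the $\beta_j$ satisfies $\langle$coefficient of $\alpha_1\rangle\ge\langle$coefficient of $\alpha_2\rangle$, and the $\alpha_1$-coefficient is at most three times the $\alpha_2$-coefficient. For longer $w$, the vector $w(\lambda+\rho)-\rho-\mu$ leaves this cone. This should follow from dominance of $\mu$ and $\lambda$: $w(\lambda+\rho)$ drops by $\lambda+\rho$ paired against roots made negative by $w$, and both boundary rays of the $\beta$-cone (namely $\beta_1$ and $\beta_3$) get violated once $w$ has length at least $2$. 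Checking this carefully for all ten longer elements is the main obstacle. I would handle it with the explicit coordinates \eqref{eq:G2-simple-fundamental}, writing $w(\lambda+\rho)-\rho-\mu$ in simple-root coordinates $(x,y)$ and testing the inequalities $y\le x\le 3y$.

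\textbf{Step 4: read off the count.} The result is
\[
 a(\mu,\lambda)=p_*(\delta)-p_*\bigl(\delta-(\lambda_1+1)\alpha_1\bigr)-p_*\bigl(\delta-(\lambda_2+1)\alpha_2\bigr).
\]
To match \eqref{eq:G2-atomic-count}, I would construct injections. Partitions with $n_2\ge\lambda_1+1$ correspond to partitions of $\delta-(\lambda_1+1)\beta_2$, which is not literally $\delta-(\lambda_1+1)\alpha_1$. So the correct route is instead to show that the $s_1$-term counts exactly the partitions of $\delta$ violating $n_2\le\lambda_1$, that the $s_2$-term counts those violating $n_4\le\lambda_2$, and that no partition violates both when the crown is a diamond, since a double violation would push $\delta$ past $\lambda$'s constraints. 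The bijection would come from a shift along the $\alpha_1$-string, using $\beta_2=s_1$-fixed structure ($s_1$ permutes $\{\beta_1,\beta_3\}$ and fixes $\beta_2$ up to sign conventions). If this bijection fails to be clean, the fallback is to compute both sides as quasi-polynomials piecewise and compare. Nonnegativity is then immediate from the count.
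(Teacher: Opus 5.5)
Your Steps 1--2 follow the paper's route and are sound after one repair. Being the coordinatewise join does not make $\mu+\alpha_1+\alpha_2$ dominant. You need its labels, $(\mu_1-1,\mu_2+1)$, which are nonnegative because dominance of $\mu+\alpha_2$ and $\mu+\alpha_1$ forces $\mu_1\ge3$ and $\mu_2\ge1$; only then is $\widehat{\mu+\alpha_1+\alpha_2}=\mu+\alpha_1+\alpha_2$. Step 3 is left as a plan, and its heuristic is inaccurate: for $w=s_1s_2$ the inequality $3v\ge u$ still holds, and only $u\ge v$ fails. A brute-force check would work, but the paper's argument is short. Your two inequalities are $\langle\xi,\beta_1^\vee\rangle\ge0$ and $\langle\xi,\beta_3^\vee\rangle\ge0$, with $\beta_1^\vee=\alpha_1^\vee+3\alpha_2^\vee$ and $\beta_3^\vee=\alpha_1^\vee+\alpha_2^\vee$. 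Strict dominance of $\mu+\rho$ then forces $w^{-1}\beta_1>0$ and $w^{-1}\beta_3>0$. But for a reduced word beginning $s_{i_1}s_{i_2}$, the root $s_{i_1}\alpha_{i_2}\in\{\beta_1,\beta_3\}$ lies in the inversion set of $w^{-1}$, so only $w\in\{1,s_1,s_2\}$ survive.

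The genuine gap is Step 4, which carries the whole content of the proposition. Starting from $p_*(\delta)-p_*(\delta-r_1\alpha_1)-p_*(\delta-r_2\alpha_2)$ with $r_i=\lambda_i+1$, you must identify the two subtracted terms with disjoint subsets of $\mathcal P_*(\delta)$, and you give no argument for this. The mechanism you suggest rests on a false computation: in fact $s_1\beta_1=\beta_2$, $s_1\beta_3=\alpha_2$ and $s_1\beta_4=\beta_4$, so $s_1$ neither fixes $\beta_2$ nor swaps $\beta_1$ and $\beta_3$.

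The missing idea is the pair of exchange relations $\alpha_1=\beta_2-\beta_1$ and $\alpha_2=\beta_4-\beta_3$.
\begin{itemize}
\item If $\sum n_j\beta_j=\delta-r_1\alpha_1$, pairing with $\alpha_1^\vee$ gives $-n_1+n_2+3n_3=-r_1-(\mu_1+1)$, so $n_1\ge r_1$. Hence $(n_1,n_2,n_3,n_4)\mapsto(n_1-r_1,n_2+r_1,n_3,n_4)$ is a bijection onto $\{n_2\ge r_1\}\subseteq\mathcal P_*(\delta)$.
\item Likewise, pairing with $\alpha_2^\vee$ gives $n_1-n_3+n_4=-r_2-(\mu_2+1)$, so $n_3\ge r_2$. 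Moving $r_2$ from $n_3$ to $n_4$ is then a bijection onto $\{n_4\ge r_2\}$.
\item Disjointness needs only dominance of $\mu$, not the diamond hypothesis. A partition of $\delta$ satisfies $\lambda_1=\mu_1-n_1+n_2+3n_3$ and $\lambda_2=\mu_2+n_1-n_3+n_4$. So $n_2\ge\lambda_1+1$ and $n_4\ge\lambda_2+1$ would give $n_1\ge\mu_1+3n_3+1$ and $n_3\ge\mu_2+n_1+1$, hence $n_1\ge3n_1+\mu_1+3\mu_2+4$, which is impossible.
\end{itemize}
Your remark that a double violation would ``push $\delta$ past $\lambda$'s constraints'' is not an argument, and the piecewise quasi-polynomial fallback is not a substitute for this count. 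Without it, the nonnegativity claim is unproved.
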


\begin{proof}
Since the crown has two atoms, \cref{cor:all-simple-root-atoms} gives that
$\mu+\alpha_1$ and $\mu+\alpha_2$ are dominant and lie below $\lambda$.
From \eqref{eq:G2-simple-fundamental} this implies
$\mu_1\ge3$ and $\mu_2\ge1$; in particular
$\mu+\alpha_1+\alpha_2=(\mu_1-1)\omega_1+(\mu_2+1)\omega_2$ is dominant.
It is the join of the two atoms.  Hence
\cref{cor:atomic-finite-difference} gives
\begin{equation}\label{eq:G2-diamond-difference}
\begin{split}
 a(\mu,\lambda)={}&m(\mu,\lambda)
 -m(\mu+\alpha_1,\lambda)-m(\mu+\alpha_2,\lambda)\\
 &+m(\mu+\alpha_1+\alpha_2,\lambda).
\end{split}
\end{equation}

Set $L=\lambda+\rho$ and $M=\mu+\rho$.  Kostant's multiplicity formula
\cite{MR0109192}, followed by \cref{lem:G2-partition-difference}, transforms
\eqref{eq:G2-diamond-difference} into
\begin{equation}\label{eq:G2-Weyl-pstar}
 a(\mu,\lambda)
   =\sum_{w\in W}(-1)^{\ell(w)}p_*(wL-M).
\end{equation}
We first determine which Weyl-group terms can be nonzero.

If $p_*(u\alpha_1+v\alpha_2)\ne0$, write
\[
 u\alpha_1+v\alpha_2=\sum_{j=1}^4 n_j\beta_j,
 \qquad n_j\in\mathbb Z_{\ge0}.
\]
Using \eqref{eq:G2-nonsimple-roots},
\[
 u=n_1+2n_2+3n_3+3n_4,
 \qquad
 v=n_1+n_2+n_3+2n_4.
\]
Consequently
\[
 u-v=n_2+2n_3+n_4\ge0,
 \qquad
 3v-u=2n_1+n_2+3n_4\ge0,
\]
which is equivalent to
\begin{equation}\label{eq:G2-pstar-cone}
       v\le u\le3v.
\end{equation}
Let
$\gamma=\beta_1=\alpha_1+\alpha_2$ and
$\eta=\beta_3=3\alpha_1+\alpha_2$.  The identities
\[
 \gamma^\vee=\alpha_1^\vee+3\alpha_2^\vee,
 \qquad
 \eta^\vee=\alpha_1^\vee+\alpha_2^\vee
\]
follow by pairing both sides with $\alpha_1$ and $\alpha_2$ and using the
$\typeG_2$ Cartan matrix.  Therefore \eqref{eq:G2-pstar-cone} is equivalent to
\[
 \langle u\alpha_1+v\alpha_2,\gamma^\vee\rangle=3v-u\ge0,
 \qquad
 \langle u\alpha_1+v\alpha_2,\eta^\vee\rangle=u-v\ge0.
\]
Consequently, if $p_*(wL-M)\ne0$, strict dominance of $M$ gives
\[
       \langle wL,\gamma^\vee\rangle>0,
       \qquad
       \langle wL,\eta^\vee\rangle>0.
\]
Since $L$ is strictly dominant, this forces both $w^{-1}\gamma$ and
$w^{-1}\eta$ to be positive roots.

The only possibilities are $w=1,s_1,s_2$.  To see this, let
$\ell(w)\ge2$ and choose a reduced expression
$w=s_{i_1}\cdots s_{i_r}$.  Since $W(\typeG_2)$ is dihedral, the initial two
letters are either $s_1s_2$ or $s_2s_1$.  The inversion set of $w^{-1}$
contains
\[
 \alpha_{i_1},\qquad s_{i_1}(\alpha_{i_2}).
\]
Using
\[
 s_1(\alpha_2)=3\alpha_1+\alpha_2=\eta,
 \qquad
 s_2(\alpha_1)=\alpha_1+\alpha_2=\gamma,
\]
we obtain $w^{-1}\eta<0$ when the word begins with $s_1s_2$, and
$w^{-1}\gamma<0$ when it begins with $s_2s_1$.  Both alternatives contradict
the positivity of $w^{-1}\gamma$ and $w^{-1}\eta$ established above.
Therefore $\ell(w)\le1$, so $w\in\{1,s_1,s_2\}$.

Put $r_i=\langle L,\alpha_i^\vee\rangle=\lambda_i+1$.  Equation
\eqref{eq:G2-Weyl-pstar} therefore reduces to
\begin{equation}\label{eq:G2-three-terms}
 a(\mu,\lambda)
   =p_*(\delta)-p_*(\delta-r_1\alpha_1)
                    -p_*(\delta-r_2\alpha_2).
\end{equation}
It remains to identify the two subtracted terms inside the same set of
partitions counted by $p_*(\delta)$.

Suppose
\[
 \delta-r_1\alpha_1
    =n_1\beta_1+n_2\beta_2+n_3\beta_3+n_4\beta_4.
\]
Pairing with $\alpha_1^\vee$ gives
\[
   -n_1+n_2+3n_3=-r_1-(\mu_1+1),
\]
and hence $n_1\ge r_1$.  Let $\mathcal P_*(\delta)$ denote the set of all partitions of $\delta$ by the
four roots $\beta_j$.  Since $\alpha_1=\beta_2-\beta_1$, the map
\begin{equation}\label{eq:G2-bijection-one}
 (n_1,n_2,n_3,n_4)
 \longmapsto(n_1-r_1,n_2+r_1,n_3,n_4)
\end{equation}
is a bijection from the partitions counted by
$p_*(\delta-r_1\alpha_1)$ onto the set
\begin{equation}\label{eq:G2-B1}
  B_1=\{(n_j)\in\mathcal P_*(\delta):n_2\ge r_1\}.
\end{equation}

For the second subtracted term, suppose
\[
 \delta-r_2\alpha_2
    =n_1\beta_1+n_2\beta_2+n_3\beta_3+n_4\beta_4.
\]
Pairing with $\alpha_2^\vee$ yields
\[
    n_1-n_3+n_4=-r_2-(\mu_2+1),
\]
so $n_3\ge r_2$.  Since $\alpha_2=\beta_4-\beta_3$, the map
\begin{equation}\label{eq:G2-bijection-two}
 (n_1,n_2,n_3,n_4)
 \longmapsto(n_1,n_2,n_3-r_2,n_4+r_2)
\end{equation}
is a bijection onto
\begin{equation}\label{eq:G2-B2}
  B_2=\{(n_j)\in\mathcal P_*(\delta):n_4\ge r_2\}.
\end{equation}

Finally, $B_1$ and $B_2$ are disjoint.  Indeed, for a partition
$(n_j)\in\mathcal P_*(\delta)$ one has
\begin{align*}
 \lambda_1&=\mu_1-n_1+n_2+3n_3,\\
 \lambda_2&=\mu_2+n_1-n_3+n_4.
\end{align*}
If the partition belonged to both $B_1$ and $B_2$, then
$n_2\ge\lambda_1+1$ and $n_4\ge\lambda_2+1$, whence
\[
       n_1\ge\mu_1+3n_3+1,
       \qquad
       n_3\ge\mu_2+n_1+1.
\]
Combining the two inequalities gives
\[
       n_1\ge3n_1+\mu_1+3\mu_2+4,
\]
which is impossible.  Therefore \eqref{eq:G2-three-terms} counts precisely the
partitions in $\mathcal P_*(\delta)\setminus(B_1\cup B_2)$.  Since
$r_i=\lambda_i+1$, this is exactly the set displayed in
\eqref{eq:G2-atomic-count}.
\end{proof}

\begin{example}\label{ex:G2-coefficient-two}
The atomic numbers in type $\typeG_2$ need not be $0$ or $1$.  The smallest example is
obtained from
\[
   \lambda=\rho=\omega_1+\omega_2=(11),
   \qquad
   \mu=\omega_1=(10).
\]
Indeed,
\[
   \chr V(11)=\Theta_{(11)}+\Theta_{(20)}+2\Theta_{(10)},
\]
and therefore
\[
   a(10,11)=2.
\]
\end{example}

\begin{theorem}\label{thm:G2-atomic}
For every pair of dominant weights $\mu\le\lambda$ in type $\typeG_2$,
\[
             a(\mu,\lambda)\ge0.
\]
Consequently every irreducible character of type $\typeG_2$ admits an atomic
decomposition.
\end{theorem}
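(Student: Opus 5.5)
The proof splits according to the shape of the crown $\QO(\mu,\lambda)$. By \cref{cor:rank2-crowns}, in rank two this crown is a singleton, a two-element chain, or a diamond, and \cref{eq:crown-formula} turns each shape into an explicit signed sum of multiplicities of $V(\lambda)$. The diamond case is already settled by \cref{prop:G2-diamond}, which gives $a(\mu,\lambda)$ as a cardinality and hence as a nonnegative integer. It therefore remains to treat the singleton and the chain.

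\emph{Singleton crown.} Here $\QO(\mu,\lambda)=\{\mu\}$, and since $\Mu(\mu,\mu)=1$ we get $a(\mu,\lambda)=m(\mu,\lambda)\ge0$. This covers in particular $\mu=\lambda$, and every $\mu$ whose interval $[\mu,\lambda]$ has no atoms.

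\emph{Chain crown.} Now there is exactly one atom $\gamma$. By \cref{prop:crosscut-lower} or \cref{cor:atomic-finite-difference},
\[
 a(\mu,\lambda)=m(\mu,\lambda)-m(\gamma,\lambda),
\]
and by \cref{lem:atoms-simple-roots} we have $\gamma=\widehat{\mu+\alpha_i}$ for some $i$. The plan is to apply \cref{lem:root-string-monotonicity} with $\nu=\mu$. This requires $\gamma-\mu$ to be a positive root, or at least $\gamma$ to be reachable from $\mu$ by a chain of dominant steps $\nu\mapsto\nu+\beta$ with $\beta$ a positive root; in the latter case we apply the lemma repeatedly.

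For $\typeG_2$ this is a finite computation. Using \eqref{eq:G2-simple-fundamental}, we read off $\widehat{\mu+\alpha_i}$ for $i=1,2$ according to the Dynkin labels $(\mu_1,\mu_2)$:
\begin{itemize}
\item If $\mu+\alpha_i$ is dominant, then $\gamma-\mu=\alpha_i$ is a positive root.
\item If $\mu+\alpha_1$ is not dominant, i.e.\ $\mu_1\le1$ or $\mu_2=0$, we add further simple roots until the result is dominant. I expect the least dominant majorant to be $\mu+\beta$ with $\beta$ one of $\beta_1,\beta_2,\dots$. For example, $\mu_2=0$ gives $\mu+\alpha_1+\alpha_2=\mu+\beta_1$ when $\mu_1\ge1$, and $\mu_1=0$ or $1$ forces a nonsimple root such as $2\alpha_1+\alpha_2$.
\item The same check is done for $\mu+\alpha_2$ when $\mu_1<3$.
\end{itemize}
Each case is a short check of which labels become negative. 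Where $\gamma-\mu$ is not itself a root, we instead exhibit an intermediate dominant weight $\mu+\beta\le\gamma$ with $\beta$ a root such that $\gamma-(\mu+\beta)$ is again a root, and chain the monotonicity inequality twice.

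\emph{Main obstacle.} The hardest step is this case analysis of dominant stabilization near the walls, where $\mu_1\in\{0,1,2\}$ or $\mu_2=0$. There I must confirm that $\widehat{\mu+\alpha_i}-\mu$ is always a nonnegative combination of positive roots through dominant intermediate weights. If some case fails, the fallback is to use the fact that the chain case has only one atom: this forces $\supp(\lambda-\mu)$ and the configuration to be very restricted. We then either reduce via \cref{thm:support-reduction} to rank one, where $a\in\{0,1\}$, or evaluate the two multiplicities directly with Kostant's formula as in \eqref{eq:G2-Weyl-pstar}, restricted to the relevant single simple-root difference.
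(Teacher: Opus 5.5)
Your argument is correct and is essentially the paper's proof: the rank-two crown trichotomy, $a(\mu,\lambda)=m(\mu,\lambda)$ for a singleton, \cref{prop:G2-diamond} for the diamond, and \cref{lem:root-string-monotonicity} for the chain once $\gamma-\mu$ is known to be a positive root; the paper takes that last fact from Stembridge's covering theorem \cite[Theorem~2.8]{Stembridge} rather than checking it by hand. Your hand check does close in one step, so the chaining and the fallback are unnecessary: $\widehat{\mu+\alpha_1}-\mu$ is $\alpha_1$, $\alpha_1+\alpha_2$, or $2\alpha_1+\alpha_2$ according as $\mu_2\ge1$, $\mu_2=0<\mu_1$, or $\mu=0$, and $\widehat{\mu+\alpha_2}-\mu$ is $\alpha_2$, $\alpha_1+\alpha_2$, or $2\alpha_1+\alpha_2$ according as $\mu_1\ge3$, $\mu_1\in\{1,2\}$, or $\mu_1=0$ (note that $\mu+\alpha_1$ fails to be dominant only when $\mu_2=0$, not when $\mu_1\le1$).
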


\begin{proof}
By \cref{cor:rank2-crowns}, the crown is a singleton, a chain, or a diamond.
In the singleton case \eqref{eq:crown-formula} gives
$a(\mu,\lambda)=m(\mu,\lambda)\ge0$.  In the chain case, if $\gamma$ is the
unique atom, then
\[
        a(\mu,\lambda)=m(\mu,\lambda)-m(\gamma,\lambda).
\]
By Stembridge's classification of coverings
\cite[Theorem~2.8]{Stembridge}, $\gamma-\mu$ is a positive root, so the last
difference is nonnegative by \cref{lem:root-string-monotonicity}.  The diamond
case is \cref{prop:G2-diamond}.
\end{proof}

\begin{remark}
The $\typeG_2$ proof uses only the crown structure, Kostant's multiplicity formula,
and elementary root combinatorics.  It is therefore independent of the
character recursion.  The stronger $q$-atomic result of
\cite{Muniz-Plaza-Rojas-G2} is obtained by different methods.
\end{remark}

\begin{proof}[Proof of \cref{thm:rank2-all}]
The rank-one case was recorded at the beginning of the section.  The cases
$\typeA_1\times\typeA_1$, $\typeA_2$, $\typeB_2$, and $\typeC_2$ follow from
\cref{prop:A1xA1-direct,prop:A2-direct,cor:B2-atomic,cor:C2-atomic},
respectively, and type $\typeG_2$ is \cref{thm:G2-atomic}, with the diamond
coefficients given by \cref{prop:G2-diamond}.
\end{proof}

\begin{corollary}
\label{cor:rank2-support}
Let $\mu\le\lambda$ be dominant weights in an arbitrary semisimple root
system, and let $I=\supp(\lambda-\mu)$.  If every connected component of
$\Raiz_I$ has rank at most two, then
\[
       a(\mu,\lambda)\ge0.
\]
Consequently, every negative atomic number has a connected support component
of rank at least three.  \Cref{thm:DE-obstruction} sharpens this later to a
connected component of type $\typeD$ or $\typeE$.
\end{corollary}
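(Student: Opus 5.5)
The plan is to combine the support reduction of \cref{thm:support-reduction} with the rank-two positivity of \cref{thm:rank2-all}. Fix dominant weights $\mu\le\lambda$ and put $I=\supp(\lambda-\mu)$. If $\mu\nleq\lambda$ there is nothing to prove, since $a(\mu,\lambda)=0$ by \cref{prop:integral-expansion}. If $I=\varnothing$ then $\mu=\lambda$ and $a(\lambda,\lambda)=1$; this follows, for example, from \eqref{eq:mobius-atomic}, because the interval is a singleton and $m(\lambda,\lambda)=1$.

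Otherwise decompose $I=I_1\sqcup\cdots\sqcup I_t$ into connected components. Part \ref{item:support-product} of \cref{thm:support-reduction} gives
\[
 a(\mu,\lambda)=\prod_{r=1}^t a(\mu_{I_r},\lambda_{I_r}).
\]
By hypothesis each $\Raiz_{I_r}$ is an irreducible root system of rank one or two, so it is of type $\typeA_1$, $\typeA_2$, $\typeB_2=\typeC_2$, or $\typeG_2$. The projected weights $\mu_{I_r}$ and $\lambda_{I_r}$ are dominant for $\Raiz_{I_r}$, because their Dynkin labels are labels of dominant weights. They also satisfy $\mu_{I_r}\le\lambda_{I_r}$, because $\lambda-\mu$ restricted to the $I_r$-part is a nonnegative combination of the simple roots in $I_r$. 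Hence \cref{thm:rank2-all}, together with the rank-one case recorded at the start of \cref{sec:rank2}, shows that each factor is nonnegative, and so is the product.

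The step needing the most care is checking that the projected pair really lives in the subsystem in the sense in which \cref{thm:rank2-all} applies. Concretely, one must confirm that $\lambda_{I_r}-\mu_{I_r}$ equals the $I_r$-component of $\lambda-\mu$ written in the simple roots of $\Raiz_{I_r}$. This holds because the Cartan matrix of $\Raiz_I$ is block diagonal along the components and labels are computed by pairing with $\alpha_i^\vee$ for $i\in I_r$. The contrapositive gives the final sentence of the statement: if $a(\mu,\lambda)<0$, then some component of $\supp(\lambda-\mu)$ must have rank at least three, which is also consistent with \cref{cor:minimal-obstruction-connected}.
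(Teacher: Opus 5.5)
Your proposal is correct and follows essentially the same route as the paper. The paper's proof factors $a(\mu,\lambda)$ over the connected components of $\supp(\lambda-\mu)$ by \cref{thm:support-reduction} and applies \cref{thm:rank2-all}, together with the rank-one case, to each factor. Your extra check that each projected pair is a comparable pair of dominant weights in $\Raiz_{I_r}$ is a valid elaboration of what that reduction already provides.
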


\begin{proof}
Apply \cref{thm:support-reduction}.
Every connected component of rank one or two has a nonnegative atomic number
by \cref{thm:rank2-all}; their product is nonnegative.
\end{proof}

\section{Atomic positivity on path Dynkin diagrams}\label{sec:path-positive}

We now prove positivity in every finite type whose Dynkin diagram is a path.
Here and below, a Dynkin diagram is called a \emph{path} when its underlying
unoriented graph is a path; edge multiplicities and arrows are retained in the
Cartan data but are ignored in this graph-theoretic terminology.  The
irreducible systems in question are
\[
   \typeA_n,\qquad \typeB_n,\qquad \typeC_n,\qquad \typeF_4,
   \qquad \typeG_2.
\]
The crosscut formula leaves two tasks.  First, on a path the dominant join of
any collection of atoms is Weyl-conjugate to the corresponding raw sum of
their differences.  Second, the raw Boolean finite difference can be peeled
off one root direction at a time: along an oriented path, alternating primitive
kernels and lowering quotients turns it into the dimension of an explicit
successive subquotient.

We begin with an order-theoretic observation.

\begin{lemma}\label{lem:path-unit-stabilization}
Let $\nu$ be a weight.  Suppose that
\[
   \nu=\nu_0,\nu_1,\ldots,\nu_t
\]
is a sequence such that $\nu_t$ is dominant and, for every $q$, there is a
simple root $\alpha_{i_q}$ satisfying
\[
   \langle\nu_{q-1},\alpha_{i_q}^\vee\rangle=-1,
   \qquad
   \nu_q=s_{i_q}\nu_{q-1}=\nu_{q-1}+\alpha_{i_q}.
\]
Then $\nu_t=\widehat\nu$.  In particular $\widehat\nu$ is Weyl-conjugate to
$\nu$.
\end{lemma}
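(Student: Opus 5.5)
We prove the two inequalities $\nu_t\ge\widehat\nu$ and $\nu_t\le\widehat\nu$ separately. The first is immediate. Each step adds a simple root, so $\nu\le\nu_1\le\cdots\le\nu_t$. Thus $\nu_t$ is a dominant majorant of $\nu$ in its root-lattice coset, and minimality of the least dominant majorant gives $\widehat\nu\le\nu_t$. The Weyl-conjugacy clause also follows at once, because $\nu_t=s_{i_t}\cdots s_{i_1}\nu$.

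For the reverse inequality we show by induction on $q$ that $\nu_q\le\xi$ for \emph{every} dominant weight $\xi$ with $\nu\le\xi$. Taking $\xi=\widehat\nu$ at $q=t$ then gives $\nu_t\le\widehat\nu$. The case $q=0$ is the hypothesis on $\xi$. For the inductive step, suppose $\nu_{q-1}\le\xi$ and write $i=i_q$. We need $\nu_{q-1}+\alpha_i\le\xi$. Since $\nu_{q-1}\le\xi$, this fails only if the $\alpha_i$-coefficient of $\xi-\nu_{q-1}$ is zero. We rule this out by contradiction.

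Suppose the $\alpha_i$-coefficient of $\xi-\nu_{q-1}$ is zero, and write $\xi-\nu_{q-1}=\sum_{j\ne i}c_j\alpha_j$ with $c_j\ge0$. Pairing with $\alpha_i^\vee$ gives
\[
 \langle\xi,\alpha_i^\vee\rangle
 =\langle\nu_{q-1},\alpha_i^\vee\rangle+\sum_{j\ne i}c_j\langle\alpha_j,\alpha_i^\vee\rangle
 =-1+\sum_{j\ne i}c_j\langle\alpha_j,\alpha_i^\vee\rangle\le-1.
\]
The last inequality holds because the off-diagonal Cartan entries are nonpositive. This contradicts the dominance of $\xi$, so the $\alpha_i$-coefficient is at least $1$. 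Hence $\nu_q=\nu_{q-1}+\alpha_i\le\xi$, completing the induction.

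No step here is a real obstacle. The only point needing care is to run the induction against an arbitrary dominant majorant $\xi$, not against $\widehat\nu$ defined as an iterated meet. This lets us use only dominance of $\xi$ and the sign pattern of the Cartan matrix. The Stembridge lattice structure enters only through the existence of $\widehat\nu$, which was already established earlier. The hypothesis that the label equals exactly $-1$ serves two purposes: it makes $s_{i_q}\nu_{q-1}=\nu_{q-1}+\alpha_{i_q}$, which gives conjugacy, and any negative label would already suffice for the order argument.
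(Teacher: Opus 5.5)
Your proof is correct and is essentially the paper's argument: an induction showing that every dominant majorant of $\nu$ lies above each $\nu_q$, because the label $-1$ at $\alpha_{i_q}$ forces a positive $\alpha_{i_q}$-coefficient in the difference (you argue by contradiction, where the paper writes $0\le -1+2d_i$ directly). The only cosmetic difference is that you use the previously established existence of $\widehat\nu$ to get $\widehat\nu\le\nu_t$, whereas the paper concludes directly that $\nu_t$ is the least dominant majorant, since it is itself a dominant majorant lying below every other one.
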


\begin{proof}
Let $\eta\ge\nu$ be dominant.  We prove inductively that $\eta\ge\nu_q$ for
all $q$.  Suppose $\eta\ge\nu_{q-1}$ and write
\[
   \eta-\nu_{q-1}=\sum_j d_j\alpha_j,
   \qquad d_j\in\Z_{\ge0}.
\]
At $i=i_q$, dominance of $\eta$ gives
\[
 0\le\langle\eta,\alpha_i^\vee\rangle
   =-1+2d_i+\sum_{j\ne i}d_j\langle\alpha_j,\alpha_i^\vee\rangle
   \le -1+2d_i.
\]
Thus $d_i\ge1$, whence
$\eta\ge\nu_{q-1}+\alpha_i=\nu_q$.  Every dominant majorant of $\nu$ therefore
lies above $\nu_t$.  Since $\nu_t$ itself is dominant and lies above $\nu$,
it is the least dominant majorant.
\end{proof}

We shall repeatedly pass from a set of roots to the semisimple subalgebra that
it generates.  Fix a $\Weyl$-invariant inner product $(\ ,\ )$ on
$\Pesos\otimes_\Z\mathbb R$.  Call a subset $\Psi\subseteq\Raiz$ a
\emph{regular subsystem} if $\Psi=-\Psi$ and $\alpha+\beta\in\Psi$ whenever
$\alpha,\beta\in\Psi$ and $\alpha+\beta\in\Raiz$.  Such a $\Psi$ is a root
system in its real span, and
\[
   \mathfrak h_\Psi=\operatorname{span}_\C\{\alpha^\vee:\alpha\in\Psi\},
   \qquad
   \mathfrak g_\Psi=\mathfrak h_\Psi\oplus
        \bigoplus_{\alpha\in\Psi}\mathfrak g_\alpha
\]
is a semisimple subalgebra of $\mathfrak g$ with Cartan subalgebra
$\mathfrak h_\Psi$ and root system $\Psi$.

\begin{lemma}\label{lem:pi-system}
Let $\beta_1,\ldots,\beta_k\in\Raiz$ be linearly independent roots such that
$\beta_r-\beta_s\notin\Raiz$ whenever $r\ne s$, and put
$c_{rs}=\langle\beta_s,\beta_r^\vee\rangle$.  Then $c_{rs}\le0$ for $r\ne s$,
the matrix $C=(c_{rs})$ is a Cartan matrix of finite type, and the roots
occurring in the subalgebra of $\mathfrak g$ generated by
$\mathfrak g_{\pm\beta_1},\ldots,\mathfrak g_{\pm\beta_k}$ form a regular
subsystem $\Psi\subseteq\Raiz$ with simple system $\{\beta_1,\ldots,\beta_k\}$
and Cartan matrix $C$.
\end{lemma}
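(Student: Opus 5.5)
The plan is to obtain the three assertions in order: the sign condition, finite type of $C$, and then the identification of the generated subalgebra. This is essentially the classical theory of $\pi$-systems (Dynkin).

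\emph{Step 1: sign condition.} For $r\ne s$, linear independence gives $\beta_s\ne\pm\beta_r$. If $c_{rs}=\langle\beta_s,\beta_r^\vee\rangle>0$, the standard root-string fact gives $\beta_s-\beta_r\in\Raiz$. This contradicts the hypothesis, so $c_{rs}\le0$. Also $c_{rr}=2$.

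\emph{Step 2: finite type.} Let $D=\operatorname{diag}\bigl(2/(\beta_r,\beta_r)\bigr)$, so that $C=D\,G$ with $G=((\beta_r,\beta_s))$. The matrix $G$ is the Gram matrix of linearly independent vectors for the positive definite form $(\ ,\ )$, hence positive definite. Therefore $C$ is a symmetrizable generalized Cartan matrix whose symmetrization is positive definite, which characterizes finite type.

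\emph{Step 3: the generated subalgebra.} Choose $x_r\in\mathfrak g_{\beta_r}$ and $y_r\in\mathfrak g_{-\beta_r}$ with $[x_r,y_r]=\beta_r^\vee$.
\begin{itemize}
\item For $r\ne s$, we have $[x_r,y_s]\in\mathfrak g_{\beta_r-\beta_s}=0$, since $\beta_r-\beta_s$ is neither a root nor $0$.
\item $[\beta_r^\vee,x_s]=c_{rs}x_s$, and similarly for $y_s$.
\item The Serre relations $(\operatorname{ad}x_r)^{1-c_{rs}}x_s=0$ hold because the $\beta_r$-string through $\beta_s$ starts at $\beta_s$ (as $\beta_s-\beta_r\notin\Raiz$) and has length $1-c_{rs}$.
\end{itemize}
By Serre's theorem there is a homomorphism from the finite-dimensional semisimple Lie algebra $\mathfrak g(C)$ onto the subalgebra $\mathfrak s$ generated by the $x_r,y_r$. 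It is injective because $\mathfrak g(C)$ is a direct sum of simple ideals and no ideal is killed (each $\beta_r^\vee\ne0$). Hence $\mathfrak s\cong\mathfrak g(C)$, with Cartan subalgebra $\mathfrak h_\Psi=\operatorname{span}\{\beta_r^\vee\}$ of dimension $k$; the dimension count uses linear independence of the $\beta_r$, which passes to the coroots.

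The weights of $\mathfrak s$ on $\mathfrak h_\Psi$ are the roots of $C$, namely the nonnegative or nonpositive integer combinations $\sum n_r\beta_r$. Each nonzero weight space of $\mathfrak s$ sits inside $\mathfrak g_{\sum n_r\beta_r}$, so the roots occurring form a set $\Psi\subseteq\Raiz$. By linear independence, $\Psi$ is identified with the root system of $C$, with simple system $\{\beta_r\}$.

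\emph{Step 4: regularity of $\Psi$.} Closure under negation is clear. For closure under addition, take $\alpha,\beta\in\Psi$ with $\alpha+\beta\in\Raiz$. Then $[\mathfrak g_\alpha,\mathfrak g_\beta]=\mathfrak g_{\alpha+\beta}\ne0$ in $\mathfrak g$, which is a standard fact for root spaces. Since $\mathfrak g_\alpha,\mathfrak g_\beta\subseteq\mathfrak s$, we get $\alpha+\beta\in\Psi$.

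\emph{Main obstacle.} I expect the hardest step to be the injectivity/identification in Step 3. One must make sure that $\mathfrak s$ is exactly $\mathfrak h_\Psi\oplus\bigoplus_{\alpha\in\Psi}\mathfrak g_\alpha$, with each root space one-dimensional. I would handle this through the Serre presentation as above, checking that the generated algebra contains no extra Cartan elements beyond $\mathfrak h_\Psi$.
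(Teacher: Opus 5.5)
Your proposal is correct and follows essentially the same route as the paper: the root-string argument for $c_{rs}\le0$, positive definiteness of the Gram matrix for finite type, a surjection from $\mathfrak g(C)$ via the Serre relations whose kernel must vanish, and closure of $\Psi$ via $[\mathfrak g_\alpha,\mathfrak g_\beta]=\mathfrak g_{\alpha+\beta}$. The only difference is cosmetic and lies in the injectivity step. You note that each simple ideal of $\mathfrak g(C)$ contains some $h_r$, whose image $\beta_r^\vee$ is nonzero, whereas the paper argues that the kernel meets the Cartan subalgebra trivially because the coroots are linearly independent; both arguments are valid.
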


\begin{proof}
Fix $r\ne s$.  Since $\beta_s-\beta_r$ is not a root, the $\beta_r$-string
through $\beta_s$ is $\beta_s,\beta_s+\beta_r,\ldots,\beta_s+q\beta_r$ for
some $q\ge0$, and therefore $c_{rs}=-q\le0$.  The Gram matrix
$G=\bigl((\beta_r,\beta_s)\bigr)_{r,s}$ is positive definite because the $\beta_r$
are linearly independent, and $C=D^{-1}G$ with
$D=\operatorname{diag}\bigl((\beta_1,\beta_1)/2,\ldots,(\beta_k,\beta_k)/2\bigr)$.  The same relation
holds for every principal submatrix, so all principal minors of $C$ are
positive and $C$ is a Cartan matrix of finite type.

Choose $e_r\in\mathfrak g_{\beta_r}$ and $f_r\in\mathfrak g_{-\beta_r}$ with
$[e_r,f_r]=\beta_r^\vee$, and let $\mathfrak g_\Psi$ be the subalgebra they
generate.  Then $[\beta_r^\vee,\beta_s^\vee]=0$,
$[\beta_r^\vee,e_s]=c_{rs}e_s$ and $[\beta_r^\vee,f_s]=-c_{rs}f_s$, while
$[e_r,f_s]=0$ for $r\ne s$ because $\beta_r-\beta_s$ is neither a root nor
zero.  The root string displayed above gives $\beta_s+m\beta_r\notin\Raiz$ for
$m>q=-c_{rs}$, whence $(\operatorname{ad}e_r)^{1-c_{rs}}e_s=0$ and,
symmetrically, $(\operatorname{ad}f_r)^{1-c_{rs}}f_s=0$.  These are the Serre
relations for $C$, so there is a surjection
$\mathfrak g(C)\to\mathfrak g_\Psi$ from the semisimple Lie algebra with
Cartan matrix $C$.  Its kernel is an ideal meeting the Cartan subalgebra of
$\mathfrak g(C)$ trivially, because the coroots $\beta_r^\vee$ are linearly
independent; since every nonzero ideal of a semisimple Lie algebra contains a
simple summand, and hence meets its Cartan subalgebra, the kernel is zero.
Thus $\mathfrak g_\Psi\cong\mathfrak g(C)$ is semisimple with Cartan
subalgebra $\mathfrak h_\Psi=\operatorname{span}_\C\{\beta_r^\vee\}$ and
simple roots $\beta_1,\ldots,\beta_k$.  Every root space of $\mathfrak g_\Psi$
is spanned by iterated brackets of the $e_r$ and $f_r$, hence lies in a single
$\mathfrak g_\alpha$ with
$\alpha\in\Raiz\cap(\Z\beta_1+\cdots+\Z\beta_k)$; restriction to
$\mathfrak h_\Psi$ is injective on that lattice, so the root system $\Psi$ of
$\mathfrak g_\Psi$ is a subset of $\Raiz$.  Finally $\Psi$ is closed: if
$\alpha,\beta\in\Psi$ and $\alpha+\beta\in\Raiz$, then
$\mathfrak g_{\alpha+\beta}=[\mathfrak g_\alpha,\mathfrak g_\beta]
\subseteq\mathfrak g_\Psi$.
\end{proof}

The next observation is valid without a path hypothesis.

\begin{lemma}\label{lem:atom-disjoint-supports}
Let
\[
   \gamma_1=\mu+\beta_1,\ldots,\gamma_k=\mu+\beta_k
\]
be distinct atoms above a dominant weight $\mu$.  Then
\[
   \supp\beta_r\cap\supp\beta_s=\varnothing
   \qquad(r\ne s).
\]
Moreover, the roots $\beta_1,\ldots,\beta_k$ form a simple system of the
regular root subsystem that they generate.
\end{lemma}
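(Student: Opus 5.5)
The plan is to combine the covering description of \cref{lem:atoms-simple-roots} with Stembridge's classification of coverings, and then to feed the resulting roots into \cref{lem:pi-system}. First, by \cite[Theorem~2.8]{Stembridge} (already used in the proof of \cref{thm:G2-atomic}), each difference $\beta_r=\gamma_r-\mu$ is a positive root. In particular $\supp\beta_r\ne\varnothing$.

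For disjointness of supports, suppose $i\in\supp\beta_r\cap\supp\beta_s$ with $r\ne s$. Then $\mu<\mu+\alpha_i\le\gamma_r$ and $\mu+\alpha_i\le\gamma_s$. Since $\gamma_r$ and $\gamma_s$ are dominant, minimality of the least dominant majorant gives
\[
  \mu<\widehat{\mu+\alpha_i}\le\gamma_r
  \qquad\text{and}\qquad
  \widehat{\mu+\alpha_i}\le\gamma_s .
\]
Because $\gamma_r$ and $\gamma_s$ both cover $\mu$, this forces $\gamma_r=\widehat{\mu+\alpha_i}=\gamma_s$, contradicting distinctness. This is the same argument as in \cref{lem:atoms-simple-roots}; it needs no path hypothesis.

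For the second assertion I will verify the hypotheses of \cref{lem:pi-system}.
\begin{enumerate}[label=\textnormal{(\alph*)}]
\item Linear independence: the $\beta_r$ are nonzero elements of $\Z_{\ge0}\RaizSimp$ with pairwise disjoint supports in the basis of simple roots, so they are linearly independent.
\item Differences are not roots: for $r\ne s$, the difference $\beta_r-\beta_s$ has a strictly positive coefficient on $\supp\beta_r$ and a strictly negative coefficient on $\supp\beta_s$, both supports being nonempty. Every root has all simple-root coefficients of one sign, so $\beta_r-\beta_s\notin\Raiz$.
\end{enumerate}
\Cref{lem:pi-system} then shows that the subalgebra generated by $\mathfrak g_{\pm\beta_r}$ has a regular root subsystem $\Psi$ with simple system $\{\beta_1,\ldots,\beta_k\}$.

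It remains to see that $\Psi$ is the regular subsystem generated by the $\beta_r$, that is, the smallest regular subsystem containing them. One inclusion is immediate: $\Psi$ is regular and contains every $\beta_r$. For the other, let $\Psi'$ be any regular subsystem containing the $\beta_r$. Its subalgebra $\mathfrak g_{\Psi'}$ contains every $\mathfrak g_{\pm\beta_r}$, hence contains $\mathfrak g_\Psi$, and therefore $\Psi\subseteq\Psi'$.

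The only external input is Stembridge's fact that covers differ by a positive root. Without it, one would instead have to argue that $\gamma_r-\mu$ is a root directly, and that is where I would expect the real difficulty to lie.
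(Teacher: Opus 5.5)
Your proposal is correct and follows the paper's proof essentially step for step. It uses Stembridge's covering theorem to make each $\beta_r$ a positive root, the least-dominant-majorant argument to force $\widehat{\mu+\alpha_j}=\gamma_r$ for every $j\in\supp\beta_r$, and then linear independence plus sign-mixed differences to invoke \cref{lem:pi-system}. Your extra check that the subsystem produced by \cref{lem:pi-system} is contained in every regular subsystem containing the $\beta_r$ is a small point the paper leaves implicit.
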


\begin{proof}
By Stembridge's covering theorem \cite[Theorem~2.8]{Stembridge}, each
$\beta_r=\gamma_r-\mu$ is a positive root with connected support.  If
$j\in\supp\beta_r$, then
\[
   \mu<\mu+\alpha_j\le\gamma_r.
\]
Hence
\[
   \mu<\widehat{\mu+\alpha_j}\le\gamma_r,
\]
and the covering relation $\mu\lessdot\gamma_r$ forces
$\widehat{\mu+\alpha_j}=\gamma_r$.  Thus the same index $j$ cannot lie in the
supports of two distinct atoms.

The $\beta_r$ are therefore linearly independent.  If $r\ne s$, then
$\beta_r-\beta_s$ has both positive and negative simple-root coordinates, so
it is not a root.  \Cref{lem:pi-system} therefore applies and shows that
$\{\beta_1,\ldots,\beta_k\}$ is a simple system for the regular subsystem of
$\Raiz$ that it generates.
\end{proof}

On a path the supports in \cref{lem:atom-disjoint-supports} are pairwise
disjoint intervals.  Fix a subset $S$ of the atoms above $\mu$.  We call the
support of $\beta_r$, for $r\in S$, a \emph{selected block}; the selected
blocks are pairwise disjoint intervals of the path, and a block consisting of
one vertex is a \emph{singleton}.  A vertex lying in no selected block is a
\emph{gap}, and a \emph{one-vertex gap} is a gap both of whose neighbours lie
in selected blocks.  The \emph{label} of a vertex $j$ at a weight $\nu$ is its
Dynkin label $\langle\nu,\alpha_j^\vee\rangle$; the label of a gap is a
\emph{gap label}.  We now show that replacing each raw sum $x_S$ by its
least dominant majorant preserves its Weyl orbit and hence its multiplicity
in every finite-dimensional module.

\begin{lemma}\label{lem:path-crown-stabilization}
Assume that the Dynkin diagram of the irreducible root system $\Raiz$ is a
path.  Let
\[
   \gamma_r=\mu+\beta_r\qquad(1\le r\le k)
\]
be distinct atoms above $\mu$.  For $S\subseteq\{1,\ldots,k\}$ put
\[
   x_S=\mu+\sum_{r\in S}\beta_r.
\]
Then $\widehat{x_S}$ is Weyl-conjugate to $x_S$.  Consequently, for every
finite-dimensional $\mathfrak g$-module $V$,
\begin{equation}\label{eq:path-join-multiplicity}
   m_V(\widehat{x_S})=m_V(x_S).
\end{equation}
\end{lemma}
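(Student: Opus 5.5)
The plan is to apply \cref{lem:path-unit-stabilization}. We exhibit an explicit chain of simple reflections from $x_S$ to a dominant weight, each applied at a vertex of label $-1$. That lemma then identifies the endpoint with $\widehat{x_S}$ and shows it is Weyl-conjugate to $x_S$. The equality \eqref{eq:path-join-multiplicity} follows from the Weyl invariance of multiplicities recalled in the introduction.

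\textbf{Step 1: the labels of $x_S$.} By \cref{lem:atom-disjoint-supports} and Stembridge's covering theorem, each $\beta_r$ is a positive root with connected support, hence an interval of the path, and these intervals are pairwise disjoint. Since $\gamma_r=\mu+\beta_r$ is dominant, the labels of $x_S$ inside a selected block $\supp\beta_r$ agree with those of $\gamma_r$. Two cases need care. If a neighbour $j$ of $\supp\beta_r$ lies in another selected block $\supp\beta_s$, that block contributes a nonpositive amount. If $j$ is outside, it loses something. So negative labels of $x_S$ can occur only at gaps adjacent to selected blocks.

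Because $\gamma_r$ covers $\mu$ and $\mu$ is dominant, the coefficient of each $\beta_r$ at its endpoints is constrained. I would use the classification of dominant covers \cite[Theorem~2.8]{Stembridge} here: a cover is either a simple root at a vertex of positive label, or a root $\beta$ whose support contains exactly the vertices of label $0$ needed. This yields that $\langle\beta_r,\alpha_j^\vee\rangle\in\{0,-1\}$ for a gap $j$ adjacent to the block. One should check this against the $\typeB/\typeC/\typeF/\typeG$ multiple-bond cases, where the pairing can be $-2$ or $-3$ in principle. The expectation is that covering forces the short-end configuration, or else the gap label in $\mu$ is positive.

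\textbf{Step 2: the reflection sequence.} Process the gaps. At a gap $j$ with $\langle x_S,\alpha_j^\vee\rangle=-1$, apply $s_j$, which adds $\alpha_j$. This raises the labels at the neighbouring vertices by $1$ or more. Those neighbours lie in selected blocks or are further gaps, so the only new negativity can appear at a further gap. A gap with both neighbours in blocks (a one-vertex gap) may start at label $\mu_j-2$ or so. This is where the path hypothesis is essential: on a path each gap has at most two neighbours, so the total deficit at a gap is bounded. Being a path also means reflection at $j$ affects only $j\pm1$. I would induct along the path from one end, maintaining the invariant that all labels are $\ge-1$, with $-1$ only at unprocessed gap vertices. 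I would show that a one-vertex gap with label $\le -2$ cannot occur: if it did, $\mu+\alpha_j\le$ both atoms would force both $\gamma_r$ and $\gamma_s$ to equal $\widehat{\mu+\alpha_j}$, contradicting distinctness, much as in the proof of \cref{lem:atom-disjoint-supports}. The process terminates because the weights strictly increase in a finite interval below any dominant majorant such as $x_S+N(2\rho)$.

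\textbf{Main obstacle.} The delicate step is the label bookkeeping in Step 1 together with the invariant in Step 2. Specifically, one must rule out a label $\le-2$ ever appearing, including after a reflection propagates along a multi-vertex gap through a multiple bond, as in $\typeB_n$, $\typeC_n$, $\typeF_4$ and $\typeG_2$. I would first handle simply-laced $\typeA_n$ cleanly. I would then check multiple bonds by using the explicit shape of covering roots (the Stembridge list) near the double/triple edge. If the general argument stalls there, the fallback is a direct case inspection for $\typeG_2$ (which already follows from the diamond analysis of \cref{prop:G2-diamond}) and for $\typeF_4$.
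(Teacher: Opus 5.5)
Your overall strategy (reach a dominant weight from $x_S$ by simple reflections at labels $-1$, then invoke \cref{lem:path-unit-stabilization}) is the paper's. However, Step~2 rests on a sign error, and the invariant you propose is false. Reflecting at $j$ adds $\alpha_j$, so the label at a neighbour $i$ changes by $\langle\alpha_j,\alpha_i^\vee\rangle<0$: neighbours are lowered, not raised. The negative labels of $x_S$ sit only at one-vertex gaps, and both neighbours of such a gap lie in selected blocks. So the new negativity appears inside blocks, not at further gaps.

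Concretely, in $\typeC_4$ take $\mu=\omega_2$, with atoms $\mu+\alpha_1=(2000)$ and $\mu+\alpha_3+\alpha_4=(0001)$. Then $x_S$ has labels $(2,-1,0,1)$. Reflecting at the gap $2$ gives $(1,1,-1,1)$, which has a $-1$ at vertex $3$ of the selected block $\{3,4\}$. Only a second reflection, at $3$, reaches $(1010)=\widehat{x_S}$. So ``$-1$ only at unprocessed gap vertices'' fails.

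The missing content is exactly the endpoint analysis the paper carries out. Stembridge's covering theorem gives each nonsingleton block the locally short dominant root. Its exposed-endpoint label is $1$ on type-$\typeA$ and terminal $\typeB_m$ blocks, and a gap reflection lowers it by $1$. On a terminal $\typeC_m$ block that label is $0$. In $\typeF_4$ the block $\{3,4\}$ next to the gap $2$ has label $1$ lowered by $2$ across the double bond. Singletons adjacent to reflected gaps must be checked to stay nonnegative. Finally, one must show that at most one new $-1$ arises and that reflecting there yields a dominant weight.

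Step~1 and your bound at a one-vertex gap also fail as stated. The claim $\langle\beta_r,\alpha_j^\vee\rangle\in\{0,-1\}$ is false: in $\typeC_n$ the singleton atom $\mu+\alpha_n$ has $\langle\alpha_n,\alpha_{n-1}^\vee\rangle=-2$. Your argument excluding a gap label $\le-2$ is also invalid. Since $j$ is a gap, $\alpha_j$ occurs in neither $\beta_r$ nor $\beta_s$, so $\mu+\alpha_j$ lies below neither atom.

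What works is the following. Let $\beta_L,\beta_R$ be the roots of the blocks flanking $j$, and put $p=-\langle\beta_L,\alpha_j^\vee\rangle$ and $q=-\langle\beta_R,\alpha_j^\vee\rangle$. By \cref{lem:pi-system}, $\beta_L,\alpha_j,\beta_R$ are the simple roots of a connected rank-three subsystem. It is necessarily of type $\typeA_3$, $\typeB_3$ or $\typeC_3$, so $\min\{p,q\}=1$. Dominance of the two atoms gives $\mu_j\ge\max\{p,q\}$, hence the gap label satisfies $\mu_j-p-q\ge-1$.

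The same estimate gives label $\mu_j+2-p-q\ge1$ for a singleton block flanked by two blocks, a case your Step~1 never addresses; you only assert that negativity is confined to gaps. For a vertex $j$ of a nonsingleton block $N$ adjacent to a block $R$, regroup the label as $\langle\beta_N,\alpha_j^\vee\rangle+\langle\gamma_R,\alpha_j^\vee\rangle\ge0$, using that $\beta_N$ is dominant for its own block.
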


\begin{proof}
There is nothing to prove when $|S|\le1$.  If $\Raiz$ has rank two and
$|S|=2$, disjointness of supports forces the two atom differences to be the
two simple roots.  Then $x_S=\mu+\alpha_1+\alpha_2$ is dominant, since
\[
 \langle x_S,\alpha_1^\vee\rangle
   =\langle\mu+\alpha_2,\alpha_1^\vee\rangle+2\ge2,
 \qquad
 \langle x_S,\alpha_2^\vee\rangle
   =\langle\mu+\alpha_1,\alpha_2^\vee\rangle+2\ge2.
\]
Assume henceforth that $\operatorname{rank}\Raiz\ge3$.

By Stembridge's covering theorem \cite[Theorem~2.8]{Stembridge}, the root of
every nonsingleton selected block is the highest short root of the
subsystem on that block: the unique short root dominant for its simple
roots.  Here shortness is measured within the block subsystem; in a
simply-laced block this is its highest root.  We call this the locally short
dominant root.  The exceptional $\typeG_2$ covering case cannot
occur in rank at least three.

We first locate the negative labels of $x_S$.  Let $N$ be a nonsingleton
selected block with root $\beta_N$, and let $j\in N$.  If no selected block is
immediately adjacent to $N$ at $j$, then
\[
 \langle x_S,\alpha_j^\vee\rangle
   =\mu_j+\langle\beta_N,\alpha_j^\vee\rangle\ge0.
\]
If a selected block with root $\beta_R$ is immediately adjacent there, then
\[
 \langle x_S,\alpha_j^\vee\rangle
   =\langle\beta_N,\alpha_j^\vee\rangle
      +\langle\mu+\beta_R,\alpha_j^\vee\rangle\ge0.
\]
Thus every vertex in a nonsingleton selected block has nonnegative label.

Suppose next that the selected block is the singleton $\{j\}$.  If at most
one selected block is immediately adjacent to it, its label is at least $2$.
If selected blocks with roots $\beta_L,\beta_R$ meet it on both sides, put
\[
 p=-\langle\beta_L,\alpha_j^\vee\rangle,
 \qquad
 q=-\langle\beta_R,\alpha_j^\vee\rangle.
\]
The roots $\beta_L,\alpha_j,\beta_R$ have disjoint supports, so they are
linearly independent and their pairwise differences have both positive and
negative simple-root coordinates.  By \cref{lem:pi-system}, they are the
simple roots of a finite rank-three regular subsystem.  The two outer roots
are orthogonal, and each has negative inner product with $\alpha_j$, so this
subsystem is connected.  Its Dynkin diagram
is therefore of type $\typeA_3$, $\typeB_3$, or $\typeC_3$, so at least one of
its two bonds is simple and
\[
             \min\{p,q\}=1.
\]
Dominance of $\mu+\beta_L$ and $\mu+\beta_R$ gives $\mu_j\ge p,q$.  Hence
\[
 \langle x_S,\alpha_j^\vee\rangle
       =\mu_j+2-p-q
       \ge2-\min\{p,q\}\ge1.
\]
Selected vertices are therefore nonnegative.

Let now $j$ be a gap.  If at most one selected block is adjacent to $j$, its
label is nonnegative by dominance of $\mu$ or of the corresponding atom.
Thus a negative label can occur only at a one-vertex gap between two selected
blocks.  With $\beta_L,\beta_R$ and $p,q$ as above, the same rank-three
argument gives $\min\{p,q\}=1$, while dominance of the two atoms gives
$\mu_j\ge p,q$.  Therefore
\begin{equation}\label{eq:path-gap-minus-one}
 \langle x_S,\alpha_j^\vee\rangle
       =\mu_j-p-q\ge-1.
\end{equation}
Every negative label of $x_S$ is consequently equal to $-1$, at a one-vertex
gap.  Such gaps are pairwise nonadjacent.  Reflect at all of them; the
reflections commute, and after reflection every such gap has label $1$.  Let
$y$ be the resulting weight.

Only vertices adjacent to reflected gaps can have decreased labels.  We first
consider endpoints of nonsingleton selected blocks.  Note that in types
$\typeA$, $\typeB$ and $\typeC$ the bond joining such an endpoint to an
adjacent reflected gap is always simple.  Indeed, the only multiple bond in
these types joins the last two vertices $n-1$ and $n$; a reflected gap is a
one-vertex gap and hence not terminal, so it cannot be $n$, and it cannot be
$n-1$ with $n$ the exposed endpoint of a nonsingleton block, since such a
block would contain $n-1$ as well.  Reflection at a gap of label $-1$ across a
simple bond decreases the label of the adjacent vertex by exactly $1$.  In
type $\typeF_4$ the exterior bond may be multiple; that configuration is
treated separately below.

The required local check is now short.  In type $\typeA$, the locally short
root on an $\typeA_m$ block is $\alpha_1+\cdots+\alpha_m$, and its exposed endpoint
has local label $1$.  For a terminal $\typeB_m$ block, number its roots from
the exposed endpoint, with $\alpha_m$ short.  Its locally short dominant root
is $\alpha_1+\cdots+\alpha_m$, whose label at $\alpha_1$ is $1$, including
when $m=2$.  Every
nonterminal block in types $\typeB$ and $\typeC$ is of type $\typeA$.

For a terminal $\typeC_m$ block, number the roots, up to reversal, so that the
exposed endpoint is $\alpha_1$.  For $m=2$ the locally short root is
$\alpha_1+\alpha_2$, while for $m\ge3$ it is
\[
 \alpha_1+2\alpha_2+\cdots+2\alpha_{m-1}+\alpha_m.
\]
Its local labels at the exposed endpoint and the next inward vertex are $0$
and $1$, respectively, and the exterior bond is simple.  Hence a reflected
gap can make the exposed endpoint negative only by changing its label to
$-1$.

It remains to check $\typeF_4$.  Number the diagram
\[
        1-2\Rightarrow3-4,
        \qquad
        \langle\alpha_2,\alpha_3^\vee\rangle=-2,\quad
        \langle\alpha_3,\alpha_2^\vee\rangle=-1.
\]
A reflected gap is either $2$ or $3$.  If it is $3$, the only nonsingleton
block on its left is the $\typeA_2$ block $\{1,2\}$; its exposed label is $1$ and
the decrement is $1$.  If the gap is $2$, the only nonsingleton block on its
right is the $\typeA_2$ block $\{3,4\}$, whose root is $\alpha_3+\alpha_4$.
Its exposed local label at $3$ is $1$, while reflection at $2$ decreases that
label by $2$.  Thus this is the only $\typeF_4$ case in which a new negative
label can occur, and that label is again $-1$.

We must also check selected singletons adjacent to reflected gaps.  If a
singleton is adjacent to one reflected gap, its label before that reflection
is at least $2$: if a selected block lies immediately on the other side, this
follows from dominance of that atom, and otherwise it is $\mu_j+2\ge2$.
A gap reflection decreases such a label by at most $2$, so it remains
nonnegative.  If two reflected gaps flank the singleton, both incident bonds
are simple.  Indeed, in types $\typeB$ and $\typeC$ the multiple bond is
terminal, whereas a reflected gap is not terminal.  Two reflected gaps
flanking a singleton require at least five distinct vertices, so this
configuration cannot occur in $\typeF_4$.  The total decrement is therefore
$2$, so the singleton remains
nonnegative.

It follows that every negative label of $y$ is a $-1$ at the exposed endpoint
of either a terminal $\typeC_m$ block or the block $\{3,4\}$ in the
$\typeF_4$ configuration above.  Moreover there is at most one such endpoint:
in type $\typeC$ every such block contains the unique terminal multiple edge,
so two cannot occur among the disjoint selected blocks, and in $\typeF_4$
there is only the single configuration just described.

If no such endpoint occurs, $y$ is dominant.  Otherwise reflect at it.  For a
terminal $\typeC_m$ block, the reflected gap has label $1$ and the next inward
label is at least $1$.  The inward decrement is exactly $1$, since
$\langle\alpha_1,\alpha_2^\vee\rangle=-1$ also when $m=2$ and that inward
bond is multiple.  The endpoint reflection therefore changes the gap label
to $0$ and leaves the inward label nonnegative.  In the exceptional $\typeF_4$ case,
reflection at $3$ changes the label at the gap $2$ from $1$ to $0$ and
decreases the label at $4$ by $1$ from a value at least $1$.  Thus in either
case the resulting weight is dominant.

We have reached a dominant weight from $x_S$ by simple reflections, each
applied at Dynkin label $-1$.  By \cref{lem:path-unit-stabilization}, this
weight is $\widehat{x_S}$ and is Weyl-conjugate to $x_S$.  Weyl invariance of
weight multiplicities gives \eqref{eq:path-join-multiplicity}.
\end{proof}

We next isolate the representation-theoretic part.  The proof uses only
ordinary $\mathfrak{sl}_2$ strings and elementary exact sequences.

\begin{lemma}\label{lem:path-regular-boolean}
Let $\Psi\subseteq\Raiz$ be a regular semisimple root subsystem whose Dynkin
components are paths, with simple roots $\beta_1,\ldots,\beta_k$.  Let $\nu$
be a weight such that $\nu$ and every $\nu+\beta_i$ are $\Psi$-dominant.  Then
for every finite-dimensional $\mathfrak g$-module $V$,
\begin{equation}\label{eq:path-regular-boolean}
   \sum_{S\subseteq\{1,\ldots,k\}}(-1)^{|S|}
      m_V\!\left(\nu+\sum_{i\in S}\beta_i\right)\ge0.
\end{equation}
More precisely, fix an order of the path components, choose an endpoint of
each, and process the components one after another, taking the simple roots of
each in order from its chosen endpoint.  Starting with $V$, alternately take
the kernel of the corresponding raising operator and the quotient by the image
of the corresponding lowering operator, beginning with a kernel on each
component.  Let $\mathfrak o$ denote the component order together with these
endpoint choices, and write $R_{\Psi,\mathfrak o}(V)$ for the final
$\mathfrak h$-graded subquotient.
For the empty subsystem, use the convention $R_{\varnothing,\mathfrak o}(V)=V$.
Then the left-hand side of \eqref{eq:path-regular-boolean} is
$\dim R_{\Psi,\mathfrak o}(V)_\nu$.
\end{lemma}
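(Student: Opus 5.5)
We argue by induction on the number of processed simple roots, tracking at each stage a whole family of graded dimensions rather than the single weight $\nu$. Order the simple roots of $\Psi$ as $\beta_1,\ldots,\beta_k$ according to $\mathfrak o$: components one after another, each read from its chosen endpoint. Let $R_0=V$, and let $R_t$ be the subquotient obtained after processing $\beta_1,\ldots,\beta_t$. Thus $R_t=\ker(e_{\beta_t}|R_{t-1})$ or $R_t=R_{t-1}/f_{\beta_t}R_{t-1}$, according to the parity of the position of $\beta_t$ in its component. For $T\subseteq\{1,\ldots,k\}$ put $\beta_T=\sum_{i\in T}\beta_i$. The inductive claim is
\[
 \dim (R_t)_{\nu+\beta_S}=\sum_{T\subseteq\{1,\ldots,t\}}(-1)^{|T|}\,m_V(\nu+\beta_S+\beta_T)
 \qquad\text{for all }S\subseteq\{t+1,\ldots,k\}.
\]
At $t=k$ and $S=\varnothing$ this is the formula for $\dim R_{\Psi,\mathfrak o}(V)_\nu$ in the statement, and nonnegativity \eqref{eq:path-regular-boolean} follows at once. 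The passage from $t-1$ to $t$ is a difference in the single direction $\beta_t$: one needs $\dim(R_t)_x=\dim(R_{t-1})_x-\dim(R_{t-1})_{x+\beta_t}$ at $x=\nu+\beta_S$, with $t\notin S$ but possibly $t+1\in S$.

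The first task is to show that every operator stabilizes what has been built so far. By \cref{lem:pi-system}, the $\beta_i$ are simple roots of $\Psi$, so $\beta_i-\beta_j\notin\Raiz$ for $i\ne j$. Hence $[e_{\beta_i},f_{\beta_j}]=0$. If $\beta_i,\beta_j$ are nonadjacent, or lie in different components, then $\beta_i+\beta_j\notin\Raiz$, so $e_{\beta_i},e_{\beta_j}$ commute and $f_{\beta_i},f_{\beta_j}$ commute. Along a path, the only already processed root adjacent to $\beta_t$ is $\beta_{t-1}$, and the alternation guarantees that $\beta_{t-1}$ was used with the \emph{opposite} type of operator. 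Therefore the operator used at step $t$ commutes with every earlier operator it must pass, and $R_{t-1}$ is stable under it. Moreover, $R_{t-2}$ is stable under both $e_{\beta_t}$ and $f_{\beta_t}$, hence is a finite-dimensional $\mathfrak{sl}_2(\beta_t)$-module. If step $t-1$ was a quotient by $f_{\beta_{t-1}}$, then $R_{t-1}$ is again an $\mathfrak{sl}_2(\beta_t)$-module, because $e_{\beta_t}$ and $f_{\beta_t}$ both commute with $f_{\beta_{t-1}}$.

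The dimension identity in direction $\beta_t$ then reduces to $\mathfrak{sl}_2$-string facts. In an $\mathfrak{sl}_2$-module, $e$ is surjective from weight $h$ to weight $h+2$ whenever $h\ge-1$, and $f$ is injective from weight $h$ whenever $h\ge1$. Two label checks are needed:
\[
\langle x,\beta_t^\vee\rangle\ge-1\ \text{(kernel steps)},\qquad
\langle x+\beta_t,\beta_t^\vee\rangle\ge1\ \text{(quotient steps)},\qquad x=\nu+\beta_S.
\]
On a path, $x$ differs from $\nu$ at $\beta_t^\vee$ only through $\beta_{t+1}$, and possibly through $\beta_{t-1}$, which however is never in $S$. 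Hence these labels are controlled by $\Psi$-dominance of $\nu+\beta_{t+1}$ and of $\nu+\beta_t$, and both inequalities hold.

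The main obstacle is a quotient step at $\beta_t$ immediately after a kernel step at $\beta_{t-1}$. There $R_{t-1}=\ker(e_{\beta_{t-1}}|R_{t-2})$ is not $e_{\beta_t}$-stable, so it is not an $\mathfrak{sl}_2(\beta_t)$-module. My plan is to avoid working inside $R_{t-1}$ and argue on $R_{t-2}$, which is an $\mathfrak{sl}_2(\beta_t)$-module and also $e_{\beta_{t-1}}$-stable. Consider the commutative square in which $e_{\beta_{t-1}}$ runs horizontally, $f_{\beta_t}$ vertically, and the corners are the graded pieces of $R_{t-2}$ at $x+\beta_t$, $x+\beta_t+\beta_{t-1}$, $x$, $x+\beta_{t-1}$. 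Commutativity holds because $[e_{\beta_{t-1}},f_{\beta_t}]=0$. The vertical maps are injective by the $\mathfrak{sl}_2(\beta_t)$ argument, using the label check above also at $x+\beta_{t-1}$. The horizontal maps are surjective by the inductive $\mathfrak{sl}_2(\beta_{t-1})$-step; here $R_{t-3}$ is an $\mathfrak{sl}_2(\beta_{t-1})$-module by the same alternation bookkeeping, and the relevant label bound comes from dominance of $\nu+\beta_{t-1}$, adjusted at the shifted weight. Applying the snake lemma to the horizontal kernels shows that $f_{\beta_t}$ is injective on $\ker e_{\beta_{t-1}}$ at this weight, so
\[
\dim(R_{t-1}/f_{\beta_t}R_{t-1})_x=\dim(R_{t-1})_x-\dim(R_{t-1})_{x+\beta_t}.
\]
I expect the real work to lie in checking that every label inequality needed at every shifted weight $x+\beta_{t-1}$, $x+\beta_t$ really follows from dominance of $\nu$ and the $\nu+\beta_i$ alone. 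If a case fails, I would first enlarge the inductive claim to include the shifts by $\beta_{t-1}$.
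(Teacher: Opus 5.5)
Your overall scheme matches the paper's: induction over the processed roots, an inductive claim quantified over all sets $S$ of unprocessed roots, commutation of each stage-$t$ operator with everything already used, and the two $\mathfrak{sl}_2$ string facts. However, one step is false as written.

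When a kernel step at $\beta_t$ follows a quotient step at $\beta_{t-1}$ in the same component, you assert that $R_{t-1}=R_{t-2}/f_{\beta_{t-1}}R_{t-2}$ is again an $\mathfrak{sl}_2(\beta_t)$-module, because $f_{\beta_t}$ commutes with $f_{\beta_{t-1}}$. But these two roots are adjacent, so $\beta_{t-1}+\beta_t$ is a root and $[f_{\beta_t},f_{\beta_{t-1}}]\neq0$, as your own commutation rule says. In fact $f_{\beta_t}$ need not preserve $f_{\beta_{t-1}}R_{t-2}$. Take $\mathfrak{sl}_4$ and $V=\Lambda^2\C^4$ with $v_{ij}=v_i\wedge v_j$. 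Then $R_1=\ker e_1=\operatorname{span}(v_{12},v_{13},v_{14},v_{34})$ and $f_2R_1=\C v_{13}$, while $f_3v_{13}=v_{14}\notin f_2R_1$. So $f_3$ does not even act on $R_2$, and surjectivity of $e_{\beta_t}$ on $R_{t-1}$ cannot be read off from an $\mathfrak{sl}_2(\beta_t)$-structure there.

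The repair is exactly the paper's argument, and it mirrors what you do in the other mixed case. The space $R_{t-2}$ is a genuine $\mathfrak{sl}_2(\beta_t)$-module, so $e_{\beta_t}\colon(R_{t-2})_x\to(R_{t-2})_{x+\beta_t}$ is onto because $\langle x,\beta_t^\vee\rangle\ge0$. Since $[e_{\beta_t},f_{\beta_{t-1}}]=0$, the graded quotient map $R_{t-2}\to R_{t-1}$ is $e_{\beta_t}$-equivariant, and surjectivity descends to $R_{t-1}$.

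On the other hand, the case you call the main obstacle needs none of the machinery you propose. There $R_{t-1}=\ker(e_{\beta_{t-1}}|R_{t-2})$ is an $f_{\beta_t}$-stable subspace of $R_{t-2}$, since $[e_{\beta_{t-1}},f_{\beta_t}]=0$. Moreover $f_{\beta_t}$ is injective on $(R_{t-2})_{x+\beta_t}$, because the $\beta_t$-label there is $\langle x,\beta_t^\vee\rangle+2\ge2$. An injective map stays injective when restricted to a subspace, which is all that is needed. So you can drop the commutative square, the horizontal surjectivity, and the snake lemma. You can also drop the label check at $x+\beta_{t-1}$, which is not the same check as before: it does hold, but in non-simply-laced components it needs the extra fact that at most one bond at $\beta_t$ is multiple.

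The two mixed cases are dual: surjectivity of $e$ passes to $e$-equivariant quotients, and injectivity of $f$ passes to $f$-stable subspaces. With these two observations, the only inequality ever needed is $\langle\nu+\beta_S,\beta_t^\vee\rangle\ge0$ for unprocessed $S$. This follows from $\Psi$-dominance of $\nu$ and of $\nu+\beta_{t+1}$, so the enlargement of the inductive claim you contemplate is unnecessary.
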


\begin{proof}
Consider first one path component and number its simple roots from the chosen
endpoint:
\[
             \beta_1-\beta_2-\cdots-\beta_\ell .
\]
Choose $\mathfrak{sl}_2$-triples $(e_r,f_r,\beta_r^\vee)$.  For distinct
simple roots of the regular subsystem one has $[e_r,f_s]=0$; if they are
nonadjacent, their $\mathfrak{sl}_2$-triples commute.  Indeed, any ambient
root causing one of these brackets to be nonzero would belong to $\Psi$ by
regularity, contradicting the corresponding simple-root relation in $\Psi$.
We define successive subquotients by
\[
 V^{(0)}=V,\qquad
 V^{(r)}=
 \begin{cases}
   \ker(e_r:V^{(r-1)}\to V^{(r-1)}),&r\text{ odd},\\[1mm]
   V^{(r-1)}/f_rV^{(r-1)},&r\text{ even}.
 \end{cases}
\]
We justify recursively that every operator in this definition acts on the
space indicated.  A commuting operator preserves the kernel and the image of
a given operator, so it acts on that kernel and on the corresponding
quotient.  In particular, after stage $t$ the space $V^{(t)}$ retains the
full $\mathfrak{sl}_2(\beta_s)$-action for every $s\ge t+2$: each of the
operators used at stages $1,\ldots,t$ commutes with that entire triple.
For $r\ge2$, the space $V^{(r-2)}$ thus retains the full current triple.
The last operation before stage $r$ uses $f_{r-1}$ when $r$ is odd and
$e_{r-1}$ when $r$ is even.  The identities
$[e_r,f_{r-1}]=0$ and $[f_r,e_{r-1}]=0$, respectively, make the chosen
stage-$r$ operator act on $V^{(r-1)}$.  This establishes the recursion,
starting from the full action on $V^{(0)}$.
All these operators are homogeneous for the full $\mathfrak h$-grading, so
their kernels and images are graded and every $V^{(r)}$ remains
$\mathfrak h$-graded.  Nonzero rescaling of a chosen root vector changes
neither its kernel nor its image, so introduces no additional choice in the
construction.

We use the elementary $\mathfrak{sl}_2$ fact that, in a finite-dimensional
module, the map from weight $h$ to weight $h+2$ induced by $e$ is surjective
for $h\ge-1$, while the reverse map induced by $f$ is injective.
These assertions apply to the individual full weight spaces needed here.
Indeed, whenever a graded space $M$ retains the full
$\mathfrak{sl}_2(\beta_r)$-action, the finite-dimensional space
\[
             \bigoplus_{q\in\Z}M_{\eta+q\beta_r}
\]
is a module for this $\mathfrak{sl}_2$, with distinct Cartan eigenvalues
$\langle\eta,\beta_r^\vee\rangle+2q$.  Its eigenspaces at $q=0,1$ are
exactly $M_\eta$ and $M_{\eta+\beta_r}$.

Fix $r$ and let $T$ be any set of simple roots of $\Psi$ that have not yet
been processed, that is, a subset of $\{\beta_{r+1},\ldots,\beta_\ell\}$
together with an arbitrary set of simple roots of the components not yet
treated.  Put $\beta_T=\sum_{\beta\in T}\beta$ and $\eta=\nu+\beta_T$.  Simple
roots lying in different components are nonadjacent, so among the roots of $T$
only $\beta_{r+1}$ can be adjacent to $\beta_r$.  Hence, with the second term
omitted when $r=\ell$,
\begin{equation}\label{eq:path-current-label}
 \langle\eta,\beta_r^\vee\rangle
   =\langle\nu,\beta_r^\vee\rangle
     -\mathbf 1_{\{\beta_{r+1}\in T\}}
        \bigl(-\langle\beta_{r+1},\beta_r^\vee\rangle\bigr)
   \ge0,
\end{equation}
by dominance of $\nu$ and of $\nu+\beta_{r+1}$.

Suppose first that $r$ is odd.  If $r=1$, \eqref{eq:path-current-label} makes
\[
 e_1:V_\eta\longrightarrow V_{\eta+\beta_1}
\]
surjective.  If $r>1$, all operations defining $V^{(r-2)}$ involve roots nonadjacent to
$\beta_r$, so $V^{(r-2)}$ is a finite-dimensional
$\mathfrak{sl}_2(\beta_r)$-module.  Moreover
$V^{(r-1)}=V^{(r-2)}/f_{r-1}V^{(r-2)}$.  The map
\[
 e_r:V^{(r-2)}_\eta\longrightarrow
       V^{(r-2)}_{\eta+\beta_r}
\]
is surjective by \eqref{eq:path-current-label}, and it descends to the
quotient because $[e_r,f_{r-1}]=0$.  Surjectivity descends as well.  Thus
\begin{equation}\label{eq:path-kernel-exact}
 0\longrightarrow V^{(r)}_\eta
 \longrightarrow V^{(r-1)}_\eta
 \xrightarrow{\ e_r\ }V^{(r-1)}_{\eta+\beta_r}
 \longrightarrow0 .
\end{equation}

Now suppose that $r$ is even.  All operations defining $V^{(r-2)}$ involve
roots nonadjacent to $\beta_r$, so $V^{(r-2)}$ is a finite-dimensional
$\mathfrak{sl}_2(\beta_r)$-module, and
$V^{(r-1)}=\ker(e_{r-1}:V^{(r-2)}\to V^{(r-2)})$.  Again by
\eqref{eq:path-current-label},
\[
 f_r:V^{(r-2)}_{\eta+\beta_r}\longrightarrow V^{(r-2)}_\eta
\]
is injective.  Since $[e_{r-1},f_r]=0$, it restricts injectively to the two
kernels, and hence
\begin{equation}\label{eq:path-quotient-exact}
 0\longrightarrow V^{(r-1)}_{\eta+\beta_r}
 \xrightarrow{\ f_r\ }V^{(r-1)}_\eta
 \longrightarrow V^{(r)}_\eta
 \longrightarrow0 .
\end{equation}

Both exact sequences give
\begin{equation}\label{eq:path-successive-difference}
 \dim V^{(r)}_\eta
   =\dim V^{(r-1)}_\eta
      -\dim V^{(r-1)}_{\eta+\beta_r}.
\end{equation}
In applying induction at stage $r-1$, both $T$ and
$T\cup\{\beta_r\}$ consist of roots not yet processed at that stage.
Thus \eqref{eq:path-successive-difference} yields, for every such $T$,
\[
 \dim V^{(r)}_{\nu+\beta_T}
   =\sum_{S\subseteq\{1,\ldots,r\}}(-1)^{|S|}
      m_V(\nu+\beta_T+\beta_S),
 \qquad \beta_S:=\sum_{i\in S}\beta_i.
\]
Taking $r=\ell$ and $T=\varnothing$ gives the desired Boolean difference for
this component as the dimension of $V^{(\ell)}_\nu$.

For a forest, process the components one at a time in the chosen order,
restarting with a kernel at the chosen endpoint of each component.  The $\mathfrak{sl}_2$-triples belonging to
different components commute, so after one component has been processed the
resulting subquotient is still a module for every remaining component.  The
same argument therefore applies successively and yields
\eqref{eq:path-regular-boolean}, with final space $R_{\Psi,\mathfrak o}(V)$.
The inclusion of arbitrary roots from later components in $T$ ensures that
the dimension identities compose across components.  If there are no roots,
both sides are $\dim V_\nu$ by the stated convention.
\end{proof}

We can now prove the all-ranks positivity theorem.

\begin{theorem}\label{thm:path-positive}
Let $\mathfrak g$ be semisimple and suppose that every connected component of
its Dynkin diagram is a path.  Then every atomic number is nonnegative.
In particular, every irreducible root system of type
\[
       \typeA_n,\qquad \typeB_n,\qquad \typeC_n,
       \qquad \typeF_4,\qquad \typeG_2
\]
is atomic.
\end{theorem}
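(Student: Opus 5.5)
First I reduce to the irreducible case.  By \cref{thm:support-reduction}\ref{item:support-product}, $a(\mu,\lambda)$ is the product of the atomic numbers $a(\mu_{I_r},\lambda_{I_r})$ over the connected components $I_r$ of $\supp(\lambda-\mu)$.  Each $I_r$ is a connected subset of a path diagram, so it is again a path.  It therefore suffices to prove $a(\mu,\lambda)\ge0$ for an irreducible root system $\Raiz$ whose Dynkin diagram is a path.

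Fix dominant $\mu\le\lambda$.  Let $\gamma_1=\mu+\beta_1,\ldots,\gamma_k=\mu+\beta_k$ be the atoms of $[\mu,\lambda]$, and put $x_S=\mu+\sum_{r\in S}\beta_r$.  Repeated use of \cref{lem:dominant-covering-join} gives $\bigvee_{r\in S}\gamma_r=\widehat{x_S}$; the full join of the $x_{\{r\}}$ is $x_S$ because the supports are disjoint by \cref{lem:atom-disjoint-supports}.  \Cref{cor:atomic-finite-difference} then reads
\[
 a(\mu,\lambda)=\sum_{S\subseteq\{1,\ldots,k\}}(-1)^{|S|}m\bigl(\widehat{x_S},\lambda\bigr).
\]
By \cref{lem:path-crown-stabilization}, each hat may be removed without changing the multiplicity in $V=V(\lambda)$.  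This turns the sum into the raw Boolean difference $\sum_S(-1)^{|S|}m_V(\mu+\sum_{r\in S}\beta_r)$.

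Next I check the hypotheses of \cref{lem:path-regular-boolean} with $\nu=\mu$ and $\Psi$ the regular subsystem generated by $\beta_1,\ldots,\beta_k$.  By \cref{lem:atom-disjoint-supports}, the $\beta_r$ form a simple system of $\Psi$.  Moreover $\mu$ and every $\mu+\beta_r=\gamma_r$ are $\Raiz$-dominant.  Since each $\beta_r^\vee$ is a nonnegative combination of simple coroots, they are also $\Psi$-dominant.

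The step needing the most care is that the Dynkin components of $\Psi$ are paths.  The supports of the $\beta_r$ are pairwise disjoint intervals of the path.  If $\beta_r$ and $\beta_s$ are adjacent in $\Psi$, then $\langle\beta_s,\beta_r^\vee\rangle\ne0$, which forces their supports to be adjacent intervals of the ambient path.  Hence each $\beta_r$ has at most two neighbours in $\Psi$, and the Dynkin graph of $\Psi$ is a subgraph of the path obtained by contracting the intervals.  That graph is a path forest, and a subgraph of a path forest is again a path forest.

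\Cref{lem:path-regular-boolean} now identifies the raw Boolean difference with $\dim R_{\Psi,\mathfrak o}(V(\lambda))_\mu\ge0$.  This proves nonnegativity and also gives the explicit subquotient model described in the introduction.  The listed types $\typeA_n,\typeB_n,\typeC_n,\typeF_4,\typeG_2$ are exactly the irreducible types whose diagrams are paths, so each of them is atomic.
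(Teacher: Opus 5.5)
Your proposal is correct and follows the paper's proof essentially step for step. The steps match in order: support reduction to an irreducible path, the crosscut formula with crown joins $\widehat{x_S}$, removal of the hats via \cref{lem:path-crown-stabilization}, and then \cref{lem:path-regular-boolean} applied to the regular subsystem $\Psi$ once $\Psi$-dominance is checked. The only cosmetic difference is how you show that the Dynkin graph of $\Psi$ is a path forest: you embed it in the contracted path, whereas the paper computes $(\beta_r,\beta_s)<0$ to show that bonds occur exactly between immediately adjacent blocks; either argument suffices.
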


\begin{proof}
By \cref{thm:support-reduction} it suffices to treat an irreducible path.  Fix
$\mu\le\lambda$.  If $\mu=\lambda$, then $a(\mu,\lambda)=1$.
Otherwise the finite interval $[\mu,\lambda]$ has at least one atom.  Let
\[
       \gamma_1=\mu+\beta_1,\ldots,\gamma_k=\mu+\beta_k
\]
be the atoms of $[\mu,\lambda]$, and let $\Psi$ be the regular subsystem
generated by the $\beta_i$.  By \cref{lem:atom-disjoint-supports}, the
supports of the $\beta_i$ are disjoint intervals.  Consequently the Dynkin
diagram of the regular subsystem $\Psi$ is a forest of paths.  To see this
directly, write $\beta_r=\sum_i b_{ri}\alpha_i$ with coefficients positive on
its support.  Two such roots are orthogonal if their supports are not joined
by an ambient edge.  If the unique edge joining their supports has endpoints
$i,j$, then
\[
             (\beta_r,\beta_s)=b_{ri}b_{sj}(\alpha_i,\alpha_j)<0.
\]
Thus a bond of $\Psi$ joins exactly those two blocks that are immediately
adjacent in the ambient path.  Ordering the blocks along that path gives
the claimed forest, with the induced Cartan multiplicities retained.  For
$S\subseteq\{1,\ldots,k\}$, the full weight-lattice join defined in
\eqref{eq:full-root-join} is, by disjointness of the simple-root supports,
\[
       x_S=\mu+\sum_{i\in S}\beta_i.
\]
Repeated use of \cref{lem:dominant-covering-join} gives
\[
       \bigvee_{i\in S}\gamma_i=\widehat{x_S}.
\]
By \cref{lem:path-crown-stabilization} and Weyl invariance,
\[
       m\!\left(\bigvee_{i\in S}\gamma_i,\lambda\right)
          =m(x_S,\lambda).
\]
\Cref{cor:atomic-finite-difference} therefore gives
\[
 a(\mu,\lambda)
   =\sum_{S\subseteq\{1,\ldots,k\}}(-1)^{|S|}
       m\!\left(\mu+\sum_{i\in S}\beta_i,\lambda\right).
\]
Every $\beta_i$ is a positive ambient root, so its coroot is a nonnegative
integral combination of the ambient simple coroots.  Ambient dominance of
$\mu$ and of every $\mu+\beta_i=\gamma_i$ therefore implies their
$\Psi$-dominance.  The last expression is consequently nonnegative by
\cref{lem:path-regular-boolean}.
\end{proof}

\begin{corollary}
\label{cor:path-primitive-model}
Assume that $\Raiz$ is irreducible and its Dynkin diagram is a path.  Let
$\mu\le\lambda$, let
\[
       \gamma_i=\mu+\beta_i\qquad(1\le i\le k)
\]
be the atoms of $[\mu,\lambda]$, and let $\Psi$ be the regular subsystem with
simple roots $\beta_1,\ldots,\beta_k$.  Fix an order of the components of the
Dynkin forest of $\Psi$ and an endpoint of each, let $\mathfrak o$ record
these choices, and let
$R_{\Psi,\mathfrak o}(V(\lambda))$ be the successive subquotient constructed
in \cref{lem:path-regular-boolean}.  Then
\begin{equation}\label{eq:path-primitive-model}
       a(\mu,\lambda)=\dim R_{\Psi,\mathfrak o}(V(\lambda))_\mu.
\end{equation}
\end{corollary}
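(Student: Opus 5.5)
This corollary repackages the proof of \cref{thm:path-positive}, taking $\nu=\mu$ and $V=V(\lambda)$ in \cref{lem:path-regular-boolean}. The plan is to chain the same identities and then read off the final dimension statement of that lemma.

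\emph{Trivial case.} If $\mu=\lambda$ there are no atoms, so $k=0$ and $\Psi$ is empty. By the stated convention $R_{\varnothing,\mathfrak o}(V(\lambda))=V(\lambda)$, and $\dim V(\lambda)_\lambda=1=a(\lambda,\lambda)$.

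\emph{Reduction to a raw Boolean difference.} Assume $\mu<\lambda$. By \cref{cor:atomic-finite-difference},
\[
a(\mu,\lambda)=\sum_{S}(-1)^{|S|}m\Bigl(\bigvee_{i\in S}\gamma_i,\lambda\Bigr).
\]
\Cref{lem:atom-disjoint-supports} shows that the $\beta_i$ have pairwise disjoint supports. Hence the coordinatewise join of the $\mu+\beta_i$ with $i\in S$ is the raw sum $x_S=\mu+\sum_{i\in S}\beta_i$. Repeated use of \cref{lem:dominant-covering-join} then gives $\bigvee_{i\in S}\gamma_i=\widehat{x_S}$. \Cref{lem:path-crown-stabilization}, which needs the irreducible path hypothesis assumed here, gives $m(\widehat{x_S},\lambda)=m(x_S,\lambda)$. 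So $a(\mu,\lambda)$ equals the raw Boolean difference over the $x_S$.

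\emph{Applying \cref{lem:path-regular-boolean}.} We must check its hypotheses.
\begin{enumerate}[label=(\roman*)]
\item $\Psi$ is a regular subsystem with simple system $\{\beta_i\}$; this is the last assertion of \cref{lem:atom-disjoint-supports}.
\item Its Dynkin diagram is a forest of paths. This follows, as in the proof of \cref{thm:path-positive}, by computing $(\beta_r,\beta_s)$: it is nonzero only for blocks that are adjacent along the ambient path.
\item $\mu$ and each $\mu+\beta_i=\gamma_i$ are $\Psi$-dominant. Each $\beta_i^\vee$ is a nonnegative combination of ambient simple coroots, and $\mu$ and the $\gamma_i$ are ambient dominant.
\end{enumerate}
The lemma's conclusion, for any choice $\mathfrak o$, identifies the Boolean difference with $\dim R_{\Psi,\mathfrak o}(V(\lambda))_\mu$. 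This proves \eqref{eq:path-primitive-model}, and in particular shows the right side does not depend on $\mathfrak o$.

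The proof contains no real obstacle, since all substantive work sits in the cited lemmas. The one step needing care is that the atoms of the interval $[\mu,\lambda]$, not all atoms above $\mu$ in the poset, are what enter \cref{cor:atomic-finite-difference}. These are exactly the atoms of the statement, so the identification is consistent.
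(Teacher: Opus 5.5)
Your proposal is correct and follows the paper's proof. You handle $\mu=\lambda$ through the empty-subsystem convention. For $\mu<\lambda$ you rerun the chain from the proof of \cref{thm:path-positive}: the crosscut difference, the disjoint atom supports, the dominant-join lemma, path stabilization and $\Psi$-dominance. You then invoke the refined statement of \cref{lem:path-regular-boolean}; the only difference is that you write out explicitly the hypothesis checks the paper cites by reference.
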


\begin{proof}
If $\mu=\lambda$, the subsystem is empty and both sides of
\eqref{eq:path-primitive-model} equal $1$.  Otherwise, the proof of
\cref{thm:path-positive} identifies $a(\mu,\lambda)$ with the
Boolean difference in \eqref{eq:path-regular-boolean}.  The refined statement
of that lemma identifies this difference with
$\dim R_{\Psi,\mathfrak o}(V(\lambda))_\mu$.
\end{proof}

\begin{remark}\label{rem:path-threshold}
The path hypothesis enters the two parts of the argument in complementary
ways.  In \cref{lem:path-crown-stabilization}, a vertex has at most two
neighbors, and every negative label encountered in the stated stabilization
procedure is $-1$.  The corresponding reflection adds exactly one simple
root, so \cref{lem:path-unit-stabilization} identifies the final weight with
the least dominant majorant without changing its Weyl orbit.
In \cref{lem:path-regular-boolean}, orienting a
path gives each current vertex at most one unprocessed neighbor.  Dominance of
that neighboring shift keeps the relevant $\mathfrak{sl}_2$ label
nonnegative, while all earlier vertices except the immediate predecessor are
nonadjacent and hence commute with the current $\mathfrak{sl}_2$.  The
predecessor is handled by a single kernel or quotient.

A fork destroys this one-dimensional elimination scheme: at a branch vertex
one cannot arrange both the processed and unprocessed interactions into a
single predecessor and a single successor.  This does not by itself force the
raw Boolean difference to be negative---indeed the $\typeD_4$ raw difference
is nonnegative by \cref{lem:D4-raw-positive}---but it explains why the path
argument ceases to apply and why a separate local analysis is needed in
\cref{sec:D4}.
\end{remark}

\section{Forks and negative atomic numbers}\label{sec:D4}\label{sec:DE-obstruction}

We use the usual Bourbaki numbering.  In type $\typeD_4$ the diagram is
\[
\begin{tikzpicture}[baseline=-.4ex,scale=.75]
  \node[circle,draw,inner sep=1.4pt,label=below:$1$] (a1) at (0,0) {};
  \node[circle,draw,inner sep=1.4pt,label=below:$2$] (a2) at (1.5,0) {};
  \node[circle,draw,inner sep=1.4pt,label=above:$3$] (a3) at (1.5,1.15) {};
  \node[circle,draw,inner sep=1.4pt,label=below:$4$] (a4) at (3,0) {};
  \draw (a1)--(a2)--(a4);
  \draw (a2)--(a3);
\end{tikzpicture}
\]
so $\alpha_2$ is the trivalent simple root and
$\alpha_1,\alpha_3,\alpha_4$ are the three outer roots.  The triality group is
the diagram-automorphism group $S_3$ permuting these three outer roots.  A
string $a_1a_2a_3a_4$ denotes $\sum_i a_i\omega_i$.

For the whole $\typeD/\typeE$ family, we use the Bourbaki layout and numbering
of \cref{fig:DE-numbering}.

\begin{figure}[ht]
\centering
\pgfmathsetmacro{\dvr}{.15}
\begin{tabular}{cc}
\begin{tikzpicture}[scale=0.43]
\draw (-1,0) node[anchor=east] {$\typeD_n$};
\draw (0,0)--(2,0);
\draw[dashed] (2,0)--(4,0);
\draw (4,0)--(6,0.7);
\draw (4,0)--(6,-0.7);
\draw[fill=white] (0,0) circle (\dvr cm) node[below=6pt]{$\scriptstyle 1$};
\draw[fill=white] (2,0) circle (\dvr cm) node[below=4pt]{$\scriptstyle 2$};
\draw[fill=white] (4,0) circle (\dvr cm) node[below=4pt]{$\scriptstyle n-2$};
\draw[fill=white] (6,0.7) circle (\dvr cm) node[right=3pt]{$\scriptstyle n-1$};
\draw[fill=white] (6,-0.7) circle (\dvr cm) node[right=3pt]{$\scriptstyle n$};
\end{tikzpicture}
&
\begin{tikzpicture}[scale=0.43]
\draw (-1,0) node[anchor=east] {$\typeE_6$};
\draw (0,0)--(8,0);
\draw (4,0)--+(0,2);
\draw[fill=white] (0,0) circle (\dvr cm) node[below=4pt]{$\scriptstyle 1$};
\draw[fill=white] (2,0) circle (\dvr cm) node[below=4pt]{$\scriptstyle 3$};
\draw[fill=white] (4,0) circle (\dvr cm) node[below=4pt]{$\scriptstyle 4$};
\draw[fill=white] (6,0) circle (\dvr cm) node[below=4pt]{$\scriptstyle 5$};
\draw[fill=white] (8,0) circle (\dvr cm) node[below=4pt]{$\scriptstyle 6$};
\draw[fill=white] (4,2) circle (\dvr cm) node[right=3pt]{$\scriptstyle 2$};
\end{tikzpicture}
\\[2.0em]
\begin{tikzpicture}[scale=0.40]
\draw (-1,0) node[anchor=east] {$\typeE_7$};
\draw (0,0)--(10,0);
\draw (4,0)--+(0,2);
\draw[fill=white] (0,0) circle (\dvr cm) node[below=4pt]{$\scriptstyle 1$};
\draw[fill=white] (2,0) circle (\dvr cm) node[below=4pt]{$\scriptstyle 3$};
\draw[fill=white] (4,0) circle (\dvr cm) node[below=4pt]{$\scriptstyle 4$};
\draw[fill=white] (6,0) circle (\dvr cm) node[below=4pt]{$\scriptstyle 5$};
\draw[fill=white] (8,0) circle (\dvr cm) node[below=4pt]{$\scriptstyle 6$};
\draw[fill=white] (10,0) circle (\dvr cm) node[below=4pt]{$\scriptstyle 7$};
\draw[fill=white] (4,2) circle (\dvr cm) node[right=3pt]{$\scriptstyle 2$};
\end{tikzpicture}
&
\begin{tikzpicture}[scale=0.36]
\draw (-1,0) node[anchor=east] {$\typeE_8$};
\draw (0,0)--(12,0);
\draw (4,0)--+(0,2);
\draw[fill=white] (0,0) circle (\dvr cm) node[below=4pt]{$\scriptstyle 1$};
\draw[fill=white] (2,0) circle (\dvr cm) node[below=4pt]{$\scriptstyle 3$};
\draw[fill=white] (4,0) circle (\dvr cm) node[below=4pt]{$\scriptstyle 4$};
\draw[fill=white] (6,0) circle (\dvr cm) node[below=4pt]{$\scriptstyle 5$};
\draw[fill=white] (8,0) circle (\dvr cm) node[below=4pt]{$\scriptstyle 6$};
\draw[fill=white] (10,0) circle (\dvr cm) node[below=4pt]{$\scriptstyle 7$};
\draw[fill=white] (12,0) circle (\dvr cm) node[below=4pt]{$\scriptstyle 8$};
\draw[fill=white] (4,2) circle (\dvr cm) node[right=3pt]{$\scriptstyle 2$};
\end{tikzpicture}
\end{tabular}
\caption{The $\typeD/\typeE$ diagrams in the Bourbaki numbering.  In
$\typeD_4$ the trivalent root is $\alpha_2$; in the exceptional $\typeE$ diagrams the
trivalent root is $\alpha_4$.}
\label{fig:DE-numbering}
\end{figure}

\subsection{A primitive finite-difference tool}\label{subsec:primitive-finite-difference}

We first record a representation-theoretic finite-difference tool used in the fork analysis.  Let $V$ be a finite-dimensional
$\mathfrak g$-module and write $m_V(\nu)=\dim V_\nu$, with the convention
that $m_V(\nu)=0$ when $\nu$ is not a weight of $V$.

For a set $J$ of pairwise nonadjacent simple roots, define the simultaneous
primitive space
\[
 \operatorname{Prim}_J(V)_\nu
   =\{v\in V_\nu:e_jv=0\text{ for every }j\in J\}.
\]

\begin{lemma}\label{lem:primitive-finite-difference}
Let $V$ be a finite-dimensional $\mathfrak g$-module and let $J$ be a set of
pairwise nonadjacent simple roots.
\begin{enumerate}[label=\textnormal{(\roman*)}]
\item \label{item:prim-identity}
If $\nu$ is dominant with respect to the subsystem generated by $J$, then
\begin{equation}\label{eq:primitive-difference}
 \sum_{S\subseteq J}(-1)^{|S|}
       m_V\!\left(\nu+\sum_{j\in S}\alpha_j\right)
   =\dim\operatorname{Prim}_J(V)_\nu .
\end{equation}
\item \label{item:prim-inequality}
If in addition $c\notin J$, the weight $\nu+\alpha_c$ is $J$-dominant, and
$\langle\nu+\alpha_c,\alpha_c^\vee\rangle>0$, then
\begin{equation}\label{eq:star-boolean-positive}
 \sum_{S\subseteq J\cup\{c\}}(-1)^{|S|}
       m_V\!\left(\nu+\sum_{j\in S}\alpha_j\right)\ge0.
\end{equation}
\end{enumerate}
\end{lemma}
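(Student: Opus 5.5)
For part \ref{item:prim-identity} the plan is to run the construction in the proof of \cref{lem:path-regular-boolean} on a Dynkin forest with no edges. Pairwise nonadjacent simple roots $\alpha_j$, $j\in J$, have commuting $\mathfrak{sl}_2$-triples, since $\alpha_i-\alpha_j$ is never a root and, by nonadjacency, $\alpha_i+\alpha_j$ is not a root either. Enumerate $J=\{j_1,\ldots,j_k\}$ and set $V^{(0)}=V$ and $V^{(r)}=\ker(e_{j_r}:V^{(r-1)}\to V^{(r-1)})$. By commutation, each $V^{(r)}$ is still a graded module for every remaining triple, and $V^{(k)}_\nu=\operatorname{Prim}_J(V)_\nu$.

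The key step is the following. For any set $T$ of unprocessed roots put $\eta=\nu+\sum_{t\in T}\alpha_t$. Nonadjacency gives $\langle\eta,\alpha_{j_r}^\vee\rangle=\langle\nu,\alpha_{j_r}^\vee\rangle\ge0$, so the map $e_{j_r}:V^{(r-1)}_\eta\to V^{(r-1)}_{\eta+\alpha_{j_r}}$ is surjective by the $\mathfrak{sl}_2$ fact. This yields the exact sequence \eqref{eq:path-kernel-exact} and hence $\dim V^{(r)}_\eta=\dim V^{(r-1)}_\eta-\dim V^{(r-1)}_{\eta+\alpha_{j_r}}$. Induction on $r$, carried out uniformly over all such $T$, gives \eqref{eq:primitive-difference}. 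Note that in this case no quotients are needed.

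For part \ref{item:prim-inequality} the plan is to split the Boolean sum according to whether $c\in S$. This writes the left side of \eqref{eq:star-boolean-positive} as
\[
 \dim\operatorname{Prim}_J(V)_\nu-\dim\operatorname{Prim}_J(V)_{\nu+\alpha_c},
\]
where part \ref{item:prim-identity} is applied at $\nu$ and at $\nu+\alpha_c$. This uses $J$-dominance of $\nu$, which is implicit in the statement, and of $\nu+\alpha_c$. It then remains to prove $\dim P_\nu\ge\dim P_{\nu+\alpha_c}$ for $P=\operatorname{Prim}_J(V)$.

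The natural tool is $f_c:P_{\nu+\alpha_c}\to V_\nu$. Its image lies in $P$ only when $[e_j,f_c]=0$, which holds for every $j\ne c$, so $f_c$ preserves $P$. This part is clean. Injectivity is the obstacle, because $P$ is not a $\mathfrak{sl}_2(\alpha_c)$-module: $e_c$ need not preserve $\ker e_j$ when $j$ is adjacent to $c$. I would therefore avoid $\mathfrak{sl}_2$-module theory and argue directly. Let $v\in P_{\nu+\alpha_c}$ with $f_cv=0$. Then $v$ is $f_c$-lowest in its $\alpha_c$-string inside $V$, so its $\alpha_c^\vee$-eigenvalue must be $\le0$. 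The hypothesis $\langle\nu+\alpha_c,\alpha_c^\vee\rangle>0$ makes this eigenvalue positive, which forces $v=0$. The point is that injectivity of $f_c$ on $V_{\nu+\alpha_c}$ already holds in the ambient module $V$ (the eigenvalue $h=\langle\nu+\alpha_c,\alpha_c^\vee\rangle\ge1$ gives injectivity of $f$ from weight $h$ to $h-2$), so restricting to the subspace $P$ costs nothing.

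Thus $f_c$ embeds $P_{\nu+\alpha_c}$ into $P_\nu$, and the difference is nonnegative. The step needing the most care is checking that $f_c$ lands in $P$, that is, the vanishing of $[e_j,f_c]$ and the ordering of the commutators. I expect this to be straightforward, since $\alpha_j-\alpha_c$ is not a root.
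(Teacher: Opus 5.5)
Your proof is correct. Part (ii) is the paper's argument: you split off $c$, apply (i) at $\nu$ and at $\nu+\alpha_c$, and embed $\operatorname{Prim}_J(V)_{\nu+\alpha_c}$ into $\operatorname{Prim}_J(V)_\nu$ by $f_c$. This uses $[e_j,f_c]=0$ and the injectivity of $f_c$ on an ambient weight space with positive $\alpha_c^\vee$-eigenvalue, which is exactly what the paper means by ``the lowering operator $f_c$ is injective there.''

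For part (i) you take a different route.

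\emph{The paper's route.} It decomposes $V$ under $\mathfrak s_J=\bigoplus_{j\in J}\mathfrak{sl}_2^{(j)}$, using the splitting $\mathfrak h=\mathfrak h_J\oplus\mathfrak t_J$ so that each irreducible summand $\boxtimes_j L(n_j)$ has one-dimensional weight spaces. It then checks summand by summand that the Boolean sum contributes $\prod_j(1-\epsilon_j)$. This equals $1$ exactly when the weight-$\nu$ line is killed by every $e_j$. $J$-dominance is used only to show that summands with $M_\nu=0$ contribute nothing.

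\emph{Your route.} You take kernels of the $e_j$ one at a time. You need only the single-$\mathfrak{sl}_2$ surjectivity fact and the fact that nonadjacent triples commute. Your argument is in fact literally \cref{lem:path-regular-boolean} applied to the regular subsystem $\{\pm\alpha_j:j\in J\}$, whose Dynkin forest has no edges. Its hypotheses hold because $\langle\nu+\alpha_j,\alpha_i^\vee\rangle=\langle\nu,\alpha_i^\vee\rangle+2\delta_{ij}\ge0$. Every component is a single vertex, so only kernels are taken, and $R_{\Psi,\mathfrak o}(V)=\operatorname{Prim}_J(V)$. You could therefore simply cite that lemma.

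\emph{What each buys.} Your version shows that the fork tool is a special case of the path machinery already in place. It also avoids the $\mathfrak h_J\oplus\mathfrak t_J$ splitting and the decomposition into tensor products of $\mathfrak{sl}_2$-irreducibles. The paper's version is self-contained and manifestly independent of processing order. It also gives a more transparent meaning to the Boolean difference: it counts simultaneous highest-weight lines in the $\mathfrak s_J$-decomposition.
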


\begin{proof}
Let
\[
 \mathfrak s_J=\bigoplus_{j\in J}\mathfrak{sl}_2^{(j)},
 \qquad
 \mathfrak h_J=\operatorname{span}_{\mathbb C}
      \{\alpha_j^\vee:j\in J\},
 \qquad
 \mathfrak t_J=\{h\in\mathfrak h:\alpha_j(h)=0\text{ for all }j\in J\}.
\]
Because the vertices in $J$ are pairwise nonadjacent,
$\langle\alpha_i,\alpha_j^\vee\rangle=2\delta_{ij}$ for $i,j\in J$.
Hence $\mathfrak h_J\cap\mathfrak t_J=0$, and comparison of dimensions gives
\[
 \mathfrak h=\mathfrak h_J\oplus\mathfrak t_J.
\]
Moreover $\mathfrak t_J$ centralizes $\mathfrak s_J$.  After fixing a
$\mathfrak t_J$-character, a full $\mathfrak h$-weight is uniquely determined
by its $\mathfrak h_J$-restriction.  Every irreducible summand of the resulting
$\mathfrak s_J$-module has the form
\[
 M=\boxtimes_{j\in J}L(n_j),
\]
and every $\mathfrak s_J$-weight space of $M$ is one-dimensional.  Fix such a
summand.  Suppose first that $M_\nu=0$.  If $M_{\nu+\alpha_S}\ne0$ for some
$S\subseteq J$, where $\alpha_S=\sum_{j\in S}\alpha_j$, then for every
$j\in J$ the integer
$\langle\nu,\alpha_j^\vee\rangle+2\cdot\mathbf 1_{\{j\in S\}}$ is a weight
of $L(n_j)$, hence lies in $[-n_j,n_j]$ and is congruent to $n_j$ modulo $2$;
since $\langle\nu,\alpha_j^\vee\rangle\ge0$ by $J$-dominance, the integer
$\langle\nu,\alpha_j^\vee\rangle$ then lies in $[-n_j,n_j]$ as well, so
$M_\nu\ne0$.  This contradiction shows that all the terms $M_{\nu+\alpha_S}$
vanish, so the contribution of $M$ to both sides of
\eqref{eq:primitive-difference} is zero.  This is the only point at which
$J$-dominance of $\nu$ is used.  If $M_\nu\ne0$, define
\[
 \epsilon_j=
 \begin{cases}
 1,&M_{\nu+\alpha_j}\ne0,\\
 0,&M_{\nu+\alpha_j}=0.
 \end{cases}
\]
Because the $J$-directions are independent,
\[
 \mathbf 1_{\{M_{\nu+\sum_{j\in S}\alpha_j}\ne0\}}
      =\prod_{j\in S}\epsilon_j.
\]
Hence the contribution of $M$ to the left-hand side of
\eqref{eq:primitive-difference} is
\[
 \sum_{S\subseteq J}(-1)^{|S|}\prod_{j\in S}\epsilon_j
      =\prod_{j\in J}(1-\epsilon_j).
\]
This equals $1$ exactly when no raising step $\nu\mapsto\nu+\alpha_j$ is
available, i.e. exactly when the one-dimensional space $M_\nu$ is annihilated
by every $e_j$, and otherwise it equals $0$.  Summing over all irreducible
summands gives
\eqref{eq:primitive-difference}.

For $j\in J$ we have $[e_j,f_c]=0$.  Hence
\[
 f_c:\operatorname{Prim}_J(V)_{\nu+\alpha_c}
       \longrightarrow \operatorname{Prim}_J(V)_\nu.
\]
On the source weight the $\mathfrak{sl}_2^{(c)}$-weight is
$\langle\nu+\alpha_c,\alpha_c^\vee\rangle>0$, so the lowering operator $f_c$
is injective there.  Thus
\[
 \dim\operatorname{Prim}_J(V)_\nu
 \ge \dim\operatorname{Prim}_J(V)_{\nu+\alpha_c}.
\]
Using \eqref{eq:primitive-difference} on the two sides gives
\eqref{eq:star-boolean-positive}.
\end{proof}

\subsection{The defect at a trivalent vertex}\label{subsec:D4-defect}

Put
\[
 J=\{1,3,4\},\qquad
 \sigma=\alpha_1+\alpha_2+\alpha_3+\alpha_4,
 \qquad
 \theta=\alpha_1+2\alpha_2+\alpha_3+\alpha_4.
\]
Thus $\theta$ is the highest root.  For a finite-dimensional $\typeD_4$-module
$V=V(\lambda)$ and an arbitrary weight $\nu$, write
$m_V(\nu)=\dim V_\nu$.  Define the raw Boolean finite difference
\begin{equation}\label{eq:D4-raw-boolean}
 B_\lambda(\mu)
   =\sum_{S\subseteq\{1,2,3,4\}}(-1)^{|S|}
      m_V\!\left(\mu+\sum_{i\in S}\alpha_i\right).
\end{equation}

\begin{lemma}\label{lem:D4-raw-positive}
For every dominant weight $\mu$ and every highest weight $\lambda$,
\[
             B_\lambda(\mu)\ge0.
\]
More precisely, if all three outer Dynkin labels of $\mu$ are positive, then
\begin{equation}\label{eq:D4-raw-cokernel}
 B_\lambda(\mu)
 =\dim\frac{\operatorname{Prim}_J(V)_\mu}
 {f_2\operatorname{Prim}_J(V)_{\mu+\alpha_2}}.
\end{equation}
If at least one outer Dynkin label is zero, then
\begin{equation}\label{eq:D4-raw-wall}
 B_\lambda(\mu)=\dim\operatorname{Prim}_J(V)_\mu.
\end{equation}
\end{lemma}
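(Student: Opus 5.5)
Split the Boolean sum over $S\subseteq\{1,2,3,4\}$ according to whether $2\in S$. Write $\alpha_S=\sum_{i\in S}\alpha_i$ and, for $T\subseteq J$, put $\mu_T=\mu+\alpha_T$. Then
\[
 B_\lambda(\mu)=\sum_{T\subseteq J}(-1)^{|T|}m_V(\mu+\alpha_T)
   -\sum_{T\subseteq J}(-1)^{|T|}m_V(\mu+\alpha_2+\alpha_T).
\]
The vertices $1,3,4$ are pairwise nonadjacent. Since $\mu$ is dominant, it is $J$-dominant. Also $\langle\alpha_2,\alpha_j^\vee\rangle=-1$ for $j\in J$, so $\mu+\alpha_2$ has $J$-labels $\mu_j-1$. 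Hence $\mu+\alpha_2$ is $J$-dominant exactly when all three outer labels of $\mu$ are positive.

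\textbf{Case 1: all outer labels of $\mu$ are positive.} Apply \cref{lem:primitive-finite-difference}\ref{item:prim-identity} to both sums. This gives
\[
 B_\lambda(\mu)=\dim\operatorname{Prim}_J(V)_\mu-\dim\operatorname{Prim}_J(V)_{\mu+\alpha_2}.
\]
Since $[e_j,f_2]=0$ for $j\in J$, the operator $f_2$ maps $\operatorname{Prim}_J(V)_{\mu+\alpha_2}$ into $\operatorname{Prim}_J(V)_\mu$. The $\mathfrak{sl}_2^{(2)}$-weight on the source is $\langle\mu+\alpha_2,\alpha_2^\vee\rangle=\mu_2+2>0$, so $f_2$ is injective there. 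The difference is therefore the dimension of the cokernel, which is \eqref{eq:D4-raw-cokernel} and is nonnegative. This is the argument of part \ref{item:prim-inequality} with $c=2$, made explicit as a cokernel.

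\textbf{Case 2: some outer label, say $\mu_1=0$ by triality.} The first sum is still $\dim\operatorname{Prim}_J(V)_\mu$ by part \ref{item:prim-identity}. It then suffices to show that the second sum vanishes identically, which gives \eqref{eq:D4-raw-wall}. Pair the terms $T$ and $T\cup\{1\}$ for $T\subseteq\{3,4\}$; I expect the cleanest route is to pair $\eta=\mu+\alpha_2+\alpha_T$ with $\eta+\alpha_1$. Since $\mu_1=0$ and $1\notin T$, we have $\langle\eta,\alpha_1^\vee\rangle=-1$. Hence $s_1\eta=\eta+\alpha_1$, and Weyl invariance of multiplicities gives $m_V(\eta)=m_V(\eta+\alpha_1)$. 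The two terms carry opposite signs and cancel, so the second sum is $0$. This holds whatever the other outer labels are, so no further case split is needed.

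The main obstacle is only the bookkeeping in Case 2: checking that $\langle\eta,\alpha_1^\vee\rangle=-1$ is unaffected by $\alpha_3$ and $\alpha_4$. This holds because $1$ is adjacent only to $2$. Beyond that, one should note that the identity in \ref{item:prim-identity} requires $J$-dominance, and that this is exactly what fails for $\mu+\alpha_2$ on the wall. This failure is why the two cases are handled separately.
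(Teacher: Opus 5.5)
Your proposal is correct and is essentially the paper's proof. You split the cube according to whether $2\in S$, apply \cref{lem:primitive-finite-difference}\ref{item:prim-identity} together with injectivity of $f_2$ on $\operatorname{Prim}_J(V)_{\mu+\alpha_2}$ in the interior case, and on a wall cancel the $\alpha_2$-half by pairing $T$ with $T\cup\{i\}$ through $s_i$ at label $-1$. The appeal to triality is harmless but unnecessary, since the pairing works verbatim for any $i\in J$ with $\mu_i=0$.
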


\begin{proof}
Apply \cref{lem:primitive-finite-difference}\ref{item:prim-identity} to the
independent set $J=\{1,3,4\}$.  If $\mu_1,\mu_3,\mu_4>0$, both $\mu$ and
$\mu+\alpha_2$ are $J$-dominant, and $f_2$ commutes with all three outer
raising operators.  Since
$\langle\mu+\alpha_2,\alpha_2^\vee\rangle=\mu_2+2>0$, the central lowering
operator is injective on the source weight space.  This gives
\eqref{eq:D4-raw-cokernel}.

Suppose instead that $\mu_i=0$ for some $i\in J$.  In the half of
\eqref{eq:D4-raw-boolean} containing $\alpha_2$, pair the terms indexed by
$T\subseteq J\setminus\{i\}$ and $T\cup\{i\}$.  The weight
$\mu+\alpha_2+\sum_{j\in T}\alpha_j$ has $i$-th Dynkin label $-1$, so the
simple reflection $s_i$ carries it to the paired weight.  Weyl invariance of
weight multiplicities makes the two terms cancel.  The remaining half is the
$J$-Boolean difference at the $J$-dominant weight $\mu$, hence equals
$\dim\operatorname{Prim}_J(V)_\mu$ by
\eqref{eq:primitive-difference}.
\end{proof}

The next elementary stabilization calculation isolates exactly where the raw
Boolean cube differs from the dominant crown.

\begin{proposition}\label{prop:D4-defect}
Let $\mu\le\lambda$ be dominant weights in type $\typeD_4$, and write
$\mu_i=\langle\mu,\alpha_i^\vee\rangle$.
\begin{enumerate}[label=\textnormal{(\roman*)}]
\item If $\mu_2\ge2$, then
\[
             a(\mu,\lambda)=B_\lambda(\mu)\ge0.
\]
\item If $\mu_2=0$, put $Z=\{i\in J:\mu_i=0\}$ and
\[
 \beta_Z=
 \begin{cases}
  \alpha_2+\displaystyle\sum_{i\in Z}\alpha_i,&|Z|\le2,\\[1mm]
  \theta,&|Z|=3.
 \end{cases}
\]
Then $\mu+\beta_Z$ is the least element of $(\mu,\lambda]$ whenever the latter
is nonempty.  In particular $[\mu,\lambda]$ has at most one atom, namely
$\mu+\beta_Z$, and $a(\mu,\lambda)\ge0$.
\item If $\mu_2=1$, put
\[
             \tau=\mu+\sigma.
\]
With the convention $a(\tau,\lambda)=0$ when $\tau\nleq\lambda$, one has
\begin{equation}\label{eq:D4-defect-atomic}
       B_\lambda(\mu)=a(\mu,\lambda)+a(\tau,\lambda),
\end{equation}
and, whenever $\tau\le\lambda$,
\begin{equation}\label{eq:D4-defect-multiplicity}
 a(\mu,\lambda)
   =B_\lambda(\mu)+m(\mu+\theta,\lambda)-m(\tau,\lambda).
\end{equation}
In particular $a(\tau,\lambda)=m(\tau,\lambda)-m(\mu+\theta,\lambda)\ge0$.
\end{enumerate}
\end{proposition}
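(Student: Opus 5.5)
The plan is to compare, case by case, the simple-root form of the atomic number in \cref{cor:atomic-finite-difference-simple},
\[
a(\mu,\lambda)=\sum_{S\subseteq\{1,2,3,4\}}(-1)^{|S|}m\bigl(\widehat{\mu+\alpha_S},\lambda\bigr),
\]
with the raw cube $B_\lambda(\mu)$ of \eqref{eq:D4-raw-boolean}. The two sums differ only through dominant stabilization. So for each $S$ I will compute the Dynkin labels of $\mu+\alpha_S$. Whenever every negative label that appears along the way is $-1$, I will reach $\widehat{\mu+\alpha_S}$ by reflections at those labels. \Cref{lem:path-unit-stabilization} then identifies the result with the least dominant majorant and shows it is Weyl-conjugate to $\mu+\alpha_S$, so the two multiplicities agree.

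\emph{Case (i), $\mu_2\ge2$.} The label of $\mu+\alpha_S$ at node $2$ is $\mu_2+2\cdot\mathbf 1_{2\in S}-|S\cap J|\ge -1$. An outer label is $\mu_i+2\cdot\mathbf 1_{i\in S}-\mathbf 1_{2\in S}\ge-1$, with $-1$ only when $\mu_i=0$, $i\notin S$ and $2\in S$.

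If $2\in S$, reflecting at every outer $-1$ adds the corresponding $\alpha_i$. The node-$2$ label then stays at least $\mu_2+2-3\ge1$, so the result is dominant after one round. If $2\notin S$, the only possible negative label is at node $2$, namely $-1$ when $\mu_2=2$ and $S=J$. Reflecting there gives $\mu+\sigma$, which is dominant. Hence every term matches and $a=B_\lambda(\mu)\ge0$ by \cref{lem:D4-raw-positive}.

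\emph{Case (ii), $\mu_2=0$.} For $i\in J$ the weight $\mu+\alpha_i$ has label $-1$ at node $2$. Hence any dominant majorant of it, and likewise any dominant $\eta>\mu$ (by \cref{lem:atoms-simple-roots} such an $\eta$ lies above some $\mu+\alpha_i$), must lie above $\mu+\alpha_2$. It therefore suffices to compute $\widehat{\mu+\alpha_2}$.

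In $\mu+\alpha_2$ the outer labels equal $-1$ exactly on $Z$; reflect at all of them. If $|Z|\le2$, the node-$2$ label becomes $2-|Z|\ge0$. If $|Z|=3$, it becomes $-1$, and one more reflection produces $\mu+\theta$. By \cref{lem:path-unit-stabilization} this gives $\widehat{\mu+\alpha_2}=\mu+\beta_Z$, which is then the least element of $(\mu,\lambda]$ whenever that set is nonempty. The crown is therefore a chain, and
\[
a(\mu,\lambda)=m(\mu,\lambda)-m(\mu+\beta_Z,\lambda).
\]
Since $\beta_Z$ is a positive root (its support is connected, or it equals $\theta$), this is nonnegative by \cref{lem:root-string-monotonicity}.

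\emph{Case (iii), $\mu_2=1$.} This is the step I expect to be the main obstacle, because it requires checking all sixteen $S$. I expect every $S\ne J$ to reach its hat by $-1$ reflections, as in case (i). For instance, $|S\cap J|=2$ with $2\notin S$ gives node label $-1$; reflecting adds $\alpha_2$, and the remaining outer label stays at least $\mu_i-1+0$, at worst another $-1$, which is fixed by one more reflection. The exception is $S=J$. There the node-$2$ label is $1-3=-2$, and adding $\alpha_2$ once yields $\tau=\mu+\sigma$, which is dominant (outer labels $\mu_i+1$, node label $0$). So $\widehat{\mu+\alpha_J}=\tau$. The Weyl-conjugate of $\mu+\alpha_J$, however, is $s_2(\mu+\alpha_J)=\mu+\alpha_J+2\alpha_2=\mu+\theta$. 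Since $|J|=3$ carries sign $-1$, the two sums differ only in this term:
\[
a(\mu,\lambda)=B_\lambda(\mu)+m(\mu+\theta,\lambda)-m(\tau,\lambda),
\]
which is \eqref{eq:D4-defect-multiplicity}. If $\tau\nleq\lambda$, then $\mu+\theta>\tau$ is not below $\lambda$ either, so both correction terms vanish.

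Finally, apply case (ii) to $\tau$. Its node-$2$ label is $0$ and its outer labels are positive, so $Z=\varnothing$ and the unique atom above $\tau$ is $\tau+\alpha_2=\mu+\theta$. This gives $a(\tau,\lambda)=m(\tau,\lambda)-m(\mu+\theta,\lambda)\ge0$. Substituting this into the previous display yields \eqref{eq:D4-defect-atomic}.
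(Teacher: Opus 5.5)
Your proposal is correct and follows the paper's proof essentially step for step. Like the paper, you compare \eqref{eq:atomic-finite-difference-simple} with the raw cube term by term and stabilize by reflections at labels $-1$ via \cref{lem:path-unit-stabilization}. You then isolate the single defect at $S=J$, where $s_2x_J=\mu+\theta$ but $\widehat{x_J}=\tau$, and finish by applying (ii) to $\tau$.

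Two points need tightening. First, \cref{lem:path-unit-stabilization} does not cover the label $-2$ at $S=J$. You should add the one-line check that any dominant $\eta=x_J+\sum_i d_i\alpha_i$ satisfies $0\le -2+2d_2-d_1-d_3-d_4$, which forces $d_2\ge1$ and hence $\eta\ge\tau$. Second, the hedged ``I expect'' in (iii) should be replaced by the remaining easy cases. When $2\notin S$ and $|S\cap J|\le1$, the weight is already dominant. When $2\in S$, the central label is $3-|S\cap J|$, and it is lowered by at most $3-|S\cap J|$ outer reflections, so it stays nonnegative.
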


\begin{proof}
For $S\subseteq\{1,2,3,4\}$ set
$x_S=\mu+\sum_{i\in S}\alpha_i$, and write $r=|S\cap J|$.  Its Dynkin labels
are
\begin{align*}
 \langle x_S,\alpha_2^\vee\rangle
   &=\mu_2+2\mathbf 1_{\{2\in S\}}-r,\\
 \langle x_S,\alpha_i^\vee\rangle
   &=\mu_i+2\mathbf 1_{\{i\in S\}}-
      \mathbf 1_{\{2\in S\}},\qquad i\in J.
\end{align*}
Assume first that $\mu_2\ge2$.  If $2\notin S$, every outer label is
nonnegative and the central label is $\mu_2-r\ge-1$.  Equality $-1$ requires
$\mu_2=2$ and $r=3$; reflecting at $2$ changes the central label to $1$ and
each selected outer label from $\mu_i+2$ to $\mu_i+1\ge1$, so the result is
dominant.  If $2\in S$, the central label is
$\mu_2+2-r\ge\mu_2-1\ge1$.  A negative outer label can occur only at an
unselected $i\in J$ with $\mu_i=0$, where it equals $-1$.  Put
\[
 Z_S=\{i\in J\setminus S:\mu_i=0\}.
\]
The reflections $s_i$, $i\in Z_S$, commute.  After applying all of them, each
affected outer label is $1$, and the central label is
\[
 \mu_2+2-r-|Z_S|
   \ge \mu_2+2-r-(3-r)=\mu_2-1\ge1.
\]
Hence the stabilized weight is dominant and is obtained only by reflections
at label $-1$.  By \cref{lem:path-unit-stabilization} the stabilized weight is
$\widehat{x_S}$, so $m(\widehat{x_S},\lambda)=m(x_S,\lambda)$ for every
$S\subseteq\{1,2,3,4\}$.  Substituting this in
\eqref{eq:atomic-finite-difference-simple} gives exactly
\eqref{eq:D4-raw-boolean}, and (i) follows from
\cref{lem:D4-raw-positive}.

Assume $\mu_2=0$, and let
$Z=\{i\in J:\mu_i=0\}$.  The Dynkin labels of $\nu=\mu+\alpha_2$ are
\[
 \langle\nu,\alpha_2^\vee\rangle=2,
 \qquad
 \langle\nu,\alpha_i^\vee\rangle=\mu_i-1\quad(i\in J).
\]
Thus the negative labels are exactly the mutually orthogonal vertices in
$Z$.  Reflecting at all of them gives
\[
 \nu_Z=\mu+\alpha_2+\sum_{i\in Z}\alpha_i.
\]
Its outer labels are nonnegative and its central label is $2-|Z|$.  Hence
$\nu_Z$ is dominant when $|Z|\le2$.  If $|Z|=3$, its central label is $-1$;
reflecting once more at $2$ gives
\[
 s_2\nu_Z=\mu+2\alpha_2+\alpha_1+\alpha_3+\alpha_4
          =\mu+\theta,
\]
whose central label is $1$ and whose three outer labels are $0$.  By the unit
stabilization argument, in both cases
\[
 \widehat{\mu+\alpha_2}=\mu+\beta_Z
\]
with $\beta_Z$ as in the statement.
Now let $i\in J$.  Any dominant majorant of $\mu+\alpha_i$ must repair the
central label $-1$, and therefore contains at least one copy of $\alpha_2$.
It is consequently also a majorant of $\mu+\alpha_2$, so
\[
 \widehat{\mu+\alpha_2}\le\widehat{\mu+\alpha_i}.
\]
Let now $\nu$ be any dominant weight with $\mu<\nu\le\lambda$, and choose $i$
with $\alpha_i$ occurring with positive coefficient in $\nu-\mu$.  Then
$\nu\ge\mu+\alpha_i$, and $\nu$ is dominant, so
$\nu\ge\widehat{\mu+\alpha_i}\ge\mu+\beta_Z$; for $i=2$ this is immediate.
Hence $\mu+\beta_Z$ is the least element of $(\mu,\lambda]$ whenever that
interval is nonempty, and it is then its unique atom.  Each such $\beta_Z$ is
a positive root of $\typeD_4$, so
\cref{lem:root-string-monotonicity} gives
$m(\mu,\lambda)\ge m(\mu+\beta_Z,\lambda)$ whenever that atom lies below
$\lambda$.  In this case \cref{cor:atomic-finite-difference} gives
$a(\mu,\lambda)=m(\mu,\lambda)-m(\mu+\beta_Z,\lambda)\ge0$.
If the interval has no atom, then $\lambda=\mu$ and $a(\mu,\mu)=1$.
This proves (ii).

Finally suppose $\mu_2=1$.  For
$S\subseteq\{1,2,3,4\}$ put $r=|S\cap J|$.  The Dynkin labels of
$x_S=\mu+\sum_{i\in S}\alpha_i$ are
\begin{align*}
 \langle x_S,\alpha_2^\vee\rangle
    &=1+2\mathbf 1_{\{2\in S\}}-r,\\
 \langle x_S,\alpha_i^\vee\rangle
    &=\mu_i+2\mathbf 1_{\{i\in S\}}-
      \mathbf 1_{\{2\in S\}},\qquad i\in J.
\end{align*}
If $2\notin S$ and $r\le1$, all labels are nonnegative.  If $2\notin S$
and $r=2$, the central label is $-1$; reflecting at $2$ makes it $1$, and the
only possible new negative label is $-1$ at the unique unselected outer
vertex with $\mu_i=0$, which is removed by one outer reflection.  If
$2\in S$, the central label is $3-r\ge0$, while every negative outer label is
exactly $-1$; reflecting at those mutually orthogonal outer vertices lowers
the central label by their number, which is at most $3-r$, so the result is
dominant.  Thus each of these raw weights reaches a dominant weight by
reflections at Dynkin label $-1$.  By \cref{lem:path-unit-stabilization}
that weight is its least dominant majorant, so Weyl invariance gives
$m(\widehat{x_S},\lambda)=m(x_S,\lambda)$ for every $S\ne J$, while
\eqref{eq:atomic-finite-difference-simple} expresses $a(\mu,\lambda)$ as the
alternating sum of the $m(\widehat{x_S},\lambda)$.

The sole remaining subset is $S=J$, for which
\[
 x_J=\mu+\alpha_1+\alpha_3+\alpha_4,
 \qquad
 \langle x_J,\alpha_2^\vee\rangle=-2.
\]
The weight
\[
 \tau=x_J+\alpha_2=\mu+\sigma
\]
is dominant: its central label is $0$ and its outer labels are
$\mu_i+1\ge1$.  It is also the least dominant majorant.  Indeed, if
$\eta\ge x_J$ is dominant and $\eta-x_J=\sum_i d_i\alpha_i$, then
\[
 0\le\langle\eta,\alpha_2^\vee\rangle
   =-2+2d_2-d_1-d_3-d_4\le -2+2d_2,
\]
so $d_2\ge1$ and therefore $\eta\ge x_J+\alpha_2=\tau$.  By contrast, the
simple reflection uses the coefficient
$-\langle x_J,\alpha_2^\vee\rangle=2$:
\[
 s_2x_J=x_J+2\alpha_2=\mu+\theta.
\]
In particular, $\tau$ and $x_J$ are not Weyl-conjugate: for the invariant
inner product fixed above,
\[
 (\tau,\tau)-(x_J,x_J)
   =2(x_J,\alpha_2)+(\alpha_2,\alpha_2)
   =-(\alpha_2,\alpha_2)<0.
\]
Thus Weyl invariance identifies the multiplicity at $x_J$ with that at
$\mu+\theta$, but gives no such identification with the multiplicity at
its least dominant majorant $\tau$.
The subset $J$ has odd cardinality.  Consequently replacing the raw term
$-m_V(x_J)=-m(\mu+\theta,\lambda)$ by its crown term
$-m(\tau,\lambda)$ gives the multiplicity identity
\eqref{eq:D4-defect-multiplicity}, whether or not $\tau\le\lambda$.
If $\tau\nleq\lambda$, then neither $\tau$ nor $\mu+\theta$ is a weight of
$V(\lambda)$: both are dominant, and $\mu+\theta=\tau+\alpha_2\ge\tau$.
Both multiplicities therefore vanish, giving $a(\mu,\lambda)=B_\lambda(\mu)$
and proving \eqref{eq:D4-defect-atomic} under the stated zero convention.
Hence assume $\tau\le\lambda$.  Now $\tau_2=0$ and all three outer Dynkin
labels of $\tau$ are positive.
Part~(ii), applied with lower weight $\tau$, therefore shows that the only
possible atom above $\tau$ is
$\tau+\alpha_2=\mu+\theta$.  If $\mu+\theta\le\lambda$, this is the
unique atom of $[\tau,\lambda]$, and \cref{cor:atomic-finite-difference} gives
\[
 a(\tau,\lambda)=m(\tau,\lambda)-m(\mu+\theta,\lambda).
\]
If $\mu+\theta\nleq\lambda$, the interval $[\tau,\lambda]$ has no atom;
then $m(\mu+\theta,\lambda)=0$ and the same identity still holds.  In both
cases it is equivalent to \eqref{eq:D4-defect-atomic}, and
\cref{lem:root-string-monotonicity}, applied at $\tau$ with
$\beta=\alpha_2$, gives $a(\tau,\lambda)\ge0$.
\end{proof}

For the two refinements below it is convenient to record a safe finite
difference form of Kostant's formula.  Let $p_{\rm ns}$ denote the Kostant
partition function obtained by using only the nonsimple positive roots.

\begin{lemma}\label{lem:D4-pns}
For any dominant $\mu\le\lambda$ in $\typeD_4$,
\begin{equation}\label{eq:D4-pns}
 B_\lambda(\mu)
  =\sum_{w\in W(\typeD_4)}(-1)^{\ell(w)}
    p_{\rm ns}\bigl(w(\lambda+\rho)-(\mu+\rho)\bigr).
\end{equation}
\end{lemma}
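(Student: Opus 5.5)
The plan is to run the argument of \cref{lem:G2-partition-difference} with the rank-four Boolean cube $\{1,2,3,4\}$ in place of the rank-two diamond. First apply Kostant's multiplicity formula \cite{MR0109192} to each of the sixteen terms of \eqref{eq:D4-raw-boolean}:
\[
 m_V(\nu)=\sum_{w\in W}(-1)^{\ell(w)}p\bigl(w(\lambda+\rho)-(\nu+\rho)\bigr),
\]
where $p$ is the full Kostant partition function extended by zero off the positive root cone. This holds for every weight $\nu$, dominant or not, and it vanishes off the weights of $V$. That is why this form is ``safe'' and needs no stabilization, in contrast to the crown terms.

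Next, substitute $\nu=\mu+\alpha_S$ and interchange the finite sums over $S\subseteq\{1,2,3,4\}$ and $w\in W$. For each fixed $w$, with $\xi=w(\lambda+\rho)-(\mu+\rho)$, the inner sum is
\[
 \sum_{S}(-1)^{|S|}p(\xi-\alpha_S).
\]
The generating-function identity from the proof of \cref{lem:G2-partition-difference} identifies this with $p_{\rm ns}(\xi)$. Indeed, multiplying $\prod_{\alpha\in\Raiz^+}(1-e^{\alpha})^{-1}$ by $\prod_{i=1}^4(1-e^{\alpha_i})$ cancels exactly the four simple-root factors, and expanding that product gives $\sum_S(-1)^{|S|}e^{\alpha_S}$. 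Comparing coefficients of $e^\xi$ yields the identity for every $\xi$ in the root lattice, with both sides zero off the cone. Summing over $w$ then gives \eqref{eq:D4-pns}.

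Dominance of $\mu$ and $\lambda$ is never really used beyond making the statement meaningful, so no step is a serious obstacle. The only point needing care is the convention that $p$ and $p_{\rm ns}$ vanish outside $\Z_{\ge0}\Raiz^+$. This guarantees that the coefficient comparison holds termwise in the completed group algebra, where all series are supported in a translate of the negative cone and the products are well defined. It also guarantees that Kostant's formula correctly returns $0$ at non-weights such as $\mu+\alpha_S\nleq\lambda$.
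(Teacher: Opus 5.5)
Your proposal is correct and matches the paper's proof: insert Kostant's formula into each of the sixteen terms of $B_\lambda(\mu)$, interchange the finite sums, and identify the inner Boolean difference $\sum_S(-1)^{|S|}p(\xi-\alpha_S)$ with $p_{\rm ns}(\xi)$ by cancelling the four simple-root factors of the Kostant generating function. (One harmless slip: with your convention $\prod_{\alpha\in\Raiz^+}(1-e^{\alpha})^{-1}$, the series are supported in the positive cone, not in a translate of the negative one.)
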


\begin{proof}
Insert Kostant's multiplicity formula into
\eqref{eq:D4-raw-boolean} and interchange the two finite sums.  For a fixed
Weyl-group term the inner Boolean difference is
\[
 \sum_{S\subseteq\{1,2,3,4\}}(-1)^{|S|}
 p\!\left(\xi-\sum_{i\in S}\alpha_i\right).
\]
Multiplication of the Kostant generating function by
$\prod_i(1-e^{-\alpha_i})$ cancels precisely the four simple-root factors,
leaving the generating function for partitions by the nonsimple positive
roots.  The displayed sum is therefore $p_{\rm ns}(\xi)$.
\end{proof}

The eight nonsimple positive roots are
\begin{equation}\label{eq:D4-nonsimple-roots}
\begin{gathered}
 \gamma_1=\alpha_1+\alpha_2,\quad
 \gamma_3=\alpha_2+\alpha_3,\quad
 \gamma_4=\alpha_2+\alpha_4,\\
 h_{13}=\alpha_1+\alpha_2+\alpha_3,\quad
 h_{14}=\alpha_1+\alpha_2+\alpha_4,\quad
 h_{34}=\alpha_2+\alpha_3+\alpha_4,\\
 \sigma=\alpha_1+\alpha_2+\alpha_3+\alpha_4,
 \qquad
 \theta=\alpha_1+2\alpha_2+\alpha_3+\alpha_4.
\end{gathered}
\end{equation}

\begin{lemma}
\label{lem:simply-laced-parabolic-bound}
Let $\Phi$ be a finite simply-laced root system with simple roots
$\{\alpha_j:j\in I\}$, let $L$ be a strictly dominant integral weight, and write
\[
       L-wL=\sum_{j\in I} q_j\alpha_j
       \qquad(q_j\in\mathbb Z_{\ge0}).
\]
Fix $i\in I$.  If $w$ does not belong to the standard parabolic subgroup
generated by the simple reflections $s_j$ with $j\ne i$, then
\begin{equation}\label{eq:simply-laced-parabolic-bound}
               q_i\ge \langle L,\alpha_i^\vee\rangle.
\end{equation}
\end{lemma}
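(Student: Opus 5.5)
We argue by induction on $\ell(w)$, using the standard formula expressing $L-wL$ as a sum of positive roots.  Choose a reduced expression $w=s_{i_1}s_{i_2}\cdots s_{i_r}$.  Telescoping gives
\[
  L-wL=\sum_{t=1}^{r}s_{i_1}\cdots s_{i_{t-1}}\bigl(L-s_{i_t}L\bigr)
      =\sum_{t=1}^{r}\langle L,\alpha_{i_t}^\vee\rangle\,\gamma_t,
  \qquad
  \gamma_t=s_{i_1}\cdots s_{i_{t-1}}\alpha_{i_t}.
\]
The $\gamma_t$ are exactly the positive roots in the inversion set of $w^{-1}$, each a nonnegative combination of simple roots.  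Every coefficient $\langle L,\alpha_{i_t}^\vee\rangle$ is a positive integer because $L$ is strictly dominant, so each summand contributes nonnegatively to every $q_j$.

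Since $w$ is not in the parabolic subgroup $W_{I\setminus\{i\}}$, the letter $s_i$ occurs in every reduced word; in particular there is a first index $t_0$ with $i_{t_0}=i$.  The prefix $u=s_{i_1}\cdots s_{i_{t_0-1}}$ lies in $W_{I\setminus\{i\}}$, so $\gamma_{t_0}=u\alpha_i$ equals $\alpha_i$ plus a combination of the $\alpha_j$ with $j\ne i$: the parabolic subgroup $W_{I\setminus\{i\}}$ fixes the $\alpha_i$-coefficient modulo the other simple roots, since $s_j\alpha_i=\alpha_i-\langle\alpha_i,\alpha_j^\vee\rangle\alpha_j$ for $j\ne i$.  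Hence the $\alpha_i$-coefficient of $\gamma_{t_0}$ is exactly $1$.  The term $t=t_0$ therefore contributes $\langle L,\alpha_i^\vee\rangle$ to $q_i$, and all other terms contribute nonnegatively, giving \eqref{eq:simply-laced-parabolic-bound}.

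The only delicate point is the correct coefficient: the summand of $\gamma_{t_0}$ carries the label $\langle L,\alpha_{i_{t_0}}^\vee\rangle=\langle L,\alpha_i^\vee\rangle$, which is precisely the required bound.  No simply-laced hypothesis is actually needed in this argument; it merely matches the setting in which the lemma is applied.
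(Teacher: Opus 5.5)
Your proof is correct, and it takes a genuinely different route from the paper's.

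\textbf{The paper's route.} It pairs with the fundamental coweight, writing $q_i=\langle L,\omega_i^\vee-w^{-1}\omega_i^\vee\rangle$. It observes that $\omega_i^\vee-w^{-1}\omega_i^\vee$ is a nonnegative combination of simple coroots, and that this combination is nonzero because $W_{I\setminus\{i\}}$ is the full stabilizer of $\omega_i^\vee$. A norm comparison, using orthogonality of $\omega_i^\vee$ to the $\alpha_j^\vee$ with $j\ne i$, then forces the $\alpha_i^\vee$-coefficient to be at least $1$.

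\textbf{Your route.} You expand $L-wL$ along a reduced word as $\sum_t\langle L,\alpha_{i_t}^\vee\rangle\gamma_t$ over the inversion roots. You then pin down the term at the first occurrence of $s_i$: its root $u\alpha_i$, with $u\in W_{I\setminus\{i\}}$, has $\alpha_i$-coefficient exactly $1$.

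\textbf{Comparison.} Your version is more elementary and constructive. It avoids the stabilizer theorem and the norm argument, and it exhibits the responsible summand explicitly. The paper's version is reduced-word free and packages $q_i$ as a single pairing of $L$ with a nonnegative coroot combination.

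\textbf{Two small remarks.} Your observation that the simply-laced hypothesis is superfluous is right. It applies to the paper's argument as well, since $(\alpha_j^\vee,\omega_i^\vee)=0$ for $j\ne i$ in every type. Separately, you announce an induction on $\ell(w)$ but never use one; the argument is direct, so that phrase should be removed.
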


\begin{proof}
Let $\omega_i^\vee$ be the $i$-th fundamental coweight.  Since
$q_i=\langle L-wL,\omega_i^\vee\rangle$,
\[
 q_i=\langle L,\omega_i^\vee-w^{-1}\omega_i^\vee\rangle.
\]
The coweight $\omega_i^\vee$ is dominant, so
$\omega_i^\vee-w^{-1}\omega_i^\vee$ is a nonnegative integral combination
of the simple coroots.  Its stabilizer is precisely the standard parabolic
subgroup generated by the $s_j$, $j\ne i$.  If $w$ is outside that
parabolic, the difference is nonzero.  Its $\alpha_i^\vee$-coefficient cannot be zero: in simply-laced type the
remaining simple coroots are orthogonal to the fundamental coweight
$\omega_i^\vee$, so if
$\delta^\vee=\omega_i^\vee-w^{-1}\omega_i^\vee$ were a nonzero combination of
only those coroots, then
\[
 \|w^{-1}\omega_i^\vee\|^2
   =\|\omega_i^\vee-\delta^\vee\|^2
   =\|\omega_i^\vee\|^2+\|\delta^\vee\|^2
   >\|\omega_i^\vee\|^2,
\]
contradicting Weyl invariance of the norm.  Thus the
$\alpha_i^\vee$-coefficient is at least $1$.  Strict dominance of $L$ now
gives \eqref{eq:simply-laced-parabolic-bound}.
\end{proof}

\begin{lemma}\label{lem:D4-boundary-estimates}
Let $\mu\le\lambda$ be dominant weights in type $\typeD_4$.
Assume $\mu_2=1$ and put $\tau=\mu+\sigma$.
\begin{enumerate}[label=\textnormal{(\alph*)}]
\item If $\lambda=\tau$, then $B_\lambda(\mu)=1$.  If, for some
$i\in J$ and $q\ge1$,
\[
       \lambda=\tau+q(\alpha_2+\alpha_i),
\]
then $B_\lambda(\mu)\ge1$.
\item Suppose $\tau\le\lambda$, write
\[
 \varepsilon=\lambda-\tau
   =u_1\alpha_1+u_2\alpha_2+u_3\alpha_3+u_4\alpha_4,
 \qquad
 E=u_1+u_3+u_4-u_2.
\]
Then
\[
 E<0\Longrightarrow a(\tau,\lambda)=0,
 \qquad
 E=0\Longrightarrow a(\tau,\lambda)=1\ \text{ and }\ B_\lambda(\mu)\ge1.
\]
\end{enumerate}
\end{lemma}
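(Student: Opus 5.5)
The plan is to compute everything through Kostant's formula and then kill almost all Weyl-group terms with \cref{lem:simply-laced-parabolic-bound}. Put $L=\lambda+\rho$, $M=\mu+\rho$ and $L-wL=\sum_j q_j\alpha_j$. By \cref{lem:D4-pns}, a term $p_{\rm ns}\bigl((\lambda-\mu)-(L-wL)\bigr)$ can be nonzero only if every coordinate of $(\lambda-\mu)-(L-wL)$ is nonnegative. If $w$ involves $s_j$, then $q_j\ge\lambda_j+1$. So whenever the $\alpha_j$-coefficient of $\lambda-\mu$ is at most $\lambda_j$, every $w$ involving $s_j$ contributes zero. Two further facts will be used repeatedly: every nonsimple positive root of $\typeD_4$ involves $\alpha_2$, and the $\alpha_2$-coefficient of every nonsimple root is at most the sum of its outer coefficients.

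For (a) with $\lambda=\tau$, I have $\lambda-\mu=\sigma$, $\tau_2=0$, and $\tau_i=\mu_i+1\ge1$ for $i\in J$. An outer $s_i$ forces $q_i\ge2>1$, so only $w\in\{1,s_2\}$ survive. For $w=s_2$ the argument is $\sigma-\alpha_2=\alpha_1+\alpha_3+\alpha_4$, which has no $\alpha_2$, so $p_{\rm ns}=0$. For $w=1$ one checks directly that $p_{\rm ns}(\sigma)=1$: the only partition is $\sigma$ itself, since removing any $\gamma_i$ or $h_{ij}$ leaves a sum of simple roots. Hence $B_\tau(\mu)=1$.

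For $\lambda=\tau+q(\alpha_2+\alpha_i)$, each outer $j\ne i$ has coefficient $1$ in $\lambda-\mu$ but $\lambda_j=\mu_j+1+q\ge2$, so $s_j$ is excluded. The sum therefore collapses to the six elements of the $\typeA_2$ parabolic $\langle s_2,s_i\rangle$. I will compute these six $p_{\rm ns}$ values explicitly. This works because $\lambda-\mu$ has $\alpha_j$-coefficient $1$ for both $j\ne i$, so only few partition shapes occur. I then exhibit, as in the proof of \cref{prop:G2-diamond}, injections from the partitions counted by the negative terms into those counted by the positive ones, shifting $\gamma_i$ against $\alpha_2$ or $\alpha_i$ and using $\alpha_i=h$-differences. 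The goal is to leave at least the partition $\sigma+q\gamma_i$ uncancelled. I expect this signed bijective bookkeeping to be the main obstacle.

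For (b), \cref{prop:D4-defect}(iii) gives $a(\tau,\lambda)=m(\tau,\lambda)-m(\tau+\alpha_2,\lambda)$. Kostant's formula turns this difference into $\sum_w(-1)^{\ell(w)}p^{(2)}(wL-\tau-\rho)$, where $p^{(2)}$ is the partition function omitting $\alpha_2$. The functional $E(x)=x_1+x_3+x_4-x_2$ is nonnegative on every positive root except $\alpha_2$, and it vanishes exactly on $\gamma_1,\gamma_3,\gamma_4$. Hence the $w=1$ term is $0$ when $E(\varepsilon)<0$. When $E(\varepsilon)=0$, the $w=1$ term is $1$: the partition is forced to be $\varepsilon=u_1\gamma_1+u_3\gamma_3+u_4\gamma_4$, since $E=0$ forces the parts to be $\gamma$'s and then matching coordinates gives coefficients $u_1,u_3,u_4$.

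For $w\ne1$ I will use the parabolic bound again, together with $\tau_j=\mu_j+1$ and $\lambda_j=\tau_j+2u_j-u_2$, to show that $E$ of the argument stays negative or that some coordinate becomes negative. This is the second place where care is needed. It yields $a(\tau,\lambda)\in\{0,1\}$ as claimed. Finally, for $E=0$ the inequality $B_\lambda(\mu)\ge1$ follows by the same Kostant computation as in (a), via \cref{lem:D4-pns} with $\lambda-\mu=\sigma+\sum_j u_j\gamma_j$. The identity partition is then $\sigma+\sum_j u_j\gamma_j$, and the remaining terms are controlled exactly as in part (a).
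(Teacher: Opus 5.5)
There are two genuine gaps. Your argument for $\lambda=\tau$ and your two statements about $a(\tau,\lambda)$ in (b) are sound and match the paper. In (b), the label doing the work is $\lambda_2=u_2-E$: every $w\notin W_J$ subtracts more than $u_2$ from the central coordinate, and every nontrivial $w\in W_J$ lowers $E$.

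The first gap is in the second assertion of (a). For $j\in J\setminus\{i\}$ one has $\lambda_j=\mu_j+1-q$, not $\mu_j+1+q$; the latter is $\lambda_i$. Dominance of $\lambda$ only gives $q\le\mu_j+1$, so $\lambda_j=0$ is allowed. In that case $\lambda_j+1=1$ equals the $\alpha_j$-coefficient of $\lambda-\mu$, and $s_j$ is \emph{not} excluded.

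The correct reduction is the reverse of yours; take $i=1$.
\begin{itemize}
\item $s_1$ dies, since $\lambda_1+1=\mu_1+q+2>q+1$.
\item Every $w$ involving $s_2$ dies, because $\lambda_2+1=q+1$ equals the central coefficient of $\lambda-\mu$. Your coordinate criterion does not reach this case. You need the extra fact that a zero residual central coefficient leaves a nonzero residual (distinct strictly dominant weights are not Weyl-conjugate), and such a residual has no partition because every nonsimple root involves $\alpha_2$.
\end{itemize}
The survivors are the four elements of $\langle s_3,s_4\rangle$:
\begin{itemize}
\item the identity term is $2$, from the partitions $q\gamma_1+\sigma$ and $(q-1)\gamma_1+h_{13}+h_{14}$;
\item the $s_3$-term is $-1$ exactly when $\lambda_3=0$ (partition $q\gamma_1+h_{14}$), and symmetrically for $s_4$;
\item the $s_3s_4$-term is $+1$ exactly when $\lambda_3=\lambda_4=0$.
\end{itemize}
Hence $B_\lambda(\mu)=2-\mathbf 1_{\{\lambda_3=0\}}-\mathbf 1_{\{\lambda_4=0\}}+\mathbf 1_{\{\lambda_3=\lambda_4=0\}}\ge1$, and no bijective bookkeeping is needed. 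Your six-term sum over $\langle s_2,s_1\rangle$ would return $2$ in every case. That is false: for $\mu=(0100)$, $q=1$, $\lambda=(2100)$, one has $B_\lambda(\mu)=1$.

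The second gap is the last claim of (b), that $E=0$ gives $B_\lambda(\mu)\ge1$; it does not follow ``as in part (a)''. The identity partition $\sigma+\sum_j u_j\gamma_j$ is unique only when $\varepsilon=0$. Otherwise $\lambda-\mu$ also has partitions containing two of $h_{13},h_{14},h_{34},\theta$. The identity term is then $1+N_s$, with $N_s\in\{0,1,4,10\}$ according to the number $s$ of indices $j\in J$ with $u_j>0$.

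Nor do the nonidentity elements of $W_J$ vanish:
\begin{itemize}
\item $s_j$ can contribute whenever $\lambda_j\le1$: it gives $-1$ or $-4$ when $\lambda_j=0$, and $0$ or $-1$ when $\lambda_j=1$;
\item $s_js_k$ contributes $+1$ when $\lambda_j=\lambda_k=0$.
\end{itemize}
For example, take $\mu=(0100)$, $\varepsilon=\gamma_1+\gamma_3+\gamma_4$, $\lambda=(0300)$, so $E=0$. The identity term is $11$, the three single-reflection terms are each $-4$, and the three double-reflection terms are each $+1$. So $B_\lambda(\mu)=2$ only after a cancellation of $9$.

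The substance of this part is therefore the inequality that the nonidentity terms sum to at least $-N_s$. The paper proves it from $\lambda_j+1=\mu_j+2u_j-u_2+2$ by a case analysis on $s$. Your proposal gives no argument for it.
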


\begin{proof}
For (a), first take $\lambda=\tau$.  Then
$\delta=\lambda-\mu=\sigma$.  In \eqref{eq:D4-pns} the identity term is
$p_{\rm ns}(\sigma)=1$.  Indeed, the central coefficient of $\sigma$ is $1$,
and every nonsimple positive root in \eqref{eq:D4-nonsimple-roots} has
positive central coefficient.  Thus a partition of $\sigma$ can contain only
one nonsimple root with central coefficient $1$; matching the three outer
coefficients, all equal to $1$, forces that root to be $\sigma$ itself.  Put $L=\lambda+\rho$.  Here $\lambda_2=0$, so
\cref{lem:simply-laced-parabolic-bound} implies that every
$w\notin W_J=\langle s_1,s_3,s_4\rangle$ subtracts central coefficient at
least $1$.  The residual central coefficient is therefore negative or zero.  In the
zero case the residual vector cannot vanish: vanishing would give
$w(\lambda+\rho)=\mu+\rho$, but $\lambda+\rho$ and $\mu+\rho$ are distinct
strictly dominant weights, whereas a Weyl orbit contains a unique dominant
weight.  Thus in the case of zero central simple-root coefficient the residual vector is
nonzero.  Since every nonsimple positive root has positive central
coefficient, both possibilities give $p_{\rm ns}=0$.  If $1\ne w\in W_J$, then
\[
 L-wL=\sum_{i\in T}(\lambda_i+1)\alpha_i
\]
for a nonempty $T\subseteq J$.  Since
$\lambda_i=\tau_i=\mu_i+1$, each coefficient $\lambda_i+1\ge2$, whereas the
outer coefficients of $\sigma$ are all $1$; hence the residual vector has a
negative outer coefficient.  Thus all nonidentity Weyl terms vanish and
$B_\tau(\mu)=1$.

For the second assertion use triality and take $i=1$.  Then
\[
 \delta=\lambda-\mu
   =(q+1)\alpha_1+(q+1)\alpha_2+\alpha_3+\alpha_4,
 \qquad \lambda_2=q.
\]
Let $L=\lambda+\rho$.  By \cref{lem:simply-laced-parabolic-bound}, every
$w\notin W_J$ subtracts central coefficient at least
$L_2=\lambda_2+1=q+1$, which equals the central coefficient of $\delta$.
If the residual central coefficient is negative, the corresponding
$p_{\rm ns}$ term is zero.  If it is zero, the residual vector is again
nonzero: otherwise $w(\lambda+\rho)=\mu+\rho$ would identify two distinct
strictly dominant weights in one Weyl orbit.  Since every nonsimple positive
root has positive central coefficient, this case also contributes zero.
Hence only $W_J$ contributes.  The excess
$F(x)=x_1+x_3+x_4-x_2$ satisfies $F(\delta)=2$; among the nonsimple roots,
$F(\gamma_i)=0$, $F(h_{13})=F(h_{14})=F(h_{34})=F(\theta)=1$, and
$F(\sigma)=2$.  Since $F(\delta)=2$, every nonsimple-root partition of $\delta$ is of one
of two forms: either it contains one copy of $\sigma$ and all remaining roots
have excess $0$, or it contains two roots from
\[
 H=\{h_{13},h_{14},h_{34},\theta\}
\]
and all remaining roots have excess $0$.  In the first case
$\delta-\sigma=q\gamma_1$, giving $q\gamma_1+\sigma$.  In the second case,
the $\alpha_3$- and $\alpha_4$-coefficients of $\delta$ are both $1$; among
unordered pairs from $H$, the only pair whose coefficients in both positions
are at most $1$ is $h_{13}+h_{14}$.  The remainder is
$(q-1)\gamma_1$.  Therefore the identity argument has exactly the two
partitions
\[
 (q-1)\gamma_1+h_{13}+h_{14},
 \qquad
 q\gamma_1+\sigma.
\]
For $w_T=\prod_{i\in T}s_i\in W_J$ the outer reflections commute and
\[
 L-w_TL=\sum_{i\in T}r_i\alpha_i,
 \qquad r_i=\lambda_i+1.
\]
Now $r_1=\mu_1+q+2>q+1=\delta_1$, so every term with $1\in T$ vanishes.  For
$i=3,4$, dominance of $\lambda$ gives $q\le\mu_i+1$, while
$r_i=\mu_i+2-q$.  Thus $r_i\le1$ is possible exactly when
$q=\mu_i+1$, equivalently $\lambda_i=0$, and then $r_i=1$.  In that case
\[
 p_{\rm ns}(\delta-\alpha_3)=1,
 \qquad
 p_{\rm ns}(\delta-\alpha_4)=1,
\]
with unique partitions $q\gamma_1+h_{14}$ and $q\gamma_1+h_{13}$,
respectively; if both labels vanish, then
$p_{\rm ns}(\delta-\alpha_3-\alpha_4)=1$, with unique partition
$(q+1)\gamma_1$.  Therefore
\[
 B_\lambda(\mu)
 =2-\mathbf 1_{\{\lambda_3=0\}}
    -\mathbf 1_{\{\lambda_4=0\}}
    +\mathbf 1_{\{\lambda_3=\lambda_4=0\}}
 \ge1.
\]

For (b), first consider the one-atom coefficient
$a(\tau,\lambda)=m(\tau,\lambda)-m(\tau+\alpha_2,\lambda)$.  Applying
Kostant's formula and cancelling only the factor corresponding to
$\alpha_2$ gives a partition function using all positive roots except
$\alpha_2$.  Every such root
$x_1\alpha_1+x_2\alpha_2+x_3\alpha_3+x_4\alpha_4$ satisfies
\[
       x_1+x_3+x_4-x_2\ge0,
\]
and equality holds precisely for $\gamma_1,\gamma_3,\gamma_4$.
If $E<0$, the identity term therefore vanishes.  Here
$\lambda_2=2u_2-(u_1+u_3+u_4)=u_2-E$.  Thus
$\lambda_2+1>u_2$.  By \cref{lem:simply-laced-parabolic-bound}, a Weyl element outside
$W_J$ makes the central simple-root coefficient of the partition argument negative, while
an element of $W_J$ subtracts only outer simple roots and therefore decreases
the displayed excess.  Hence every Weyl term vanishes and
$a(\tau,\lambda)=0$.

If $E=0$, the identity term has the unique partition
\[
       \varepsilon=u_1\gamma_1+u_3\gamma_3+u_4\gamma_4,
\]
For $w\notin W_J$, \cref{lem:simply-laced-parabolic-bound} now subtracts central
coefficient at least $\lambda_2+1=u_2+1$, and for nontrivial $w\in W_J$ the
outer subtraction makes the excess negative.  Thus every nonidentity Weyl
term vanishes and $a(\tau,\lambda)=1$.
It remains to prove $B_\lambda(\mu)\ge1$.  Put
$k_i=u_i$ for $i\in J$ and $K=k_1+k_3+k_4=u_2$.  The difference
$\delta=\lambda-\mu=\sigma+\varepsilon$ has ``outer-minus-central'' excess
$2$.  In \eqref{eq:D4-nonsimple-roots}, the three $\gamma_i$ have excess
$0$, the four roots
\[
      h_{13},\ h_{14},\ h_{34},\ \theta
\]
have excess $1$, and $\sigma$ has excess $2$.  Consequently every
nonsimple-root partition of $\delta$ is either the canonical partition
\[
      \sigma+k_1\gamma_1+k_3\gamma_3+k_4\gamma_4
\]
or consists of two of the four excess-one roots, with repetition allowed,
plus excess-zero roots.

The central coefficient of $\delta=\sigma+\varepsilon$ is $1+K$, since each
$\gamma_i$ has central coefficient $1$ and $u_1+u_3+u_4=K$.  As
$\lambda_2=K$, \cref{lem:simply-laced-parabolic-bound} shows that every
$w\notin W_J$ subtracts central coefficient at least $\lambda_2+1=K+1$, so the
residual central coefficient is at most $0$.  If it is negative the
corresponding $p_{\rm ns}$ term vanishes; if it is zero then, every nonsimple
positive root having positive central coefficient, a nonzero term would force
the entire residual vector $w(\lambda+\rho)-(\mu+\rho)$ to vanish, identifying
two distinct strictly dominant weights in one Weyl orbit.  Hence
\cref{lem:simply-laced-parabolic-bound} again excludes every Weyl element
outside $W_J$.  If $r_i=\lambda_i+1$, a term indexed by
$T\subseteq J$ has excess $2-\sum_{i\in T}r_i$; hence a nonidentity term can
occur only when $\sum_{i\in T}r_i\le2$.  For reference, the ten unordered
pairs of the four excess-one roots have simple-root coordinates
\[
\begin{array}{c|c@{\qquad}c|c}
2h_{13}&(2,2,2,0)&h_{13}+h_{14}&(2,2,1,1)\\
h_{13}+h_{34}&(1,2,2,1)&h_{13}+\theta&(2,3,2,1)\\
2h_{14}&(2,2,0,2)&h_{14}+h_{34}&(1,2,1,2)\\
h_{14}+\theta&(2,3,1,2)&2h_{34}&(0,2,2,2)\\
h_{34}+\theta&(1,3,2,2)&2\theta&(2,4,2,2).
\end{array}
\]
The preceding list can be counted explicitly.  Let
\[
 H=\{h_{13},h_{14},h_{34},\theta\}.
\]
For a two-element multiset $P$ of elements of $H$, let
$d(P)=(d_1,d_3,d_4)$ be the three outer coefficients of the sum of the two
roots.  The ten vectors $d(P)$, read from the displayed table, are
\[
\begin{gathered}
 (2,2,0),(2,1,1),(1,2,1),(2,2,1),(2,0,2),\\
 (1,1,2),(2,1,2),(0,2,2),(1,2,2),(2,2,2).
\end{gathered}
\]
Since
\[
 \delta=(1+k_1)\alpha_1+(1+K)\alpha_2
       +(1+k_3)\alpha_3+(1+k_4)\alpha_4,
\]
the remainder $\delta-P$ is a nonnegative combination of
$\gamma_1,\gamma_3,\gamma_4$ exactly when
\[
 d_i(P)\le1+k_i\qquad(i\in J).
\]
Indeed, $\delta$ and $P$ both have excess $2$, so the central coefficient of
$\delta-P$ equals the sum of its outer coefficients.  Thus the displayed
inequalities are sufficient as well as necessary.
When this holds, that remainder has the unique expression
$n_1\gamma_1+n_3\gamma_3+n_4\gamma_4$, because its outer coefficients are
$n_1,n_3,n_4$.  Put
\[
 s=|\{i\in J:k_i>0\}|.
\]
Then the identity term is
\[
 p_{\rm ns}(\delta)=1+N_s,
 \qquad
 N_0=0,\quad N_1=1,\quad N_2=4,\quad N_3=10.
\]
The initial $1$ is the canonical partition
$\sigma+k_1\gamma_1+k_3\gamma_3+k_4\gamma_4$.  These four values are read
from the ten demand vectors without suppressing a case distinction.  If
$s=0$, every bound is $d_i(P)\le1$, and no demand vector qualifies.  If
$s=1$, after permuting the outer indices the bounds are $(2,1,1)$, and the
unique admissible vector is $(2,1,1)$.  If $s=2$, the bounds may be taken as
$(2,2,1)$, and the admissible vectors are
\[
 (2,2,0),\quad(2,1,1),\quad(1,2,1),\quad(2,2,1),
\]
so $N_2=4$.  If $s=3$, all three bounds are at least $2$, so all ten demand
vectors are admissible.

We now bound the nonidentity Weyl contribution without another case table.
For $i\in J$ put
\[
 r_i=\lambda_i+1=\mu_i+2k_i-K+2\ge1,
\quad
 J_1=\{i:r_i=1\},\quad J_2=\{i:r_i=2\}.
\]
For $T\subseteq J$ the argument of $p_{\rm ns}$ has excess
$2-\sum_{i\in T}r_i$, so it vanishes when
$\sum_{i\in T}r_i>2$.  The remaining partition numbers are explicit:
\begin{align*}
 p_{\rm ns}(\delta-\alpha_i)
   &=\begin{cases}1,&k_i=0,\\4,&k_i>0,\end{cases}
      &&(i\in J_1),\\
 p_{\rm ns}(\delta-2\alpha_i)
   &=\begin{cases}0,&k_i=0,\\1,&k_i>0,\end{cases}
      &&(i\in J_2),\\
 p_{\rm ns}(\delta-\alpha_i-\alpha_j)&=1
      &&(i,j\in J_1,\ i\ne j).
\end{align*}
Indeed, in the first line a partition contains exactly one root from $H$;
if $k_i=0$, only the member of $H$ not involving the $i$-th outer root is
possible, whereas if $k_i>0$ all four members of $H$ are possible.  In the
last two lines the excess is zero, so only the three $\gamma$-roots occur and
the outer coefficients determine their multiplicities uniquely.

Let
\[
 a_0=|\{i\in J_1:k_i=0\}|,
 \qquad a_1=|\{i\in J_1:k_i>0\}|,
 \qquad b_1=|\{i\in J_2:k_i>0\}|.
\]
The amount subtracted from the identity term after the positive two-reflection
terms are restored is therefore
\begin{equation}\label{eq:D4-Weyl-correction-count}
 C=a_0+4a_1+b_1-\binom{|J_1|}{2}.
\end{equation}
The relation $r_i=\mu_i+2k_i-K+2$ now gives the required bounds.  If $s=0$,
then $r_i=\mu_i+2\ge2$ and $b_1=0$, so $C=0$.  If $s=1$, the unique index
with $k_i>0$ has $r_i\ge k_i+2\ge3$; hence $a_1=b_1=0$ and
$C=|J_1|-\binom{|J_1|}{2}\le1$.  If $s=2$, two positive indices cannot both
belong to $J_1$, since $r_i=r_j=1$ would imply simultaneously
$k_j\ge k_i+1$ and $k_i\ge k_j+1$.  If one positive index belongs to $J_1$,
the other cannot belong to $J_2$, and \eqref{eq:D4-Weyl-correction-count}
gives $C\le4$; if no positive index belongs to $J_1$, then $b_1\le2$ and
$a_0\le1$, so $C\le3$.  Finally, if $s=3$, then $a_0=0$ and
\[
 C=4|J_1|+b_1-\binom{|J_1|}{2}\le9,
\]
with equality only possible for $|J_1|=3$; for $|J_1|=2,1,0$ the respective
upper bounds are $8,6,3$.  Consequently
\[
 B_\lambda(\mu)=1+N_s-C
 \ge\begin{cases}
 1,&s=0,1,2,\\
 2,&s=3,
 \end{cases}
\]
and in particular $B_\lambda(\mu)\ge1$.
\end{proof}

\begin{theorem}
\label{thm:D4-necessary}
Let $\mu\le\lambda$ be dominant weights in type $\typeD_4$, and write
\[
 \delta=\lambda-\mu
   =d_1\alpha_1+d_2\alpha_2+d_3\alpha_3+d_4\alpha_4.
\]
If $a(\mu,\lambda)<0$, then
\begin{enumerate}[label=\textnormal{(\roman*)}]
\item \label{item:D4-central}
$\mu_2=\langle\mu,\alpha_2^\vee\rangle=1$;
\item \label{item:D4-depth}
for some $i\in J$, with $\{i,j,k\}=J$,
\begin{equation}\label{eq:D4-depth}
 \delta\ge
 \beta_i:=\alpha_i+2\alpha_2+2\alpha_j+2\alpha_k;
\end{equation}
\item \label{item:D4-balance}
\begin{equation}\label{eq:D4-balance}
             d_1+d_3+d_4\ge d_2+3.
\end{equation}
\end{enumerate}
\end{theorem}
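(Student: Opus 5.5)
We argue by contraposition, using \cref{prop:D4-defect} and \cref{lem:D4-boundary-estimates}. For \ref{item:D4-central}: if $\mu_2\ge2$ then $a(\mu,\lambda)=B_\lambda(\mu)\ge0$ by part~(i) of \cref{prop:D4-defect} together with \cref{lem:D4-raw-positive}. If $\mu_2=0$, part~(ii) gives $a(\mu,\lambda)\ge0$ directly. Hence negativity forces $\mu_2=1$.

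Assume now $\mu_2=1$ and put $\tau=\mu+\sigma$. If $\tau\nleq\lambda$, then \eqref{eq:D4-defect-atomic} with the zero convention gives $a(\mu,\lambda)=B_\lambda(\mu)\ge0$. So we may assume $\tau\le\lambda$. Write $\varepsilon=\lambda-\tau$ and $E=u_1+u_3+u_4-u_2$ as in \cref{lem:D4-boundary-estimates}(b). Then \eqref{eq:D4-defect-atomic} reads $a(\mu,\lambda)=B_\lambda(\mu)-a(\tau,\lambda)$.
\begin{itemize}
\item If $E<0$, then $a(\tau,\lambda)=0$ and $a(\mu,\lambda)=B_\lambda(\mu)\ge0$.
\item If $E=0$, then $a(\tau,\lambda)=1\le B_\lambda(\mu)$.
\end{itemize}
Hence negativity requires $E\ge1$. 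Since $\delta=\sigma+\varepsilon$, we have $d_1+d_3+d_4-d_2=3-1+E=2+E\ge3$, which is \eqref{eq:D4-balance}. This proves \ref{item:D4-balance}.

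For \ref{item:D4-depth} we must show that if no $\beta_i\le\delta$ then $a(\mu,\lambda)\ge0$. Here $\delta\ge\beta_i$ means $d_2\ge2$, $d_i\ge1$, and $d_j,d_k\ge2$. Note $d_i\ge1$ is automatic because $\delta\ge\sigma$. So failure of \eqref{eq:D4-depth} for all $i$ means one of two things:
\begin{itemize}
\item $d_2\le1$, i.e.\ $\lambda=\tau$; or
\item at least two of the outer coefficients $d_1,d_3,d_4$ equal $1$.
\end{itemize}
In the first case $a(\mu,\lambda)=B_\tau(\mu)-a(\tau,\tau)=1-1=0$ by \cref{lem:D4-boundary-estimates}(a). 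In the second case, say $d_j=d_k=1$, so $u_j=u_k=0$ and $\varepsilon=u_i\alpha_i+u_2\alpha_2$. Dominance of $\lambda$ at the outer vertices $j,k$ gives $\lambda_j=\mu_j+1-u_2\ge0$. We then split on $E=u_i-u_2$: the cases $E\le0$ are already settled above, so assume $u_i>u_2$. Dominance of $\lambda$ at $i$ gives $\mu_i+1+2u_i-u_2\ge0$, which is automatic and gives no bound.

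The main obstacle is this residual region $\varepsilon=u_i\alpha_i+u_2\alpha_2$ with $u_i>u_2\ge0$. There we must show directly that $a(\mu,\lambda)=B_\lambda(\mu)-a(\tau,\lambda)\ge0$. One route uses \eqref{eq:D4-defect-multiplicity} and shows $m(\tau,\lambda)-m(\mu+\theta,\lambda)\le B_\lambda(\mu)$. A cleaner route uses support reduction. The weights $\tau$ and $\lambda$ differ by $u_i\alpha_i+u_2\alpha_2$, so \cref{thm:support-reduction} computes $a(\tau,\lambda)$ in the $\typeA_2$ subsystem $\{i,2\}$ (or in $\typeA_1$ if $u_2=0$). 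By \cref{prop:A2-direct}, $a(\tau,\lambda)=1$ if $u_i=u_2$ and $0$ otherwise. In our range $u_i>u_2$, so $a(\tau,\lambda)=0$, and therefore $a(\mu,\lambda)=B_\lambda(\mu)\ge0$. The case $u_2=0<u_i$ reduces to $\typeA_1$, where again $a(\tau,\lambda)=0$.

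Thus only the support-reduction step needs care; the rest is direct bookkeeping from the two earlier results. Combining the three parts shows that $a(\mu,\lambda)<0$ forces \ref{item:D4-central}, \ref{item:D4-depth}, and \ref{item:D4-balance} simultaneously.
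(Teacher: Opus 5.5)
Your proof is correct and follows essentially the paper's route: parts (i) and (iii) match the paper's argument, and for (ii) you use the same identity $a(\mu,\lambda)=B_\lambda(\mu)-a(\tau,\lambda)$ with support reduction to $\typeA_2$ and \cref{lem:D4-boundary-estimates}, only splitting the cases differently (you invoke part~(b) with $E=0$ where the paper uses the second assertion of part~(a)). The one step you assert without justification, that $d_2=1$ forces $\lambda=\tau$, does hold, because $\lambda_2=2u_2-(u_1+u_3+u_4)\ge0$ together with $u_2=0$ forces $\varepsilon=0$; the same central-label computation $\lambda_2=2u_2-u_i\ge0$ also shows that your final case $u_2=0<u_i$ is vacuous.
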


\begin{proof}
\Cref{prop:D4-defect} gives
$a(\mu,\lambda)\ge0$ when $\mu_2=0$ or $\mu_2\ge2$.  Therefore
$a(\mu,\lambda)<0$ forces $\mu_2=1$, proving
\ref{item:D4-central}.
Assume henceforth that $\mu_2=1$ and put $\tau=\mu+\sigma$.
By \eqref{eq:D4-defect-atomic} and \cref{lem:D4-raw-positive}, negativity
forces $a(\tau,\lambda)>0$, so $\tau\le\lambda$ and hence every $d_i\ge1$.
Write $\varepsilon=\lambda-\tau=(d_1-1,d_2-1,d_3-1,d_4-1)$ in simple-root
coordinates.

Suppose \ref{item:D4-depth} fails.  Then either $d_2=1$, or $d_2\ge2$ and at
most one outer coefficient $d_i$ is at least $2$.  In either case every
connected component of $\supp(\varepsilon)$ has rank at most two.  Since the
ambient system is simply laced, these components have type $\typeA_1$ or $\typeA_2$.
\Cref{thm:support-reduction} therefore writes $a(\tau,\lambda)$ as a product
of atomic numbers of these components.  In type $\typeA_1$ every irreducible
character is its own girdle, so the corresponding factor is
$\delta_{\tau_{I_r}\lambda_{I_r}}$; a component of $\supp(\varepsilon)$
carries a nonzero coefficient of $\varepsilon=\lambda-\tau$, so such a factor
is $0$.  An $\typeA_2$ factor is given by \eqref{eq:A2-atomic-formula} and
lies in $\{0,1\}$.  Hence
$a(\tau,\lambda)\in\{0,1\}$, and positivity forces the value $1$.  Then
$\supp(\varepsilon)$ has no $\typeA_1$ component, and any $\typeA_2$
component of the $\typeD_4$ diagram contains the central vertex $2$, so
$\supp(\varepsilon)$ is empty or is a single component $\{2,i\}$ with
$i\in J$; \eqref{eq:A2-atomic-formula} then forces equal coefficients.  Thus
\[
       \varepsilon=0
       \quad\text{or}\quad
       \varepsilon=q(\alpha_2+\alpha_i)
       \quad(q\ge1)
\]
for some $i\in J$.  By \cref{lem:D4-boundary-estimates}(a), however,
$B_\lambda(\mu)\ge1$ in all these cases.  Equation
\eqref{eq:D4-defect-atomic} then gives $a(\mu,\lambda)\ge0$, a contradiction.
This proves \ref{item:D4-depth}.

Finally set
\[
 E=(d_1-1)+(d_3-1)+(d_4-1)-(d_2-1)
   =d_1+d_3+d_4-d_2-2.
\]
If $E<0$, \cref{lem:D4-boundary-estimates}(b) gives
$a(\tau,\lambda)=0$; if $E=0$, it gives
$a(\tau,\lambda)=1$ and $B_\lambda(\mu)\ge1$.  Neither case permits
$B_\lambda(\mu)-a(\tau,\lambda)<0$.  Thus negativity forces $E\ge1$, which
is precisely \eqref{eq:D4-balance}.
\end{proof}

\begin{proposition}\label{prop:D4-minimal-family}
Let $i\in J$, let $\{i,j,k\}=J$, and put
\[
 \beta_i=\alpha_i+2\alpha_2+2\alpha_j+2\alpha_k.
\]
For every dominant weight $\mu$ with $\mu_2=1$ such that
$\lambda=\mu+\beta_i$ is dominant,
\begin{equation}\label{eq:D4-minimal-family}
 a(\mu,\lambda)=
 \begin{cases}
 -2,&\mu_i=0,\\
 -1,&\mu_i>0.
 \end{cases}
\end{equation}
\end{proposition}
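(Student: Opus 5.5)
By triality (\cref{prop:atomic-dynkin}) I would reduce to $i=1$, so $\beta_1=\alpha_1+2\alpha_2+2\alpha_3+2\alpha_4$. The first step is a label computation: $\langle\beta_1,\alpha_1^\vee\rangle=0$, $\langle\beta_1,\alpha_2^\vee\rangle=-1$, $\langle\beta_1,\alpha_3^\vee\rangle=\langle\beta_1,\alpha_4^\vee\rangle=2$. Hence $\lambda_1=\mu_1$, $\lambda_2=0$ and $\lambda_{3,4}=\mu_{3,4}+2$, and dominance of $\lambda$ is automatic once $\mu_2=1$. Since $\mu_2=1$ and $\tau=\mu+\sigma\le\lambda$ (indeed $\lambda-\tau=\varepsilon=\alpha_2+\alpha_3+\alpha_4$), \cref{prop:D4-defect}(iii) gives
\[
 a(\mu,\lambda)=B_\lambda(\mu)-a(\tau,\lambda).
\]
The proof then splits into two independent computations. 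I would aim to show $a(\tau,\lambda)=2$ for all admissible $\mu$, and $B_\lambda(\mu)=1-\mathbf 1_{\{\mu_1=0\}}$. Subtracting gives exactly \eqref{eq:D4-minimal-family}.

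For $a(\tau,\lambda)$, note that $E=u_1+u_3+u_4-u_2=1$ falls outside \cref{lem:D4-boundary-estimates}(b). Instead I would apply \cref{thm:support-reduction}: $\supp\varepsilon=\{2,3,4\}$ is a path $3-2-4$ of type $\typeA_3$, and $\varepsilon$ is its highest root. In that $\typeA_3$ the weight $\tau_I$ has labels $(\mu_3+1,0,\mu_4+1)$ in the order $(3,2,4)$, so the central label vanishes and the outer ones are positive. By \cref{prop:D4-defect}(ii), transported to the $\typeA_3$ setting (or directly), the only atom is $\tau+\alpha_2$. Thus $a(\tau,\lambda)=m(\tau,\lambda)-m(\tau+\alpha_2,\lambda)$. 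Since $\lambda-(\tau+\alpha_2)=\alpha_3+\alpha_4$ has disconnected support, a second support reduction gives $m(\tau+\alpha_2,\lambda)=1$. It then remains to show $m(\lambda-\theta_{\typeA_3},\lambda)=3$ for an $\typeA_3$ weight $\lambda$ with labels $(\mu_3+2,0,\mu_4+2)$. I would verify this by Kostant's formula: the identity term is $p(\theta_{\typeA_3})=4$. The only other surviving term is the $s_2$-term $-p(\theta-\alpha_2)=-1$, because $\lambda_2=0$; the $s_3,s_4$ terms die because $\lambda_{3,4}+1\ge3$. Alternatively a Gelfand--Tsetlin count would give the same value.

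For $B_\lambda(\mu)$ I would use \cref{lem:D4-pns} with $\delta=\beta_1$, which has simple-root coordinates $(1,2,2,2)$. For the identity term, every nonsimple root has central coefficient $1$ or $2$, and $\theta$ alone leaves the non-partitionable remainder $\alpha_3+\alpha_4$. So a partition consists of exactly two roots of central coefficient $1$ whose outer parts sum to $(1,2,2)$, and checking the list \eqref{eq:D4-nonsimple-roots} leaves only $\sigma+h_{34}$; hence $p_{\rm ns}(\delta)=1$. Inside $W_J$, the terms $s_3,s_4$ vanish because $r_{3,4}=\mu_{3,4}+3>2$. The term $s_1$ subtracts $(\mu_1+1)\alpha_1$, which survives only for $\mu_1=0$, where the residual $(0,2,2,2)=2h_{34}$ gives $-1$; products involving $s_3$ or $s_4$ die for the same reason.

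The main obstacle is excluding the Weyl elements outside $W_J$. Here \cref{lem:simply-laced-parabolic-bound} only guarantees central subtraction $q_2\ge\lambda_2+1=1$, leaving a residual central coefficient as large as $1$. For $q_2=1$ I would argue that a nonzero term needs the residual to be a single nonsimple root of central coefficient $1$ plus nothing else. Such a root has outer coordinates at most $1$, so the residual must lose at least $(0,1,1)$ in the outer coordinates $3,4$. But $L-wL$ has $\alpha_3,\alpha_4$-coefficients that are either $0$ or at least $r_{3,4}\ge3$; I would confirm this by writing $w$ minimally in $W_Jw'$ and tracking the outer coordinates. For $w=s_2$ the residual is $(1,1,2,2)$, which is not a single root, so that term vanishes. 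The case $q_2\ge2$ leaves residual central coefficient at most $0$, and the nonvanishing residual argument used in \cref{lem:D4-boundary-estimates} kills it. If the coset bookkeeping becomes messy, the fallback is to enumerate the few $w$ with $q_2\le1$, namely $s_2$ and $s_2$ composed with outer reflections, and check each directly.
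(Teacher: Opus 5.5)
Your proposal is correct and follows essentially the paper's own route: triality, the defect identity $a(\mu,\lambda)=B_\lambda(\mu)-a(\tau,\lambda)$ from \cref{prop:D4-defect}(iii), the single-atom computation $a(\tau,\lambda)=3-1=2$ by Kostant's formula in the $\typeA_3$ subsystem on $\{2,3,4\}$, and the evaluation of $B_\lambda(\mu)$ through \cref{lem:D4-pns}. The one difference is organizational: the paper applies \cref{lem:simply-laced-parabolic-bound} at the outer nodes $3$ and $4$, where $\lambda_3+1,\lambda_4+1\ge3$ exceed the coefficient $2$ of $\beta_1$; this is exactly your ``$0$ or at least $3$'' claim, and it confines every contributing $w$ to $\langle s_1,s_2\rangle$ at once, so your case split on $q_2$ and the coset bookkeeping are unnecessary.
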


\begin{proof}
By triality and \Cref{prop:atomic-dynkin} it is enough to take $i=1$.  Put $\tau=\mu+\sigma$.  Then
\[
 \lambda-\tau=\alpha_2+\alpha_3+\alpha_4,
\]
which is the highest root of the regular $\typeA_3$ subsystem on
$\{2,3,4\}$.  The weight $\tau+\alpha_2$ is dominant and lies below
$\lambda$, since
$\lambda-(\tau+\alpha_2)=\alpha_3+\alpha_4$; hence
\cref{prop:D4-defect}(ii), applied with lower weight $\tau$, shows that it is
the unique atom of $[\tau,\lambda]$.

Both differences $\lambda-\tau=\alpha_2+\alpha_3+\alpha_4$ and
$\lambda-(\tau+\alpha_2)=\alpha_3+\alpha_4$ have support contained in
$\{2,3,4\}$.  Therefore the
standard multiplicity restriction to the support
\cite[Proposition~2.4(1)]{BZ1}, which is also the multiplicity input in the
proof of \cref{thm:support-reduction}, identifies the ambient $\typeD_4$
multiplicities of $\tau$ and $\tau+\alpha_2$ with the corresponding weight
multiplicities in the regular $\typeA_3$ subsystem.  We may consequently compute
both multiplicities entirely in that subsystem.  Its positive roots are
\[
 \alpha_2,\alpha_3,\alpha_4,
 \alpha_2+\alpha_3,\alpha_2+\alpha_4,
 \alpha_2+\alpha_3+\alpha_4.
\]
The difference $\lambda-\tau=\alpha_2+\alpha_3+\alpha_4$ has exactly four
Kostant partitions, written here as multisets of positive roots:
\[
 \{\alpha_2+\alpha_3+\alpha_4\},\qquad
 \{\alpha_2+\alpha_3,\alpha_4\},\qquad
 \{\alpha_2+\alpha_4,\alpha_3\},\qquad
 \{\alpha_2,\alpha_3,\alpha_4\}.
\]
For the restricted highest weight, $\lambda_2=0$ and
$\lambda_3,\lambda_4\ge2$.  Thus the $s_2$ term in Kostant's formula subtracts
$(\lambda_2+1)\alpha_2=\alpha_2$ and leaves
$\alpha_3+\alpha_4$, which has the unique partition
$\{\alpha_3,\alpha_4\}$.  If a Weyl element of this $\typeA_3$ subsystem involves $s_3$,
\cref{lem:simply-laced-parabolic-bound}, applied to the simply-laced subsystem,
gives an $\alpha_3$-subtraction of at least
$\lambda_3+1\ge3$, larger than the coefficient $1$ of $\alpha_3$ in
$\lambda-\tau$; hence that Kostant argument has negative $\alpha_3$
coordinate.  If a Weyl element involves $s_4$,
\cref{lem:simply-laced-parabolic-bound} gives an
$\alpha_4$-subtraction of at least $\lambda_4+1\ge3>1$, so its Kostant
argument has negative $\alpha_4$ coordinate.  The only remaining nonidentity
element is therefore $s_2$.  Consequently
\[
 m(\tau,\lambda)=4-1=3.
\]
For the weight $\tau+\alpha_2$ the identity argument is
$\alpha_3+\alpha_4$, with the single partition
$\{\alpha_3,\alpha_4\}$.  The $s_2$ term has negative $\alpha_2$ coordinate,
and any term involving $s_3$ or $s_4$ has a negative corresponding outer
coordinate by the same coefficient bound.  Therefore
\[
 m(\tau+\alpha_2,\lambda)=1.
\]
Thus
\begin{equation}\label{eq:D4-minimal-shifted}
             a(\tau,\lambda)=2.
\end{equation}

It remains to compute the raw Boolean term.  Here
$\lambda-\mu=\beta_1=(1,2,2,2)$ in simple-root coordinates.
By \cref{lem:D4-pns}, the identity Weyl term has the unique nonsimple-root
partition
\[
       \beta_1=h_{34}+\sigma.
\]
Indeed, its central coefficient is $2$, so a partition consists either of
$\theta$ alone or of two roots with central coefficient $1$.  The former
does not equal $\beta_1$.  In the latter case both roots must contain
$\alpha_3$ and $\alpha_4$, and exactly one must contain $\alpha_1$; the list
\eqref{eq:D4-nonsimple-roots} therefore forces $h_{34}$ and $\sigma$.
To determine the nonidentity terms, put $L=\lambda+\rho$.  Since the simple-root coefficients of $\beta_1$ at $\alpha_3$ and
$\alpha_4$ are both $2$, while $L_3=\mu_3+3>2$ and $L_4=\mu_4+3>2$,
\cref{lem:simply-laced-parabolic-bound} forces every contributing $w$ to lie in
the rank-two parabolic $\langle s_1,s_2\rangle$.  If $\mu_1>0$, then also
$L_1=\mu_1+1>1$, the $\alpha_1$-coefficient of $\beta_1$, so only $1$ and
$s_2$ remain; the $s_2$ argument is
$(1,1,2,2)$ and has no nonsimple-root partition, since central coefficient
$1$ allows only one nonsimple root, whose outer coefficients are at most $1$.
If $\mu_1=0$, the six
elements of $\langle s_1,s_2\rangle\cong W(\typeA_2)$ give, respectively,
first and central simple-root coefficients
\[
 (1,2),\ (0,2),\ (1,1),\ (-1,1),\ (0,0),\ (-1,0)
\]
after subtraction from $\beta_1$ (in the order
$1,s_1,s_2,s_1s_2,s_2s_1,s_1s_2s_1$).  Only the first two can be partitioned
by nonsimple roots: each remaining argument has a negative coefficient or
has central coefficient at most $1$ while its $\alpha_3$ and $\alpha_4$
coefficients remain $2$.  The $s_1$ term has
\[
       \beta_1-\alpha_1=2h_{34}
\]
as its unique partition, since both roots must contain $\alpha_3$ and
$\alpha_4$ and neither can contain $\alpha_1$.  Hence
\[
 B_\lambda(\mu)=
 \begin{cases}
 0,&\mu_1=0,\\
 1,&\mu_1>0.
 \end{cases}
\]
Combining this with \eqref{eq:D4-defect-atomic} and
\eqref{eq:D4-minimal-shifted} gives \eqref{eq:D4-minimal-family}.
\end{proof}

\begin{proposition}
\label{prop:DE-propagation}
Every irreducible root system of type $\typeD_n$ $(n\ge4)$,
$\typeE_6$, $\typeE_7$, or $\typeE_8$ contains infinitely many pairs of
dominant weights $\mu\le\lambda$ with negative atomic number.  More precisely,
if $I=\supp(\lambda-\mu)$ is connected and of type $\typeD_4$ and, after identifying
it with the Bourbaki-numbered $\typeD_4$, the restricted pair is $((0100),(0022))$, then
\[
      a(\mu,\lambda)=-2.
\]
Likewise, the restricted pair $((1100),(1022))$ gives atomic number $-1$.
\end{proposition}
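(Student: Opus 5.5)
The plan is to reduce everything to \cref{prop:D4-minimal-family} through \cref{thm:support-reduction}\ref{item:support-one}, and then to build explicit infinite families in each branched type.

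\textbf{Step 1: the two local values.} In $\typeD_4$ take $\mu=(0100)$ and $\lambda=(0022)$. Using $\alpha_1=(2,-1,0,0)$, $\alpha_2=(-1,2,-1,-1)$, $\alpha_3=(0,-1,2,0)$, $\alpha_4=(0,-1,0,2)$ in Dynkin labels, one checks $\beta_1=\alpha_1+2\alpha_2+2\alpha_3+2\alpha_4=(0,-1,2,2)$. Hence $\mu+\beta_1=(0022)=\lambda$. Here $\mu_2=1$ and $\mu_1=0$, so \cref{prop:D4-minimal-family} gives $a=-2$. Likewise $(1100)+\beta_1=(1022)$ with $\mu_1=1>0$, giving $-1$.

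\textbf{Step 2: transport to the ambient system.} Let $I$ be a connected $\typeD_4$ subdiagram of a $\typeD_n$ or $\typeE$ diagram, so $I$ contains the trivalent node and three neighbours. Fix an identification of $I$ with Bourbaki $\typeD_4$. Suppose dominant $\mu\le\lambda$ have $\supp(\lambda-\mu)=I$, and their restricted Dynkin labels on $I$ are the prescribed ones. Then \cref{thm:support-reduction}\ref{item:support-one} gives $a(\mu,\lambda)=a(\mu_I,\lambda_I)$, which is the local value. The only subtlety is that $\mu_I,\lambda_I$ are defined by labels, and \cref{prop:atomic-dynkin} with triality absorbs the choice of identification. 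This already proves the "more precisely" assertion.

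\textbf{Step 3: existence of infinitely many pairs.} Set $\xi=\alpha_1'+2\alpha_2'+2\alpha_3'+2\alpha_4'$, with primes denoting the simple roots of $I$. Choose $\mu$ with labels $(0,1,0,0)$ on $I$ in Bourbaki order, and arbitrary labels $c_j\ge0$ off $I$. Put $\lambda=\mu+\xi$. Then $\lambda$ has labels $(0,0,2,2)$ on $I$, so the prescribed part is automatic. The remaining requirement is that $\lambda$ be dominant at each vertex $j\notin I$. For such $j$,
\[
\langle\lambda,\alpha_j^\vee\rangle=c_j+\langle\xi,\alpha_j^\vee\rangle,
\]
and $\langle\xi,\alpha_j^\vee\rangle\le0$, with nonzero contribution only from the neighbours of $I$. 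Taking $c_j\ge4$ (or simply $c_j$ large) for $j\notin I$ makes $\lambda$ dominant. Also $\supp(\lambda-\mu)=I$, so Step 2 applies.

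Letting the $c_j$ vary yields infinitely many pairs. Alternatively, one can let $\mu_I$ range over the family of \cref{prop:D4-minimal-family}: any $\mu_3,\mu_4\ge0$ and any $\mu_1$ keep $\mu+\beta_1$ dominant on $I$.

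\textbf{Main obstacle.} The hardest part is bookkeeping, not any real difficulty. One must confirm that each listed type contains a $\typeD_4$ subdiagram: for $\typeD_n$ use $\{n-3,n-2,n-1,n\}$, and for $\typeE_n$ use $\{2,3,4,5\}$. One must also ensure the Bourbaki identification puts the trivalent node at position $2$. The $(1100),(1022)$ case is handled identically.
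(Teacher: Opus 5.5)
Your proposal is correct and follows the paper's proof. You use support reduction (\cref{thm:support-reduction}) to transfer the $\typeD_4$ values of \cref{prop:D4-minimal-family}, and you make $\lambda$ dominant off $I$ by taking the outside labels of $\mu$ large; the paper uses the sharp bound $\mu_j\ge d_i$ for $j$ adjacent to $i\in I$, and your $c_j\ge4$ is a cruder but sufficient choice. Note only that $\typeD_4$ itself has no vertices off $I$, so there the infinitude must come from your alternative of varying $\mu_1,\mu_3,\mu_4$ in \cref{prop:D4-minimal-family}, which is exactly what the paper does.
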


\begin{proof}
For type $\typeD_4$ the infinitude follows already from
\cref{prop:D4-minimal-family}.  In every other listed type, choose the
$\typeD_4$ subdiagram $I$ formed by a trivalent vertex and its three neighbors,
and fix one of the displayed $\typeD_4$ pairs on $I$.  Regard its root difference
as the same nonnegative combination of the ambient simple roots in $I$.
Write the chosen $\typeD_4$ difference as
\[
 \delta=\sum_{i\in I}d_i\alpha_i,
 \qquad d_i\ge0.
\]
Use the prescribed Dynkin labels for $\mu$ on $I$.  For $j\notin I$, choose
\begin{equation}\label{eq:DE-extension-bound}
 \mu_j\ge -\langle\delta,\alpha_j^\vee\rangle.
\end{equation}
The right-hand side is $0$ unless $j$ is adjacent to $I$; since the ambient
$\typeD/\typeE$ diagram is simply laced, if $j$ is adjacent to the unique
$i\in I$ then it equals $d_i$.  With
$\lambda=\mu+\delta$ we have, for every $j\notin I$,
\[
 \lambda_j=\mu_j+\langle\delta,\alpha_j^\vee\rangle\ge0,
\]
while for $i\in I$ the labels of $\lambda$ are exactly those of the prescribed
$\typeD_4$ upper weight because $\delta$ has no coefficient outside $I$.
Thus both $\mu$ and $\lambda$ are dominant and
$\supp(\lambda-\mu)=I$.  \Cref{thm:support-reduction} identifies the ambient
atomic number with the
displayed $\typeD_4$ value.  Increasing any one outside label while preserving
\eqref{eq:DE-extension-bound} produces infinitely many distinct ambient
pairs.
\end{proof}

\subsection{Propagation to types $\typeD$ and $\typeE$, and locality}
\Cref{thm:path-positive} yields the following obstruction.

\begin{theorem}\label{thm:DE-obstruction}
Let $\mu\le\lambda$ be dominant weights.  If
\[
       a(\mu,\lambda)<0,
\]
then some connected component of $\supp(\lambda-\mu)$ has Dynkin type
$\typeD$ or $\typeE$.  Equivalently, the subdiagram spanned by
$\supp(\lambda-\mu)$ has a trivalent vertex.
\end{theorem}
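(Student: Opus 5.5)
We argue by contraposition, combining \cref{thm:support-reduction} with \cref{thm:path-positive}.  Assume that no connected component of $I=\supp(\lambda-\mu)$ is of type $\typeD$ or $\typeE$, and write $I=I_1\sqcup\cdots\sqcup I_t$ for its decomposition into connected components.  By \cref{thm:support-reduction}\ref{item:support-product},
\[
  a(\mu,\lambda)=\prod_{r=1}^t a(\mu_{I_r},\lambda_{I_r}),
\]
where each factor is an atomic number computed in the irreducible subsystem $\Raiz_{I_r}$, and $\mu_{I_r}\le\lambda_{I_r}$ are dominant there.  It therefore suffices to show that every factor is nonnegative.

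Each $\Raiz_{I_r}$ is an irreducible finite root system whose Dynkin diagram is the subdiagram induced on $I_r$.  By the classification of connected Dynkin diagrams, such a diagram is either a path (types $\typeA,\typeB,\typeC,\typeF_4,\typeG_2$) or has a trivalent vertex (types $\typeD_n$ with $n\ge4$, $\typeE_6,\typeE_7,\typeE_8$).  The only point needing care is the type label of $\Raiz_{I_r}$.  Here $\typeD_3=\typeA_3$ counts as a path, and a subdiagram of a $\typeD$ or $\typeE$ diagram that happens to be a path is of type $\typeA$.  Under our assumption, every $I_r$ is therefore a path.  Consequently \cref{thm:path-positive} applies to $\Raiz_{I_r}$ and gives $a(\mu_{I_r},\lambda_{I_r})\ge0$, so the product is nonnegative.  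This contradicts $a(\mu,\lambda)<0$.

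For the stated equivalence, observe that a connected Dynkin diagram has a vertex of degree three exactly in types $\typeD_n$ ($n\ge4$) and $\typeE$.  Moreover, a trivalent vertex of the subdiagram on $\supp(\lambda-\mu)$ lies in a single component, so that component has a trivalent vertex.  No real obstacle is expected: all the work lies in \cref{thm:path-positive}.  The step to check carefully is that the hypothesis of \cref{thm:path-positive} is stated for an arbitrary semisimple algebra whose components are paths, so it can be applied to the subsystem, possibly of lower rank, directly.
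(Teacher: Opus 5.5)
Your proposal is correct and follows essentially the same route as the paper: factor $a(\mu,\lambda)$ over the connected components of $\supp(\lambda-\mu)$ by \cref{thm:support-reduction}, apply \cref{thm:path-positive} to each path component, and use the classification to identify the non-path connected types as $\typeD$ and $\typeE$. Your extra remarks on $\typeD_3=\typeA_3$ and on path subdiagrams of $\typeD/\typeE$ diagrams being of type $\typeA$ are harmless clarifications of the same argument.
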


\begin{proof}
By \cref{thm:support-reduction}, the atomic number factors over the connected
components of $\supp(\lambda-\mu)$.  If every component were a path, each
factor would be nonnegative by \cref{thm:path-positive}.  Thus a negative
factor must come from a connected finite Dynkin diagram that is not a path,
and the classification of finite root systems leaves precisely the types
$\typeD$ and $\typeE$.
\end{proof}

Conversely, every irreducible finite type with a fork fails universal atomic
positivity by \cref{prop:DE-propagation}.  Consequently we obtain the following
classification.

\begin{corollary}
\label{cor:universal-path-classification}
For an irreducible finite root system, all atomic numbers are nonnegative if
and only if its Dynkin diagram is a path.  Thus the universally
positive irreducible types are
\[
       \typeA_n,\quad \typeB_n,\quad \typeC_n,\quad \typeF_4,
       \quad \typeG_2,
\]
whereas universal positivity fails in
\[
       \typeD_n\ (n\ge4),\qquad \typeE_6,\quad \typeE_7,\quad \typeE_8.
\]
\end{corollary}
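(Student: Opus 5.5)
The corollary has two directions, and each follows by combining results already proved.

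For the positive direction, let $\Raiz$ be irreducible with a path Dynkin diagram. Then \cref{thm:path-positive} applies directly: for every dominant pair $\mu\le\lambda$, the reduction to $\supp(\lambda-\mu)$ in \cref{thm:support-reduction} splits $a(\mu,\lambda)$ as a product over connected components. Each component is a subpath of a path, hence again a path, so each factor is nonnegative. Alternatively, one may simply quote \cref{thm:path-positive}, which already covers every semisimple algebra all of whose components are paths. The classification of connected finite Dynkin diagrams then identifies the path types as exactly $\typeA_n,\typeB_n,\typeC_n,\typeF_4,\typeG_2$. The only point to record here is that $\typeD_n$ ($n\ge4$) and $\typeE_{6,7,8}$ each contain a trivalent vertex, while the remaining types do not.

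For the converse, let $\Raiz$ be irreducible and not a path. By the classification it is of type $\typeD_n$ with $n\ge4$ or $\typeE_6,\typeE_7,\typeE_8$, and it contains a trivalent vertex together with its three neighbours, forming a $\typeD_4$ subdiagram $I$. I would invoke \cref{prop:DE-propagation}, which constructs dominant $\mu\le\lambda$ with $\supp(\lambda-\mu)=I$ and restricted pair $((0100),(0022))$, giving $a(\mu,\lambda)=-2<0$. The input is \cref{prop:D4-minimal-family} with $i=1$ and $\mu=(0100)$, so that $\lambda=\mu+\beta_1=(0022)$ and $\mu_1=0$. The construction transports this value through \cref{thm:support-reduction} after the outside labels are chosen large enough to keep $\lambda$ dominant. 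Hence universal positivity fails in these types.

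No step is a genuine obstacle. The only thing to check carefully is that the two lists in the statement are exactly the path and non-path connected finite diagrams. I would verify this by inspecting the standard classification, noting that the $\typeG_2$, $\typeF_4$, $\typeB$ and $\typeC$ diagrams are paths once edge multiplicities are ignored, as in the convention adopted at the start of \cref{sec:path-positive}.
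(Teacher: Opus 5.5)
Your proposal is correct and matches the paper's argument. Positivity for path types is \cref{thm:path-positive}, and failure for $\typeD_n$ $(n\ge4)$, $\typeE_6$, $\typeE_7$, $\typeE_8$ comes from \cref{prop:DE-propagation}, which carries the $\typeD_4$ value $a(0100,0022)=-2$ of \cref{prop:D4-minimal-family} into the ambient system via \cref{thm:support-reduction}. One small point: your first variant of the positive direction is mildly circular, since nonnegativity of each path factor is exactly what \cref{thm:path-positive} proves, so quoting that theorem directly is the right formulation.
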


There is a sharper local expectation: the Dynkin label at a trivalent root
should equal $1$, while $\lambda-\mu$ should be ``high enough.''  In type
$\typeD_4$ this local statement is no longer conjectural.  \Cref{prop:D4-defect}
and \cref{thm:D4-necessary} prove that negativity forces the
trivalent label to be $1$, forces domination of one of the three minimal
vectors $\beta_i$, and also forces the balance inequality
\eqref{eq:D4-balance}.  Determining an effective local criterion that is also
sufficient remains open.

The relevance of $\typeD_4$ is structural rather than tied to the exceptional
M\"obius value $\pm2$.  For the negative pair
$(\mu,\lambda)=((1100),(1022))$, the three atoms are
\[
 (3000)=\mu+\alpha_1,\qquad
 (1020)=\mu+\alpha_3,\qquad
 (1002)=\mu+\alpha_4.
\]
These three dominant simple-root increments cover $\mu$.  The remaining
dominant covering is $\widehat{\mu+\alpha_2}=(0111)$, which lies above both
$(1020)$ and $(1002)$ and hence is not an atom.  Thus
\cref{lem:atoms-simple-roots} excludes any further atom.
Their joins satisfy
\begin{align*}
 (1020)\vee(1002)&=(0111),\\
 (3000)\vee(1020)&=(3000)\vee(1002)=(2011),\\
 (3000)\vee(1020)\vee(1002)&=(2011).
\end{align*}
Indeed, the raw sum for the pair $\{3,4\}$ has central label $-1$, and its
reflection at $2$ gives $(0111)$.  For the pair $\{1,3\}$, successive
reflections at labels $-1$ at vertices $2$ and $4$ give $(2011)$; the pair
$\{1,4\}$ is symmetric.  These are the least dominant majorants by
\cref{lem:path-unit-stabilization}.  The triple has least dominant majorant
$\mu+\sigma=(2011)$ by the calculation in \cref{prop:D4-defect}(iii).
Hence the crown consists of
\[
 (1100),\quad(3000),\quad(1020),\quad(1002),\quad(0111),\quad(2011).
\]
\Cref{prop:crosscut-lower} gives the M\"obius values, in this order,
\[
 1,\quad -1,\quad -1,\quad -1,\quad 1,\quad 1.
\]
Thus negativity already occurs when every nonzero crown M\"obius value is
$\pm1$.  What remains open is whether the obstruction is always detectable on
the smallest trivalent neighborhood.

\begin{question}\label{q:local-criterion}
For a negative atomic number in type $\typeD_n$ or $\typeE_n$, is the
obstruction always detected after restriction to the radius-one $\typeD_4$
neighborhood of a trivalent vertex, or can longer arms change the effective
boundary condition in an essential way?
\end{question}

The distinction is important.  \Cref{thm:DE-obstruction} proves that a
trivalent component is necessary, and \cref{thm:support-reduction} proves that
a pair
supported on a $\typeD_4$ subdiagram is genuinely a $\typeD_4$ problem.  Neither result
shows that every negative pair in a larger $\typeD/\typeE$ diagram admits a
radius-one reduction.

\section{The stable chamber}\label{sec:stabilization}

Deep inside the dominant chamber the wall corrections disappear.  The inverse
relation $a*\kappa=\delta$ becomes translation invariant, with fundamental
solution the nonsimple-root partition function $p_{\rm ns}$; atomic numbers
therefore become independent of $\mu$.  Lecouvey and Lenart observed the
corresponding limit statement \cite[(17)]{MR4178925}, together with the
character limits \cite[Proposition~2.7]{MR4178925}, and proved a $t$-analogue
on the crystal $B(\infty)$ \cite[Corollary~3.4]{MR4178925}.  Here we give an
effective finite range on which the atomic number itself equals that partition
number.  Consequently \cref{cor:negativity-is-shallow} confines every negative
atomic number to boundary slabs.

Throughout this section $p$ denotes Kostant's partition function for
$\Raiz^+$ and, as in \cref{sec:D4}, $p_{\rm ns}$ denotes the partition
function using only the nonsimple positive roots $\Raiz^{\rm ns}$; both are
extended by zero outside $\Z_{\ge0}\RaizSimp$.  For $S\subseteq I$ we write
$\alpha_S=\sum_{i\in S}\alpha_i$.

\begin{lemma}\label{lem:pns-general}
For every $\eta$ in the root lattice,
\begin{equation}\label{eq:pns-general}
   p_{\rm ns}(\eta)=\sum_{S\subseteq I}(-1)^{|S|}\,p(\eta-\alpha_S).
\end{equation}
\end{lemma}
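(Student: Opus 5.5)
The identity is a generating-function computation in the completed group algebra, exactly parallel to \cref{lem:G2-partition-difference} and to the argument in \cref{lem:D4-pns}. I would work in the ring of formal series $\sum_{\eta}c_\eta e^{\eta}$ supported in $\Z_{\ge0}\RaizSimp$; this ring is closed under multiplication because each coefficient of a product is a finite sum. Kostant's partition function satisfies
\[
   \sum_{\eta}p(\eta)e^{\eta}=\prod_{\alpha\in\Raiz^+}(1-e^{\alpha})^{-1},
\]
and likewise
\[
   \sum_{\eta}p_{\rm ns}(\eta)e^{\eta}=\prod_{\alpha\in\Raiz^{\rm ns}}(1-e^{\alpha})^{-1}.
\]
Both functions are extended by zero outside the cone, which is the same as declaring that those coefficients vanish.

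Since $\Raiz^+=\RaizSimp\sqcup\Raiz^{\rm ns}$, multiplying the first series by $\prod_{i\in I}(1-e^{\alpha_i})$ cancels exactly the simple-root factors and leaves the second series. Expanding the finite product gives
\[
   \prod_{i\in I}(1-e^{\alpha_i})=\sum_{S\subseteq I}(-1)^{|S|}e^{\alpha_S}.
\]
Comparing coefficients of $e^{\eta}$ then yields
\[
   p_{\rm ns}(\eta)=\sum_{S\subseteq I}(-1)^{|S|}p(\eta-\alpha_S),
\]
which is \eqref{eq:pns-general}.

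The only point needing care is that the formal inverses and the cancellation are legitimate in this ring. Each factor $1-e^{\alpha}$ has constant term $1$, so it is invertible with inverse the geometric series $\sum_{k\ge0}e^{k\alpha}$. The ring is commutative, so cancellation is valid. Terms with $\eta-\alpha_S\notin\Z_{\ge0}\RaizSimp$ contribute zero by the zero-extension convention. I do not expect any real obstacle; this lemma is the general-rank version of the computation already carried out for $\typeG_2$ and $\typeD_4$.
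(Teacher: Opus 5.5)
Your proof is correct and follows the paper's argument: multiply the Kostant generating function $\prod_{\alpha>0}(1-e^{\alpha})^{-1}$ by $\prod_{i\in I}(1-e^{\alpha_i})$, cancel the simple-root factors, and compare coefficients of $e^{\eta}$. Your remarks on the completed ring (closure under multiplication and invertibility of each $1-e^{\alpha}$) make explicit what the paper leaves implicit.
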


\begin{proof}
Multiplying $\sum_\eta p(\eta)e^\eta=\prod_{\alpha\in\Raiz^+}(1-e^\alpha)^{-1}$
by $\prod_{i\in I}(1-e^{\alpha_i})$ cancels exactly the simple-root factors
and leaves $\prod_{\alpha\in\Raiz^{\rm ns}}(1-e^{\alpha})^{-1}$.  This is
\cref{lem:G2-partition-difference} in type $\typeG_2$ and the inner
computation of \cref{lem:D4-pns} in type $\typeD_4$.
\end{proof}

\begin{lemma}\label{lem:kostant-stabilization}
Let $\lambda\in\Pesos^+$ and let $\eta\le\lambda$ with
$\lambda-\eta=\sum_{j\in I}p_j\alpha_j$.  If
\[
   \langle\lambda,\alpha_i^\vee\rangle\ \ge\ \max_{j\in I}p_j
   \qquad\text{for every }i\in I,
\]
then $m(\eta,\lambda)=p(\lambda-\eta)$.
\end{lemma}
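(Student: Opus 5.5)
Apply Kostant's multiplicity formula,
\[
 m(\eta,\lambda)=\sum_{w\in\Weyl}(-1)^{\ell(w)}\,p\bigl(w(\lambda+\rho)-(\eta+\rho)\bigr),
\]
and show that every nonidentity Weyl term vanishes under the stated hypothesis.

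Fix $w\ne1$ and put $L=\lambda+\rho$, so $L$ is strictly dominant with $\langle L,\alpha_i^\vee\rangle=\lambda_i+1$. The Kostant argument is
\[
 w L-(\eta+\rho)=(\lambda-\eta)-(L-wL).
\]
Since $w\ne1$, there is a simple reflection $s_i$ occurring in a reduced word of $w$. Then $w$ lies outside the standard parabolic subgroup generated by the $s_j$ with $j\ne i$. I want a lower bound for the $\alpha_i$-coefficient $q_i$ of $L-wL$:
\[
 q_i\ge\langle L,\alpha_i^\vee\rangle=\lambda_i+1.
\]
In simply-laced type this is exactly \cref{lem:simply-laced-parabolic-bound}.

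For a general root system I would use a direct argument instead of the norm argument.

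\begin{itemize}
\item Choose a reduced expression $w=s_{i_1}\cdots s_{i_r}$ and let $t$ be the last position with $i_t=i$.
\item Write $L-wL=\sum_{k}\langle s_{i_{k+1}}\cdots s_{i_r}L,\alpha_{i_k}^\vee\rangle\,s_{i_1}\cdots s_{i_{k-1}}\alpha_{i_k}$. Each summand is a positive integer times a positive root, because the word is reduced and $L$ is strictly dominant.
\item In the $k=t$ term, $u=s_{i_{t+1}}\cdots s_{i_r}$ lies in the parabolic subgroup avoiding $s_i$. So $uL=L-\sum_{j\ne i}c_j\alpha_j$ with $c_j\ge0$, and hence $\langle uL,\alpha_i^\vee\rangle\ge\langle L,\alpha_i^\vee\rangle$, because off-diagonal Cartan entries are nonpositive.
\item The root $s_{i_1}\cdots s_{i_{t-1}}\alpha_i$ is positive. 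Its $\alpha_i$-coefficient is at least $1$, since applying reflections $s_j$ with $j\ne i$ (and earlier occurrences of $s_i$) to $\alpha_i$ still yields a positive root, which I must check has $\alpha_i$ in its support.
\end{itemize}

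This last point is the main obstacle. The cleanest route is the coweight computation $q_i=\langle L,\omega_i^\vee-w^{-1}\omega_i^\vee\rangle$. Then show that $\omega_i^\vee-w^{-1}\omega_i^\vee$ has $\alpha_i^\vee$-coefficient at least $1$ whenever $w^{-1}\omega_i^\vee\ne\omega_i^\vee$. For this I would use that $w^{-1}\omega_i^\vee$ is obtained from $\omega_i^\vee$ by a chain of simple reflections, each subtracting a positive multiple of a simple coroot. The first nontrivial step must be $s_i$, since all other $s_j$ fix $\omega_i^\vee$, so at least one $\alpha_i^\vee$ is subtracted. The coefficients only accumulate, by the standard fact that the difference to the dominant weight grows along a reduced path. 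Granting this, $q_i\ge\lambda_i+1$.

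The hypothesis gives $\lambda_i\ge\max_j p_j\ge p_i$, so the $\alpha_i$-coefficient of the Kostant argument is $p_i-q_i\le p_i-\lambda_i-1<0$. Hence $p$ vanishes there. Only $w=1$ survives, giving $m(\eta,\lambda)=p(\lambda-\eta)$.
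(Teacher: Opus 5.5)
Your proof is correct, but it makes the nonidentity Kostant terms vanish at a different coordinate than the paper does.

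The paper takes $i$ with $w\alpha_i<0$, writes $w=us_i$ and $\beta=-w\alpha_i$, and uses $(\lambda+\rho)-w(\lambda+\rho)=[(\lambda+\rho)-u(\lambda+\rho)]+\langle\lambda+\rho,\alpha_i^\vee\rangle\beta$. The large coefficient then sits at some $j_0\in\supp\beta$, and $j_0$ need not equal $i$: for $w=s_1s_2s_1$ in $\typeA_2$ and $i=1$ one gets $\beta=\alpha_2$. So $\langle\lambda,\alpha_i^\vee\rangle$ has to be compared with $p_{j_0}$, and this is exactly why the hypothesis is stated with $\max_j p_j$. That argument is two lines and needs only the positivity of $\beta$.

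You instead fix $i$ in the support of $w$ and bound the $\alpha_i$-coefficient itself via $q_i=\langle\lambda+\rho,\omega_i^\vee-w^{-1}\omega_i^\vee\rangle$. Your chain argument is valid in every type once it is phrased precisely. Apply a reduced word for $w^{-1}$ from the right. Every step subtracts a nonnegative multiple of a simple coroot, and the first occurrence of $s_i$ acts on $\omega_i^\vee$ itself and subtracts exactly $\alpha_i^\vee$. This is a type-free form of \cref{lem:simply-laced-parabolic-bound}. The paper's norm argument there also works in all types, since $\omega_i^\vee$ is orthogonal to $\alpha_j^\vee$ for $j\ne i$ regardless of root lengths.

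The payoff of your route is a sharper statement. You only use $\langle\lambda,\alpha_i^\vee\rangle\ge p_i$ for each $i$, so your proof gives the lemma under this coordinatewise hypothesis. Accordingly, \eqref{eq:stab-hyp-2} in \cref{thm:atomic-stabilization} could be weakened to $\langle\lambda,\alpha_i^\vee\rangle\ge n_i$.

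You were right to drop your first bulleted route. With the reduced word $s_1s_2s_1$ and $i=1$, the last-occurrence root is $s_1s_2\alpha_1=\alpha_2$, which has no $\alpha_1$ in its support. So that single term does not yield the bound.
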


\begin{proof}
By Kostant's formula \cite{MR0109192},
$m(\eta,\lambda)=\sum_{w\in\Weyl}(-1)^{\ell(w)}p\bigl(w(\lambda+\rho)-(\eta+\rho)\bigr)$.
Fix $w\ne1$ and choose $i$ with $w\alpha_i\in\Raiz^-$; put $u=ws_i$ and
$\beta=u\alpha_i=-w\alpha_i\in\Raiz^+$.  From
$s_i(\lambda+\rho)=(\lambda+\rho)-\langle\lambda+\rho,\alpha_i^\vee\rangle\alpha_i$
we get
\[
   (\lambda+\rho)-w(\lambda+\rho)
     =\bigl[(\lambda+\rho)-u(\lambda+\rho)\bigr]
      +\langle\lambda+\rho,\alpha_i^\vee\rangle\,\beta .
\]
The bracket lies in $\Z_{\ge0}\RaizSimp$ because $\lambda+\rho$ is dominant.
Write $\beta=\sum_j b_j\alpha_j$ with $b_j\ge0$ and choose $j_0$ with
$b_{j_0}\ge1$.  Then the coefficient of $\alpha_{j_0}$ in
$(\lambda+\rho)-w(\lambda+\rho)$ is at least
$\langle\lambda,\alpha_i^\vee\rangle+1$.  Since
\[
   w(\lambda+\rho)-(\eta+\rho)
     =(\lambda-\eta)-\bigl[(\lambda+\rho)-w(\lambda+\rho)\bigr],
\]
its coefficient of $\alpha_{j_0}$ is at most
$p_{j_0}-\langle\lambda,\alpha_i^\vee\rangle-1\le-1$, so the corresponding
value of $p$ vanishes.  Only $w=1$ survives.
\end{proof}

For $i\in I$ set
\begin{equation}\label{eq:b-constant}
   b_i=-\sum_{j\ne i}\langle\alpha_j,\alpha_i^\vee\rangle
      =\sum_{j\ne i}\bigl|\langle\alpha_j,\alpha_i^\vee\rangle\bigr| .
\end{equation}
Inspecting the finite Dynkin diagrams gives $b_i\le3$ in every type: a node
with three neighbours occurs only in types $\typeD$ and $\typeE$, where all
bonds are simple; a node with a double or triple bond has at most one further
neighbour, joined by a simple bond.

\begin{theorem}\label{thm:atomic-stabilization}
Let $\mu\le\lambda$ be dominant and write
$\lambda-\mu=\sum_{j\in I}n_j\alpha_j$.  Assume
\begin{align}
  \langle\mu,\alpha_i^\vee\rangle&\ \ge\ b_i
     &&\text{for every }i\in I,\label{eq:stab-hyp-1}\\
  \langle\lambda,\alpha_i^\vee\rangle&\ \ge\ \max_{j\in I}n_j
     &&\text{for every }i\in I.\label{eq:stab-hyp-2}
\end{align}
Then
\begin{equation}\label{eq:atomic-stable-value}
   a(\mu,\lambda)=p_{\rm ns}(\lambda-\mu).
\end{equation}
\end{theorem}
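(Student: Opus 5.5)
The plan is to combine the simple-root form of the crosscut formula, \cref{cor:atomic-finite-difference-simple}, with the Kostant stabilization of \cref{lem:kostant-stabilization} and the partition identity of \cref{lem:pns-general}.

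\textbf{Step 1: the hats are trivial.} Hypothesis \eqref{eq:stab-hyp-1} should make every raw weight $\mu+\alpha_S$ dominant. For $S\subseteq I$ and $i\in I$,
\[
 \langle\mu+\alpha_S,\alpha_i^\vee\rangle
   =\mu_i+2\cdot\mathbf 1_{\{i\in S\}}+\sum_{j\in S,\ j\ne i}\langle\alpha_j,\alpha_i^\vee\rangle
   \ \ge\ \mu_i-b_i\ \ge 0 .
\]
Hence $\widehat{\mu+\alpha_S}=\mu+\alpha_S$ for every $S$. Then \eqref{eq:atomic-finite-difference-simple} reads
\[
 a(\mu,\lambda)=\sum_{S\subseteq I}(-1)^{|S|}m(\mu+\alpha_S,\lambda).
\]
This holds with no further case analysis, since terms with $\mu+\alpha_S\nleq\lambda$ have multiplicity $0$.

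\textbf{Step 2: replace each multiplicity by a partition number.} For $\eta=\mu+\alpha_S$, the coefficients of $\lambda-\eta$ are $n_j-\mathbf 1_{\{j\in S\}}\le\max_j n_j$. So \eqref{eq:stab-hyp-2} gives the hypothesis of \cref{lem:kostant-stabilization}, whenever $\eta\le\lambda$, and yields $m(\mu+\alpha_S,\lambda)=p(\lambda-\mu-\alpha_S)$.

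The only subtlety is the case $\eta\nleq\lambda$, i.e.\ some $n_j=0$ with $j\in S$. There $m=0$ because $\eta$ is dominant and not below $\lambda$, and also $p(\lambda-\mu-\alpha_S)=0$ because the argument has a negative coordinate. So the identity holds for every $S$. I would check that the lemma's proof does not need $\eta$ to be a weight; it does not, since it applies Kostant's formula to arbitrary $\eta\le\lambda$.

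\textbf{Step 3: conclude.} Substituting gives
\[
 a(\mu,\lambda)=\sum_{S\subseteq I}(-1)^{|S|}p(\lambda-\mu-\alpha_S),
\]
which equals $p_{\rm ns}(\lambda-\mu)$ by \eqref{eq:pns-general}.

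I expect the main obstacle to be Step 1 rather than the Kostant part. One must make sure that the convention $\widehat{\mu+\alpha_\varnothing}=\mu$ and the identification in \cref{cor:atomic-finite-difference-simple} require no distinctness of the atoms. That corollary was proved with no hypothesis, so it should suffice. Once every $\mu+\alpha_S$ is dominant, the rest is bookkeeping.
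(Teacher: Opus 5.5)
Your proposal is correct and is essentially the paper's argument: hypothesis \eqref{eq:stab-hyp-1} makes every $\mu+\alpha_S$ dominant, \cref{lem:kostant-stabilization} turns each multiplicity into $p(\lambda-\mu-\alpha_S)$, and \cref{lem:pns-general} finishes. The only cosmetic difference is in how the sum is set up. The paper first identifies the atoms as $\mu+\alpha_i$ for $i\in\supp(\lambda-\mu)$ and uses the atom-indexed formula \eqref{eq:atomic-finite-difference}, then extends the sum to all $S\subseteq I$. You instead invoke \cref{cor:atomic-finite-difference-simple} directly and dispose of the out-of-support terms by noting that both $m$ and $p$ vanish there.
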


\begin{proof}
By \eqref{eq:stab-hyp-1}, for every $S\subseteq I$ and every $i\in I$,
\[
   \langle\mu+\alpha_S,\alpha_i^\vee\rangle
     =\langle\mu,\alpha_i^\vee\rangle
       +\sum_{j\in S}\langle\alpha_j,\alpha_i^\vee\rangle
     \ge\langle\mu,\alpha_i^\vee\rangle-b_i\ge0,
\]
the term $j=i$ contributing $+2$; hence $\mu+\alpha_S$ is dominant, and in
particular $\widehat{\mu+\alpha_S}=\mu+\alpha_S$.  By
\cref{lem:atoms-simple-roots} every atom of $[\mu,\lambda]$ has the form
$\widehat{\mu+\alpha_i}=\mu+\alpha_i$; conversely $\mu+\alpha_i$ covers
$\mu$, since no weight lies strictly between them in the root order, and
$\mu+\alpha_i\le\lambda$ if and only if $n_i\ge1$.  Hence
\[
   \mathcal A_\mu(\lambda)=\{\mu+\alpha_i:\ i\in\supp(\lambda-\mu)\}.
\]
For $S\subseteq\supp(\lambda-\mu)$ the coordinatewise join of the weights
$\mu+\alpha_i$, $i\in S$, is $\mu+\alpha_S$, so repeated use of
\cref{lem:dominant-covering-join} gives
$\bigvee_{i\in S}(\mu+\alpha_i)=\widehat{\mu+\alpha_S}=\mu+\alpha_S$.
Formula \eqref{eq:atomic-finite-difference} therefore
reads
\[
   a(\mu,\lambda)
     =\sum_{S\subseteq\supp(\lambda-\mu)}(-1)^{|S|}
        m(\mu+\alpha_S,\lambda).
\]
For each such $S$ the weight $\lambda-\mu-\alpha_S$ has $\alpha_j$-coefficient
at most $n_j$, so \eqref{eq:stab-hyp-2} and
\cref{lem:kostant-stabilization} give
$m(\mu+\alpha_S,\lambda)=p(\lambda-\mu-\alpha_S)$.  If
$S\not\subseteq\supp(\lambda-\mu)$ then $\lambda-\mu-\alpha_S$ has a negative
coefficient and $p$ vanishes on it, so the sum may be extended over all
$S\subseteq I$ without change.  Now apply \cref{lem:pns-general}.
\end{proof}

\begin{remark}\label{rem:stabilization-second-proof}
\Cref{thm:atomic-stabilization} can also be deduced from
\cref{thm:atomic-recursion}, without Kostant's formula.  Fix
$\eta_0\in\Z_{\ge0}\RaizSimp$ and choose $\mu$ dominant enough that
$\mu+\eta-\xi$ is dominant for every $\eta\le\eta_0$ in
$\Z_{\ge0}\RaizSimp$ and every $\xi\in\mathcal F_0$; this is a finite set of
lower bounds on the labels of $\mu$.  For such $\mu$ all signs in
\eqref{eq:atomic-recursion} equal $1$ and all shifted arguments are dominant,
so with $f(\eta)=a(\mu,\mu+\eta)$, extended by zero off
$\Z_{\ge0}\RaizSimp$,
\[
   \sum_{\xi\in\mathcal F_0}c_\xi f(\eta-\xi)=\delta_{\eta,0}
   \qquad(\eta\le\eta_0).
\]
By \eqref{eq:ns-generating} the function $p_{\rm ns}$ satisfies the same
recursion with the same initial condition, so $f=p_{\rm ns}$ on
$\{\eta\le\eta_0\}$ by induction on the height.  This argument is shorter but
gives a less explicit range than \eqref{eq:stab-hyp-1}--\eqref{eq:stab-hyp-2}.
\end{remark}

\begin{corollary}
\label{cor:eventual-positivity}
For every $\eta\in\Z_{\ge0}\RaizSimp$ there is a constant $C(\eta)$, namely
\[
   C(\eta)=\max_{i\in I}
     \max\Bigl\{\,b_i,\ \max_{j\in I}n_j-\langle\eta,\alpha_i^\vee\rangle\Bigr\}
   \qquad\Bigl(\eta=\sum_j n_j\alpha_j\Bigr),
\]
such that every dominant $\mu$ with $\langle\mu,\alpha_i^\vee\rangle\ge C(\eta)$
for all $i$ satisfies
\[
   a(\mu,\mu+\eta)=p_{\rm ns}(\eta)\ \ge\ 0 .
\]
\end{corollary}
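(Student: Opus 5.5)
This is a direct specialization of \cref{thm:atomic-stabilization}, so the plan is to verify its two hypotheses for $\lambda=\mu+\eta$ and then read off nonnegativity. Write $\eta=\sum_j n_j\alpha_j$. Suppose $\mu$ is dominant with $\langle\mu,\alpha_i^\vee\rangle\ge C(\eta)$ for all $i$, and put $\lambda=\mu+\eta$.

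\textbf{Hypothesis \eqref{eq:stab-hyp-1}.} Since $C(\eta)\ge b_i$ for each $i$, this holds immediately.

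\textbf{Hypothesis \eqref{eq:stab-hyp-2}.} For each $i$,
\[
\langle\lambda,\alpha_i^\vee\rangle=\langle\mu,\alpha_i^\vee\rangle+\langle\eta,\alpha_i^\vee\rangle\ge C(\eta)+\langle\eta,\alpha_i^\vee\rangle\ge\max_j n_j .
\]
The last inequality uses $C(\eta)\ge\max_j n_j-\langle\eta,\alpha_i^\vee\rangle$, which is built into the definition of $C(\eta)$.

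\textbf{Dominance of $\lambda$.} \Cref{thm:atomic-stabilization} also requires $\lambda$ to be dominant. The same display gives $\langle\lambda,\alpha_i^\vee\rangle\ge\max_j n_j\ge0$, so this is automatic. We also have $\mu\le\lambda$, since $\eta\in\Z_{\ge0}\RaizSimp$.

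\textbf{Conclusion.} With both hypotheses in place, \cref{thm:atomic-stabilization} gives $a(\mu,\mu+\eta)=p_{\rm ns}(\eta)$. This value is nonnegative because it counts partitions.

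There is no real obstacle; the only points needing care are that dominance of $\lambda$ comes for free and that $C(\eta)$ does not depend on $\mu$.
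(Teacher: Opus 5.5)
Your proposal is correct and is the same as the paper's argument: the corollary is the specialization $\lambda=\mu+\eta$ of \cref{thm:atomic-stabilization}, and its two hypotheses reduce to the two lower bounds built into $C(\eta)$. Your explicit checks that $\lambda$ is dominant and that $p_{\rm ns}(\eta)\ge0$ (it counts partitions) fill in points the paper leaves implicit.
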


\begin{proof}
The two hypotheses of \cref{thm:atomic-stabilization} for the pair
$(\mu,\mu+\eta)$ read $\langle\mu,\alpha_i^\vee\rangle\ge b_i$ and
$\langle\mu,\alpha_i^\vee\rangle\ge\max_j n_j-\langle\eta,\alpha_i^\vee\rangle$.
\end{proof}

\begin{corollary}
\label{cor:negativity-is-shallow}
If $a(\mu,\lambda)<0$ then
$\langle\mu,\alpha_i^\vee\rangle<C(\lambda-\mu)$ for at least one $i\in I$.
Equivalently, for each fixed $\eta$, the set of dominant weights $\mu$ for
which $a(\mu,\mu+\eta)<0$ is contained in the finite union of boundary slabs
\[
   \bigcup_{i\in I}
   \{\mu\in\Pesos^+:\langle\mu,\alpha_i^\vee\rangle<C(\eta)\}.
\]
\end{corollary}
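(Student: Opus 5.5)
The statement is the contrapositive of \cref{cor:eventual-positivity}, and the plan is to derive it directly from \cref{thm:atomic-stabilization}.

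Fix dominant $\mu\le\lambda$ and put $\eta=\lambda-\mu=\sum_j n_j\alpha_j$. We argue by contradiction. Suppose $\langle\mu,\alpha_i^\vee\rangle\ge C(\eta)$ for every $i\in I$. We then check the two hypotheses of \cref{thm:atomic-stabilization}.

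Hypothesis \eqref{eq:stab-hyp-1} is immediate, because $C(\eta)\ge b_i$ for each $i$.

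For \eqref{eq:stab-hyp-2}, we have
\[
\langle\lambda,\alpha_i^\vee\rangle=\langle\mu,\alpha_i^\vee\rangle+\langle\eta,\alpha_i^\vee\rangle\ge C(\eta)+\langle\eta,\alpha_i^\vee\rangle\ge\max_j n_j,
\]
using the second entry in the definition of $C(\eta)$.

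The theorem then gives $a(\mu,\lambda)=p_{\rm ns}(\eta)$. This is a count of partitions, so it is nonnegative, contradicting $a(\mu,\lambda)<0$. Hence some label satisfies $\langle\mu,\alpha_i^\vee\rangle<C(\eta)$.

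The equivalent formulation is a restatement. For fixed $\eta$, every dominant $\mu$ with $a(\mu,\mu+\eta)<0$ must violate the bound in some coordinate $i$, so $\mu$ lies in the $i$-th slab. When $\mu+\eta$ is not dominant, $a(\mu,\mu+\eta)$ is not defined, or is zero by convention, so those $\mu$ need no treatment.

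There is no real obstacle here. The only point needing care is the bookkeeping that $C(\eta)$ was chosen so that both inequalities of the theorem follow from a single lower bound on the labels of $\mu$, and this holds by construction in \cref{cor:eventual-positivity}.
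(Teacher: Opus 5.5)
Your argument is correct and is the same as the paper's. The paper gives no separate proof: the corollary is the contrapositive of \cref{cor:eventual-positivity}, whose proof is exactly your check that the single bound $C(\eta)$ yields both \eqref{eq:stab-hyp-1} and \eqref{eq:stab-hyp-2}, so that \cref{thm:atomic-stabilization} gives $a(\mu,\lambda)=p_{\rm ns}(\lambda-\mu)\ge0$. The one step you leave implicit is that $a(\mu,\lambda)\ne0$ already forces $\mu\le\lambda$, so $\lambda-\mu\in\Z_{\ge0}\RaizSimp$ and the theorem applies; this follows from \cref{prop:integral-expansion}.
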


\begin{remark}
\label{rem:D4-stable}
Take $\eta=\beta_1=\alpha_1+2\alpha_2+2\alpha_3+2\alpha_4$, the vector of
\cref{prop:D4-minimal-family}.  In the notation
\eqref{eq:D4-nonsimple-roots} the only decomposition of $\beta_1$ into
nonsimple positive roots is $\beta_1=\sigma+h_{34}$, so
$p_{\rm ns}(\beta_1)=1$.  Accordingly, for $\mu=3\rho=(3333)$ --- which
satisfies \eqref{eq:stab-hyp-1} and \eqref{eq:stab-hyp-2} exactly, since
$b_2=3$ and $\max_j n_j=2$ --- one has
$a\bigl(3\rho,\,3\rho+\beta_1\bigr)=1$.  By contrast
\cref{prop:D4-minimal-family} gives
\[
   a(0100,0022)=-2,
   \qquad
   a(5155,5077)=-1 .
\]
Thus negativity survives however deep $\mu$ is in the three outer directions,
provided the trivalent label stays equal to $1$; and it is exactly
\eqref{eq:stab-hyp-1} at the trivalent node, where $b_2=3$, that fails.  This
is the quantitative counterpart of \cref{thm:D4-necessary}.
\end{remark}

We finish with two questions suggested by the preceding arguments.

\begin{question}\label{q:path-intrinsic-model}
The successive model $R_{\Psi,\mathfrak o}$ of
\cref{cor:path-primitive-model} depends on the order of the path components
and on the endpoint from which each is processed.  Are the
subquotients obtained from opposite orientations naturally isomorphic, or are
they different realizations of a canonical representation-theoretic object
attached to $(\mu,\lambda)$?
\end{question}

\begin{question}\label{q:DE-stabilization-defects}
Let $\gamma_i=\mu+\beta_i$ ($1\le i\le k$) be the atoms of
$[\mu,\lambda]$ in type $\typeD$ or $\typeE$, and put
$x_S=\mu+\sum_{i\in S}\beta_i$ for $S\subseteq\{1,\ldots,k\}$.
Which local crown configurations satisfy
$\widehat{x_S}\in\Weyl x_S$ for every $S$, as on paths?
For those that do not, can one determine from the local configuration the
multiplicity corrections
\[
 m(\widehat{x_S},\lambda)-m(x_S,\lambda)
\]
and their signed sum?  By disjointness of atom supports and
\cref{lem:dominant-covering-join}, the crown joins are $\widehat{x_S}$, so
\cref{cor:atomic-finite-difference} gives
\[
 a(\mu,\lambda)-\sum_S(-1)^{|S|}m(x_S,\lambda)
   =\sum_S(-1)^{|S|}
      \bigl(m(\widehat{x_S},\lambda)-m(x_S,\lambda)\bigr).
\]
Failure of Weyl conjugacy alone need not make an individual correction
nonzero for a given $\lambda$, and nonzero corrections may cancel in this
signed sum.
\end{question}

\section*{Acknowledgment and AI disclosure}

This work originated in research conducted by the authors.  
WS wrote and executed the \LiE\ code used for exhaustive computational
searches, within the ranges investigated, in ranks up to $10$, producing
millions of data samples.  The authors analysed these data and formulated
the initial machinery, proved positivity in ranks 2 and 3, and made precise
conjectures on the negativity in types $\typeD/\typeE$. All the main ideas
in this paper are due and related to this effort. This is contained in
substantial part in the Ph.D. dissertation of FH.

In subsequent work, the authors used OpenAI's Codex and Anthropic's Claude
Code to assist in identifying
further patterns in the data, critically examining and refining mathematical
proofs, and preparing and revising the manuscript.  The authors take full
responsibility for the mathematical arguments, computations, references, and
final text.


\begin{thebibliography}{MPRA25}

\bibitem[BZ90]{BZ1}
A.~D. Berenshtein and A.~V. Zelevinskii.
\newblock When is the multiplicity of a weight equal to {$1$}?
\newblock {\em Funct. Anal. Appl.}, 24(4):259--269, 1990.

\bibitem[Kos59]{MR0109192}
Bertram Kostant.
\newblock A formula for the multiplicity of a weight.
\newblock {\em Trans. Amer. Math. Soc.}, 93(1):53--73, 1959.

\bibitem[LL21]{MR4178925}
C{\'e}dric Lecouvey and Cristian Lenart.
\newblock Atomic decomposition of characters and crystals.
\newblock {\em Adv. Math.}, 376:107453, 2021.

\bibitem[MPRA25]{Muniz-Plaza-Rojas-G2}
B{\'a}rbara Muniz, David Plaza, and Claudia Rojas-And{\'\i}as.
\newblock Atomic decomposition for an affine {Weyl} group of type {$G_2$},
  2025.
\newblock arXiv:2512.02559.

\bibitem[PS26]{Plaza-Sagurie}
David Plaza and Yamil Sagurie.
\newblock Positivity of pre-canonical bases for spherical {Hecke} algebras,
  2026.
\newblock arXiv:2608.07703.

\bibitem[PT25]{Patimo}
Leonardo Patimo and Jacinta Torres.
\newblock Atoms and charge in type {$C_2$}.
\newblock {\em Algebr. Comb.}, 8(2):521--574, 2025.

\bibitem[Sch12]{Schutzer}
Waldeck Sch{\"u}tzer.
\newblock A new character formula for {Lie} algebras and {Lie} groups.
\newblock {\em J. Lie Theory}, 22(3):817--838, 2012.

\bibitem[Shi01]{Shimozono}
Mark Shimozono.
\newblock Multi-atoms and monotonicity of generalized {Kostka} polynomials.
\newblock {\em European J. Combin.}, 22(3):395--414, 2001.

\bibitem[Sta70]{Stanley}
Richard~P. Stanley.
\newblock Structure of incidence algebras and their automorphism groups.
\newblock {\em Bull. Amer. Math. Soc.}, 76:1236--1239, 1970.

\bibitem[Sta12]{StanleyEC1}
Richard~P. Stanley.
\newblock {\em Enumerative Combinatorics. Volume 1}, volume~49 of {\em
  Cambridge Studies in Advanced Mathematics}.
\newblock Cambridge University Press, Cambridge, second edition, 2012.

\bibitem[Ste98]{Stembridge}
John~R. Stembridge.
\newblock The partial order of dominant weights.
\newblock {\em Adv. Math.}, 136(2):340--364, 1998.

\bibitem[vL94]{LiE}
M.~A.~A. van Leeuwen.
\newblock {LiE}, a software package for {Lie} group computations.
\newblock {\em Euromath Bull.}, 1(2):83--94, 1994.

\bibitem[Wal13]{Walton2}
Mark~A. Walton.
\newblock Polytope expansion of {Lie} characters and applications.
\newblock {\em J. Math. Phys.}, 54(12):121701, 2013.

\end{thebibliography}
\end{document}